\theoremstyle{plain}
\newtheorem{theorem}{Theorem}[section]
\newtheorem{remark}[theorem]{Remark}
\newtheorem{corollary}[theorem]{Corollary}
\newtheorem{definition}[theorem]{Definition}
\newtheorem{lemma}[theorem]{Lemma}
\newtheorem{proposition}[theorem]{Proposition}
\newtheorem{question}{Question}
\newcommand{\C}{\mathbb{C}}
\newcommand{\N}{\mathbb{N}}
\newcommand{\R}{\mathbb{R}}
\newcommand{\Z}{\mathbb{Z}}
\DeclarePairedDelimiter\abs{\lvert}{\rvert}
\renewcommand{\epsilon}{\ensuremath\varepsilon}
\newcommand{\eps}{\epsilon}
\renewcommand{\phi}{\ensuremath{\varphi}}
\newcommand{\Oh}[3]{\tilde{O}_{#2}^{#1}(#3)}
\newcommand{\oh}[3]{O_{#2}^{#1}(#3)}
\renewcommand{\P}[2]{{#2}^{(#1)}}
\newcommand{\M}[2]{\partial_\ast^{#1} #2}
\newcommand{\m}[1]{\partial_\ast #1}
\renewcommand{\C}[1]{[#1]}
\newcommand{\into}{\hookrightarrow}
\newcommand{\mb}{\partial_*}
\newcommand{\mc}{\mathcal}
\newcommand{\comb}{\delta_*}
\newcommand{\st}{\underline{*}}
\newcommand{\und}{\underline}
\newcommand{\pres}{4}
\newcommand{\bij}{5}
\newcommand{\emp}{{7}}
\newcommand{\xgg}{{9}}
\newcommand{\lam}{{10}}
\newcommand{\ftree}{{11}}
\newcommand{\empbij}{{12}}
\newcommand{\mov}{{13}}
\newcommand{\blam}{{14}}
\newcommand{\clam}{{15}}
\newcommand{\comp}{{16}}
\newcommand{\ver}{7}
\newcommand{\dlam}{8}
\newcommand{\dclam}{9}
\newcommand{\tree}{4}
\author{Stefanie Zbinden}
\title{Morse boundaries of 3-manifold groups}
\begin{document}
\maketitle

\begin{abstract}
We classify the homeomorphism types of the Morse boundaries of all 3-manifold groups into 9 different possible homeomorphism types, and show how the Morse boundary depends on the geometric decomposition.  
\end{abstract}

\section{Introduction}

The Morse boundary was defined in \cite{Cor16} and is a generalization of the Gromov boundary for non-hyperbolic groups. It captures the hyperbolic directions of groups and is a quasi-isometry invariant. In \cite{CH17,Charney_2019,Murray_2019,liu_dynamics,Zalloum2018ASC,graph_of_groups1} and \cite{fioravanti2022connected} it was shown that Morse boundaries inherit several properties of the Gromov boundary. However, unlike the Gromov boundary, the Morse boundary is in general neither compact nor metrizable. Before \cite{charney2020complete} managed to describe the topology of certain Morse boundaries (e.g. of fundamental groups of graph manifolds), it was thought to be near impossible to describe the topology comprehensively for non-trivial examples. In this paper, we build on the work of \cite{charney2020complete} and \cite{graph_of_groups1} to classify the Morse boundaries of all 3-manifold groups, that is, the fundamental group of all closed and connected 3-manifolds. 

\begin{theorem}\label{theorem:manifold_light} There exist 9 homeomorphism types of $\mb \pi_1(M)$, where $M$ is a closed and connected 3-manifold. Moreover the homeomorphism type depends only on the geometric decomposition of $M$.
\end{theorem}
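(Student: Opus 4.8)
The plan is to follow the prime and JSJ (i.e.\ geometric) decomposition of $M$, compute $\mb$ of the geometric building blocks, and reassemble using the known behaviour of Morse boundaries under graphs of groups and free products. By Geometrization, write $M=P_1\#\cdots\#P_n$ with each prime summand $P_i$ either a spherical space form, a copy of $S^2\times S^1$, or irreducible and aspherical; in the last case the JSJ decomposition exhibits $\pi_1(P_i)$ as the fundamental group of a finite graph of groups whose vertex groups are Seifert-fibered or hyperbolic (atoroidal) and whose edge groups are undistorted $\Z^2$'s, and when $P_i$ is already geometric the JSJ graph is a point and $P_i$ carries one of the eight $3$-dimensional geometries. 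Since $\pi_1(M)$ is the free product of the $\pi_1(P_i)$, everything is governed by this finite decorated tree of pieces; the aim is to show that the homeomorphism type of $\mb\pi_1(M)$ depends only on it, and that exactly nine types occur.

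The first step is to record $\mb$ of the building blocks. Finite groups give $\emptyset$ and $\Z$ (hence $S^2\times S^1$) gives $S^0$. Among the eight geometries: $\mathbb{H}^3$ gives a hyperbolic group, so $\mb$ is its Gromov boundary $S^2$; $S^2\times\R$ gives a virtually cyclic group, so $\mb=S^0$; $S^3$, $E^3$, $\mathrm{Nil}$ give finite or virtually-nilpotent groups that are not virtually cyclic, so $\mb=\emptyset$; $\mathbb{H}^2\times\R$ gives a group virtually a product of two infinite groups, which has no Morse directions, and a cocompact $\widetilde{SL_2}$ group is quasi-isometric to such a product (to $\pi_1\Sigma\times\Z$), so both give $\mb=\emptyset$; finally, for $\mathrm{Sol}$ one checks directly that no geodesic is contracting — a quasigeodesic of controlled quality joining two points of the ``vertical'' axis can stray from it a distance comparable to the length of the segment, by detouring through the exponentially distorted normal $\Z^2$ — so $\mb=\emptyset$. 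I would also note that Seifert-fibered JSJ pieces have fundamental group virtually a product $F_k\times\Z$, hence empty Morse boundary, and their peripheral $\Z^2$'s undistorted.

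Next I would reassemble. For $P$ with nontrivial JSJ decomposition, the Morse boundary of a fundamental group of a graph of groups with undistorted $\Z^2$ edge groups (via \cite{graph_of_groups1}) is built by gluing, along a Bass--Serre pattern, copies of the boundaries of the vertex groups: Seifert vertices contribute nothing, while each hyperbolic (cusped) vertex group contributes its Bowditch $2$-sphere with the countable dense set of parabolic points (where the adjacent edge spaces attach) removed. For pure non-geometric graph manifolds this is the situation handled by \cite{charney2020complete}, whose work lets one identify $\mb\pi_1(P)$ with a single fixed homeomorphism type $X_{\mathrm{gm}}$; for mixed pieces (at least one hyperbolic vertex) one gets a space $X_{\mathrm{mix}}$ assembled from punctured $2$-spheres together with the graph-manifold pattern, and a crucial step is to prove that $X_{\mathrm{mix}}$ does not depend on the chosen mixed manifold. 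Finally, $\mb\pi_1(M)$ for $M=P_1\#\cdots\#P_n$ is obtained from the $\mb\pi_1(P_i)$ by the free-product description of \cite{graph_of_groups1}, plugging copies of the $\mb\pi_1(P_i)$ into the ends of the Bass--Serre tree of the free splitting (with $\mb(\Z/2\ast\Z/2)=S^0$ the one exception among ``small'' free products). A short list of absorption lemmas — attaching further finite, $\Z$, or empty-boundary summands does not change the type beyond the first nontrivial one; a Cantor set's worth of copies of $S^2$, of $X_{\mathrm{gm}}$, or of both is absorbed once one such copy is present; and $X_{\mathrm{mix}}$ absorbs the relevant free products with it — then collapses the a priori infinite family of decorated trees to exactly nine types: roughly $\emptyset$, $S^0$, a Cantor set, $S^2$, $X_{\mathrm{gm}}$, $X_{\mathrm{mix}}$, and three further types arising from free products that involve a summand with non-degenerate Morse boundary.

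The main obstacle is exactly this homeomorphism-invariance. One must produce, by hand, homeomorphisms between Morse boundaries coming from combinatorially very different data: graph manifolds with wildly different JSJ graphs, mixed manifolds with different numbers and types of hyperbolic pieces, and free products with different numbers of summands. The engine is the strong self-similarity of these boundaries — every point has arbitrarily small neighbourhoods homeomorphic to one of finitely many local models, one per kind of geometric piece — in the spirit of the self-similarity arguments of \cite{charney2020complete}; making the local-model bookkeeping match across decompositions, and then verifying that the resulting bijections are homeomorphisms in the (non-metrizable, direct-limit) topology of the Morse boundary, is where the real work lies.
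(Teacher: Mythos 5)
Your outline matches the paper's overall strategy — geometrization, compute boundaries of building blocks, reassemble via graphs of groups and free products — but there are two serious problems, one of which is the crux of the whole paper.

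First, you never give a mechanism for passing from the graph of groups with $\Z^2$ edge groups to its Morse boundary. You write that the boundary is ``built by gluing, along a Bass--Serre pattern, copies of the boundaries of the vertex groups (via \cite{graph_of_groups1})'', but \cite{graph_of_groups1} treats only graphs of groups with \emph{trivial} edge groups; it does not apply directly to the JSJ splitting. The paper's central technical input — and the reason it is a paper rather than a corollary — is Theorem \ref{cor:rel_hyp_graph_of_groups}, which shows that under a relative hyperbolicity hypothesis on the hyperbolic vertex groups one may replace the $\Z^2$ edge groups by trivial ones without changing the Morse boundary. Proving this occupies Sections \ref{section:morseless_stars}--\ref{section:relativemap} and is genuinely delicate: one must show every Morse ray is determined by its projection to the Bass--Serre tree and a possible tail, then construct a Bass--Serre tree bijection and verify it is a homeomorphism in the direct-limit topology. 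You acknowledge ``this is where the real work lies'' and gesture at self-similarity arguments, but you do not say what the self-similar local models are or why they let you compare JSJ graphs of different shape. In particular, the example from the introduction ($F_2\times\Z \cong \Z^2 *_\Z \Z^2$, whose Morse boundary is empty while $\mb(\Z^2*\Z^2)$ is an $\omega$-Cantor set) shows that some hypothesis beyond undistortedness of edge groups is needed; your sketch does not notice this and would, as written, predict the wrong answer for graph manifolds glued along $\Z$.

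Second, your first absorption lemma is false. You claim that ``attaching further finite, $\Z$, or empty-boundary summands does not change the type beyond the first nontrivial one.'' But $\mb(\pi_1\Sigma \ast \Z/2)$ has factor-boundary set $\{S^2\}$ and is $S^2\st S^2$, whereas $\mb(\pi_1\Sigma \ast \Z/2 \ast \Z^2)$ has factor-boundary set $\{S^2,\emptyset\}$ and is $S^2 \st \emptyset$; these are items (6) and (7) of Theorem \ref{theorem:3-manifold} and are distinct (one is compact, the other is not). The correct absorption statement — Theorem \ref{theorem:empty_morse_boundary}, $\mb(G*G)\cong\mb(G*H)$ for $H$ infinite with empty Morse boundary — requires $G$ to be \emph{not hyperbolic}, and proving even that restricted statement occupies Section \ref{section:doublemap}. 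Without that hypothesis, the empty-boundary summand does change the type. Your proposed absorption would collapse the nine types to fewer and cannot yield the correct count.

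So: same skeleton, but the two theorems that carry the actual weight (\ref{cor:rel_hyp_graph_of_groups} and \ref{theorem:empty_morse_boundary}) are missing from your proposal, and your replacement for the second is false as stated.
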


In Theorem \ref{theorem:3-manifold} we make the above Theorem more precise, giving both the list of possible homeomorphism types of the Morse boundary of $\pi_1(M)$ and a description of which geometric decomposition leads to which Morse boundary.

The fundamental group, $\pi_1(M)$, has a graph of groups decomposition where all edge groups are either $\Z^2$ or trivial and the vertex groups are fundamental groups of geometric prime manifolds, Seifert fibered manifolds or finite volume hyperbolic manifolds with cusps. In particular, the Morse boundary of the vertex groups are known and Theorem \ref{cor:rel_hyp_graph_of_groups} below can be used to show that the Morse boundary $\mb (\pi_1(M))$ is homeomorphic to the one of a graph of groups with the same vertex groups but trivial edge groups. Graph of groups with trivial edge groups were studied in \cite{graph_of_groups1}, so Theorem \ref{theorem:manifold_light}, up to counting some homeomorphism types multiple times, follows from Theorem \ref{cor:rel_hyp_graph_of_groups} and \cite{graph_of_groups1}. 

\begin{theorem}\label{cor:rel_hyp_graph_of_groups}
Let $\mc G$ be a graph of finitely generated groups with underlying graph $\Gamma$ and a distinguished set of vertices $W\subset \Gamma$ such that:
\begin{enumerate}
    \item All edge groups are undistorted in $\pi_1(\mc{G})$, wide and have infinite index in their corresponding vertex groups. 
    \item For every vertex $w\in W$, the vertex group $G_w$ is hyperbolic relative to a collection of subgroups containing the adjacent edge groups.\label{cor:assumption2}
    \item Every edge has at least one of its endpoints in $W$.\label{cor:assumption3}
\end{enumerate}
Let $\mc G'$ be the graph of groups which is obtained from $\mc G$ by defining all edge groups to be trivial groups. Then $\mb \pi_1(\mc G) \cong \mb \pi_1 (\mc G ')$.
\end{theorem}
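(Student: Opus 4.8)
\emph{Proof strategy.} The plan is to realise both $\pi_1(\mc G)$ and $\pi_1(\mc G')$ as relatively hyperbolic groups whose peripheral structures differ only in ``wide'' pieces, and then to use that wide subgroups are invisible to the Morse boundary. First, fix peripheral structures on the vertex groups: for $w\in W$, assumption (2) provides a structure $\mc P_w$ containing the adjacent edge groups; for $v\notin W$ take $\mc P_v=\{G_v\}$. Every edge group is then contained, at each of its endpoints, in a member of the corresponding family --- at a $W$-endpoint it is itself peripheral, and by assumption (3) every edge has such an endpoint; at a non-$W$-endpoint it lies in $G_v$ --- so a combination theorem for relatively hyperbolic groups (in the style of Dahmani) shows that $\pi_1(\mc G)$ is hyperbolic relative to a collection $\mc P$ consisting, up to conjugacy, of the groups $G_v$ for $v\notin W$, one copy of $G_e$ for each edge $e$ internal to $W$, and the non-edge members of the $\mc P_w$. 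On the other hand $\pi_1(\mc G')$ is a free product of the vertex groups with a free group, hence hyperbolic relative to $\mc P'=\{G_v:v\notin W\}\cup\bigcup_{w\in W}\mc P_w$. Thus $\mc P$ and $\mc P'$ coincide except that $\mc P'$ carries finitely many additional conjugacy classes of peripherals, each isomorphic to an edge group, which by assumption (1) is wide, undistorted, and of infinite index.

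The engine is a structural description of the Morse boundary of a group $H$ hyperbolic relative to $\mc Q$, which I would quote or establish: a geodesic ray is Morse precisely when it is transient --- it spends uniformly bounded time in every peripheral coset --- in which case it limits to a conical point, or else it eventually enters a peripheral coset $hQ$ (necessarily with $Q$ undistorted) inside which it is Morse; topologically, $\mb H$ is then the Gromov boundary of the cone-off $\widehat H$ with a copy of $\mb Q$ glued on at the parabolic point of each coset $hQ$. Since a wide group has empty Morse boundary --- a Morse ray would exhibit super-linear lower divergence, which wide groups do not have --- the extra peripherals of $\mc P'$ contribute nothing, and no Morse ray of either group ever enters an edge-group coset: the edge groups are genuinely invisible. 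The remaining peripherals, namely the $G_v$ and the non-edge members of the $\mc P_w$, are undistorted as required, since vertex groups and peripheral subgroups of relatively hyperbolic groups are undistorted in a graph of groups with undistorted edge groups.

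It remains to compare conical parts. Here I claim $\widehat{\pi_1(\mc G)}$ and $\widehat{\pi_1(\mc G')}$ are quasi-isometric by a map carrying parabolic points to parabolic points, since both are quasi-isometric to the same model: the tree of spaces over $\Gamma$ with vertex space $\widehat{G_w}$ (hyperbolic, being $G_w$ with $\mc P_w$ coned off) at each $w\in W$, a point at each $v\notin W$, and a point at each edge --- indeed coning off $\mc P$, respectively $\mc P'$, collapses every edge-group coset, directly for an edge internal to $W$, because it sits inside a coned-off $G_v$ for an edge meeting a non-$W$ vertex, and trivially in $\mc G'$. This model is a tree of uniformly hyperbolic spaces glued along uniformly bounded sets, hence hyperbolic, and the quasi-isometries identify its boundary with both conical parts compatibly with the parabolic points; feeding this, with the evident identifications of the surviving peripheral Morse boundaries, into the structural description yields $\mb\pi_1(\mc G)\cong\mb\pi_1(\mc G')$. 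I expect the bulk of the work to lie in that last step: the Morse boundary carries a direct-limit topology admitting no compatible metric, so checking that a quasi-isometry of cone-offs matching parabolic points --- together with homeomorphisms of peripheral Morse boundaries --- genuinely assembles into a \emph{homeomorphism} requires uniform control of how sequences from different peripheral pieces approach conical points; establishing the structural description of $\mb H$ and the combination-theorem bookkeeping behind the precise form of $\mc P$ and $\mc P'$ are lesser but genuine hurdles.
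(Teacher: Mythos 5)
Your route is conceptually quite different from the paper's. You impose a global relatively hyperbolic structure on $\pi_1(\mc G)$ and $\pi_1(\mc G')$ and try to read both Morse boundaries off the Gromov boundaries of the cone-offs with peripheral Morse boundaries glued in at parabolic points. The paper instead iterates over the distinguished vertices, collapsing the complement of each $w_i$ into a single vertex to produce a ``relatively hyperbolic Morseless star,'' and for that one-vertex case builds the homeomorphism explicitly through a combinatorial description of Morse rays, local bijections between Bass-Serre trees, and uniform control of Morse gauges and neighbourhood systems (Sections 3--5, applied in Section 7).

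The fatal gap is the asserted structural description of the Morse boundary: that for $H$ hyperbolic relative to $\mc Q$, \emph{topologically} $\mb H$ is $\partial\widehat H$ with copies of $\mb Q$ glued at parabolic points. This is false, not merely hard to verify. Take $H=\Z^2*\Z^2$ with the two $\Z^2$ factors as peripherals; your description would give a metrizable, $\sigma$-compact space (every peripheral Morse boundary is empty, so you would just get the boundary of a coned-off space), but $\mb(\Z^2*\Z^2)$ is the $\omega$-Cantor set of \cite{charney2020complete}, which is neither metrizable nor $\sigma$-compact --- as must hold, by \cite{Murray_2019}, for any non-hyperbolic group. The set-level picture of transient rays versus peripheral tails is roughly right, but the direct-limit topology on $\mb H$ is strictly finer than anything a quasi-isometry of cone-offs can see, so matching parabolic points and gluing in $\mb Q$'s does not assemble into a homeomorphism. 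You flag this as the ``bulk of the work'' and expect to control ``how sequences from different peripheral pieces approach conical points'' --- which is indeed exactly the content of the paper's $V_k^M(r)$ neighbourhoods, Lemma \ref{lemma:neighbourhood_inclusions}, and the notion of a continuous Bass-Serre tree map --- but you underestimate the discrepancy: the statement you hoped to quote or establish cannot be established in the form you stated it. Your Dahmani combination set-up and the observation that wide undistorted edge groups are Morse-invisible are sound (they parallel Lemma \ref{lemma:peripherals} and Lemma \ref{lemma:properties_of_morseless_stars}), but they only settle the question at the level of sets, not of topological spaces.
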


To prove Theorem \ref{cor:rel_hyp_graph_of_groups}, we follow the proof strategy of \cite{martin2013hyperbolic} and show that the equivalence class of every Morse ray is determined by its projection to the Bass-Serre tree and (if this is finite) its tail in one vertex group. Since there are in general no cut points in the Cayley graph of $\pi_1(\mc G)$, this step requires more care and is the main reason for the added assumption of relative hyperbolicity. As a second step we adapt the tools from \cite{graph_of_groups1} to construct bijections $q_\alpha$ between vertex groups $G_i$ and their quotient by the adjacent edge group $H_\alpha$. For every coset of the edge group $H_\alpha$, we choose one distinguished element of that coset. With this, we can view the bijections $q_\alpha$ as (non-surjective) maps from $G_i$ to itself. There are two main conditions the bijections $q_\alpha$ need to satisfy. On one hand, the bijections $q_\alpha$ need to preserve the Morseness of elements (in the sense of Definition \ref{definition:morse_elements}). On the other hand, if a sequence of $M$-Morse elements converges to a Morse direction $\xi$, then its image (viewed as a sequence of elements in $G_i$) needs to converge to the Morse direction $\xi$. In Section \ref{sec:local-bijections}, we make the above conditions more precise by introducing the notion of \emph{local bijections}. 
The bijections $q_\alpha$ then allow us to construct a bijection between the Bass-Serre trees of $\mc G$ and $\mc G'$ which induces a homeomorphism between the Morse boundaries $\mb \pi_1(\mc G)$ and $\mb \pi_1(\mc G ')$.

The strategy above to first determine the Morse boundary of all prime manifolds and then use the results of \cite{graph_of_groups1} to determine the Morse boundary of the 3-manifold as a whole will yield that there are 11 possible homeomorphism types of Morse boundaries, which are all the Morse boundary of some closed and connected 3-manifold. However, those types are not necessarily pairwise non-homeomorphic. We can improve this to show that there are exactly 9 possible homeomorphism types which are all pairwise non-homeomorphic. To do so, we use the following theorem. 

\begin{theorem}\label{theorem:empty_morse_boundary}
Let $G$ and $H$ be finitely generated groups such that $G$ is not hyperbolic and $H$ is infinite and has empty Morse boundary. 
Then $\mb (G*G)\cong \mb (G* H)$.
\end{theorem}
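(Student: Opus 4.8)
The plan is to reduce both sides to the explicit description of the Morse boundary of a free product developed in \cite{graph_of_groups1}, which underlies the earlier part of this paper. For a free product $A*B$ with Bass--Serre tree $T$, a Morse geodesic ray from the identity is of one of two kinds: either it eventually remains inside a single vertex coset $gA$ or $gB$, so that its equivalence class is determined by a Morse ray of that vertex group ``hung at'' the coset; or it crosses infinitely many edges and follows an end $\xi$ of $T$ along which all syllables are uniformly Morse in their factors, in which case its class is determined by $\xi$. This exhibits $\mb(A*B)$ as the disjoint union of a ``tree part'' $C(A*B)$ (a $\sigma$-compact, totally disconnected space of such ends) together with one copy of $\mb A$ for each coset of $A$ and one copy of $\mb B$ for each coset of $B$; the topology is the tree-of-spaces topology of \cite{graph_of_groups1}, in which a neighbourhood of an end collects all rays fellow-travelling it past a given branch point, and a neighbourhood of a point of $\mb A$ at $gA$ combines a neighbourhood in $\mb A$ with all rays that run into $gA$ and leave it near that point.

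Granting this, I would first dispose of two easy points. Because $G$ and $H$ are infinite and finitely generated, the Bass--Serre trees of $G*G$ and $G*H$ are both isomorphic to the bipartite tree both of whose sides have countably infinite valence, so one can fix a graph isomorphism $\Phi$ sending the cosets of the first $G$-factor of $G*G$ to the cosets of $G$ in $G*H$ and the cosets of the second $G$-factor to the cosets of $H$; using finite generation, $\Phi$ induces a homeomorphism $C(G*G)\cong C(G*H)$, since after stratifying by the Morse gauge each tree part is an increasing union of compact totally disconnected pieces whose combinatorics depends only on the finitely many available syllable types at each gauge. Second, $\mb H=\emptyset$ kills all the $H$-coset summands of $\mb(G*H)$. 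Thus $\Phi$ already identifies $\mb(G*H)$, as a set, with what is left of $\mb(G*G)$ after deleting the copies of $\mb G$ attached along the second family of $G$-cosets.

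The core of the proof is to promote this to a homeomorphism, i.e.\ to reabsorb those deleted copies of $\mb G$, and this is where non-hyperbolicity of $G$ is essential: it is equivalent to $G*G$ (hence $G*H$) being non-hyperbolic, so that these Morse boundaries are non-compact and, concretely, every neighbourhood of an end and every neighbourhood of a point of $\mb G$ at a coset contains a countably infinite, converging ``fan'' of further copies of $\mb G$ and of pieces of the tree part --- in particular the construction has the room it needs, which it would not have in the hyperbolic case (where $\mb(G*G)$ is compact and the analogous statement fails). I would then run a back-and-forth construction, building $\Phi$ together with compatible homeomorphisms of neighbourhood models and arranging inductively --- over the countably many cosets, ordered by tree distance from the basepoint --- that each copy of $\mb G$ sitting at a second-family coset of $G*G$ is re-routed a bounded distance, into a fan direction available in $\mb(G*H)$ (rays that pass through the matching $H$-coset and then terminate in, or run off past, an adjacent $G$-coset), while everything else is preserved. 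The verification that this bijection and its inverse are continuous is in the same spirit as the local-bijection conditions behind Theorem \ref{cor:rel_hyp_graph_of_groups} and the cut-point-free arguments of \cite{martin2013hyperbolic}: Morseness must be preserved and sequences of Morse rays converging to a Morse ray must be sent to sequences converging to the image.

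I expect the main obstacle to be exactly this continuity check at the tree part and at the re-routed copies of $\mb G$, precisely because the matching is not tree-structure-preserving on the nose. The way I would control it is to keep the re-routing bounded --- each decoration moves only a fixed tree distance --- so that fellow-travelling lengths between a ray and its image are distorted only by a controlled additive-plus-multiplicative amount, and then to read off convergence from the neighbourhood-basis description above. The cardinality bookkeeping and the stratum-by-stratum homeomorphism $C(G*G)\cong C(G*H)$ are routine; the delicate and genuinely new part is organising the absorption so that no point is moved too far and the topology is preserved globally.
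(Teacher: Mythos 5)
Your high-level picture is right and matches the paper's framing: describe $\mb(A*B)$ combinatorially as a tree part plus copies of vertex-group boundaries hung at cosets, observe that $\mb H=\emptyset$ kills the $H$-summands of $\mb(G*H)$, and exploit non-hyperbolicity of $G$ to find room to absorb the ``extra'' copies of $\mb G$ attached along the second-family cosets of $G*G$. But there is a genuine gap at precisely the step you flag as ``the delicate and genuinely new part'': you never explain \emph{how} to absorb them, and ``bounded tree-distance re-routing'' plus ``back-and-forth'' is not a substitute for the mechanism.

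The difficulty is not tree distance at all --- every re-route is one edge further, trivially bounded. The difficulty is Morse-gauge calibration. Because $\mb H=\emptyset$, for each gauge $M$ only finitely many $h\in H$ are $M$-Morse; so a coset of $G$ sitting past a long $h$ in $G*H$ carries a copy of $\mb G$ that is ``attached'' only at a very large Morse gauge. Any bijection $G\to G\times(H\setminus\{1\})$ underlying the re-routing must therefore send to $(g',h)$ (with $|h|$ large) some $g\in G$ that is itself correspondingly badly Morse, in a way that is \emph{uniform} and quantitatively controlled, or the induced map fails both bijectivity on each Morse stratum and the continuity check. Your proposal gives no way to produce such a $g$. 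The paper's entire Section 6 is devoted to manufacturing exactly this control: using non-hyperbolicity of $G$ to extract, for each $n$, a geodesic line $\lambda^n$ through $1$ whose Morse gauge degrades with $n$ in a controlled, two-sided way (Lemmas \ref{lemma:existance_of_bad_geodesic_segments}, \ref{lemma:existance_dual_segments}, \ref{lemma:constructing_geodesics}), and then, for a target $(x,h)$ with $|h|=n$, choosing the preimage $x_1$ far out along the corresponding ray $\lambda^n_x$ so that the Morse gauge of $[1,x_1]$ tracks that of both $x$ and $h$ (Lemmas \ref{lemma:corresponding2}, \ref{lemma:morseness_comp1}). This also explains what non-hyperbolicity is really buying: not merely ``non-compactness and room'', but the existence of geodesic segments that are arbitrarily non-Morse at a fixed quasi-geodesic scale, which is the raw material for the lines $\lambda^n$. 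Without this construction there is no reason the bijection you posit exists, and no handle on the continuity of $\Phi$ or $\Phi^{-1}$ at the tree part; the back-and-forth scheme would need exactly this quantitative ingredient to close.

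One smaller point: you propose to treat the tree part $C(G*G)\cong C(G*H)$ as a separate, routine, stratum-by-stratum step via a graph isomorphism. In the paper the tree part and the vertex-group boundaries are handled simultaneously through a single continuous Bass--Serre tree map (Propositions \ref{prop:bass_serre_implies_homeo} and \ref{prop:local_bij_imply_bass_serre_map}); separating them is awkward because neighbourhoods of ends mix both, and the Morse stratification of the ends of $T$ is itself governed by the same gauge data you need for the copies of $\mb G$.
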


We prove Theorem \ref{theorem:empty_morse_boundary} by constructing a bijection from $G$ to $G\times ( H - \{1\})$. Again, this bijection needs to preserve the Morseness of elements and respect sequences that correspond to Morse rays. The bijection constructed then induces a map from the Bass-Serre tree of $G*G$ to the Bass-Serre tree of $G*H$ which in turn induces a homeomorphism between the Morse boundary of $G*G$ and the Morse boundary of $G*H$.

\subsection*{Further Questions} 

Consider the group $G_1 =\langle a, b \mid aba^{-1}b^{-1} \rangle *_{\langle b\rangle} \langle b, c\mid bcb^{-1}c^{-1} \rangle\cong F_2\times \Z$. The subgroup $\langle b\rangle$ is relatively wide in both factors and the Morse boundary of $G_1$ is empty. However, the Morse boundary of $\Z^2*\Z^2$ is an $\omega$-Cantor set (see \cite{charney2020complete}). This shows that if we remove assumption \eqref{cor:assumption2} from Theorem \ref{cor:rel_hyp_graph_of_groups}, then it does not hold anymore. On the other hand, while graphs of groups which are free products do not (in general) satisfy assumption \eqref{cor:assumption2} of Theorem \ref{cor:rel_hyp_graph_of_groups}, they trivially satisfy the conclusion of Theorem \ref{cor:rel_hyp_graph_of_groups}. Thus the questions arises whether and how much we can relax the assumptions in Theorem 1.2. Observe that replacing assumption \eqref{cor:assumption2} with an assumption that does not use the set of distinguished vertices $W$ makes them obsolete. That is, we automatically also get rid of assumption \eqref{cor:assumption3} and the need to define a set of distinguished vertices.

One property that groups satisfying the assumptions of Theorem \ref{cor:assumption2} and free products have in common is that they both act acylindrically on their Bass-Serre trees. Furthermore the group $G_1$ as constructed above does not have this property. Therefore, one might ask the following question. 

\begin{question}\label{q1}
Does Theorem \ref{cor:rel_hyp_graph_of_groups} hold if we replace assumption \eqref{cor:assumption2} by the assumption that $\pi_1(\mc G)$ acts acylindrically on its Bass-Serre tree?
\end{question}

As argued before, if one removes assumption \eqref{cor:assumption2} from Theorem \ref{cor:rel_hyp_graph_of_groups}, it does not hold anymore. But maybe a weaker version does. 

\begin{question}
    If we remove assumption \eqref{cor:assumption2} from Theorem \ref{cor:rel_hyp_graph_of_groups}, is it true that $\mb \pi_1(\mc G)$ embeds topologically into $\mb \pi_1(\mc G')$?
\end{question}

While such a version of the Theorem is much weaker, it would still provide us with with some tools to talk about the Morse boundary of $\pi_1(\mc G)$.

\subsection*{Outline of the paper}

In Section \ref{section:preliminaries} we recall some definitions, most notably the definition of the Morse boundary.
In Section \ref{section:morseless_stars} we introduce a certain subclass of graphs of groups which we call Morse preserving Morseless stars. For those graphs of groups we give a description of Morse rays in terms of their projection to the Bass-Serre tree. In Section \ref{section:homeo_tools} we develop tools to show that certain maps between Bass-Serre trees induce homeomorphisms between the respective Morse boundaries. In Section \ref{section:relativemap} we prove Theorem \ref{theorem:star_of_groups}, which is a special case of Theorem \ref{cor:rel_hyp_graph_of_groups} where the set of distinguished vertices contains only one vertex. In Section \ref{section:doublemap} we prove Theorem \ref{theorem:empty_morse_boundary}. Lastly, in Section \ref{section:main_result} we apply Theorem 
 \ref{theorem:star_of_groups} and \ref{theorem:empty_morse_boundary} to prove  Theorem \ref{cor:rel_hyp_graph_of_groups} and \ref{theorem:3-manifold}, where the latter is a more detailed version of Theorem \ref{theorem:manifold_light}.

\subsection*{Acknowledgements}
I want to thank Elia Fioravanti and Annette Karrer for useful conversations. I would also like to thank my advisor Alessandro Sisto for his ongoing support and helpful comments on the paper.

\section{Preliminaries}\label{section:preliminaries}

\textbf{Notation:}  For this section, unless stated otherwise, $(X, e)$ denotes a pointed proper geodesic metric space and $C$ denotes a constant at least $1$. To ease notation, we denote a $(C, C)$-quasi-geodesic as $C$-quasi-geodesic. Furthermore, we write quasi-geodesic instead of continuous quasi-geodesic, that is, we assume all quasi-geodesics are continuous. Furthermore, we assume that, unless stated otherwise, the domain of any ray is $[0, \infty)$ and that the domain of a segment is $[0, t]$ for some $t\in \R$.

A well known but important way to construct quasi-geodesics is the following Lemma, a proof can be found for example in \cite{graph_of_groups1}.

\begin{lemma}\label{quasi-geodesic}
Let $p, q \in X$ and let $\gamma : [0, T]\to X$ be a $(\lambda, \eps)$-quasi-geodesic. Let $t, t^\prime\in [0, T]$ such that $\gamma(t)$ and $\gamma(t^\prime)$ are points on $\gamma$ closest to $p$ and $q$ respectively. Finally, let $\alpha$ be a geodesic from $p$ to $\gamma(t)$ and let $\beta$ be a geodesic from $\gamma(t^\prime)$ to $q$. The following hold: 
\begin{enumerate}[label= \roman*)] 
    \item $\alpha \cdot \gamma[t, T]$ is a $(3\lambda, \eps)$ -quasi-geodesic.
    \item If $\abs{t - t^\prime} \geq 3\lambda (d_X(p, \gamma(t))+d_X(q, \gamma(t^\prime)))$, then $\alpha \cdot \gamma[t, t^\prime]\cdot \beta$ is a $(3\lambda, \eps)$ -quasi-geodesic.
\end{enumerate}
\end{lemma}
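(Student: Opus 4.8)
The plan is to verify the two quasi-geodesic bounds directly from the definition, exploiting the fact that the nearest-point projection points $\gamma(t),\gamma(t')$ control the distances from $p,q$ to all of $\gamma$. Throughout write $a = d_X(p,\gamma(t))$ and $b = d_X(q,\gamma(t'))$; the key elementary observations are that for every $s\in[0,T]$ we have $d_X(p,\gamma(s)) \ge a$ (since $\gamma(t)$ is a closest point) and, by the triangle inequality, $d_X(p,\gamma(s)) \le a + d_X(\gamma(t),\gamma(s)) \le a + \lambda|s-t| + \eps$. These sandwich estimates are what let the geodesic stub $\alpha$ be absorbed into the quasi-geodesic constant.

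For part (i), I parametrize $\alpha\cdot\gamma[t,T]$ by arclength on the $\alpha$ portion and by the original parameter (shifted) on the $\gamma$ portion, so the concatenation is a continuous path defined on an interval of length $a + (T-t)$. I would split the verification of the lower bound $d_X(\text{path}(u),\text{path}(u')) \ge \tfrac{1}{3\lambda}|u-u'| - \eps$ into three cases according to whether both points lie on $\alpha$, both on $\gamma[t,T]$, or one on each. The first case is immediate since $\alpha$ is a geodesic; the second is immediate since $\gamma$ is a $(\lambda,\eps)$-quasi-geodesic and $\lambda \le 3\lambda$. In the mixed case, with the $\alpha$-point at distance $r \le a$ from $\gamma(t)$ and the $\gamma$-point at $\gamma(s)$, I use $d_X(\text{path}(u),\text{path}(u')) \ge d_X(p,\gamma(s)) - r \ge a - r$ on one hand (too weak alone) and on the other hand $d_X(\text{path}(u),\text{path}(u')) \ge d_X(\gamma(t),\gamma(s)) - r \ge \tfrac1\lambda|s-t| - \eps - r$; combining these two lower bounds (the parameter gap is $u'-u = r + |s-t|$, wait — $(a-r) + |s-t|$... ) one gets a bound of the form $\tfrac{1}{3\lambda}(r + |s-t|) - \eps$ after a short computation balancing the two estimates. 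The upper bound (Lipschitz-type) $d_X(\text{path}(u),\text{path}(u')) \le 3\lambda|u-u'| + \eps$ is easier and follows from the triangle inequality together with the upper sandwich estimate above, since $\lambda \le 3\lambda$.

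For part (ii), the path $\alpha\cdot\gamma[t,t']\cdot\beta$ (assume WLOG $t \le t'$) is handled the same way but now with a hypothesis $|t-t'| \ge 3\lambda(a+b)$ ensuring the middle segment is long enough that the two geodesic stubs at the ends cannot create a shortcut. The new case to check is when one point lies on $\alpha$ and the other on $\beta$: here the distance is at least $d_X(\gamma(t),\gamma(t')) - (\text{stub lengths}) \ge \tfrac1\lambda|t-t'| - \eps - (a+b)$, and using $|t-t'| \ge 3\lambda(a+b)$ one checks $\tfrac1\lambda|t-t'| - (a+b) \ge \tfrac{1}{3\lambda}|t-t'|$, which combined with the parameter gap (at most $a + |t-t'| + b \le 2|t-t'|$ roughly, using the hypothesis) gives the required $(3\lambda,\eps)$ bound; the mixed $\alpha$-to-$\gamma[t,t']$ and $\gamma[t,t']$-to-$\beta$ cases are as in part (i), and the interior case is again immediate.

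The main obstacle — really the only non-bookkeeping point — is getting the constant exactly $3\lambda$ in the mixed cases: one has two competing lower bounds (one good when the stub is short, one good when the stub is long relative to the $\gamma$-displacement) and must choose the case split or the convex combination so that the worst case still yields $\tfrac{1}{3\lambda}$ rather than a larger multiple of $\lambda$. Since the statement says a proof can be found in \cite{graph_of_groups1}, I would carry out this case analysis carefully once and then cite that reference for the details rather than reproduce every inequality.
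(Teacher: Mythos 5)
The paper itself does not prove this lemma; it defers to \cite{graph_of_groups1}, so there is no in-paper proof to compare against. Your plan is the standard direct verification and it works. Two small remarks. First, the parenthetical self-doubt about the parameter gap is unwarranted: if the $\alpha$-point is at (geodesic) distance $r$ from $\gamma(t)$, it sits at parameter $a-r$ along $\alpha$, and $\gamma(s)$ sits at parameter $a+(s-t)$, so the parameter gap is indeed $r+(s-t)$, as you first wrote. Second, to make the ``balancing'' concrete in the mixed $\alpha$-to-$\gamma$ case: writing $d=s-t$, you have the two lower bounds $d_X\geq r$ and $d_X\geq d/\lambda-\eps-r$, and splitting into the cases $r\geq d/(2\lambda)$ and $r<d/(2\lambda)$ gives $\frac{r+d}{3\lambda}\leq\max\{r,\ d/\lambda-r\}$ in both cases (using $\lambda\geq 1$), which is exactly the $(3\lambda,\eps)$ lower bound. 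The $\alpha$-to-$\beta$ case in (ii) then closes as you describe, since $r_1+r_2\leq a+b\leq|t-t'|/(3\lambda)$ under the hypothesis.
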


\subsection{Morse boundary}
A Morse gauge is a map $M : \R_{\geq 1}\times \R_{\geq_0}\to \R_{\geq0}$. We will assume, unless stated otherwise, that every Morse gauge is increasing and is continuous in the second argument. We denote the set of all Morse gauges (that satisfy these properties) by $\mathcal{M}$.

The set of all Morse gauges $\mathcal{M}$ has a partial order defined by $M\leq M^\prime$ if and only if $M(\lambda, \eps)\leq M^\prime(\lambda, \eps)$ for all $(\lambda, \eps) \in \R_{\geq 1}\times \R_{\geq 0}$. Observe that with this order, the maximum of a set of Morse gauges is the point-wise maximum.

\begin{definition}[Morse geodesic]
A quasi-geodesic $\gamma$ in $X$ is called $M$-Morse, if for all continuous $(\lambda, \eps)$-quasi-geodesics $\eta$ with endpoints on $\gamma$ we have
\begin{align}
    \eta \subset \mathcal{N}_{M(\lambda, \eps)}(\gamma),
\end{align}
where $\mathcal{N}_{M(\lambda, \eps)}(\gamma)$ denotes the closed $M(\lambda, \eps)$-neighbourhood of $\gamma$ in $X$.
A geodesic is called Morse if it is $M$-Morse for some Morse gauge $M$.
\end{definition}

\begin{remark}
Note that in the usual definition all quasi-geodesics (and not only continuous ones) have to stay close. Thanks to the the taming quasi-geodesics Lemma (Lemma III.H.1.11 \cite{hyperbolic_spaces}) our definition is equivalent to the usual one in the sense that if any quasi-geodesic is $M$-Morse for one of the definitions, it is $M'$ in the other, where $M'( \lambda, \eps) = M(\lambda, 2(\lambda+\eps) )+\lambda+\eps$. 
\end{remark}

\textbf{Notation:} For the rest of the paper, unless stated otherwise, $M, M', M_1$ etc. will denote Morse gauges. We will also write $M(C)$ instead of $M(C, C)$. We define $\delta_M = \max\{4 M (1, 2M(5, 0) ) + 2 M (5, 0), 8 M (3, 0) \}.$ The constant $\delta_M$ can be thought of as the hyperbolicity constant of $M$-Morse subsets. For example, Lemma 2.2 of \cite{Cor16} shows that every triangle where at least two of the sides are $M$-Morse is $\delta_M$-slim. Since we only consider increasing Morse gauges we have that $\delta_{M_1}\leq \delta_{M_2}$ for all $M_1, M_2\in \mc M$ with $M_1\leq M_2$ . 

\begin{definition}[Morse points] \label{definition:morse_elements}Let $x\in X$ be a point, we say that $x$ is $M$-Morse if there exists a geodesic $\gamma$ from the basepoint $e$ to $x$ which is $M$-Morse. Similarly, if $G$ is a group with fixed generating set and $g\in G$ is an element, we say that $g$ is $M$-Morse if there exists an $M$-Morse geodesic from $1$ to $g$. 
\end{definition}

Note that in other literature, the term ``Morse element'' for an element $g\in G$ is used to say that the axis of $g$ is a Morse quasi-geodesic. So being a ``Morse element'' has several consequences for example having infinite order. The definition of ``Morse element'' in the literature differs drastically from an element being $M$-Morse as defined above. In particular, with the definition we use, every element $g\in G$ is $M$-Morse for some Morse gauge $M$. 

\begin{definition}[Morse boundary as a set]
As a set, the Morse boundary of $X$, denoted by $\m X$, is the set of equivalence classes of Morse geodesic rays, where two Morse geodesic rays are equivalent if they have bounded Hausdorff distance. Elements $z\in \m X$ are called Morse directions. For a Morse geodesic ray $\xi$ we denote its equivalence class by $\C{\xi}\in \m X$.
\end{definition}

\begin{definition}[Realisation]
Let $z\in \mb X$ be a Morse direction and $a\in X$. A geodesic ray $\gamma : [0, \infty) \to X $ with $\C{\gamma} = z$ and $\gamma(0) = a$  is called an $a$-realisation of $z$. Similarly, for an element $x\in X$ a geodesic segment $\gamma$ from $a$ to $x$ is called an $a$-realisation of $x$. For a Morse direction $z\in \mb X$, $\bar{z}_a$ denotes a choice of an $a$-realisation of $z$. 
\end{definition}

A realisation of $x\in X\cup \mb X$ is an $e$-realisation of $x$, where $e$ is the basepoint of $X$. We denote by $\P{M}{X}_a\subset X$ the set of all points $x\in X$ that have an $M$-Morse $a$-realisation and we say that points $x\in \P{M}{X}_e$ are $M$-Morse. Note that $ X = \cup_{M\in \mathcal{M}} \P{M}{X}_a$. Similarly, we define the set of all $M$-Morse directions $\M{M}{X}$ as the set of all Morse directions $z\in\m X$ that have an $M$-Morse realisation.

Let $a\in X$, we denote by $\mc G_a^M X$ the set of all $M$-Morse geodesic rays in $X$ starting at $a$. We denote by $\tilde{\mc G}_a^M X$ the set of all $M$-Morse geodesics (that is rays or segments) starting at $a$. Let $\xi$ be an $M$-Morse realisation of $z$ starting at $a$. For any positive $n$ such that $l(\xi)\geq n$ we define
\begin{align}
    O^M_n(\xi, a) &= \{\eta \in \mc G_a^M X \mid d_X(\eta(t), \xi(t)) < \delta_M \text{ for all } t\in [0, n]\},\\
   \Oh{M}{n}{\xi, a} &= \{\eta \in \tilde {\mc G}_a^M X \mid d_X(\eta(t), \xi(t)) < \delta_M \text{ for all } t\in [0, n]\}.
\end{align}
We say that a Morse direction $y\in \mb X$ is in $\oh{M}{n}{\xi, a}$ (resp in $\Oh{M}{n}{\xi, a}$) if there exists an $a$-realisation $\gamma$ of $y$ which is in $\oh{M}{n}{\xi, a}$. For an $M$-Morse direction $z\in \mb^M X$ we define $O_n^M(z, a)$ as the union of all $O_n^M(\xi, a)$, for $M$-Morse $a$-realisations $\xi$ of $z$. We define $\tilde{O}_n^M(z, a)$ analogously. With this notation $\{\oh{M}{n}{\xi, a}\}_{n\in \N}$ is a fundamental system of neighbourhoods of $\mb ^M X$ for any $a\in X$ and $M\in \mc M$. 
\begin{remark}\label{remark:property_of_neighbourhoods}
If $M\leq N$ are Morse gauges, then $\tilde{O}_n^M(z, a)\subset \tilde{O}_n^N(z, a)$ and the same holds if we replace $z$ with a geodesic $\xi$. Furthermore, let $z, z'\in \mb X\cup X$ and let $\xi$ and $\zeta$ be $M$-Morse realisations starting at $a$ of $z$ and $z'$. If $z'\in \Oh{M}{n+18\delta_M}{z, a}$, then $\zeta\in \Oh{M}{n}{\xi, a}$. Indeed, there exist $M$-Morse realisations $\xi'$ and $\zeta'$ of $z$ and $z'$ starting at $a$ such that $\zeta'\in \Oh{M}{n+18\delta_M}{\xi', a}$. Corollary 2.15 of \cite{Cor16} implies that $\zeta'\in \Oh{M}{n+18\delta_M}{\zeta, a}$ and $\xi'\in\Oh{M}{n+18\delta_M}{\xi, a}$. Hence, using Lemma \ref{lemma:get_close_again} we get that $\zeta\in \Oh{M}{n}{\xi, a}$. 
\end{remark}

When $a$ is equal to the basepoint, we usually omit it in the notation. For example, we denote $\oh{M}{n}{\xi, e}$ by $\oh{M}{n}{\xi}$. 

The topology on $\mb X$ is the direct limit topology arising from 
\begin{align}
    \mb X = \lim_{\xrightarrow[\mathcal{M}]{}} \mb^M{X}.
\end{align}

The following Lemma summarises a list of properties of Morse quasi-geodesics that we will use throughout the paper. Most of the various properties are slight variations of well known properties and can be found in \cite{Charney_2019}, \cite{Cor16} and \cite{graph_of_groups1}. 

\begin{lemma}\label{lemma:monster}
There exist increasing functions $f_1: \mc M\to \mc M$, $f_2: \mc M\times \R_{\geq 1}\to \mc M$ and $D_1: M\times \R_{\geq 1}\to \R_{\geq 1}$ such that for any pointed geodesic metric space $(X, e)$ the following properties hold:
\begin{enumerate}[label = \roman*)]
    \item (Subsegments) Let $\gamma : I\to X$ be an $M$-Morse geodesic and $J\subset I$ an interval, then the restriction $\gamma|_J : J\to X$ is $f_1(M)$-Morse. Moreover, if $\gamma' : I\to X$ is an $M$-Morse $C$-quasi-geodesic and $J\subset I$ an interval, then the restriction $\gamma'|_J : J\to X$ is $f_2(M, C)$-Morse.\label{prop:subsegments} 
    \item (Triangles) Let $(\alpha, \beta, \gamma)$ be a geodesic triangle, where the sides are infinite or finite and let $\alpha$ and $\beta$ be $M$-Morse. Then $\gamma$ is $f_1(M)$-Morse. Furthermore, if $(\alpha, \beta, \gamma)$ is a $C$-quasi-geodesic triangle and if $\alpha$ and $\beta$ are $M$-Morse, then $\gamma$ is $f_2(M, C)$-Morse.\label{lemma:triangles}
    \item ($n$-gons) Let $n\geq 2$ and let $(\gamma_1, \ldots, \gamma_n)$ be an $n$-gon with geodesic sides $\gamma_i$. That is, $\gamma_i$ is geodesic and $\gamma_i^+ = \gamma_{i+1}^-$ for all $i\leq n$, where we use the notation $\gamma_{n+1} = \gamma_1$. If $\gamma_i$ is $M$-Morse for all $i\leq n-1$, then $\gamma_n$ is $f_1^{n-2}(M)$-Morse.\label{cond:n_gon}
    \item (Concatenation of two) Let $\gamma_1$ and $\gamma_2$ be $M$-Morse quasi-geodesics with $\gamma_1^+ =\gamma_2^-$. If the concatenation $\gamma = \gamma_1\circ \gamma_2$ is a quasi-geodesic, then $\gamma$ is $f_1(M)$-Morse.\label{cond:concatenation2}
    \item (Multiple Concatenation) Let $\gamma_1, \gamma_2, \ldots, \gamma_n$ be $M$-Morse quasi-geodesics with $\gamma_i^+ = \gamma_{i+1}^-$ for all $1\leq i\leq n-1$. If the concatenation $\gamma = \gamma_1\circ\gamma_2\circ\ldots\circ \gamma_n$ is a quasi-geodesic it is $f_1^{n-1}(M)$-Morse. \label{prop:multiple_concatenation}
    \item (realisation properties) Let $z\in \mb^M X$ be an $M$-Morse direction. Any realisation $\gamma$ of $z$ is $f_1(M)$-Morse. More generally, let $\gamma_1$ and $\gamma_2$ be geodesic rays with $\gamma_1(0)=\gamma_2(0)$ and $[\gamma_1] = [\gamma_2]$. If $\gamma_1$ is $M$-Morse, then $\gamma_2$ is $f_1(M)$-Morse. Furthermore, let $\gamma_1'$ and $\gamma_2'$ be $C$-quasi-geodesic rays with $d_X(\gamma_1'(0),\gamma_2'(0))\leq C$ and $[\gamma_1']=[\gamma_2']$. If $\gamma_1'$ is $M$-Morse, then $\gamma_2'$ and any subsegment or subray of $\gamma_2'$ is $f_2(M, C)$-Morse.  \label{prop:realization1}
    \item (Choice of geodesic) Let $\gamma_1$ and $\gamma_2$ be geodesic segments with the same endpoints. If $\gamma_1$ is $M$-Morse, then $\gamma_2$ is $f_1(M)$-Morse. \label{prop:geodesic_segments}
    \item (Distance of realisations) Let $\gamma_1$ and $\gamma_2$ be $M$-Morse $C$-quasi-geodesics with $\gamma_1(0)=\gamma_2(0)$ and such that $[\gamma_1] = [\gamma_2]$. The Hausdorff distance $d_H(\gamma_1, \gamma_2)\leq D_1(M, C)$ and for every $t\geq 0 $ there exists $s\geq 0$  with $d_H(\gamma_1[0, t], \gamma_2[0, s])\leq D_1(M, C)$ and $d_X(\gamma_1(t), \gamma_2(s))\leq D_1(M, C)$.\label{lemma:close_representatives}
    \item (Morseness of neighbourhoods) Let $\gamma_1$ be an $M$-Morse quasi-geodesic. If $\gamma_2$ is a $C$-quasi-geodesic contained in a $C$-neighbourhood of $\gamma_1$ then $\gamma_2$ is $f_2(M, C)$-Morse. Moreover, if $\gamma_3$ is a finite $C$-quasi-geodesic segment with endpoints in $\mc N_C(\gamma_1)$ then $\gamma_3$ is $f_2(M, C)$-Morse. \label{prop:adapted2.5}
    \item (Quasi-geodesic lines) Let $\gamma_1$ be an $M$-Morse $C$-quasi-geodesic line. Any $C$-quasi-geodesic line $\gamma_2$ with endpoints the same endpoints as $\gamma_1$  is $f_2(M, C)$-Morse and contained in the $D_1(M, C)$-neighbourhood of $\gamma_1$.\label{prop:quasi-geodesic_lines}
\end{enumerate}
\end{lemma}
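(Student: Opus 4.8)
\emph{Proof plan.} The plan is to reduce all ten items to two master estimates and then assemble $f_1,f_2,D_1$ as point-wise maxima. First I would dispose of the soft points. Universality in $(X,e)$ is automatic: every bound below is produced from $M$ (and the quasi-geodesic constant $C$) by a fixed composite of increasing operations and never refers to $X$ otherwise. Monotonicity can be forced by replacing any construction $g$ with $M\mapsto\sup_{M'\le M}g(M')$, the supremum being finite for the same reason. So it suffices to bound, item by item, the output Morse gauge in terms of $M$ and $C$; then I set $f_1$ to be a point-wise upper bound for the gauges arising in the geodesic items, $f_2(\cdot,C)$ a common upper bound for the (finitely many, each a bounded composite of increasing operations) gauges in the quasi-geodesic items (so that $f_2\ge f_1$), and $D_1$ the maximum of the Hausdorff constants.

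\emph{The triangle engine.} The first master estimate is the slim-triangle bound already recorded: by Lemma~2.2 of \cite{Cor16} a geodesic triangle with two $M$-Morse sides is $\delta_M$-slim, and after tameing (Lemma III.H.1.11 of \cite{hyperbolic_spaces}, replacing $M$ by $M'$) the same holds for quasi-geodesic triangles. From this I get item \ref{lemma:triangles} at once: a $(\lambda,\eps)$-quasi-geodesic with endpoints on the third side $\gamma$ has endpoints within $\delta$ of the two $M$-Morse sides, hence lies in their $M'(\lambda,\eps)+\delta$-neighbourhood, hence in $\mc N_{M'(\lambda,\eps)+2\delta}(\gamma)$; this yields a gauge depending only on $M$ (resp.\ only on $M$ and $C$). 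Item \ref{cond:n_gon} is the evident induction on $n$ (close the $n$-gon by a diagonal and apply item \ref{lemma:triangles}), explaining the iterate $f_1^{n-2}$. Items \ref{cond:concatenation2} and \ref{prop:multiple_concatenation} reduce to the triangle engine plus the easy half of \ref{prop:subsegments}: for the concatenation $\gamma_1\circ\gamma_2$ of Morse quasi-geodesics meeting at $v$, one first replaces $\gamma_1,\gamma_2$ by geodesics with the same endpoints at bounded Hausdorff distance (by the fellow-traveling engine below), and then, assuming $\gamma_1\circ\gamma_2$ is a quasi-geodesic, applies item \ref{lemma:triangles} to the triangle on the endpoints of $\gamma_1\circ\gamma_2$ and the vertex $v$: a geodesic $\sigma$ joining the endpoints becomes Morse with controlled gauge and $\delta$-slim, so any quasi-geodesic with endpoints on $\gamma_1\circ\gamma_2$ stays near $\sigma$, which stays within $\delta$ of $\gamma_1\cup\gamma_2$. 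Subsegments of geodesics are the degenerate case, and multiple concatenation is the obvious induction, giving $f_1^{n-1}$.

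\emph{The fellow-traveling engine.} The second master estimate is: if $\gamma_1$ is an $M$-Morse $C$-quasi-geodesic and $\gamma_2$ a $C$-quasi-geodesic which either has the same endpoints as $\gamma_1$ in $X\cup\mb X$ (up to distance $C$ at finite ends) or is contained in $\mc N_C(\gamma_1)$, then $d_H(\gamma_1,\gamma_2)\le D_1(M,C)$ and $\gamma_2$ is $f_2(M,C)$-Morse. I would prove the Hausdorff bound by tameing $\gamma_2$, capping it by geodesics of length $\le C$ onto $\gamma_1$, invoking Lemma~\ref{quasi-geodesic} to see the cap is a quasi-geodesic with endpoints on $\gamma_1$, and using Morseness of $\gamma_1$; the reverse inclusion uses that a quasi-geodesic ray or line confined to $\mc N_C(\gamma_1)$ must exhaust $\gamma_1$. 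Morseness of $\gamma_2$ follows identically: a $(\lambda,\eps)$-quasi-geodesic with endpoints on $\gamma_2$ has endpoints within $D_1(M,C)$ of $\gamma_1$, so after capping and using Morseness of $\gamma_1$ it stays near $\gamma_1$, hence near $\gamma_2$. This one estimate yields item \ref{prop:realization1} (two rays from a common point at bounded Hausdorff distance, or two fellow-travelling quasi-geodesic rays, have the same endpoint at infinity), item \ref{prop:geodesic_segments} (two geodesics with common endpoints are mutually $(1,0)$-quasi-geodesics with endpoints on one another), item \ref{lemma:close_representatives} (the Hausdorff bound, the pointwise synchronisation coming from applying the bound to initial subsegments, as in \cite{Cor16,Charney_2019}), item \ref{prop:adapted2.5} (for the finite-segment case $\gamma_3$, project its endpoints onto $\gamma_1$, so the spanned subsegment of $\gamma_1$ is Morse by \ref{prop:subsegments} and shares the endpoints of $\gamma_3$ up to $C$, then apply the same-endpoints form), and item \ref{prop:quasi-geodesic_lines} (the bi-infinite case, with exhaustion at both ends). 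The quasi-geodesic halves of \ref{prop:subsegments} and \ref{lemma:triangles} also drop out, since a subsegment of $\gamma'$ lies in $\mc N_0(\gamma')$.

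\emph{Main obstacle.} The one genuinely delicate point is the ``contained in a neighbourhood'' form of the fellow-traveling engine: I expect the real work to be in showing that a quasi-geodesic ray (or line) staying inside $\mc N_C(\gamma_1)$ actually tracks $\gamma_1$ all the way to its end(s) rather than lagging in a bounded region --- this is where properness of $X$ and the no-backtracking property of quasi-geodesics enter, and it is what makes item \ref{prop:adapted2.5} (and through it \ref{cond:concatenation2} and \ref{prop:multiple_concatenation}) go through cleanly. Everything else is routine bookkeeping with the taming constants and Lemma~\ref{quasi-geodesic}, and each individual item is a mild variant of statements already in \cite{Cor16,Charney_2019,graph_of_groups1}, from which full proofs can be assembled.
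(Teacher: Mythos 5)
Your two-engine organization (slim triangles and fellow-travelling) is an attractive way to carve up the lemma, and for most of the items it tracks the paper's proof closely (which itself reduces several items to \ref{prop:adapted2.5}, \ref{prop:realization1} and \ref{lemma:triangles}, citing \cite{Cor16}, \cite{Charney_2019} and \cite{CHAR14} for the technical cores). However, there is a genuine gap in your treatment of items \ref{cond:concatenation2} and \ref{prop:multiple_concatenation}, and a logical-ordering concern that interacts with it.

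The gap: you propose to ``replace $\gamma_1,\gamma_2$ by geodesics with the same endpoints at bounded Hausdorff distance'' and then invoke the triangle engine on the resulting geodesic triangle. But the Hausdorff-distance bound produced by your fellow-travelling engine (item \ref{lemma:close_representatives}/\ref{prop:adapted2.5}) is $D_1(M,C)$ where $C$ is the quasi-geodesic constant of $\gamma_1,\gamma_2$. In item \ref{cond:concatenation2} this constant is \emph{not} part of the hypotheses, and the conclusion is required to be $f_1(M)$-Morse, with $f_1$ depending on $M$ only. Indeed an $M$-Morse $(\lambda,\eps)$-quasi-geodesic can wander roughly $\lambda,\eps$ away from the geodesic with the same endpoints, so the replacement step pollutes your output gauge with uncontrolled constants. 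Moreover, even granting the replacement, $\delta$-slimness of the triangle $(\sigma_1,\sigma_2,\sigma)$ does not put $\sigma_1\cup\sigma_2$ inside $\mc N_\delta(\sigma)$: points on $\sigma_1$ near the corner $v$ may be within $\delta$ of $\sigma_2$ but far from $\sigma$, and the endpoints of the test quasi-geodesic $\lambda$ are on $\gamma_1\circ\gamma_2$, not a priori near $\sigma$. The paper sidesteps all of this by working directly with $\lambda$: it splits $\lambda$ at the point closest to $v$, caps each half by a geodesic to $v$ (which by Lemma~\ref{quasi-geodesic} produces $3C$-quasi-geodesics with endpoints on $\gamma_1$ resp.\ $\gamma_2$), and applies the Morse property of $\gamma_1,\gamma_2$ to these. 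This yields $f_1(M)(K,C) = M(\max\{3K,3C\})$, manifestly independent of $\gamma_1,\gamma_2$'s constants, which is what the iterate $f_1^{n-1}(M)$ in \ref{prop:multiple_concatenation} requires.

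Secondarily, you claim the moreover part of \ref{prop:subsegments} ``drops out'' of the fellow-travelling engine since $\gamma'|_J\subset\mc N_0(\gamma')$, but the paper's proof of \ref{prop:adapted2.5} (your fellow-travelling engine) itself invokes \ref{prop:subsegments} to show the relevant subsegment $\gamma_1[t_0,t_1]$ is Morse before applying the Hausdorff comparison; so this route is circular unless you first give a direct proof of \ref{prop:subsegments} for quasi-geodesics, as the paper does (by a continuity argument on the ``entry'' and ``exit'' parameters of a test quasi-geodesic into the neighbourhoods of $\gamma'[0,a]$ and $\gamma'[b,c]$). The paper's dependency order is \ref{prop:subsegments} $\to$ \ref{cond:concatenation2} $\to$ \ref{prop:adapted2.5} $\to$ (\ref{prop:realization1}, \ref{lemma:close_representatives}, \ref{prop:geodesic_segments}, \ref{prop:quasi-geodesic_lines}) $\to$ \ref{lemma:triangles} $\to$ \ref{cond:n_gon}, which keeps \ref{cond:concatenation2} and \ref{prop:subsegments} independent of the fellow-travelling machinery for exactly these reasons.
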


All the conditions in the above lemma have the following property in common. If they hold for some function $f_1$ (resp $f_2$ or $D_1$) then they also hold for $f_1'$ (resp $f_2'$ or $D_1'$) as long as the new function is larger.

Before proving Lemma \ref{lemma:monster} we show that it is enough to prove that for every property $P$ in \ref{prop:subsegments}- \ref{prop:quasi-geodesic_lines}, there exists a function $f_P$ for which property $P$ is satisfied (the functions $f_P$ might even depend on each other as long as the dependence is not circular). Then, we define $f'$ as the maximum of the functions $f_P$ and make sure that $f'(M)\geq M$ for all Morse gauges $M$. Since we did not require that the functions $f_P$ are increasing, $f'$ is not necessarily increasing. We can construct a function $f$ that is increasing and satisfies all the properties as follows. First we define a function $f''$ via $f''(M)(\lambda, \eps) = \inf_{M'\geq M}\{f'(M')(\lambda, \eps)\}+1$. With this construction, $f''$ is increasing and for every Morse gauge $M$, $f''(M)$ is also increasing. However, $f''(M)$ might not be continuous in the second coordinate. Thus we define $f(M)(\lambda, \eps) = \int_\eps^{\eps+1} f''(M)(\lambda, x)dx$. The function $f$ is increasing and for all Morse gauges $M\in \mc M$, $f(M)$ is an increasing Morse gauge which is continuous in the second coordinate. However, it is not clear that $f$ satisfies all the properties since it not necessarily satisfies that $f\geq f_P$ for every Property $P$. 

We will show that $f$ satisfies Property \ref{cond:concatenation2}, which, like many other properties, has the form ``if certain quasi-geodesics $\gamma_1$ and $\gamma_2$ are $M$-Morse, then another quasi-geodesic $\gamma$ is $f_{\ref{cond:concatenation2}}(M)$-Morse''.

Let $M\leq M'$ be Morse gauges. Observe that the set of $M$-Morse quasi-geodesics is a subset of the set of $M'$-Morse quasi-geodesics. Thus if $\gamma_1$ and $\gamma_2$ are $M$-Morse, they are also $M'$-Morse and hence $\gamma$ has to be $f_{\ref{cond:concatenation2}}(M')$-Morse. Since this is the case for every Morse gauge $M'\geq M$, we have that $\gamma$ is $f''(M)$-Morse and hence $f(M)$-Morse.

We can argue analogously that $f$ satisfies all the other properties.

\textbf{Convention: } We assume that every increasing function $f : \mc M\to \mc M$ satisfies that $f(M)\geq M$. 

If we have to prove that increasing functions $f': \mc M\times \R\to \mc M$, $D: \mc M\times \R\to \R$ or similar functions exist, we can argue analogously. Again we assume that $D(M, C)\geq C$ and $f'(M, C)\geq M$. 

\begin{proof}
We need to use some of the properties to prove the others. Therefore, we denote by $f_{1, P}$, $f_{2, P}$ and $D_{1, P}$ functions that work for property $P$. To ease notation, $f_1$, $f_2$ and $D_1$ will denote $f_{1, P}$, $f_{2, P}$ and $D_{1, P}$ respectively while proving property $P$. 

\ref{prop:subsegments} Note that it is enough to prove the moreover part since it implies the rest of the statement. We prove it in the case that $I$ and $J$ are finite intervals, but the proof if they are not works analogously. Let $I = [0, c]$ and $J = [a, b]$. Let $\lambda: [0, d]\to X$ be a $(C', K')$-quasi-geodesic with endpoints on $\gamma'[a, b]$. Since $\gamma'$ is $M$-Morse, $\lambda$ is contained in the $K_1= M(C', L')$-neighbourhood of $\gamma'$. Assume that $\lambda$ intersects the $K_1$-neighbourhood of $\gamma_1[0, a]$. Since we assume that quasi-geodesics are continuous the supremum $s$ such that $\lambda(s)$ is in the $K_1$-neighbourhood of $\gamma'[0, a]$ satisfies that $\lambda(s)$ is also in the $K_1$-neighbourhood of $\gamma'[a, c]$, say it is $K_1$-close to $\gamma'(t_0)$ and $\gamma'(t_1)$ for some $t_0\in [0 ,a]$ and $t_1\in [a, c]$. Since $\gamma'$ is a quasi-geodesic $t_1-a\leq t_1 -t_0\leq 2CK_1+C^2 = K_2$. Define $s'$ as the infimum such that $\lambda(s')$ is in the $K_1$-neighbourhood of $\gamma'[0, a]$. With the same reasoning as for the supremum we get that $\lambda(s')$ is in the $K_1$ neighbourhood of $\gamma'(t_1')$ for some $t_1'\in [a, c]$ such that $t_1'-a\leq K_2$ and hence $\abs{t_1 - t_1'}\leq K_2$. Thus $d_X(\gamma'(t_1), \gamma'(t'_1))\leq CK_2 +C^2 = K_3$ and also $d_X(\gamma'(a), \gamma'(t_1))\leq K_3$. Since $d_X(\lambda(s), \lambda(s'))\leq 2K_1+K_3$ and since $\lambda$ is a $(C', L')$-quasi-geodesic we have that $s-s'\leq C'(2K_1+K_3+L') = K_4$ and hence $\lambda[s', s]$ is in the $K_5 = C'(K_4+L')$-neighbourhood of $\lambda(s)$ and hence in the $K_6 = K_5+K_1 +K_3$ neighbourhood of $\gamma'(a)$. If we assume that $\lambda$ intersects the $K_1$-neighbourhood of $\gamma'[b, c]$ and define $\tilde s$ (respectively $\tilde{s}'$) as the infimum (respectively supremum) such that $\lambda(s')$ is in the $K_1$-neighbourhood of $\gamma'[b, c]$ we can argue analogously that $\lambda[\tilde {s}, \tilde{s}']$ is contained in the $K_6$-neighbourhood of $\gamma'(b)$. Hence $\lambda$ as a whole is contained in the $K_6 = K_6(C', L')$-neighbourhood of $\gamma'[a, b]$ and $\gamma'[a, b]$ is $f_2(M, C)$-Morse for the Morse gauge $f_2(M, C)(C', L') = K_6(C', L')$ for all $C', L'$. 

\ref{cond:concatenation2} Let $\lambda : [0, t]\to X$ be a $C$-quasi-geodesic with endpoints on $\gamma$. If both endpoints of $\lambda$ are either on $\gamma_1$ or $\gamma_2$, then $\lambda$ is contained in the $M(C)$-neighbourhood of $\gamma$. Thus assume that $\lambda(0)$ lies on $\gamma_1$ and $\lambda(t)$ lies on $\gamma_2$. Let $\lambda(s)$ be the closest point on $\lambda$ to $\gamma_1^+ = \gamma_2^-$. Lemma \ref{quasi-geodesic} implies that both $\lambda[0, s][\lambda(s), \gamma_1^+]$ and $[\gamma_2^-, \lambda(s)]\lambda[s, t]$ are $3C$-quasi-geodesics with endpoints on $\gamma_1$ and $\gamma_2$ respectively. Thus they are both contained in the $M(3C)$-neighbourhood of $\gamma$. Therefore, this property holds for $f_1(M)$ defined by $f_1(M)(K, C)= M(\max\{3K, 3C\})$.

\ref{prop:multiple_concatenation} This follows from \ref{cond:concatenation2} by induction on $n$.

\ref{prop:adapted2.5} The moreover part implies that every finite subsegment of $\gamma_2$ is $f_2(M, C)$-Morse, which implies that $\gamma_2$ is $f_2(M, C)$-Morse as a whole. Thus it is enough to prove the moreover part of \ref{prop:adapted2.5}. We know that there exists $p= \gamma_1(t_0)$ and $q = \gamma_1(t_1)$ such that $p$ and $q$ are $C$-close to the endpoints $\gamma_3^-$ and $\gamma_3^+$ of $\gamma_3$ respectively. We may assume that $t_0\leq t_1$. The concatenation $[\gamma_1(t_0), \gamma_3^-]\gamma_3[\gamma_3^+, \gamma_1(t_1)]$ is a $3C$-quasi-geodesic with endpoints on $\gamma_1[t_0, t_1]$. Property \ref{prop:subsegments} about subsegments implies that $\gamma_1[t_0, t_1]$ is $M_1 = f_{2 ,\text{\ref{prop:subsegments}}}(M, C)$-Morse, which implies that $\gamma_3$ is contained in a $K_1= M_1(3C)$-neighbourhood of $\gamma_1[t_0, t_1]$. Let $K_2 = C(2K_1+C) +1$. We will show that the Hausdorff distance between $\gamma_3$ and $\gamma[t_0, t_1]$ is at most $K_3 = CK_2+C+K_1$. Case 1: for every $t_0\leq s_0\leq s_1, \leq t_1$ with $s_1 - s_0 = K_2$ there exists $s\in [s_0, s_1]$ such that the distance of $\gamma_1(s)$ to $\gamma_3$ is at most $K_1$. In this case, for every $t\in [t_0, t_1]$ the distance of $\gamma_1(t)$ to $\gamma_3$ is at most $CK_2+C +K_1$, where $CK_2+C$ is the distance to a point $\gamma_1(t')$ which has distance at most $K_1$ to $\gamma_3$. Case 2: there exists $t_0\leq s_0\leq s_1\leq t_1$ with $s_1 - s_0 = K_2$ and such that the distance from $\gamma_1[s_0, s_1]$ to $\gamma_3$ is larger than $K_1$. We assume that quasi-geodesics are continuous. Thus, there exists a point $x$ on $\gamma_3$ which is in the $K_1$-neighbourhood of both $\gamma_1[t_0, s_0)$ and $\gamma_1(s_1, t_1]$. But, $s_1 - s_0 = K_2$, that is, $\gamma_1[t_0, s_0)$ and $\gamma_1(s_1, t_1]$ have distance larger than $2K_1$, a contradiction. Hence, this case can never occur and $\gamma_1[t_0, t_1]$ and $\gamma_3$ have Hausdorff distance at most $K_3$. Lemma 2.5 of \cite{CHAR14} is stated for CAT(0) spaces but as noted in \cite{Cor16} works for geodesic metric spaces. It implies that $\gamma_3$ is $M'$-Morse, where $M'$ only depends on $M_1$ and $K_3$. Hence, setting $f_2(M, C) = M'$ works. 

\ref{prop:realization1} Part one of the statement about geodesics follows directly from Lemma 2.23 of \cite{graph_of_groups1} and hence we only need to prove the furthermore part. For any $t\geq 0$, let $s_t\geq 0 $ be such that $\gamma_2'(s_t)$ is the closest point on $\gamma_2'$ to $\gamma_1'(t)$. Since the Hausdorff distance between $\gamma_1'$ and $\gamma_2'$ is finite, $s_t$ gets arbitrarily large for large enough $t$. Lemma \ref{quasi-geodesic} implies that the concatenation $\gamma_2'[0, s_t][\gamma_2'(s_t), \gamma_1'(t)]$ is a $3C$-quasi-geodesic and hence the concatenation $[\gamma_1'(0), \gamma_2'(0)]\gamma_2'[0, s_t][\gamma_2'(s_t), \gamma_1'(t)]$ is a $4C$-quasi-geodesic. Thus $\gamma_2'(s_t)$ is in the $K=M(4C)$-neighbourhood of $\gamma_1'$. Property \ref{prop:adapted2.5} shows that $\gamma_2'[0, s_t)$ is $M_1 = f_{2, \text{\ref{prop:adapted2.5}}}(M, K)$-Morse. Since $s_t$ gets arbitrarily large $\gamma_2'$ is $M_1$-Morse as a whole. By Property \ref{prop:subsegments} any subsegment or subray of $\gamma_2'$ is $f_{2, \text{\ref{prop:subsegments}}}(M_1, C)$-Morse.


\ref{lemma:close_representatives} The proof of \ref{prop:realization1} shows that $\gamma_2$ is in the $K=M(6C)$-neighbourhood of $\gamma_1$. Similarly, $\gamma_1$ is in the $K$-neighbourhood of $\gamma_2$. Thus for every $t\geq 0$ there exists an $s\geq 0$ such that $d_X(\gamma_1(t), \gamma_2(s))\leq K$. Property \ref{prop:subsegments} implies that both $\gamma_1[0, t]$ and $\gamma_2[0, s]$ are $M_1 = f_{2, \text{\ref{prop:subsegments}}}(M, C)$-Morse. In particular, the concatenation $\gamma_1[0, t][\gamma_1(t), \gamma_2(s)]$ is in the $M_1(K+C)$-neighbourhood of $\gamma_2[0, s]$. Conversely, the concatenation $\gamma_2[0, s][\gamma_2(s), \gamma_1(t)]$ is in the $M_1(K+C)$-neighbourhood of $\gamma_1[0, t]$. Thus, this property holds for $D_1(M, C) = M_1(K+C)$.

\ref{prop:geodesic_segments} This follows directly from \ref{prop:adapted2.5}.

\ref{prop:quasi-geodesic_lines} Let $t_1, t_2\in \R$ and let $s_1, s_2\in \R$ be such that $\gamma_2(s_1)$ and $\gamma_2(s_2)$ are closest points on $\gamma_2$ to $\gamma_1(t_1)$ and $\gamma_1(t_2)$ respectively. Since the Hausdorff distance between $\gamma_1$ and $\gamma_2$ is finite, Lemma \ref{quasi-geodesic} implies for far enough apart $t_1$ and $t_2$ that the concatenation $[\gamma_1(t_1), \gamma_2(s_2)]\gamma_2[s_1, s_2][\gamma_2(s_2), \gamma_1(t_2)]$ is a $3C$-quasi-geodesic. Hence it is contained the $K=M(3C)$-neighbourhood of $\gamma_1$. In particular, we can do this for any pair of far apart real numbers $t_1$ and $t_2$ and thus $\gamma_2$ as a whole is contained in the $D_1(M, C)= K$ neighbourhood of $\gamma_1$. Furthermore $\gamma_2$ is $f_{2, \ref{prop:adapted2.5}}(M, K)$-Morse by \ref{prop:adapted2.5}.

\ref{lemma:triangles} The first part of the statement follows directly from Lemma 2.4 of \cite{Charney_2019}. So we only have to prove the moreover part. Consider a geodesic triangle $(\alpha', \beta', \gamma')$ with the same endpoints as the quasi-geodesic triangle $(\alpha, \beta, \gamma)$. In the case that $\alpha$ or $\beta$ are finite segments, Property \ref{prop:adapted2.5} shows that $\alpha'$ or $\beta'$ respectively are $M_1 = f_{2, \text{\ref{prop:adapted2.5}}} (M, C)$-Morse. In the case that $\alpha$ or $\beta$ are rays, $\alpha'$ or $\beta'$ respectively are $M_2 =f_{2, \text{\ref{prop:realization1}}}(M, C)$-Morse by Property \ref{prop:realization1}. If $\alpha$ or $\beta$ are quasi-geodesic lines, then $\alpha'$ or $\beta'$ respectively are $M_3 = f_{2,\text{\ref{prop:quasi-geodesic_lines}}}(M, C)$ by Property \ref{prop:quasi-geodesic_lines}. Hence in any case, $\alpha'$ and $\beta'$ are $M_4 = \max\{M_1, M_2, M_3\}$-Morse. By the first part of \ref{lemma:triangles} about geodesic triangles, the geodesic $\gamma'$ is is $M_5$-Morse, where $M_5$ depends only on $M_4$. Depending on whether $\gamma'$ is a segment, ray or line we can again use Property \ref{prop:adapted2.5}, \ref{prop:realization1} or \ref{prop:quasi-geodesic_lines} to get that $\gamma$ is $f_2(M, C) = M_6$-Morse, where $M_6$ only depends on $M_5$ and $C$. 

\ref{cond:n_gon} For $n=2$ the statement follows from Property \ref{prop:adapted2.5}, \ref{prop:realization1} or \ref{prop:quasi-geodesic_lines} depending on whether $\gamma_1$ is a segment, ray or line. For $n = 3$ the statement follows directly from \ref{lemma:triangles}. For $n\geq 4$ it follows by induction; If $\gamma_1^-$ and $\gamma_2^+$ are equal we can remove $\gamma_1$ and $\gamma_2$ from the $n$-gon and apply the statement to the remaining $(n-2)$-gon to show that $\gamma_n$ is $f_1^{n-4}(M)$ and hence $f_1^{n-2}(M)$-Morse. Otherwise let $\gamma$ be a geodesic with endpoints $\gamma_1^-$ and $\gamma_2^+$. We can use the statement for $n=3$ to get that $\gamma$ is $f_1(M)$-Morse. Now we can apply the statement to the $(n-1)$-gon $(\gamma, \gamma_3, \ldots, \gamma_n)$ and get that $\gamma_n$ is $f_1^{n-3}(f_1(M)) = f_1^{n-2}(M)$-Morse.
\end{proof}

\begin{lemma} \label{lemma:unbounded_crossing}
Let $\gamma_1$ and $\gamma_2$ be quasi-geodesic rays. If $\gamma_1$ is Morse, then $[\gamma_1]=[\gamma_2]$ if and only if there exists a constant $C$ such that $\mc N_C(\gamma_1)\cap \gamma_2$ is unbounded.
\end{lemma}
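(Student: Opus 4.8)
The plan is to prove the two implications separately. For the forward direction I would argue directly: if $[\gamma_1]=[\gamma_2]$ then $d_H(\gamma_1,\gamma_2)<\infty$, so $\gamma_2\subset\mc N_C(\gamma_1)$ with $C=d_H(\gamma_1,\gamma_2)$, and since $\gamma_2$ is a quasi-geodesic ray it is unbounded; hence $\mc N_C(\gamma_1)\cap\gamma_2=\gamma_2$ is unbounded. All of the content is in the converse, which is where the Morse hypothesis on $\gamma_1$ enters.

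For the converse I would first reduce to the case that both $\gamma_i$ are continuous $C$-quasi-geodesics for a common constant $C$, write $D_0=d_X(\gamma_1(0),\gamma_2(0))$, and let $M$ be a Morse gauge for $\gamma_1$. Since $\mc N_C(\gamma_1)\cap\gamma_2$ is unbounded, pick $s_n\to\infty$ and $t_n\ge 0$ with $d_X(\gamma_2(s_n),\gamma_1(t_n))\le C$, and for each $n$ form the continuous path
\[
  \eta_n=[\gamma_1(0),\gamma_2(0)]\cdot\gamma_2|_{[0,s_n]}\cdot[\gamma_2(s_n),\gamma_1(t_n)]
\]
from $\gamma_1(0)$ to $\gamma_1(t_n)$. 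It is obtained from the $C$-quasi-geodesic $\gamma_2|_{[0,s_n]}$ by gluing geodesic segments of length at most $D_0$ and at most $C$ at its two ends, so an elementary estimate shows $\eta_n$ is a $(C,\eps_0)$-quasi-geodesic with $\eps_0$ depending only on $C$ and $D_0$, and in particular \emph{independent of $n$}; keeping this constant uniform in $n$ is the one point that requires attention, since no fixed Morse gauge can control constants that grow. As $\eta_n$ has both endpoints on the $M$-Morse geodesic $\gamma_1$, it is contained in $\mc N_R(\gamma_1)$ for $R:=M(C,\eps_0)$, hence so is $\gamma_2|_{[0,s_n]}$; letting $n\to\infty$ yields $\gamma_2\subset\mc N_R(\gamma_1)$. (Essentially the same computation appears in the proof of Lemma \ref{lemma:monster}\ref{prop:adapted2.5}.)

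It then remains to show the reverse inclusion $\gamma_1\subset\mc N_{R'}(\gamma_2)$ for some $R'$. For each $s\ge 0$ I would choose $\tau(s)\ge 0$ with $\gamma_1(\tau(s))$ a nearest point of $\gamma_1$ to $\gamma_2(s)$, so $d_X(\gamma_2(s),\gamma_1(\tau(s)))\le R$. The quasi-geodesic inequalities give three facts about $\tau$: $\tau(0)$ is bounded in terms of $C,D_0,R$; $\tau(s)\to\infty$ as $s\to\infty$, because $d_X(\gamma_1(\tau(s)),\gamma_1(0))\ge d_X(\gamma_2(s),\gamma_2(0))-D_0-R\to\infty$; and $|\tau(s)-\tau(s')|$ is bounded whenever $|s-s'|\le 1$. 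A coarse intermediate-value argument then shows $\tau([0,\infty))$ is coarsely dense in $[0,\infty)$, i.e.\ there is a uniform $K$ so that every $t\ge 0$ satisfies $|t-\tau(s)|\le K$ for some $s$; for that $s$ we get $d_X(\gamma_1(t),\gamma_2(s))\le d_X(\gamma_1(t),\gamma_1(\tau(s)))+R\le CK+C+R=:R'$. Hence $\gamma_1\subset\mc N_{R'}(\gamma_2)$, and combining with the first inclusion, $d_H(\gamma_1,\gamma_2)\le\max\{R,R'\}<\infty$, i.e.\ $[\gamma_1]=[\gamma_2]$.

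I expect the main obstacle to be the first half of the converse: promoting ``$\gamma_2$ returns to $\mc N_C(\gamma_1)$ at arbitrarily large parameters'' to ``$\gamma_2$ stays in a fixed neighbourhood of $\gamma_1$''. Without Morseness this is false — $\gamma_2$ could repeatedly leave $\mc N_C(\gamma_1)$ and stray arbitrarily far before returning — and Morseness is exactly what forbids this, since every such excursion-and-return sits inside a quasi-geodesic with both endpoints on $\gamma_1$. The technical crux is arranging that the quasi-geodesic constants of the $\eta_n$ do not degrade with $n$, so that a single value $R$ of the Morse gauge controls all of them at once; after that, the reverse inclusion $\gamma_1\subset\mc N_{R'}(\gamma_2)$ is a soft coarse-connectedness argument and the lemma follows.
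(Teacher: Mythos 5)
Your proof is correct. The forward direction and the inclusion $\gamma_2\subset\mc N_R(\gamma_1)$ (via a concatenation with endpoints on the $M$-Morse ray $\gamma_1$, with care that the quasi-geodesic constants stay uniform in $n$) match the paper's argument exactly. Where you genuinely diverge is in the reverse inclusion $\gamma_1\subset\mc N_{R'}(\gamma_2)$. The paper first invokes Lemma~\ref{lemma:monster}\ref{prop:adapted2.5} to promote the containment $\gamma_2[0,s_t]\subset\mc N_{K_1}(\gamma_1)$ to the statement that $\gamma_2$ itself is $M'$-Morse for some gauge $M'$ depending only on $M$ and $K_1$; it then symmetrizes the concatenation trick, forming $[\gamma_2(0),\gamma_1(0)]\cdot\gamma_1[0,t]\cdot[\gamma_1(t),\gamma_2(s_t)]$, a quasi-geodesic with endpoints on the now-Morse $\gamma_2$, so $\gamma_1[0,t]\subset\mc N_{M'(C'+K+C)}(\gamma_2)$, and finishes by letting $t\to\infty$. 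You avoid the Morseness transfer entirely and instead run a projection argument: the nearest-point parameter $\tau(s)$ has bounded initial value, tends to infinity, and has uniformly bounded jumps, so its image is coarsely dense in $[0,\infty)$, which forces $\gamma_1\subset\mc N_{R'}(\gamma_2)$. The paper's route is shorter because it reuses machinery already built (the stability statement in Lemma~\ref{lemma:monster}\ref{prop:adapted2.5}); your route is more self-contained, needing only the quasi-geodesic inequalities and no further Morse lemmas once the one-sided containment is in hand. Both are valid, and your emphasis on keeping the quasi-geodesic constants of the $\eta_n$ independent of $n$ identifies the same pressure point the paper handles implicitly.
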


\begin{proof}
Let $K$ be the distance between $\gamma_1(0)$ and $\gamma_2(0)$ and assume that $\gamma_2$ and $\gamma_1$ are $C'$-quasi-geodesics. If $[\gamma_1] = [\gamma_2]$, then their Hausdorff distance is bounded, say it is at most $C$. Hence, $\mc N_C(\gamma_1)$ contains $\gamma_2$ and is therefore unbounded. On the other hand, if there exists a constant $C$ such that $\mc N_C(\gamma_1)$ is unbounded, then there exist arbitrarily large $t\in \R$ such that $d_X(\gamma_1(t), \gamma_2)\leq C$. For any such $t\in \R$ Let $\gamma_2(s_t)$ be a closest point on $\gamma_2$ to $t$. The concatenation $\eta = [\gamma_1(0), \gamma_2(0)]\gamma_2[0, s_t][\gamma_2(s_t), \gamma_1(t)]$ is a $(C'+K+C)$-quasi-geodesic, and hence $\gamma_2[0, s_t]$ is contained in the $K_1 = M(C'+K+C)$-neighbourhood of $\gamma_1$ and $M'$-Morse for some Morse gauge $M$ by Lemma \ref{lemma:monster}\ref{prop:adapted2.5}. Note that $s_t$ is arbitrarily large for arbitrarily large $t$. As a consequence, the whole ray $\gamma_1$ is contained in the $K_1$-neighbourhood of $\gamma_2$ and $M'$-Morse. The concatenation $[\gamma_2(0), \gamma_1(0)]\gamma_1[0, t][\gamma_1(t), \gamma_2(s_t)]$ is also a $(C'+K+C)$-quasi-geodesic. Hence, $\gamma_1[0, t]$ is contained in the $K_2 = M'(C'+K+C)$-neighbourhood of $\gamma_2$. Since again $t$ can be arbitrarily large, the whole ray $\gamma_1$ is contained in the $K_2$-neighbourhood of $\gamma_2$. Hence, $[\gamma_1] = [\gamma_2]$.
\end{proof}

We will use the following lemma from \cite{Cor16} various times in this paper. The proof of the lemma does not use the fact that $\alpha$ and $\beta$ are rays. Thus we can and will also use it if $\alpha$ and $\beta$ are geodesic segments.

\begin{lemma}[Cor 2.5 of \cite{Cor16}]\label{lemma:get_close_again}
Let X be a geodesic metric space and $\alpha : [0, \infty) \to X$ be an $M$-Morse geodesic ray. Let $\beta: [0, \infty)\to X$ be a ray such that $\beta(0) = \alpha(0)$ and $d_X(\alpha(t), \beta(t)) < K $ for all $t\in [0, D]$ for some $D\geq 6K$. Then $d_X(\alpha(t), \beta(t)) < \delta_M$ for all $t\in [0, D - 2K]$.
\end{lemma}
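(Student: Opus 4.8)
The content of the statement is the passage from the \emph{a priori} bound $K$, which may be arbitrarily large, to the uniform bound $\delta_M$, which sees only the Morse gauge. So the proof cannot merely note that $\beta|_{[0,D]}$ lies in the closed $K$-neighbourhood of $\alpha$; it has to feed the Morse property of $\alpha$ a quasi-geodesic whose quality constants are independent of $K$. The plan is, first, to show that $\beta$ stays in $\mathcal{N}_{M(3,0)}(\alpha)$ up to time roughly $D-K$, and then to deduce the pointwise estimate from the fact that $\alpha$ and $\beta$ are geodesic rays issuing from the same point.

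For the first step (we may assume $K>0$, since $K=0$ forces $\beta|_{[0,D]}=\alpha|_{[0,D]}$), let $\beta(s_1)$ be a point of $\beta|_{[0,D]}$ closest to $\alpha(D)$; such a point exists as $X$ is proper. Since $d_X(\alpha(D),\beta(D))<K$ we get $d_X(\beta(s_1),\alpha(D))<K$, and, $\beta$ being geodesic with $\beta(0)=\alpha(0)$, the triangle inequality
\[ D=d_X(\alpha(0),\alpha(D))\le d_X(\alpha(0),\beta(s_1))+d_X(\beta(s_1),\alpha(D))=s_1+d_X(\beta(s_1),\alpha(D)) \]
gives $s_1>D-K$. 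Now apply part (i) of Lemma~\ref{quasi-geodesic} to the geodesic $u\mapsto\beta(D-u)$ on $[0,D]$ with $p=\alpha(D)$: the concatenation of a geodesic $[\alpha(D),\beta(s_1)]$ with the portion of $\beta$ run backwards from $\beta(s_1)$ to $\beta(0)$ is a $(3,0)$-quasi-geodesic whose two endpoints, $\alpha(D)$ and $\alpha(0)$, lie on $\alpha$. As $\alpha$ is $M$-Morse, this quasi-geodesic is contained in $\mathcal{N}_{M(3,0)}(\alpha)$; in particular $\beta|_{[0,s_1]}\subset\mathcal{N}_{M(3,0)}(\alpha)$, and $s_1>D-K\ge D-2K$.

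For the second step, fix $t_0\in[0,D-2K]$, so that $t_0<s_1$. Since $\beta(t_0)\in\mathcal{N}_{M(3,0)}(\alpha)$, pick $r_0\ge0$ with $d_X(\beta(t_0),\alpha(r_0))\le M(3,0)$. As $\alpha$ and $\beta$ are geodesics with $\alpha(0)=\beta(0)$,
\[ \abs{t_0-r_0}=\abs{d_X(\alpha(0),\beta(t_0))-d_X(\alpha(0),\alpha(r_0))}\le d_X(\beta(t_0),\alpha(r_0))\le M(3,0), \]
hence $d_X(\alpha(t_0),\alpha(r_0))=\abs{t_0-r_0}\le M(3,0)$, and therefore
\[ d_X(\alpha(t_0),\beta(t_0))\le d_X(\alpha(t_0),\alpha(r_0))+d_X(\alpha(r_0),\beta(t_0))\le 2M(3,0)\le 8M(3,0)\le\delta_M. \]

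I expect the only genuinely delicate point to be the one behind the first step: part~(i) of Lemma~\ref{quasi-geodesic} lets one splice the (possibly long) segment $[\alpha(D),\beta(s_1)]$, of length up to $K$, onto $\beta$ at its closest point to $\alpha(D)$ with no loss in the quasi-geodesic constants, and this is exactly what renders the final bound independent of $K$. A more hands-on argument using the slim-triangle property of $M$-Morse triangles (Lemma~2.2 of \cite{Cor16}) is presumably what lies behind the proof in \cite{Cor16}; it meets essentially the same obstruction and is, I suspect, where the slack in the hypothesis $D\ge 6K$ is spent, whereas the argument above uses only $D\ge 2K$.
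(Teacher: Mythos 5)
The paper does not prove this lemma: it imports it as Corollary~2.5 of \cite{Cor16} and uses it as a black box (the only internal remark is that the argument also works for geodesic segments), so there is no paper-internal proof to compare against. Your proof is correct, and it is in fact sharper than the statement in the two ways you flag: the conclusion already holds under $D\ge 2K$, and the bound obtained is $d_X(\alpha(t_0),\beta(t_0))\le 2M(3,0)$, which is at most $\delta_M/4$. The splice in your first step is set up properly: reparametrizing $\beta$ backwards as $\gamma(u)=\beta(D-u)$ on $[0,D]$, the point $\gamma(D-s_1)=\beta(s_1)$ is a closest point on $\gamma$ to $p=\alpha(D)$, so Lemma~\ref{quasi-geodesic}(i) produces a $(3,0)$-quasi-geodesic from $\alpha(D)$ to $\alpha(0)$ containing $\beta[0,s_1]$, and since $s_1>D-K\ge D-2K$ the Morse condition on $\alpha$ puts $\beta[0,D-2K]$ inside the closed $M(3,0)$-neighbourhood of $\alpha$. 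Your second step is the standard reverse-triangle comparison of two geodesics issuing from the same basepoint, which correctly upgrades the Hausdorff-type bound $M(3,0)$ to the pointwise bound $2M(3,0)$. The only blemish is the strict inequality in the conclusion: you obtain $\le 2M(3,0)$, which is $<\delta_M$ whenever $\delta_M>0$; in the fully degenerate case $\delta_M=0$ (forcing $M(3,0)=0$ and hence $\beta|_{[0,s_1]}=\alpha|_{[0,s_1]}$) the inequality is an equality, but this is a harmless artefact of how the imported statement is phrased and does not affect any use of the lemma in the paper.
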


There are two distinct definition of the relative Morse boundary \cite{fioravanti2022connected} and \cite{AnnettesThesis}, which agree as a set, but use a different topology. We use the one from \cite{fioravanti2022connected}, but only talk about the relative Morse boundary in case it is empty and thus the topology is not important for us. 

\begin{definition}[Relative Morse boundary] Let $H\leq G$ be finitely generated subgroups and let $H$ be undistorted in $G$. The relative Morse boundary of $H$ in $G$, denoted $(\mb H, G)$, is the subset of $\mb  H$ consisting of points represented by rays that remain Morse in the Cayley graphs of $G$.
\end{definition}

\subsection{Relative Hyperbolicity}

Relative hyperbolicity can be thought of as a generalisation of hyperbolicity. Roughly speaking, a group $G$ (space $X$) is relatively hyperbolic to a collection of subgroups (subspaces) $\mc P$ if $G$ ($X$) is hyperbolic after ``collapsing'' all elements of $\mc P$. A precise definition can be found in \cite{bowditch2012relatively}. There are many different but equivalent definitions of hyperbolic groups (and spaces). In this paper, we do not work with any definition in particular but use the following consequences of relative hyperbolicity detailed below. 

The following lemma about projections in relatively hyperbolic groups summarises results of \cite{sisto_rel_hyp_proj}. It is the main tool about relatively hyperbolic groups we will use.

\begin{lemma}[Lemma 1.13 and Theorem 2.14 of \cite{sisto_rel_hyp_proj}]\label{lemma:rel_hyp_projection}
Let $X$ be a geodesic metric space hyperbolic relative to a collection of subset $\mc P$ and let $\{\pi_P : X\to P\mid P \in \mc P\}$ be a collection of closets point projections. There exists a constant $C_1$ and an increasing
function $D_2 : \R_{\geq 1}\to \R_{\geq 1}$ such that the following conditions hold. 
\begin{enumerate}
    \item For all $P, Q\in\mc P$ with $P\neq Q$, $\mathrm{diam}(\pi_P(Q))\leq C_1$; 
    \item For all $P\in \mc P$, $x\in X$ and $p\in P$, $d_X(p, \pi_P(x))\leq C_1+ (d_X(x, p) - d_X(x, P))$. In particular, if $d_X(x, p) = d_X(x, P)$, then $d_X(p, \pi_P(x))\leq C_1$.
    \item For all $P, Q\in\mc P$ with $P\neq Q$ and for every geodesic $\lambda$ from $P$ to $Q$, $d_X(\pi_Q(P),\lambda)\leq C_1$
    \item For all $P, Q\in\mc P$ with $P\neq Q$ and for every $C$-quasi-geodesic $\lambda$ from $P$ to $Q$, $d_X(\pi_Q(P),\lambda)\leq D_2(C)$.
\end{enumerate}
\end{lemma}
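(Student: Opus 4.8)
Since the statement is quoted verbatim from \cite{sisto_rel_hyp_proj}, one may simply invoke that reference; what follows sketches how I would obtain it from the standard machinery of relatively hyperbolic spaces, so as to see where each constant comes from.

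\textbf{Setup.} The plan is to work in the coned-off space $\hat X$ obtained by attaching, for each $P\in\mc P$, a cone point joined to every point of $P$. Hyperbolicity relative to $\mc P$ means that $\hat X$ is $\delta$-hyperbolic and that $(X,\mc P)$ satisfies the Bounded Coset Penetration (BCP) property. From this I would extract three inputs, uniform over $\mc P$: (a) each $P$ is $Q_0$-quasiconvex in $X$, so the closest-point projection $\pi_P$ is coarsely well defined; (b) a \emph{bounded overlap} estimate: for every $r$ there is $R(r)$ with $\mathrm{diam}\big(\mathcal N_r(P)\cap\mathcal N_r(Q)\big)\le R(r)$ whenever $P\ne Q$; (c) a \emph{penetration} estimate: there is $r_0$ (and, for $C$-quasi-geodesics, $r_0=r_0(C)$) such that along a geodesic (resp.\ $C$-quasi-geodesic) $\lambda$ the set of parameters with $\lambda(t)\in\mathcal N_{r_0}(P)$ is coarsely an interval, and on the complement of that interval the composition $\pi_P\circ\lambda$ is coarsely constant.

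\textbf{Parts (2) and (1).} For (2) I would set $q=\pi_P(x)$, fix a geodesic $[x,p]$ (which meets $\mathcal N_{r_0}(P)$ since $p\in P$), and let $y$ be its first point in $\mathcal N_{r_0}(P)$: then $d(x,y)\ge d(x,P)-r_0$ gives $d(y,p)\le d(x,p)-d(x,P)+r_0$, while $d(y,\pi_P(y))\le r_0+C_1$ and, by (c) applied on $[x,y]$, $d(\pi_P(y),q)\le C'$; chaining these bounds yields $d(p,q)\le d(x,p)-d(x,P)+C_1$ after enlarging $C_1$. For (1) I would take $q_1,q_2\in Q$ and a geodesic $[q_1,q_2]\subset\mathcal N_{Q_0}(Q)$: if it avoids $\mathcal N_{r_0}(P)$ then by (c) its $\pi_P$-image is coarsely constant and $d(\pi_P(q_1),\pi_P(q_2))$ is bounded; otherwise the parameters meeting $\mathcal N_{r_0}(P)$ form a coarse interval $I$, on which $[q_1,q_2]$ lies in $\mathcal N_{Q_0}(Q)\cap\mathcal N_{r_0}(P)$ — a set of diameter $\le R(\max\{Q_0,r_0\})$ by (b) — while outside $I$ the projection is coarsely constant on each side, equal to $\pi_P(q_1)$ and $\pi_P(q_2)$ respectively, so these two coarse points are joined across a bounded region and $d(\pi_P(q_1),\pi_P(q_2))$ is again bounded. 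Hence $\mathrm{diam}\,\pi_P(Q)\le C_1$ after a final adjustment.

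\textbf{Parts (3) and (4).} Let $\lambda$ run from $p\in P$ to $q\in Q$. I would look at the initial subpath of $\lambda$ up to the first point $z\in\mathcal N_{r_0}(Q)$; this subpath avoids $\mathcal N_{r_0}(Q)$, so by (c) the map $\pi_Q$ is coarsely constant on it, whence $d(\pi_Q(p),\pi_Q(z))$ is bounded, and since $z\in\mathcal N_{r_0}(Q)$ also $d(z,\pi_Q(z))\le r_0+C_1$; thus $\lambda$ passes within a bounded distance of $\pi_Q(p)$. By part (1), $\mathrm{diam}\,\pi_Q(P)\le C_1$, so $\pi_Q(p)$ represents $\pi_Q(P)$ up to $C_1$ and $d(\pi_Q(P),\lambda)$ is bounded, giving (3). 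For (4) one repeats this with $\lambda$ a $C$-quasi-geodesic, using the quasi-geodesic version of (c) with $r_0=r_0(C)$; every bound now degrades by a function of $C$ alone, and collecting them defines the increasing function $D_2$.

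\textbf{Main obstacle.} The genuine content is the penetration estimate (c): a geodesic staying outside a fixed neighbourhood of $P$ has coarsely constant nearest-point projection to $P$. This is precisely where relative hyperbolicity enters — it fails for arbitrary quasiconvex subsets of a hyperbolic space, e.g.\ two quasi-geodesic lines — and proving it, together with bookkeeping how all constants, and in particular the tracking function $D_2(C)$ for $C$-quasi-geodesics, depend on the data, is the bulk of the work. All of this is done in \cite{sisto_rel_hyp_proj}, which is why in the paper we simply cite it.
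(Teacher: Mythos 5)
The paper does not prove this lemma; it is stated verbatim as a quoted result from \cite{sisto_rel_hyp_proj}, so there is no paper-internal proof to compare against. Your decision to simply invoke that reference matches the paper exactly, and the accompanying sketch---reducing (1)--(4) to quasiconvexity of peripherals, bounded coset overlap, and the penetration/coarse-constancy estimate for projections, the last of which you correctly flag as the genuine content and the place where relative hyperbolicity is used---is a faithful outline of how these statements are established in the cited source.
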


The following Theorem is a result of \cite{dahmani2003combination} and is a combination result about relative hyperbolicity. Relevant for this paper are (2) and (3').

\begin{theorem}[Theorem 0.1 of \cite{dahmani2003combination}]\label{dah03}
The following hold.
\begin{enumerate}
    \item Let $\Gamma$ be the fundamental group of an acylindrical finite graph of relatively
hyperbolic groups, whose edge groups are fully quasi-convex subgroups of the
adjacent vertices groups. Let $\mc G$ be the family of the images of the maximal
parabolic subgroups of the vertices groups, and their conjugates in $\Gamma$. Then,
$(\Gamma, \mc G)$ is a relatively hyperbolic group.
    \item Let $G$ be a group which is hyperbolic relative to a family of subgroups $\mc G$,
and let $P$ be a group in $\mc G$. Let $A$ be a finitely generated group in which $P$
embeds as a subgroup. Then, $\Gamma = A \ast_P G$ is hyperbolic relative to the family
$(\mc H \cup \mc A )$, where $\mc H$ is the set of the conjugates of the images of elements of $G$ not conjugated to $P$ in $G$, and where $\mc A$ is the set of the conjugates of $A$ in $\Gamma$.
\item Let $G_1$ and $G_2$ be relatively hyperbolic groups, and let $P$ be a maximal
parabolic subgroup of $G_1$, which is isomorphic to a parabolic (not necessarily
maximal) subgroup of $G_2$. Let $\Gamma = G_1 \ast_P G_2$. Then $\Gamma$ is hyperbolic relative to the family of the conjugates of the maximal parabolic subgroups of $G_1$, except
$P$, and of the conjugates of the maximal parabolic subgroups of $G_2$.
\item [(3')] Let $G$ be a relatively hyperbolic group and let $P$ be a maximal parabolic
subgroup of $G$ isomorphic to a subgroup of another parabolic subgroup $P'$ 
not conjugate to $P$. Let $\Gamma = G\ast_P$ according to the two images. Then $\Gamma$ is
hyperbolic relative to the family of the conjugates of the maximal parabolic
subgroups of $G$, except $P$ (but including the parabolic group $P'$)
\end{enumerate}
\end{theorem}

\subsection{Graphs of groups}\label{section:graph_of_group}

We now recall Bass-Serre theory.

\begin{definition}[Graph] A graph $\Gamma = (V, E)$ consists of a set of vertices $V$ and a set of edges $E$ together with maps 
\begin{align}
    E &\to V\times V\\
    \alpha&\mapsto (\alpha^-, \alpha^+)
\end{align}
and 
\begin{align}
    E&\to E\\
    \alpha&\mapsto \bar{\alpha}
\end{align}
satisfying $\bar{\bar{\alpha}} = \alpha$, $\bar{\alpha}\neq\alpha$ and $\bar{\alpha}^+=\alpha^-$ for all edges $\alpha\in E$. For an edge $\alpha$ we call $\alpha^-$ the source, $\alpha^+$ the target and $\bar{\alpha}$ the reverse edge of $\alpha$. If it is not clear which graph we are talking about, we will write $V(\Gamma)$ and $E(\Gamma)$ instead of $V$ and $E$.
\end{definition}

If we have a set of edges $A$ we denote $\{\alpha^-\mid \alpha\in A\}$ by $A^{-}$ and define $A^+$ accordingly. Given a subset of vertices $U\subset V$ the induced subgraph of $U$, denoted by $[U]$ is the subgraph of $\Gamma$ with vertex set $U$ and edge set $F = \{\alpha\in E\mid \alpha^+,\alpha^-\in U\}$. When it is clear that we are considering a subgraph and not a set of vertices, we will denote $[U]$ simply by $U$. An edge $\alpha$ is incident to a vertex $v$ if $v\in \{\alpha^{-}, \alpha^{+}\}$.\\

For a graph $\Gamma$ we denote its path metric (where every edge has length 1) by $d_{\Gamma}$. In the following definition one does not usually require that vertex and edge groups are finitely generated but it suffices for our purposes. 

\begin{definition}
A graph of groups $\mc{G}$ is a finite connected graph $\Gamma = (V, E)$ together with finitely generated vertex groups $\{G_v\mid v\in V\}$, finitely generated edge groups $\{\tilde{H}_\alpha\mid \alpha\in E\}$ and injective morphisms $f_\alpha: \tilde{H}_\alpha\to G_{\alpha^+}$ such that $\tilde{H}_\alpha = \tilde{H}_{\bar{\alpha}}$ for all $\alpha\in E$.
\end{definition}
We denote $ f_\alpha(\tilde{H}_\alpha)$ by $H_\alpha$.
Given a graph of groups $\mc{G}$ we can define 
\begin{align}
    \mc{FG} =\left(  \ast_{v\in V} G_v\right) \ast\left(\ast_{\alpha\in E} \langle t_\alpha\rangle \right).
\end{align}

Given a spanning tree $ST$ of $\Gamma$ we obtain the fundamental group of $\mc{G}$, denoted by $\pi_1(\mc{G})$ by adding the following relations to $\mc{FG}$:
\begin{itemize}
    \item $t_{\bar{\alpha}} = t_\alpha^{-1}$ for all edges $\alpha\in E$.
    \item $t_\alpha = 1$ for all edges $\alpha\in ST$.
    \item $f_{\bar{\alpha}}(h) = t_{\alpha} f_{\alpha}(h) t_\alpha ^{-1}$ for all $\alpha\in E$ and $h\in \tilde{H}_\alpha$.
\end{itemize}

With this definition, for any $v\in V$, the group $G_v$ is a subgroup of $\pi_1(\mc G)$. Edge groups $\tilde{H}_\alpha$ are technically not a subgroup of $\pi_1(\mc G)$, however their image $H_\alpha$ under $f_\alpha$ is. So for all $\alpha\in E$ we will call the groups $H_\alpha$ edge groups. Whenever we have to view edge groups as a subset of $\pi_1(\mc G)$ we consider the groups $H_\alpha$. So for example, in Condition (1) of Theorem \ref{cor:rel_hyp_graph_of_groups}, we require that the groups $H_\alpha$ be undistorted, wide and have infinite index in $\pi_1(\mc G)$. 

We define the Bass-Serre space $X$ associated to $\mc{G}$ as follows. For every edge $\alpha$ fix a finite symmetric generating set $\tilde{S}_\alpha = \tilde{S}_{\bar{\alpha}}$ of $\tilde{H}_\alpha$. For vertices $v$ fix finite symmetric generating sets $S_v$ of $G_v$ such that $S_\alpha =f_\alpha(\tilde{S}_\alpha)\subset S_{\alpha^+}$ for all edges $\alpha\in E$. The Bass-Serre space $X$ is the graph with vertex set $V(X) = \{(g, v)\mid g\in \pi_1(\mc{G}), v\in V(\Gamma)\}$ and edges from $(g, v)$ to $(gs, v)$, labelled by $s$ and from $(g, \alpha^-)$ to $(g t_{\alpha},  \alpha^+)$ labelled by $\alpha$ for all $v\in V(\Gamma)$, $s\in S_v$, $\alpha\in E(\Gamma)$ and $g\in \pi_1(\mc G)$. For an element $x =(g, v)\in X$ we define $\und x= g$.

We denote the Bass-Serre tree associated to $\pi_1(\mc G)$ by $T$. Its vertices are cosets of vertex groups in $\pi_1 (\mc G)$. We denote the canonical projection from $X$ to the Bass-Serre $T$ defined on vertices by $(g, v) \mapsto gG_v$ by $\pi$. This projection uniquely extends to the edges of $X$ in the following sense. If an edge $\beta$ of $E(X)$ has $\pi(\beta^+) = \pi(\beta^-) = v$, then we define $\pi(\beta) = v$. On the other hand, if $\pi(\beta^-) = v_1\neq \pi(\beta^+) = v_2$, then there exists a (unique) edge $\alpha\in E(T)$ from $v_1$ to $v_2$ and we define $\pi(\beta) = \alpha$. If $\alpha\in E(T)$, then $\pi^{-1}(\alpha)$ is a collection of edges of $X$. In particular, $\pi^{-1}(\alpha)^+$ (and $\pi^{-1}(\alpha)^-$) are ``edge groups''. On the other hand, for a vertex $v\in V(T)$, $\pi^{-1}(v)$ is a subgraph of $X$ which correspond to a vertex group. So for an edge $\alpha\in E(T)$ there is a big difference between $\pi^{-1}(\alpha^+)$ (which is a ``vertex group'') and $\pi^{-1}(\alpha)^+$ (which is an ``edge group'').

Similarly, we have a projection $\hat\cdot : X\to \Gamma$ defined by $(g, v)\mapsto v$, edges of the form $(g, \alpha^-)$ to $(g t_{\alpha},  \alpha^+)$ map to $\alpha$ and edges of the form $(g, v)$ to $(gs, v)$ map to $v$. If we denote by $\check\cdot$ the canonical projection from $T$ to $\Gamma$ we see that the maps commute, that is $\check\cdot \circ \pi = \hat\cdot$. In particular, since the image of both maps is $\Gamma$ they can both be simply thought of as projections to $\Gamma$.

Observe that $\pi_1(\mc G)$ acts on $X$ and $T$ by left multiplication and the maps $\pi, \hat\cdot$ and $\check\cdot$ are equivariant with respect to these actions. We allow vertices of $X$ to act on $X$ in the following sense. If $w = (g, v), w'\in V(X)$ and $\beta\in E(X)$ we define $w\cdot w'$ (respectively $w\cdot \beta)$ as $g\cdot w'$ (respectively $g\cdot \beta$).

There are canonical inclusions $Cay(gG_v, S_v)\into X$ induced by $g\mapsto (g, v)$, which induce a graph isomorphism between $Cay(gG_v, S_v)$ and $[\pi^{-1}(gG_v)]$. Similarly, for an edge $\alpha$ of $T$ with $v = \check\alpha^+$, the inclusion $g\mapsto (g, v)$ induces an isomorphism between $Cay(gH_{\check\alpha}, S_{v}\cap gH_{\check\alpha} )$ and $[\pi^{-1}(\alpha)^+]$. 
In the following, when it is clear which coset we are working with, we will identify elements of $gG_v$ and $H_{\check{\alpha}}$ with their image in $X$.

We will need the following notation to perform inductions on graph of groups.

\begin{definition}
Let $\mc G$ be a graph of groups with underlying graph $\Gamma$. Let $Y$ be a connected subgraph of $\mc G$. 

\begin{itemize}
    \item The graph of groups $\mc G$ restricted to $Y$, denoted by $\mc G|_Y$, is the graph of groups with underlying graph $Y$ and the rest of the data induced by $\mc G$.
    \item We define $\mc G_Y$ as follows: its underlying graph $\Gamma_Y$ is obtained by taking $\Gamma$ and collapsing $Y$ to a single point $y$. The vertex group $G_y $ is defined as $\pi_1(\mc G|_Y)$ and the other data is induced by $\Gamma$.
    \item Let $Y_1, Y_2$ be disjoint connected subgraphs of $\Gamma$, we define $\mc G _{Y_1, Y_2}$ as $(\mc G _ {Y_1})_{Y_2}$ and for a collection of disjoint connected subgraphs $Y_1, \ldots , Y_n\subset \Gamma$, we define $\mc G_{Y_1, \ldots, Y_n}$ accordingly and sometimes write $\mc G_{\{Y_i\}}$.
\end{itemize}

\end{definition}

Note that there exists a connection between $\pi(\mc G)$ and $\pi(\mc G_Y)$ described in the Lemma below. The connection follows directly from the definition of graph of groups.

\begin{lemma}\label{lemma:graph_of_groups_trivia} For every connected subgraph $Y\subset \Gamma$, $\pi_1 (\mc G) = \pi_1(\mc G_Y)$. In particular, for a collection of disjoint connected subgraphs $Y_1, \ldots , Y_n$ of $\Gamma$, $\pi_1(\mc G) =  \pi_1(\mc G_{Y_1, \ldots, Y_n})$
\end{lemma}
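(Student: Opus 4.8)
The plan is to verify the identity directly at the level of presentations, after arranging the spanning trees compatibly. Since $\pi_1(\mc G)$ is independent, up to canonical isomorphism, of the chosen spanning tree, I would first fix a spanning tree $ST$ of $\Gamma$ that contains a spanning tree $ST_Y$ of the connected subgraph $Y$; such a choice exists because any spanning tree of $Y$ extends to one of $\Gamma$. Under the collapsing map $\Gamma\to\Gamma_Y$ the image $\overline{ST}$ of $ST$ is a spanning tree of $\Gamma_Y$, and its edge set is precisely $E(ST)\setminus E(ST_Y)$. I would then present $\pi_1(\mc G|_Y)$ — which by definition is the vertex group $G_y$ of $\mc G_Y$ — using $ST_Y$, present $\pi_1(\mc G_Y)$ using $\overline{ST}$, and present $\pi_1(\mc G)$ using $ST$.

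With these choices, the presentation of $\pi_1(\mc G_Y)$ has generating set $\bigl(\ast_{v\in V(\Gamma)\setminus V(Y)}G_v\bigr)\ast G_y\ast\bigl(\ast_{\alpha\in E(\Gamma)\setminus E(Y)}\langle t_\alpha\rangle\bigr)$ together with the reverse-edge, trivialisation and conjugation relations for the edges $\alpha\notin E(Y)$. Substituting the Bass--Serre presentation of $G_y=\pi_1(\mc G|_Y)$, whose generators are $\bigsqcup_{v\in V(Y)}G_v$ and $\{t_\alpha:\alpha\in E(Y)\}$ and whose relations are exactly the corresponding relations for edges $\alpha\in E(Y)$, the combined generating set becomes $\bigl(\ast_{v\in V(\Gamma)}G_v\bigr)\ast\bigl(\ast_{\alpha\in E(\Gamma)}\langle t_\alpha\rangle\bigr)$, i.e. the generating set of $\pi_1(\mc G)$, and the relations match up: the relations for internal edges $\alpha\in E(Y)$ come from the $G_y$-part, and the relations for the remaining edges are those of $\mc G_Y$. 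The only point to check here is that for a boundary edge $\alpha$ (exactly one endpoint in $V(Y)$, say $\alpha^+\in V(Y)$, so $\alpha^+=y$ in $\Gamma_Y$) the edge inclusion used by $\mc G_Y$ is the composition $\tilde H_\alpha\to G_{\alpha^+}\hookrightarrow\pi_1(\mc G|_Y)$ with first map $f_\alpha$, which on generators is literally $f_\alpha$ into the free factor $G_{\alpha^+}$; hence the conjugation relation $f_{\bar\alpha}(h)=t_\alpha f_\alpha(h)t_\alpha^{-1}$ reads word-for-word as in $\mc G$ (note $\bar\alpha^+=\alpha^-\notin V(Y)$, so $f_{\bar\alpha}$ is untouched). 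Finally the trivialised stable letters agree, since $E(ST)=E(ST_Y)\sqcup E(\overline{ST})$: the relation $t_\alpha=1$ is imposed in $G_y$ for $\alpha\in E(ST_Y)$, in $\pi_1(\mc G_Y)$ for $\alpha\in E(\overline{ST})$, and in $\pi_1(\mc G)$ for all $\alpha\in E(ST)$. Thus the two presentations coincide and the map that is the identity on generators is the desired isomorphism $\pi_1(\mc G_Y)\cong\pi_1(\mc G)$.

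For the ``in particular'' statement I would induct on $n$, the case $n=1$ being what was just proved. For the inductive step write $\mc G_{Y_1,\dots,Y_n}=(\mc G_{Y_1,\dots,Y_{n-1}})_{Y_n}$; since $Y_1,\dots,Y_n$ are pairwise disjoint, no vertex or edge of $Y_n$ lies in $Y_1\cup\dots\cup Y_{n-1}$, so $Y_n$ survives the collapsing $\Gamma\to\Gamma_{Y_1,\dots,Y_{n-1}}$ as a connected subgraph with $(\mc G_{Y_1,\dots,Y_{n-1}})|_{Y_n}=\mc G|_{Y_n}$. Applying the $n=1$ case to $\mc G_{Y_1,\dots,Y_{n-1}}$ and its subgraph $Y_n$, then the inductive hypothesis, gives $\pi_1(\mc G)=\pi_1(\mc G_{Y_1,\dots,Y_{n-1}})=\pi_1(\mc G_{Y_1,\dots,Y_n})$. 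The only genuine work in the whole argument is the bookkeeping of indices and of the three sets of trivialisation relations via the compatible spanning-tree choice; no geometric input is needed, in keeping with the remark that the statement follows directly from the definitions.
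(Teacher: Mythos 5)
Your proposal is correct. The paper does not actually supply a proof, remarking only that the lemma ``follows directly from the definition of graph of groups''; your compatible choice of spanning trees $ST_Y\subset ST$ (whose image $\overline{ST}$ is a spanning tree of $\Gamma_Y$) and the resulting presentation-level verification is precisely the content behind that remark.
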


\section{Morseless stars and their Morse boundary}\label{section:morseless_stars}

When trying to prove extension results one can often prove the desired result for a free product and then use Papasuglu-Whyte \cite{papasoglu2002quasi}, which states that the fundamental group of two graph of groups with finite edge groups are quasi-isometric if both are infinitely ended and their sets of quasi-isometry types of (non virtually cyclic) vertex groups agree. In our case, this is not quite enough. This is why we introduce and study an object we call ``Morseless star''. 

\subsection{Definition and fundamental properties}\label{section:definitoin_of_morseless_stars}

We say a graph of groups is a \textbf{Morseless star} if 
\begin{itemize}
    \item The vertex set $V$ is equal to $\{0, \ldots n\}$ for some $n\in \N_0$ and every edge is incident to 0.
    \item Every edge group $H_\alpha$ is undistorted in $\pi_1(\mc{G})$.
    \item All edge groups are finitely generated, relatively wide and have infinite index in the corresponding vertex groups. 
    \item All vertex groups are finitely generated.
\end{itemize}

An undistorted subgroup $H\leq G$ is relatively wide in $G$, if for every asymptotic cone $G_\omega$, no
two points of the limit $H_\omega\subset G_\omega$ are separated by a cut point of $G_\omega$. We only use the following implications about relatively wide groups.
\begin{itemize}
    \item If a group $H$ is relatively wide in $G$, then the relative Morse boundary $(\mb H, G)$ is empty.
    \item Morseless stars satisfy the assumptions on graph of groups discussed in \cite{fioravanti2022connected} so we can use their results. In particular, we can use Theorem C, which states that the Morse boundary of every vertex group topologically embeds into the Morse boundary of the ambient group, Corollary 2.16, which states that vertex groups are undistorted, and Corollary 4.3, which states that a $M$-Morse geodesic in a vertex group is $M'$-Morse in the ambient group.
    \item If an undistorted subgroup $H\leq G$ is wide, then $H$ is relatively wide in $G$.
    \item The group $\Z^2$ is wide.
\end{itemize}

If the results in \cite{fioravanti2022connected} can be upgraded, that is, if the assumption of relatively wide can be replaced by the assumption of having relatively empty Morse boundary for their results, then the same holds for our results. 

Note that the underlying graph $\Gamma$ of a Morseless star does not have to be a star (in the graph theoretic sense) since we allow loops (based at 0) and double edges. We call such graphs $\Gamma$ which we allow to be the underlying graph \emph{fake stars}.

There are two subclasses of Morseless stars we are most interested in. Firstly, if all edge groups of a Morseless star are trivial, we call it a \textbf{trivial Morseless star}. Note that any graph of groups whose underlying graph is a fake star, whose vertex groups are finitely generated and infinite and whose edge groups are trivial is a Morseless star and hence a trivial Morseless star.


Secondly, we say a Morseless star is \textbf{relatively hyperbolic} if the vertex group $G_0$ is hyperbolic relative to a collection of subgroups $\mc A\sqcup\{H_{\alpha^+}\mid \alpha \in E(\Gamma), \alpha^+=0\}^{G_0}$. We require that (unless finite) the subgroups $H_{\alpha^+}$ from above are not conjugate, while $\mc A$ can be any collection of subgroups disjoint from $\{H_{\alpha^+}\mid \alpha \in E(\Gamma), \alpha^+=0\}^{G_0}$. Here, if $\mc H$ is a set of subgroups of a group $G$, $\mc H^G$ denotes the set of conjugates of subgroups of $\mc H$ by elements of $G$.\\

In this section we study the Morse boundary of certain Morseless stars and describe their Morse boundary combinatorially. As in \cite{graph_of_groups1}, the combinatorial structure we describe is an analogue to the combinatorial structure used in \cite{martin2013hyperbolic}. In \cite{graph_of_groups1} we could concatenate geodesics of different vertex groups without loosing control of the Morse gauge due to cut points between vertex groups. The control of the Morse gauge when concatenating geodesics is an important part to show the combinatorial structure represents the Morse boundary. It is why we have to restrict our study to a class of Morseless stars we call Morse preserving (see Definition \ref{definition:morse-preserving}), that is, those Morseless stars where we can concatenate geodesics without losing control of the Morse gauge. We prove that the two subclasses of Morseless stars we care about, namely trivial and relatively hyperbolic ones, are Morse preserving. \\

\textbf{Convention:} For the rest of the paper, unless stated otherwise, $\mc{ G}$ is a Morseless star with the notation from Section \ref{section:graph_of_group}. We say that the basepoint of the Bass-Serre space $X$ is $(1, 0)$ and denote it by $e$. We view $G_0$ as the basepoint of the Bass-Serre tree $T$.
Furthermore, unless stated otherwise, we assume neighbourhoods and other properties are induced by the metric $d_X$. That is, for a subgraph $Y$ of the Bass-Serre space $X$ and $x, y\in Y$, $[x, y]$ denotes a geodesic from $x$ to $y$ which is a geodesic with respect to the metric $d_X$, not the metric $d_Y$. Hence $[x, y]$ is not necessarily contained in $Y$. We denote a geodesic (with respect to the metric $d_Y$) from $x$ to $y$ in $Y$ by $[x, y]_{Y}$. Similarly we say that $\gamma$ is a $Y$-(quasi-)geodesic from $x$ to $y$ if it is a (quasi)-geodesic with respect to the metric $d_Y$. If we want to say that a $Y$-geodesic $\gamma$ is $M$-Morse in $Y$, we say that it is $(M, Y)$-Morse.

With this notation, we can study and state some fundamental properties about Morseless stars. 
\begin{lemma}[Properties of Morseless stars]\label{lemma:properties_of_morseless_stars}
Let $\mc G$ be a Morseless star. There exists a constant $C_2\geq 1$ and increasing functions $f_3 : \mc M\to \mc M$ and $D_3: \mc M\times \R_{\geq 1}\to \R_{\geq 1}$ such that for every edge $\alpha\in E(T)$, vertex group $Y=\pi^{-1}(\alpha^+)$ and edge group $W=[\pi^{-1}(\alpha)^+]$, the following properties hold:
\begin{enumerate}
    \item The inclusions $Y\into X, W \into Y$ and $W\into X$ are $C_2$-quasi-isometric embeddings.\label{lemma:undistorted_vertex_groups}
    \item Every $(M, Y)$–Morse $Y$-geodesic is an $f_3(M)$–Morse $C_2$-quasi-geodesic. Conversely, every $M$-Morse geodesic contained in $Y$ is an $(f_3(M), Y)$-Morse $C_2$-$Y$-quasi-geodesic.\label{lemma:vertex_group_morseness}.
    \item If $x, y\in W$ with $d_X(x, y)\geq D_3(M, 1)$ the geodesic $[x, y]$ is not $M$-Morse.\label{lemma:max_length}
    \item Let $\gamma$ be an $M$-Morse $C$-quasi-geodesic. The intersection $\mc N_C(\gamma) \cap \mc N_C(W)$ has diameter at most $D_3(M,C)$. \label{constant:edge_group_intersection2}
\end{enumerate}
\end{lemma}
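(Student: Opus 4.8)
\noindent\emph{Proof strategy.} The plan is to establish the four assertions in the order \ref{lemma:undistorted_vertex_groups}, the converse half of \ref{lemma:vertex_group_morseness}, \ref{constant:edge_group_intersection2}, \ref{lemma:max_length}, and finally the forward half of \ref{lemma:vertex_group_morseness}; each step will use only the preceding ones together with the defining properties of a Morseless star. For \ref{lemma:undistorted_vertex_groups}: since $\pi_1(\mc G)$ is quasi-isometric to $X$ and every edge group is undistorted in $\pi_1(\mc G)$ by hypothesis, each edge space $W\into X$ is a quasi-isometric embedding with a uniform constant. For $Y\into X$, take an $X$-geodesic between two vertices of $Y$: because $T$ is a tree, every maximal excursion of this geodesic out of $Y$ leaves and re-enters $Y$ through the same edge space and has both feet in it, so replacing each excursion by an edge-space geodesic between its feet produces a path inside $Y$ whose length, by undistortedness of the edge spaces, is at most a fixed linear function of the original length; hence $d_Y$ is coarsely dominated by $d_X$ on $Y$, while $d_X\le d_Y$ is automatic, and then $W\into Y$ is a quasi-isometric embedding by composition. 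Letting $C_2$ be the largest of the finitely many constants that occur (all vertex and edge spaces being translates of finitely many) proves \ref{lemma:undistorted_vertex_groups}.

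The converse half of \ref{lemma:vertex_group_morseness} is bookkeeping with \ref{lemma:undistorted_vertex_groups}: an $X$-geodesic contained in $Y$ is a $C_2$-quasi-geodesic of $Y$, and if $\eta$ is a $Y$-quasi-geodesic with endpoints on it, then $\eta$ is an $X$-quasi-geodesic with controlled constants, so it stays $M(\cdot)$-close to $\gamma$ in $d_X$ by $M$-Morseness, and hence stays $(C_2M(\cdot)+C_2)$-close to $\gamma$ in $d_Y$, again by \ref{lemma:undistorted_vertex_groups} applied to pairs of points of $Y$. This produces the gauge required in that direction.

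Next I would prove \ref{constant:edge_group_intersection2}, in a form uniform enough to apply equally with $X$ replaced by a vertex space $Y$ and $W$ by one of its edge spaces. The crux is the following sub-claim: for each gauge $N$ there is a bound $B(N)$ such that every $N$-Morse geodesic segment of $X$ contained in a single edge space has length at most $B(N)$. This is where relative wideness enters, since it guarantees that the relative Morse boundary of the edge group $H_\alpha$ in its vertex group $G_{\alpha^+}$ is empty. If the sub-claim failed for some $N$ there would be an edge space carrying $N$-Morse segments of unbounded length (reducing to this case since edge spaces fall into finitely many translation classes); after translating by an element of $\pi_1(\mc G)$ so that this edge space is the standard copy of some $H_\alpha$ (which then acts on $X$ by isometries stabilising it), translating the $n$-th segment by an element of $H_\alpha$ so that its midpoint is the identity vertex, and applying Arzel\`a--Ascoli in the proper space $X$, one would obtain a bi-infinite $N$-Morse geodesic of $X$ contained in that edge space; one of its rays, being contained in $W\subset Y$ and $N$-Morse in $X$, is Morse in $Y\cong\Cay(G_{\alpha^+})$ and in $W\cong\Cay(H_\alpha)$ by the converse half of \ref{lemma:vertex_group_morseness} (and its analogue for $W\into X$), hence yields a point of the relative Morse boundary of $H_\alpha$ in $G_{\alpha^+}$, a contradiction. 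Granting the sub-claim, \ref{constant:edge_group_intersection2} follows: given $p,q\in\mc N_C(\gamma)\cap\mc N_C(W)$ with $d_X(p,q)$ large, choose $\gamma(s),\gamma(t)$ and $p',q'\in W$ within $C$ of $p$ and $q$ respectively; by Lemma \ref{quasi-geodesic} the concatenation of the two short geodesics with an edge-space geodesic $[p',q']_W$ is a quasi-geodesic with endpoints on $\gamma$, hence stays $M(\cdot)$-close to $\gamma$, so $[p',q']_W$ is $M^*$-Morse in $X$ for some $M^*$ depending only on $M$ and $C$ by Lemma \ref{lemma:monster}\ref{prop:adapted2.5}; the sub-claim then bounds the length of $[p',q']_W$, hence $d_X(p',q')$ and therefore $d_X(p,q)$. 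Statement \ref{lemma:max_length} is the special case $C=1$ of \ref{constant:edge_group_intersection2} applied to $\gamma=[x,y]$, since $x,y\in\mc N_1([x,y])\cap\mc N_1(W)$.

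The remaining direction of \ref{lemma:vertex_group_morseness} — that an $(M,Y)$-Morse $Y$-geodesic $\gamma$ is Morse in $X$ — is, in my view, the main obstacle, since Morseness in a quasi-isometrically embedded subspace does not in general pass to the ambient space. Let $\eta$ be an $X$-quasi-geodesic with endpoints on $\gamma$. Using \ref{lemma:undistorted_vertex_groups} together with the standard ``projection to a vertex space'' argument for trees of spaces with quasi-isometrically embedded edge spaces (as in \cite{graph_of_groups1}), replace each excursion of $\eta$ out of $Y$ by an edge-space geodesic between its feet to obtain a $Y$-quasi-geodesic $\eta'$ with the same endpoints and controlled constants; then $\eta'$ stays $(M,Y)$-close to $\gamma$, hence $X$-close to $\gamma$. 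Each excursion of $\eta$ has both feet near $\gamma$ (they lie on $\eta'$) and near a single edge space $W_i$ of $Y$; applying \ref{constant:edge_group_intersection2} inside $Y$ shows these two feet are within a bounded distance of one another, so the excursion, being a quasi-geodesic, is short and stays close to $\gamma$. Combining the two contributions gives $\eta\subset\mc N_K(\gamma)$ in $X$ with $K$ depending only on $M$ and the quasi-geodesic constants of $\eta$, so $\gamma$ is $f_3(M)$-Morse once $f_3$ and $D_3$ are enlarged to dominate the various gauges and constants produced above. Throughout, the delicate point is the sub-claim feeding \ref{constant:edge_group_intersection2}: upgrading the a priori asymptotic emptiness of the relative Morse boundary to a bound on the length of Morse segments that is uniform in the Morse gauge, while keeping track of the fact that Morseness is measured in $X$ rather than inside $Y$ or $W$; everything else is bookkeeping with Lemmas \ref{quasi-geodesic} and \ref{lemma:monster}.
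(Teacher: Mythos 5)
Your proposal takes a genuinely more self-contained route than the paper: for \eqref{lemma:undistorted_vertex_groups} and \eqref{lemma:vertex_group_morseness} the paper simply invokes Corollary 2.16 and Corollary 4.3 of \cite{fioravanti2022connected}, whereas you reprove them by excursion-replacement in the tree of spaces. The main dividend is the forward half of \eqref{lemma:vertex_group_morseness}, for which the paper gives no independent proof; your argument (replace excursions of a test quasi-geodesic out of $Y$ by edge-space geodesics to obtain a $Y$-quasi-geodesic, then control each excursion through the $Y$-internal form of \eqref{constant:edge_group_intersection2}) is a plausible direct one. Your treatment of \eqref{lemma:max_length} and \eqref{constant:edge_group_intersection2} uses the same Arzel\`a--Ascoli-plus-relative-wideness mechanism as the paper, with the harmless permutation that you derive \eqref{lemma:max_length} from \eqref{constant:edge_group_intersection2} rather than the reverse.

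There is, however, a concrete gap in the sub-claim feeding \eqref{constant:edge_group_intersection2}. You state it for $N$-Morse $X$-\emph{geodesic} segments \emph{contained in} a single edge space, and then apply it to $[p',q']_W$. But $[p',q']_W$ is a $W$-geodesic, hence merely a $C_2$-quasi-geodesic of $X$, and the genuine $X$-geodesic $[p',q']$ need not lie inside $W$, so the sub-claim as stated applies to neither path; it could even be vacuous, since $X$-geodesics joining two vertices of $W$ typically exit $W$. What you need is either the sub-claim for uniformly-constant $X$-quasi-geodesics contained in $W$, or, as the paper's \eqref{lemma:max_length} is phrased, for $X$-geodesics with \emph{endpoints} in $W$. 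Both versions admit the same Arzel\`a--Ascoli argument; the paper extracts simultaneous limits of $[1,z_k]$ and $[1,z_k]_W$ and transfers Morseness across the bounded Hausdorff distance between them (Lemma 2.1 of \cite{Cor16}), and your midpoint-translation variant would work equally well once the statement is repaired. Relatedly, the ``$Y$-internal'' version of \eqref{constant:edge_group_intersection2} that you call on in the last paragraph is a parallel statement about $(M,Y)$-Morse paths and $Y$-neighbourhoods and needs its own pass of the Arzel\`a--Ascoli argument with gauges measured in $Y$; the dependence of the resulting bound on the $Y$-quasi-geodesic constants of the excursion-replaced path $\eta'$ should be carried explicitly into the final $f_3$.
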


\begin{remark}\label{remark:uniform_constants}
Unlike $f_1$, $f_2$ and $D_1$, the functions $D_3$ and $f_3$ as well as the constant $C_2$ depend on $\mc G$. If we have finitely many Morseless stars we can make sure that $f_3$, $D_3$ and $C_2$ work for all of them. The same holds for functions we define later.
\end{remark}

\begin{remark}
Property (\ref{constant:edge_group_intersection2}) implies that every Morse quasi-geodesic ray diverges from edge groups in the sense that for every Morse quasi-geodesic ray $\gamma : [0, \infty)\to X$, edge $\alpha\in T$ and $C>0$ there exists some $t>0$ such that for every $s\geq t$ we have $d_X(\gamma(s), \pi^{-1}(\alpha))\geq C$.
\end{remark}

\begin{proof}
As in the proof of Lemma \ref{lemma:monster} we just need to prove that for every Property $x$ there exist (not necessarily increasing) functions $f_{3, x}$ and $D_{3, x}$ such that Property $x$ holds for $f_{3, x}$ and $D_{3, x}$.

Morseless stars satisfy the assumptions on graph of groups discussed in \cite{fioravanti2022connected}. Property \eqref{lemma:undistorted_vertex_groups} and the second implication of Property \eqref{lemma:vertex_group_morseness} follow from Corollary 2.16 of\cite{fioravanti2022connected}. The first implication of Property \eqref{lemma:vertex_group_morseness} follows from Corollary 4.3 of \cite{fioravanti2022connected}, which is stated about geodesic rays, but the proof works for any geodesic, in particular geodesic segments. 

We prove (\ref{lemma:max_length}) by contradiction. Fix a Morse gauge $M$ and assume there exists a sequences of edges $(\alpha_k)$ and sequences of vertices $(x_k)$ and $(y_k)$ with  $x_k, y_k\in \pi^{-1}(\alpha_k)^+$ and $d_X(x_k, y_k)\geq k$ such that $[x_k, y_k]$ is $M$-Morse. Denote $\pi^{-1}(\alpha_1)^+=W'$. We can assume that $\hat{\alpha}_k =\hat\alpha_1$ for all $k$. Define $z_k = x_k^{-1}y_k$. We have that $x_k^{-1}[x_k, y_k] = [e, z_k]$ are $M$-Morse geodesics and are contained in $W'$. By Arzel\`a-Ascoli there exists a subsequence $(z_k')$ of $(z_k)$ such that $[e, z_k]_{W'}$ converge uniformly on compact sets to some $W'$-geodesic $\lambda_1$. Again by Arzel\`a-Ascoli there exists a subsequence $(z_k'')$ of $(z_k')$ such that $[e, z_k'']$ converges uniformly on compact sets to an $X$-geodesic $\lambda_2$. The Hausdorff distance $d_{Hauss}([e, z_k''], [e, z_k'']_{W})$ is bounded by some constant $D$, depending only on $M$ (see Lemma 2.1 \cite{Cor16}). Thus, the Hausdorff distance between $\lambda_1$ and $\lambda_2$ is bounded by $D+1$. By Lemma 2.10 of \cite{Cor16} $\lambda_2$ is $M$-Morse, which implies that $\lambda_1$ is Morse. This is a contradiction to the assumption that the edge groups have empty relative Morse boundary.

Next we prove Property (\ref{constant:edge_group_intersection2}). We denote by $D_{3, 3}: \mc M\times \R_{\geq 1}\to \R$ a function such that Property~(3) is satisfied. Let $\gamma$ be an $M$-Morse $C$-quasi-geodesic and let $x, y\in \mc N_C(\gamma)\cap\mc N_C(W)$. Let $x_1, y_1$ be points on $\gamma$ and $x_2, y_2$ be points on $W$ such that $d_X(x, x_j)\leq C$ and $d_X(y, y_j)\leq C$ for $j\in \{1, 2\}$. The geodesic $[x_2, y_2]$ has endpoints in the $2C$-neighbourhood of $\gamma$ and is thus $M_1 = f_2(M, 2C)$-Morse by Lemma \ref{lemma:monster} \ref{prop:adapted2.5}. By (3), $d_X(x_2 ,y_2)\leq D_{3, 3}(M_1, 1)$ and hence $d_X(x, y)\leq 2C+  D_{3, 3}(M_1, 1)$. Since $x$ and $y$ were chosen arbitrarily, the intersection $\mc N_C(\gamma) \cap \mc N_C(W)$ has diameter at most $D_{3, 4}(M,C) = 2C+  D_{3, 3}(M_1, 1)$.
\end{proof}

Next we use the results of \cite{dahmani2003combination} to study the relatively hyperbolic structure on $X$ if $\mc G$ is a relatively hyperbolic Morseless star.

\begin{definition}\label{def:subsetsassociated}
Let $\mc G$ be a relatively hyperbolic Morseless star. Let $v\in V(T)$ be a vertex which is not a coset of $G_0$, or in other words, where $\check v\neq 0$. Let $F\subset E(T)$ be the set of edges adjacent to $v$. The subset $P(v)\subset X$ associated to $v$ is $P(v) = \pi^{-1}(v)\cup \pi^{-1}(F)$.

Let $\alpha\in E(T)$ be an edge of $T$. We associate a subset $P(\alpha)\subset X$ to $\alpha$ as follows. 
\begin{align}
    P(\alpha) = \begin{cases}
    \pi^{-1}(\alpha) & \text{if $\check\alpha^+ = \check\alpha^-$ = 0,}\\
    P(v) & \text{Otherwise. Here $v$ denotes the endpoint of $\alpha$ with $\check v\neq 0$.}
    \end{cases}
\end{align}

\end{definition}

\begin{remark}\label{rem:p(alpha)are_the_same}
With this definition for every edge $\alpha\in T$ we have that $\pi^{-1}(\alpha)\subset P(\alpha)$. Furthermore if $P(\alpha)=P(\beta)$ for some edges $\alpha$ and $\beta$, then $\alpha$ and $\beta$ share at least one endpoint. Moreover, if $P(\alpha) = P(\beta)$ and they share exactly one endpoint $v$, then $\check v\neq 0$. 
\end{remark}

\begin{lemma}\label{lemma:realhypassoc}
Let $\mc G$ be relatively hyperbolic Morseless star. Then $X$ is relatively hyperbolic with respect to a collection of subsets $\mc P$ such that for every edge $\alpha\in E(T)$, $P(\alpha)\in \mc P$.
\end{lemma}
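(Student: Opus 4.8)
The plan is to invoke the combination theorem for relatively hyperbolic groups from \cite{dahmani2003combination} applied to the graph of groups $\mc G$. The key observation is that $\pi_1(\mc G)$ acts on the Bass-Serre tree $T$, and by assumption $\mc G$ is a relatively hyperbolic Morseless star: the vertex group $G_0$ is hyperbolic relative to $\mc A\sqcup\{H_{\alpha^+}\mid \alpha\in E(\Gamma),\ \alpha^+=0\}$, each edge group $H_\alpha$ embeds as a peripheral (or at least undistorted) subgroup, and the non-$G_0$ vertex groups together with their incident edge groups are themselves "parabolic" in the sense needed. First I would set up $\pi_1(\mc G)$ as the fundamental group of a finite graph of groups where $G_0$ is relatively hyperbolic and the other vertex groups $G_v$ ($\check v\neq 0$) are to be treated as peripheral: declaring each such $G_v$, together with the edge groups adjacent to $0$, as peripheral structures so that the acylindrical/malnormality-type hypotheses of the combination theorem are met (edge groups have infinite index and are undistorted, so the relevant quasiconvexity and finiteness conditions hold). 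This yields that $\pi_1(\mc G)$ is hyperbolic relative to a collection $\mc Q$ of subgroups consisting of conjugates of $\mc A$, conjugates of the $G_v$'s with $\check v\neq 0$, and conjugates of the edge groups joining two copies of $G_0$.

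Next I would transport this algebraic statement to the Bass-Serre space $X$. Since $X$ is quasi-isometric to $\pi_1(\mc G)$ (it is a Cayley-graph-like model with bounded-index vertex spaces), relative hyperbolicity of the group transfers to relative hyperbolicity of $X$ with respect to the images of the peripheral subgroups, i.e. a collection of subsets $\mc P$. The main bookkeeping step is to check that the subsets $P(v)$ and $P(\alpha)$ from Definition \ref{def:subsetsassociated} are, up to finite Hausdorff distance, exactly these peripheral subsets. For a vertex $v$ with $\check v\neq 0$, $P(v) = \pi^{-1}(v)\cup\pi^{-1}(F)$ is a bounded neighbourhood of the coset of $G_v$ (the extra edge fibers $\pi^{-1}(F)$ only add a bounded collar because edge groups are undistorted and $S_\alpha\subset S_{\alpha^+}$), so $P(v)$ coincides coarsely with a peripheral subset coming from the conjugate of $G_v$. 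For an edge $\alpha$ with both endpoints over $0$, $P(\alpha)=\pi^{-1}(\alpha)$ is coarsely the coset of the edge group $H_{\check\alpha}$, which I have arranged to be peripheral. Finally I would invoke the standard fact that the collection of peripheral subsets in a relatively hyperbolic space can be enlarged by adding (bounded neighbourhoods of) additional uniformly quasiconvex, uniformly "isolated" subsets — or, more simply, note that one is free to choose the peripheral structure to include these; in either case $\mc P$ can be taken to contain every $P(\alpha)$, $\alpha\in E(T)$, as claimed.

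The hard part will be verifying the precise hypotheses of the combination theorem of \cite{dahmani2003combination} in the presence of the $\mc A$ factors and the non-$G_0$ vertex groups, and making sure that the peripheral subsets one reads off are genuinely pairwise "$P$-divergent"/isolated — in particular that distinct $P(\alpha)$ coarsely intersect in bounded sets. This last point should follow from the tree structure of $T$: two fibers $\pi^{-1}(\alpha)$, $\pi^{-1}(\beta)$ over distinct edges project to distinct edges of $T$, and a coarse intersection would force a bounded-diameter "turning region" in $T$, while Remark \ref{rem:p(alpha)are_the_same} controls exactly when two associated subsets can coincide (they must share an endpoint, and if they share exactly one it lies over a non-zero vertex, consistent with them belonging to the same peripheral piece). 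I would also need Lemma \ref{lemma:properties_of_morseless_stars}\eqref{lemma:undistorted_vertex_groups} to guarantee the vertex and edge fibers are uniformly quasi-isometrically embedded, which is what makes the coarse identifications above legitimate. Once the hypotheses are checked, the conclusion is essentially a restatement of the combination theorem together with this coarse dictionary between $\mc P$ and the $P(\alpha)$, $P(v)$.
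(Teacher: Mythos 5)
Your high-level strategy matches the paper's: both invoke Dahmani's combination theorem \cite{dahmani2003combination} to show $\pi_1(\mc G)$ is relatively hyperbolic with a peripheral structure consisting of $\mc A$, the non-$G_0$ vertex groups, and the edge groups of self-loops at $0$, and then observe that the $P(\alpha)$, $P(v)$ from Definition~\ref{def:subsetsassociated} are (coarsely) the cosets of these peripherals. However, your proposal has a real gap: Dahmani's Theorem 0.1, which is what the paper uses, applies to amalgamated products and HNN extensions, i.e.\ one edge at a time, not to an entire graph of groups in a single stroke. You cannot simply ``invoke the combination theorem applied to the graph of groups $\mc G$'' and read off the result. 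The paper (Lemma~\ref{lemma:peripherals}) does exactly this bookkeeping: it inducts on the number of edges of $\Gamma$, distinguishing three cases according to whether the edge being added runs from $0$ to a non-$0$ vertex (an amalgamation, handled by part (2) of Theorem 0.1), is a self-loop at $0$ (an HNN extension with both edge-group images already peripheral, handled by part ($3'$)), or is a non-spanning-tree edge with a non-zero endpoint (an HNN extension with one image inside a peripheral, again part ($3'$)). In each case one must track carefully which subgroups are declared peripheral so that the edge group of the new edge genuinely coincides with (or sits inside) an existing peripheral of the piece; this is where the hypothesis that $G_0$ is hyperbolic relative to $\mc A\sqcup\{H_{\alpha^+}\}$ is used, not merely ``undistorted and infinite index.''

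Your final suggestion --- that one is ``free to choose the peripheral structure to include'' additional quasiconvex isolated subsets, or to enlarge an existing peripheral structure --- is both unnecessary and a potential trap. In general one cannot add arbitrary isolated quasiconvex subsets to a peripheral structure and preserve relative hyperbolicity without verifying hyperbolic embeddedness; and if the combination theorem is applied correctly (as in the paper's induction) no enlargement is needed, because every $P(\alpha)$ already arises, up to bounded Hausdorff distance, as a coset of one of the peripheral subgroups produced. Similarly, the pairwise bounded coarse intersection of the $P(\alpha)$'s that you worry about in the last paragraph is automatic once relative hyperbolicity of the group is established algebraically; you do not need a separate geometric argument via the tree. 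The fix to your proposal, then, is to replace the one-shot invocation with the explicit edge-by-edge induction from Lemma~\ref{lemma:peripherals}, and to drop the enlargement step.
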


Lemma \ref{lemma:realhypassoc} follows from the following technical Lemma. Recall that $ST$ is the spanning tree used to define $\pi_1(\mc G)$ and that $\mc A$ is the same set as in the definition of a relatively hyperbolic star.

\begin{lemma}\label{lemma:peripherals}
Let $\mc G$ be a relatively hyperbolic Morseless star. Let $E(\Gamma)^+$ be an orientation of the edges of $\Gamma$, that is, let $E(\Gamma)^+$ be a subset of $E(\Gamma)$ such that for each edge $\alpha\in E(\Gamma)$ exactly one of $\alpha, \bar{\alpha}$ is in $E(\Gamma)$. The group $G = \pi_1(\mc G)$ is hyperbolic relative to $\mc P = \mc A^G\sqcup \mc H'^{G}$, where $\mc H' = \mc H_1'\cup \mc H_2'$ and
\begin{itemize}
    \item $\mc H_1 '  = \{G_{\alpha^-}\mid \alpha\in E(ST), \alpha^+ = 0\}$,
    \item $\mc H_2 ' = \{H_\alpha\mid \alpha\in E(\Gamma)^+, \alpha^+ = \alpha^- = 0\}$.
\end{itemize}
\end{lemma}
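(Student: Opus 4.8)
The plan is to realize $G = \pi_1(\mc G)$ as the fundamental group of a graph of groups to which the Dahmani combination theorem \cite{dahmani2003combination} applies, and read off the peripheral structure from the combinatorics. First I would set up the appropriate graph of groups: working with the spanning tree $ST$ and the chosen orientation $E(\Gamma)^+$, the underlying graph $\Gamma$ together with the vertex groups $G_v$ and edge monomorphisms $f_\alpha$ already presents $G$. The key observation is the role of the distinguished vertex $0$: by the definition of a relatively hyperbolic Morseless star, $G_0$ is hyperbolic relative to $\mc A \sqcup \{H_{\alpha^+} \mid \alpha \in E(\Gamma), \alpha^+ = 0\}$, and each adjacent edge group $H_\alpha$ (being an edge group incident to $0$) lies in a peripheral subgroup of $G_0$; meanwhile each non-central vertex group $G_v$ ($\check v \neq 0$) is a (finitely generated, hence trivially relatively hyperbolic relative to itself, or we view it as a full peripheral) group in which the adjacent edge groups $H_\alpha$ sit. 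So all edge groups of $\mc G$ are contained in peripheral subgroups of the (relatively hyperbolic) vertex groups on the $0$-side, which is exactly the acylindrical/malnormal-type hypothesis Dahmani's theorem needs.

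The main steps, in order, are: (i) verify that the hypotheses of the Dahmani combination theorem are satisfied — each edge group embeds into a parabolic subgroup of an adjacent vertex group (on the side of $0$), and the relevant finiteness/quasiconvexity conditions hold because edge groups are finitely generated and, by Lemma \ref{lemma:properties_of_morseless_stars}\eqref{lemma:undistorted_vertex_groups}, undistorted; (ii) conclude that $G$ is hyperbolic relative to the collection consisting of the images of the peripheral subgroups of the vertex groups that are not ``absorbed'' by edge identifications, together with the vertex groups $G_v$ with $\check v \neq 0$ that become peripheral because the incident edge groups are infinite-index; (iii) bookkeep which subgroups survive. For the central vertex $0$: the peripherals of $G_0$ are $\mc A$ plus one $H_{\alpha^+}$ for each edge $\alpha$ with $\alpha^+ = 0$; an edge $\alpha \in ST$ with $\alpha^+ = 0$ glues $G_{\alpha^-}$ onto the peripheral $H_\alpha$, replacing that peripheral by the larger group $G_{\alpha^-}$ (this gives $\mc H_1'$); an edge $\alpha \notin ST$ with $\alpha^+ = \alpha^- = 0$ becomes an HNN extension identifying two peripherals of $G_0$, and the associated subgroup $H_\alpha$ remains peripheral (this gives $\mc H_2'$, one representative per orientation to avoid double counting). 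The subgroups in $\mc A$ are untouched. Vertices $v$ with $\check v \neq 0$ and $v \neq \alpha^-$ for $ST$-edges, together with the non-tree edges incident to them, assemble into the subsets $P(v)$ — but these are exactly handled by Lemma \ref{lemma:realhypassoc} at the level of the Bass–Serre space; at the group level with the chosen $ST$ they are already inside the $G_{\alpha^-}$ terms of $\mc H_1'$, so no extra peripherals appear.

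The hard part will be step (iii), the careful bookkeeping: making sure that every edge group of $\mc G$ is accounted for exactly once as lying in a peripheral of exactly one vertex group, that the choice of spanning tree $ST$ and orientation $E(\Gamma)^+$ eliminates redundancy (each HNN-type loop at $0$ contributes a single $H_\alpha$, not two), and that the $G_{\alpha^-}$ appearing in $\mc H_1'$ genuinely become peripheral rather than, say, being absorbed further or splitting. I would verify these by induction on the number of edges of $\Gamma$ not in $ST$, or equivalently by first treating the tree part (amalgams that enlarge peripherals of $G_0$) and then adding the extra edges one at a time as HNN extensions, each of which by the Bestvina–Feighn/Dahmani combination machinery either adds the associated edge group as a peripheral (if both identifications are into existing peripherals) — which is precisely the case here since every such edge has both endpoints at $0$ and every edge group incident to $0$ is parabolic in $G_0$. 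Finiteness of $\mc P$ and the requirement that the distinct $H_{\alpha^+}$ be genuinely distinct (unless finite) is inherited directly from the definition of a relatively hyperbolic Morseless star. Once Lemma \ref{lemma:peripherals} is established, Lemma \ref{lemma:realhypassoc} follows by transporting the peripheral structure from $G$ acting on $X$ to the subsets $P(\alpha)$: each $P(\alpha)$ is a bounded-Hausdorff-distance thickening of a coset of one of the listed peripheral subgroups (or of an edge-space $\pi^{-1}(\alpha)$ when $\check\alpha^+ = \check\alpha^- = 0$, which is quasi-isometric to a coset of $H_\alpha \in \mc H_2'$), so $\mc P$ for $X$ can be taken to be the $G$-orbit of these thickenings together with the orbits of the $\mc A$-cosets.
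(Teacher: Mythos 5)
Your overall approach is the same as the paper's: induct on edges and invoke Dahmani's combination theorem, handling the spanning-tree edges as amalgams that enlarge peripherals of $G_0$ and the remaining edges as HNN extensions. But the bookkeeping in step (iii) has a concrete gap. You assert that the non-tree edges can be added one at a time as HNN extensions and that this ``adds the associated edge group as a peripheral ... which is precisely the case here since every such edge has both endpoints at $0$.'' That last claim is false: in a Morseless star every edge is incident to $0$, but a non-tree edge $\beta$ may well have $\beta^- \neq 0$ (a parallel edge between $0$ and some $i \neq 0$). These are a separate case, and they behave differently from loops at $0$. For a loop $\beta$ with $\beta^+ = \beta^- = 0$, the HNN step identifies two distinct parabolic subgroups of $G_0$ and one copy $H_\beta$ survives as a new peripheral; this gives $\mc H_2'$. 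But for a non-tree $\beta$ with $\beta^- \neq 0$, the image $H_{\bar\beta}$ already sits inside the full peripheral $G_{\beta^-} \in \mc H_1'$, and the effect of the HNN step is to \emph{remove} $H_\beta$ from the peripheral list, contributing nothing to $\mc H_2'$. Your argument as written would add $H_\beta$ for all non-tree edges, giving a larger $\mc H_2'$ than the one in the statement. This is exactly the content of the paper's Case 3, which you have skipped.

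Two smaller issues. First, you appeal to Lemma \ref{lemma:realhypassoc} to argue that the non-$0$ vertices and their incident non-tree edges ``assemble into the subsets $P(v)$''; but Lemma \ref{lemma:realhypassoc} is derived \emph{from} the present lemma, so that reference is circular, and the sentence anyway concerns an empty set of vertices (every $v \neq 0$ is already $\alpha^-$ for an $ST$-edge in a star). Second, you justify the hypotheses of Dahmani's theorem by citing that edge groups are finitely generated and undistorted; the relevant hypothesis is that each edge group maps into a \emph{parabolic} of the vertex group on the $0$-side, which holds directly because $G_0$ is hyperbolic relative to a collection containing the adjacent edge groups (undistortion is not the right invariant here). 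With Case 3 filled in and these points cleaned up, your argument would match the paper's proof.
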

Recall that $\mc H'^G$ is the set of conjugates of elements of $\mc H'$ by elements of $G$. 

\begin{proof}
This follows from inductively applying Theorem \ref{dah03}. The induction base case is when $\mc G$ has no edges, and the result follows immediately. Assume the statement holds for every relatively hyperbolic Morseless star with $m$ edges and assume $\Gamma$ has $m+1$ edges. 

Case 1: $\Gamma$ is a star. Let $\beta\in E(\Gamma)$ be an edge with $\beta ^+ = 0$, and define $W = V(\Gamma) - \beta ^-$ and $Y = [W]\subset \Gamma$. The graph of groups $\mc G\mid_Y$ has $m$ edges and is, by the induction hypothesis, hyperbolic relative to $\tilde{\mc P}= (\mc A\cup \{H_{\beta}\}^{\pi_1(\mc G \mid _Y)}) \cup (\mc{H}_1'-\{G_{\beta^-}\}^{G})^{\pi_1(\mc G \mid _Y)}\cup \mc H_2'^{\pi_1(\mc G \mid _Y)}$. Then, by (2) Theorem \ref{dah03}, the group $G$ is hyperbolic relative to $(\tilde{\mc P}\cup G_{\beta^-} )^G- \{H_\beta\}^G = \mc A^G \cup \mc H_1'^G\cup \mc H_2'^G$.

Case 2: $\Gamma$ contains an edge $\beta\in E(\Gamma)^+$ with $\beta^+ = \beta^- = 0$. In this case, we define $Y = \Gamma- \{\beta, \bar{\beta}\}$. Then, $\mc G\mid_Y$ has $m$ edges and is, by the induction hypothesis, hyperbolic relative to $ \tilde{\mc P}= (\mc A \cup \{H_{\beta}, H_{\bar{\beta}}\})^{\pi_1(\mc G \mid _Y)}\cup \mc H_1'^{\pi_1(\mc G \mid _Y)}\cup (\mc H_2' - \{H_\beta, H_{\bar{\beta}}\})^{\pi_1(\mc G \mid _Y)}$. The underlying graph of $\mc G_Y$ is a loop, more precisely $\pi_1 (G_Y) = B*_{\tilde{H}_{\beta}}$. Both images $H_{\beta}$ and $H_{\bar{\beta}}$ of $\tilde{H}_{\beta}$ are in $\tilde{P}$. By ($3'$) of Theorem \ref{dah03}, $G = \pi_1(\mc G_Y)$ is hyperbolic relative to $\tilde P^G - \{H_{\bar\beta}\}^G = A^G\cup \mc H_1'^G\cup \mc H_2 '^G$.

Case 3: $\Gamma$ contains an edge $\beta\in E(\Gamma)\setminus  E(ST)$ with $\beta^-\neq 0$. In this case, we define $Y = \Gamma - \{\beta, \bar{\beta}\}$. The graph of group $\mc G\mid_Y$ has $m$ edges and is, by the induction hypothesis, hyperbolic relative to $ \tilde{\mc P}= (\mc A \cup\{H_{\beta}\})^{\pi_1(\mc G \mid _Y)}\cup \mc H_1'^{\pi_1(\mc G \mid _Y)}\cup \mc H_2'^{\pi_1(\mc G \mid _Y)}$. The underlying graph of $\mc G_Y$ is a loop, more precisely $G =\pi_1 (G_Y) = B*_{\tilde{H}_{\beta}}$. The image $H_\beta$ of $\tilde{H}_\beta$ is contained in $\tilde {\mc P}$ and the other image $H_{\bar\beta}$ is a subgroup of $G_{\beta^-}\in \tilde{\mc P}$. By ($3'$) of Theorem \ref{dah03}, $G$ is hyperbolic relative to $\tilde{\mc P}^G - \{H_{\beta}\}^G =A^G\cup \mc H_1'^G\cup \mc H_2 '^G$.
\end{proof}

\subsection{The combinatorial structure of the Morse boundary}

Similarly as in \cite{graph_of_groups1} and \cite{martin2013hyperbolic} we describe Morse geodesics by how they travel in the Bass-Serre tree and, if they eventually stay in a vertex group, what their tail in this vertex group is. The discussion gets a bit more technical since the projection of a geodesic in $X$ to the Bass-Serre tree might not be a geodesic, even after reparametrization. This is a consequence of the edge group embeddings being quasi-isometries but not necessarily isometries. Thus to give a combinatorial description of equivalence classes of Morse geodesics
in $X$ starting at $e$, we first need to set up some notation. 

Let $\alpha$ be an edge of the Bass-Serre tree $T$. We say the edge $\alpha$ is \textbf{outgoing} if $d_T(G_0, \alpha^-)<d_T(G_0, \alpha^+)$. Denote by $T_\alpha$ the connected component of $T\setminus \{\alpha\}$ that contains $\alpha^+$. The edge $\alpha$ is outgoing if and only if $T_\alpha$ does not contain $G_0$. Define $T_{G_0}$ as $T$ and for every vertex $v\in V(T)- \{G_0\}$ define $T_v$ as $T_{\alpha}$, where $\alpha$ is the outgoing edge with $\alpha^+ = v$. 

With this definition, for two distinct outgoing edges $\alpha$ and $\beta$ we have that $T_\alpha$ and $T_\beta$ are either disjoint or one is contained in the other, where the latter happens if and only if one of the edges lies on the path from $G_0$ to the other edge. Similarly, for two distinct vertices $v$ and $w$, $T_v$ and $T_w$ are either disjoint or one is contained in the other.

For every outgoing edge $\alpha$ we choose a vertex $x$ of $\pi^{-1}(\alpha)^+$ with $d_X(e, x) = d_X(e, \pi^{-1}(\alpha)^+)$ and denoted the chosen vertex by $\alpha^*$. We think of $\alpha^*$ as the ``basepoint'' of (coset of) the edge group $\pi^{-1}(\alpha)^+$.

If $\alpha$ is not outgoing, then $\bar\alpha$ is. Let $(g,i) =\bar\alpha^*$. We define $\alpha^* = (gt_\alpha, \check\alpha^+)$. With this definition, $\bar{\alpha}^*$ and $\alpha^*$ are connected by an edge $\beta$ with $\pi(\beta) = \alpha$. For every vertex $v\neq G_0\in V(T)$ there exists exactly one outgoing edge $\alpha\in E(T)$ with $\alpha^+ = v$. We define $v^*$ as $\alpha^*$ and $G_0^*$ as $e$. We think of $v^*$ as the ``basepoint'' of the (coset of) the vertex group $\pi^{-1}(v)$. 

We want to highlight the following properties of $v^*$ and $\alpha^*$.

\begin{lemma} Let $\alpha$ be an outgoing edge and let $v$ be a vertex of $T$.
\begin{enumerate}[label= \roman*)]
    \item The vertex $\bar{\alpha} ^*$  lies on the geodesic $[e ,\alpha^*]$.
    \item The following equalities hold $d_X(e, \alpha^*) = d_X(e, \pi^{-1}(\alpha)^+)$, $d_X(e, \bar{\alpha}^*) = d_X(e, \pi^{-1}(\alpha)^-)$ and $d_X(\pi^{-1}(v), e) =  d_X(v^*, e)$.
\end{enumerate}
\end{lemma}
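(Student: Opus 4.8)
The plan is to work entirely with the projection $\pi\colon X\to T$ and to isolate two structural facts about how a single edge of the Bass--Serre tree sits inside $X$. The first is a separation property: for an outgoing edge $\alpha$, since $\pi$ is a graph morphism onto the tree $T$ and $\alpha$ is the last edge of the geodesic $[G_0,\alpha^+]$ in $T$, every path in $X$ from $\pi^{-1}(G_0)$ to $\pi^{-1}(\alpha^+)$ must use an edge of $\pi^{-1}(\alpha)$; hence $X\setminus\pi^{-1}(\alpha)$ has a component $X^-$ containing $1$ and a component $X^+$ containing $\pi^{-1}(\alpha^+)$, with $\pi^{-1}(\alpha)^-\subseteq X^-$ and $\pi^{-1}(\alpha)^+\subseteq X^+$, and the only edges of $X$ joining $X^-$ to its complement are those of $\pi^{-1}(\alpha)$, each running from a vertex of $\pi^{-1}(\alpha)^-$ to a vertex of $\pi^{-1}(\alpha)^+$. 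The second is that every vertex of $\pi^{-1}(\alpha)^+$ is the endpoint of exactly one edge of $\pi^{-1}(\alpha)$; combined with the defining recipe for $\bar\alpha^*$ recalled above, this says precisely that $\bar\alpha^*$ is the unique vertex of $\pi^{-1}(\alpha)^-$ adjacent to $\alpha^*$ across $\pi^{-1}(\alpha)$, so in particular $d_X(\alpha^*,\bar\alpha^*)=1$. I would record both of these as a short preliminary claim.

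For part i) I would take any geodesic $\gamma\colon\{0,\dots,N\}\to X$ from $1$ to $\alpha^*$, with $N=d_X(1,\alpha^*)$, and let $j$ be the largest index with $\gamma(j)\in X^-$; this is well defined since $\gamma(0)=1\in X^-$, and $j<N$ since $\gamma(N)=\alpha^*\in X^+$. The edge $\gamma(j)\gamma(j+1)$ joins $X^-$ to its complement, hence lies in $\pi^{-1}(\alpha)$, so $\gamma(j+1)\in\pi^{-1}(\alpha)^+$; since $\gamma$ is a geodesic from $1$ this gives $j+1=d_X(1,\gamma(j+1))\ge d_X(1,\pi^{-1}(\alpha)^+)=d_X(1,\alpha^*)=N$, so $j=N-1$. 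Thus $\gamma(N-1)\in\pi^{-1}(\alpha)^-$ is adjacent to $\gamma(N)=\alpha^*$ across $\pi^{-1}(\alpha)$, so $\gamma(N-1)=\bar\alpha^*$ by the second fact; hence $\bar\alpha^*$ lies on $\gamma$ and $d_X(1,\bar\alpha^*)=N-1=d_X(1,\alpha^*)-1$. The identity $d_X(1,\bar\alpha^*)=d_X(1,\pi^{-1}(\alpha)^-)$ then follows because $\pi^{-1}(\alpha)^-$ and $\pi^{-1}(\alpha)^+$ are at Hausdorff distance $1$: the inclusion $\bar\alpha^*\in\pi^{-1}(\alpha)^-$ gives one inequality, and for $r\in\pi^{-1}(\alpha)^-$ with adjacent vertex $r'\in\pi^{-1}(\alpha)^+$ one has $d_X(1,r)\ge d_X(1,r')-1\ge d_X(1,\alpha^*)-1=d_X(1,\bar\alpha^*)$, giving the other.

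For part ii), the equality $d_X(1,\alpha^*)=d_X(1,\pi^{-1}(\alpha)^+)$ is the defining property of $\alpha^*$ for an outgoing edge, and $d_X(1,\bar\alpha^*)=d_X(1,\pi^{-1}(\alpha)^-)$ is exactly what was proved in i). For $d_X(\pi^{-1}(v),1)=d_X(v^*,1)$: if $v=G_0$ then $v^*=1\in\pi^{-1}(G_0)$ and both sides are $0$; if $v\ne G_0$, let $\alpha$ be the unique outgoing edge with $\alpha^+=v$, so $v^*=\alpha^*$. One inequality is immediate from $\pi^{-1}(\alpha)^+\subseteq\pi^{-1}(v)$, giving $d_X(1,\pi^{-1}(v))\le d_X(1,\pi^{-1}(\alpha)^+)=d_X(1,v^*)$. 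For the reverse I would pick $y\in\pi^{-1}(v)$ realising $d_X(1,\pi^{-1}(v))$ and a geodesic from $1$ to $y$; by the separation fact it crosses $\pi^{-1}(\alpha)$, hence meets some vertex $q\in\pi^{-1}(\alpha)^+$, and then $d_X(1,\pi^{-1}(v))=d_X(1,y)\ge d_X(1,q)\ge d_X(1,\pi^{-1}(\alpha)^+)=d_X(1,v^*)$.

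The only genuine work is the preliminary claim of the first paragraph, i.e. reading off from the definition of $X$ that $\pi^{-1}(\alpha)$ behaves as described. The delicate point is that $\pi^{-1}(\alpha)$ matches the \emph{edge-group} coset $\pi^{-1}(\alpha)^-$ with the edge-group coset $\pi^{-1}(\alpha)^+$, not the full vertex-group cosets: two edges of $X$ labelled by the same $\Gamma$-edge $\check\alpha$ project to the same $T$-edge exactly when their initial vertices differ by an element of $G_{\check\alpha^-}\cap t_{\check\alpha}G_{\check\alpha^+}t_{\check\alpha}^{-1}=H_{\bar{\check\alpha}}$, the elementary Bass--Serre identity that an edge stabiliser is the intersection of the two adjacent vertex stabilisers. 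Once this, together with the bookkeeping that the two-case recipe for $\bar\alpha^*$ really returns the partner of $\alpha^*$, is nailed down, parts i) and ii) are the short tree arguments above, so I expect the overall proof to be brief.
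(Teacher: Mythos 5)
The paper states this lemma without a proof, presumably regarding it as a routine consequence of the Bass--Serre construction, so there is no argument to compare yours against. Your proof is correct: the two structural facts you isolate (that $\pi^{-1}(\alpha)$ separates $X$ into the two sub-forests $X^{\pm}=\pi^{-1}(T\setminus T_\alpha)$ and $\pi^{-1}(T_\alpha)$, with $\pi^{-1}(\alpha)$ the only edges between them, and that each vertex of $\pi^{-1}(\alpha)^+$ meets exactly one edge of $\pi^{-1}(\alpha)$) hold because $\pi$ is an equivariant graph morphism onto the tree $T$ and because, on the $X$-side, the unique edge labelled $\check\alpha$ with target $(h,\check\alpha^+)$ has source $(ht_{\check\alpha}^{-1},\check\alpha^-)$. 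Together with the paper's own remark that $\bar\alpha^*$ and $\alpha^*$ are joined by an edge $\beta$ with $\pi(\beta)=\alpha$, this identifies $\bar\alpha^*$ as the unique $\pi^{-1}(\alpha)^-$-vertex adjacent to $\alpha^*$ across $\pi^{-1}(\alpha)$, and your last-crossing argument then correctly yields that every geodesic $[1,\alpha^*]$ passes through $\bar\alpha^*$ at distance $N-1$, which also gives $d_X(1,\bar\alpha^*)=d_X(1,\pi^{-1}(\alpha)^-)$. The derivation of $d_X(1,\pi^{-1}(v))=d_X(1,v^*)$ from the same separation fact is likewise sound. Your observation that the heart of the matter is the preliminary claim, and the rest is short tree bookkeeping, matches the level at which the paper treats the statement.
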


Sometimes we think of an outgoing edge $\alpha$ with $\alpha^- = v$ as a connection from $v^*$ to $\alpha^*$. It is then natural to ask how Morse this connection and thus $\alpha$ is.

\begin{definition}[Morseness of the Bass-Serre tree]\label{definition:morseness_edges} Let $\alpha\in E(T)$ be an outgoing edge and $v = \alpha^-$. We say that $\alpha$ is $M$-Morse if the geodesic $[v^*, \alpha^*]$ is $M$-Morse.
\end{definition}

As described in \cite{graph_of_groups1}, in a graph of groups $\mc G$ with trivial edge groups, there are two types of $M$-Morse geodesics $\gamma$; Ones whose projection to the Bass-Serre tree (after reparametrisation) is an infinite geodesic $p$ and ones whose projection to the Bass-Serre tree (after reparametrisation) is a finite geodesic $p$ ending in some vertex $v$. What they both have in common is that the restriction to every vertex group is $M$-Morse. For Morseless stars, we replace ``the intersection of $\gamma$ with every vertex group has to be $M$-Morse" by ``the edges of the geodesic $p$ have to be $M$-Morse".

\begin{definition}[Combinatorial rays]
A combinatorial ray $r$ can be of finite or infinite type. It is defined as follows:
\begin{itemize}
    \item \textbf{Finite type:} A combinatorial ray $r$ of finite type is a sequence 
    \begin{align}
        r =  (\alpha_1, \alpha_2, \ldots, \alpha_n; z),
    \end{align}
    such that $\alpha_j = \alpha_j(r)$ are edges of $T$ and $(\alpha_1, \ldots, \alpha_n)$ is a geodesic path (potentially the empty path) in $T$ starting at $G_0$ and denoted by $p(r)$. We say $\omega= \omega(r)=p(r)^+$ is the end vertex of $r$ and require that $z$, also denoted by $z(r)$ and called the tail of $r$, is a Morse direction of $\partial_* G_{\check{\omega}}$. We say the length of $r$, denoted by $l(r)$, is $n$.

    \item \textbf{Infinite type:} A combinatorial ray $r$ of infinite type is an infinite sequence
    \begin{align}
        r =  (\alpha_1, \alpha_2, \ldots),
    \end{align}
    such that $\alpha_j = \alpha_j(r)$ are edges of $T$ and $(\alpha_1, \alpha_2, \ldots, )$ is a geodesic path starting at $G_0$ denoted by $p(r)$. We say the length of $r$, denoted by $l(r)$, is infinite. 

\end{itemize}

\end{definition}

By \cite{fioravanti2022connected} we know that for every vertex $i\in V(\Gamma)$ the Morse boundary of its vertex group $\mb G_i$ topologically embeds into $\mb X$. Thus, for a combinatorial ray $r$ of finite type, we can view $z(r)$ as an element of $\mb X$ and will do so. Let $w = G_i$ be a vertex of $T$. The realisation $\overline{z(r)}$ of $z(r)$ is not necessarily contained in $\pi^{-1}(w)$ but the following lemma shows that the intersection of $\overline{z(r)}$ and $\pi^{-1}(w)$ is unbounded. 

\begin{lemma}\label{lemma:vertex_group_intersection}
Let $i\in V(\Gamma)$ be a vertex and let $w = G_i$ be a vertex of $T$. Furthermore, let $z\in \mb G_i$ be a Morse direction and let $\gamma$ be a quasi-geodesic with $[\gamma] = z$. The intersection $\pi^{-1}(w)\cap \gamma$ is unbounded.
\end{lemma}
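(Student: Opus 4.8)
The plan is to show that the quasi-geodesic $\gamma$ with $[\gamma] = z \in \mb G_i$ cannot escape to infinity away from the copy $\pi^{-1}(w)$ of the vertex group $G_i$ inside $X$. Since $z$ is a Morse direction of $\mb G_i$, by definition there is a $G_i$-geodesic ray $\sigma$ in $\mathrm{Cay}(G_i, S_i)$ with $[\sigma]_{G_i} = z$, and $\sigma$ is $(M, G_i)$-Morse for some Morse gauge $M$. Under the inclusion $\iota_i$ this ray lands entirely inside $\pi^{-1}(w)$, and by Lemma \ref{lemma:properties_of_morseless_stars}\eqref{lemma:vertex_group_morseness} it becomes an $f_3(M)$-Morse $C_2$-quasi-geodesic ray in $X$; in particular $\iota_i(\sigma) \subset \pi^{-1}(w)$ is an unbounded subset which is genuinely Morse in $X$, so $z$ makes sense as an element of $\mb X$ (this is the embedding $\mb G_i \hookrightarrow \mb X$ already invoked in the text).

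Now I would compare $\gamma$ with $\iota_i(\sigma)$ directly in $X$. Both represent the same point $z \in \mb X$: $\gamma$ does by hypothesis, and $\iota_i(\sigma)$ does by the previous paragraph. Hence $[\gamma] = [\iota_i(\sigma)]$ as quasi-geodesic rays in $X$. Since $\iota_i(\sigma)$ is Morse in $X$, Lemma \ref{lemma:unbounded_crossing} applies: there is a constant $C$ such that $\mc N_C(\iota_i(\sigma)) \cap \gamma$ is unbounded. But $\iota_i(\sigma)$ is contained in $\pi^{-1}(w)$, so $\mc N_C(\iota_i(\sigma)) \subset \mc N_C(\pi^{-1}(w))$, and therefore $\mc N_C(\pi^{-1}(w)) \cap \gamma$ is unbounded. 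It remains to upgrade this from "unbounded in a neighbourhood of $\pi^{-1}(w)$" to "the intersection $\pi^{-1}(w) \cap \gamma$ itself is unbounded." This last step is where I expect the only real subtlety: a priori $\gamma$ could repeatedly come within distance $C$ of $\pi^{-1}(w)$ without ever entering it. However, the subgraph $\pi^{-1}(w)$ is the full preimage of a vertex of $T$, so a point of $X$ within distance $C$ of $\pi^{-1}(w)$ lies in a uniformly bounded combinatorial neighbourhood of that vertex of $T$, hence $\pi(\gamma(t_k))$ stays within bounded $d_T$-distance of $w$ for an unbounded sequence $t_k$; since $\pi \circ \gamma$ is a coarse path in the tree $T$, this forces $\pi(\gamma(t)) = w$ for all $t$ in unboundedly many long intervals, and on such intervals $\gamma(t) \in \pi^{-1}(w)$. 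This yields the claim. (If one prefers to avoid the tree-geometry argument, one can instead directly show $\gamma$ is eventually trapped: outside $\pi^{-1}(w)$ one would have to cross an edge group $\pi^{-1}(\alpha)^+$ to leave and return, and Lemma \ref{lemma:properties_of_morseless_stars}\eqref{constant:edge_group_intersection2} bounds how long a Morse quasi-geodesic can linger near such an edge group, but this route requires first knowing $\gamma$ is Morse in $X$, which follows from Lemma \ref{lemma:monster}\ref{prop:realization1} since $[\gamma] = [\iota_i(\sigma)]$ and $\iota_i(\sigma)$ is Morse.)

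The main obstacle, then, is purely the "neighbourhood versus honest intersection" bookkeeping in the final paragraph; the conceptual content — that two quasi-geodesic rays defining the same Morse point must fellow-travel, via Lemma \ref{lemma:unbounded_crossing} — is immediate. I would write the neighbourhood-to-intersection step carefully using that $\pi : X \to T$ is $1$-Lipschitz on vertices, that $T$ is a tree so geodesics between vertices of bounded distance are unique and short, and that $\gamma$, being a quasi-geodesic, cannot oscillate across a fixed edge of $T$ infinitely often without being unbounded in the direction of $w$ anyway — so in all cases $\pi^{-1}(w) \cap \gamma$ is unbounded.
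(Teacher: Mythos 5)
Your opening moves are correct and match the paper's: the paper likewise picks a realisation $\gamma'$ of $z$ inside $\pi^{-1}(w)$ (a $\pi^{-1}(w)$-geodesic rather than your $\iota_i(\sigma)$, but that is immaterial) and appeals to Lemma \ref{lemma:unbounded_crossing} to get that $\mc N_C(\gamma')\cap\gamma$ is unbounded for some $C$. You also correctly recognise that $\gamma$ is Morse in $X$ via Lemma \ref{lemma:monster}\ref{prop:realization1}. The trouble is your step from ``unbounded in a $C$-neighbourhood of $\pi^{-1}(w)$'' to ``$\pi^{-1}(w)\cap\gamma$ unbounded,'' which is indeed the whole content of the lemma, and your primary argument for it does not work. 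Knowing that $\pi(\gamma(t_k))$ stays within bounded $d_T$-distance of $w$ for an unbounded sequence $t_k$ does \emph{not} force $\pi(\gamma(t))=w$ for any $t$: nothing prevents $\pi\circ\gamma$ from settling permanently at a vertex $w'$ adjacent to $w$, or among a bounded collection of such vertices, without ever hitting $w$. No amount of ``coarse path in a tree'' reasoning rules this out on its own; you need to bring the Morse assumption back in.

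Your parenthetical alternative (b) points at the right ingredient, namely Lemma \ref{lemma:properties_of_morseless_stars}\eqref{constant:edge_group_intersection2}, but the framing via ``eventually trapped'' and ``leave and return'' is not how it gets used, and ``trapped'' would anyway be a stronger (and false) conclusion than the lemma claims. The paper closes the gap by contradiction: if $\pi^{-1}(w)\cap\gamma$ were bounded, there would be $t>0$ with $w\notin\pi(\gamma[t,\infty))$; since $\pi(\gamma[t,\infty))$ is connected and misses $w$, it sits in one component of $T-\{w\}$, so there is a \emph{single} separating edge $\alpha$ such that every path from $\gamma[t,\infty)$ to $\gamma'\subset\pi^{-1}(w)$ passes through $\pi^{-1}(\alpha)^+$. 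Consequently $\mc N_C(\gamma')\cap\gamma[t,\infty)\subset\mc N_C(\pi^{-1}(\alpha)^+)\cap\gamma[t,\infty)$ for every $C$, and the right-hand side is bounded by Lemma \ref{lemma:properties_of_morseless_stars}\eqref{constant:edge_group_intersection2} (using that $\gamma$ is Morse), contradicting Lemma \ref{lemma:unbounded_crossing}. That observation about a single separating edge is the step you are missing; once you have it, the contradiction is immediate and the neighbourhood-versus-intersection issue disappears.
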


Note that we can also apply the lemma for translations of $z$ and $\gamma$. In particular, if $v\in V(T)$ is a vertex with $\check{v} = i$ and $\gamma'$ is a quasi-geodesic with $[\gamma'] = v^*\cdot z$, then the lemma above implies that the intersection $\pi^{-1}(v)\cap \gamma'$ is unbounded.

\begin{proof}
Let $\gamma'$ be a $\pi^{-1}(w)$-geodesic with $[\gamma']=z$. Assume that there exists $t>0$ such that $w\not\in \pi(\gamma[t, \infty)])$. Since $T$ is a tree there exists an edge $\alpha$ of $T$ such that all paths from $\pi(\gamma[t, \infty))$ to $w$ intersect $\alpha$. In other words, all paths from $\gamma[t, \infty)$ to $\gamma'$ have to intersect $\pi^{-1}(\alpha)^+$. Hence for every constant $C$, $\mc N_C(\gamma')\cap \gamma[t, \infty)\subset \mc N_C(\pi^{-1}(\alpha)^+)\cap \gamma [t, \infty)$. By Lemma \ref{lemma:unbounded_crossing} there exists a constant $C$ such that $\mc N_C(\gamma')\cap \gamma[t, \infty)$ is unbounded but by Lemma \ref{lemma:properties_of_morseless_stars} (\ref{constant:edge_group_intersection2}), $N_C(\pi^{-1}(\alpha)^+)\cap \gamma [t, \infty)$ is always bounded, a contradiction.
\end{proof}

We say that a combinatorial ray $r$ of infinite type is $M$-Morse if for all $j$, $\alpha_j(r)$ is $M$-Morse. If $r$ is a combinatorial ray of finite type we say $r$ is $M$-Morse if its tail $z(r)$ is $M$-Morse and for all $1\leq j \leq l(r)$, $\alpha_j(r)$ is $M$-Morse. We say a combinatorial ray is Morse, if it is $M$-Morse for some Morse gauge $M$.

The set of all $M$-Morse combinatorial rays is denoted by $\delta^M_*X$ and the set of all Morse combinatorial rays is denoted by $\delta_* X$ and called the combinatorial Morse boundary.

\begin{definition}[Realisation of a combinatorial ray]

Let $r$ be a combinatorial ray and $\eps$ the empty path. If $r$ is finite, let $\omega = \omega(r)$ be its end vertex, let $j = \check{\omega}$ and let $\xi$ be a realisation of the tail of $r$, $z(r)$, starting at $(1, j)$. Define 
\begin{align}
    \gamma_i(r) = \begin{cases}
    [e, \alpha_1^*]&\text{if $i=0$ and $l(r)>0$}\\
    [\alpha_{i}^*, \alpha_{i+1}^*]&\text{if $1\leq i < l(r)$}\\
    \omega(r)^*\cdot\xi&\text{if $i = l(r)$}\\
    \eps &\text{if $i >l(r)$}.
    \end{cases}
\end{align}

The realisation of $r$, denoted by $\bar{r}$, is the infinite concatenation
\begin{align}
    \bar{r}=\gamma_0(r)\circ \gamma_1(r)\circ \gamma_2(r)\circ \ldots
\end{align}

\end{definition}

The definition involves a choice of geodesics $[\alpha^*_{j-1}, \alpha_j^*]$. It does not matter how we choose but for simplicity of arguments we assume that the choice is fixed for the rest of the paper and consistent for all combinatorial rays $r$. Hence, for every combinatorial ray $r$, we have chosen a unique realisation $\bar{r}$.

If $r$ is of finite type, then $\bar{r}$ can also be written as the finite concatenation $\bar{r}=\gamma_0(r)\circ\gamma_1(r)\circ\ldots\circ\gamma_{l(r)}(r)$.

\begin{remark}\label{rem:morsenessofrealizationparts}
If $r$ is an $M$-Morse combinatorial ray, then by Lemma \ref{lemma:monster} \ref{prop:geodesic_segments} and \ref{prop:realization1} about choices of realisations, all the geodesics $\gamma_i(r)$ are $f_1(M)$-Morse. Hence the realisation of an $M$-Morse combinatorial ray $r$ is a concatenation of $f_1(M)$-Morse geodesics. 
\end{remark}

We can show that the realisation of a Morse combinatorial ray is a quasi-geodesic.

\begin{lemma}\label{lemma:quasi-geodesic-representatives}
Let $\mc G$ be a Morseless star. There exists an increasing function $D_4: \mc M\to \R_{\geq 1}$ such that for every $M$-Morse combinatorial ray $r$, its realisation $\bar{r}$ is a $D_4(M)$-quasi-geodesic.
\end{lemma}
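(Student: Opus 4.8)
The plan is to show that $\bar r$ is a quasi-geodesic by verifying that each initial concatenation $\gamma_0 \circ \gamma_1 \circ \cdots \circ \gamma_k$ is a quasi-geodesic with constants independent of $k$, then pass to the limit. The realisation $\bar r$ is a concatenation of $f_1(M)$-Morse geodesics by Remark \ref{rem:morsenessofrealizationparts}, and the natural tool to control concatenations of Morse geodesics is Lemma \ref{quasi-geodesic} together with Lemma \ref{lemma:monster}\ref{prop:multiple_concatenation}; but the latter requires already knowing the concatenation is a quasi-geodesic, so the real work is producing that uniform quasi-geodesic estimate from scratch.

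**First I would** establish the key geometric fact that consecutive pieces do not backtrack: the geodesic $\gamma_i = [\alpha_i^*, \alpha_{i+1}^*]$ enters the edge space $\pi^{-1}(\alpha_{i+1})^+$ essentially only at its endpoint, and likewise leaves $\pi^{-1}(\alpha_i)^+$ essentially only at its start. This should follow from the choice of $\alpha^*$ as a closest point of the edge group to $1$, combined with Lemma \ref{lemma:properties_of_morseless_stars}(\ref{lemma:undistorted_vertex_groups}) (edge groups are undistorted, so $\pi^{-1}$ of an edge separates $X$ coarsely) and (\ref{constant:edge_group_intersection2}) (a Morse quasi-geodesic meets a neighbourhood of an edge group in bounded diameter $D_3(M,C)$). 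Concretely: since $\alpha^*$ is a closest point of the edge group $\pi^{-1}(\alpha)^+$ to the basepoint, and since $\pi$ of any geodesic from $1$ to a point of $T_{\alpha_{i+1}}$ must pass through $\alpha_{i+1}$, the pieces $\gamma_0,\dots,\gamma_{i-1}$ all lie on the ``$1$-side'' of $\pi^{-1}(\alpha_i)$ while $\gamma_i, \gamma_{i+1},\dots$ lie on the far side. The tree structure of $T$ (the nested/disjoint dichotomy for the $T_\alpha$) is what makes these separations compatible across all indices simultaneously.

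**Next**, using this separation I would run an induction on $k$: assume $\eta_k := \gamma_0\circ\cdots\circ\gamma_k$ is a $D$-quasi-geodesic for a constant $D = D(M)$ to be determined, and show $\eta_{k+1}$ is too. For a putative ``shortcut'' geodesic $[\eta_k(0), \gamma_{k+1}(t)]$, its projection to $T$ must cross the edge $\alpha_{k+1}$, hence it passes within bounded distance (by undistortedness, or by Lemma \ref{lemma:rel_hyp_projection} in the relatively hyperbolic case — but here we want a statement for all Morseless stars, so we use undistortedness) of the point $\alpha_{k+1}^* = \gamma_{k+1}(0) = \gamma_k(+)$. So the shortcut decomposes as a path through $\gamma_k(+)$, and we can apply Lemma \ref{quasi-geodesic} to splice the pieces: the portion before $\alpha_{k+1}^*$ has endpoints near $\eta_k$ (so is controlled by the inductive hypothesis and Lemma \ref{lemma:monster}\ref{prop:adapted2.5}), and the portion after is a geodesic inside a single piece. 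The Morse gauge of $\eta_k$ stays bounded by $f_1^{?}(M)$ — and this is the subtle point: naive iteration of Lemma \ref{lemma:monster}\ref{cond:concatenation2} blows up the gauge by $f_1$ at each step. The fix is that we should \emph{not} treat $\eta_{k+1}$ as ``$\eta_k$ concatenated with one more piece'' at the level of Morse gauges; instead, because each $\gamma_i$ already has its two endpoints serving as coarse closest-point projections onto the respective edge spaces, a quasi-geodesic meeting $\eta_{k+1}$ with endpoints on it can be cut at the edge spaces it crosses and on each resulting subinterval it lies in a bounded neighbourhood of a \emph{single} piece $\gamma_i$, which is $f_1(M)$-Morse with a gauge independent of $i$. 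Summing the bounded errors at the (boundedly many relevant) crossings gives a uniform quasi-geodesic constant $D_4(M)$.

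**The hard part will be** making the ``boundedly many relevant crossings'' argument rigorous, i.e. showing that a shortcut path of length $L$ can only interact with $O(L)$ of the pieces $\gamma_i$ and that the total accumulated error is linear in $L$ with slope depending only on $M$. The danger is that a quasi-geodesic could oscillate back and forth across several edge spaces; ruling this out uses Lemma \ref{lemma:properties_of_morseless_stars}(\ref{constant:edge_group_intersection2}) (bounded diameter of the intersection of a Morse quasi-geodesic with a neighbourhood of an edge group), which forces the shortcut to cross each edge space $\pi^{-1}(\alpha_i)$ in a bounded ``window'' and hence to progress monotonically through the relevant initial segment of $p(r)$ up to bounded error. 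Once monotone progress is established, a telescoping/additivity estimate on $d_X$ along $\bar r$ (each piece $\gamma_i$ contributes its full length $d_X(\alpha_i^*,\alpha_{i+1}^*)$ to the total displacement, up to a bounded overlap at each junction) yields the linear lower bound $d_X(\bar r(s), \bar r(s')) \geq \tfrac{1}{D_4(M)}|s - s'| - D_4(M)$, which together with the trivial upper bound $|s-s'|$ (it is a concatenation of geodesics parametrised by arc length) completes the proof.
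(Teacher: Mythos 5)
Your proposal eventually lands on the right closing idea — the telescoping estimate
\begin{align}
d_X(1, x_{k+1}) \geq d_X(1, x_k) + d_X(x_k, x_{k+1}) - K
\end{align}
for a uniform constant $K = K(M)$, with $x_k = \alpha_k(r)^*$ — and this is exactly what the paper's proof establishes and then sums. However, the bulk of your argument is derailed into concerns that are irrelevant to the statement being proved, and the inductive scaffolding you set up would not actually deliver the result.

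The statement to be proven is purely metric: bound $d_X(\bar r(s), \bar r(t))$ above and below linearly in $|s-t|$. Nothing about Morse gauges of $\bar r$, or about controlling quasi-geodesics \emph{with endpoints on} $\bar r$, is needed here; those are exactly the concerns of the separate \emph{Morse-preserving} property (Definition \ref{definition:morse-preserving} and Lemmas \ref{lemma:morse_preserving_rel_hyp} and the trivial-edge-group lemma). Your ``cut shortcut paths at edge spaces'' and ``boundedly many relevant crossings'' discussion is the core of those later lemmas, not of Lemma \ref{lemma:quasi-geodesic-representatives}. Mixing the two obscures the actual content. The only place Morse gauges enter Lemma \ref{lemma:quasi-geodesic-representatives} is as input: each piece $\gamma_k$ is $f_1(M)$-Morse, and the bounded-intersection Lemma \ref{lemma:properties_of_morseless_stars}(\ref{constant:edge_group_intersection2}) is applied to the single piece $\eta = \gamma_k|_{[x_k, x_{k+1}]}$ to produce the uniform constant $K$. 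No gauge of any concatenation is ever needed.

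The inductive framing (``assume $\eta_k$ is a $D$-quasi-geodesic, show $\eta_{k+1}$ is'') also does not work as stated: each application of Lemma \ref{quasi-geodesic} to splice a geodesic onto a quasi-geodesic multiplies the constant by $3$, so the naive induction gives constants growing like $3^k$. The fix is precisely to strengthen the inductive hypothesis to the additivity identity above (equivalently, to prove it for all $k$ at once rather than inductively), which is what the paper does. Finally, your assertion that a geodesic $[1, \gamma_{k+1}(t)]$ ``passes within bounded distance of the point $\alpha_{k+1}^*$'' is not quite what one gets; the paper's argument is more careful, comparing the intersection point $y_k$ of $[1, x_{k+1}]$ with $W_k$ (which satisfies $d_X(1,y_k) \geq d_X(1,x_k)$ because $x_k$ is a closest point of $1$ to $W_k$) against the closest point $z_k$ of $x_{k+1}$ to $W_k$, and then applying the bounded-intersection property to the $W_k$-geodesic from $z_k$ to $x_k$ inside a neighbourhood of $\eta$. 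That chain of inequalities yields the additivity constant $K$ directly, with no induction and no Morse-gauge bookkeeping for the concatenation.
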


\begin{figure}
\centering
\begin{minipage}{.5\textwidth}
  \centering
  \includegraphics[width=.9\linewidth]{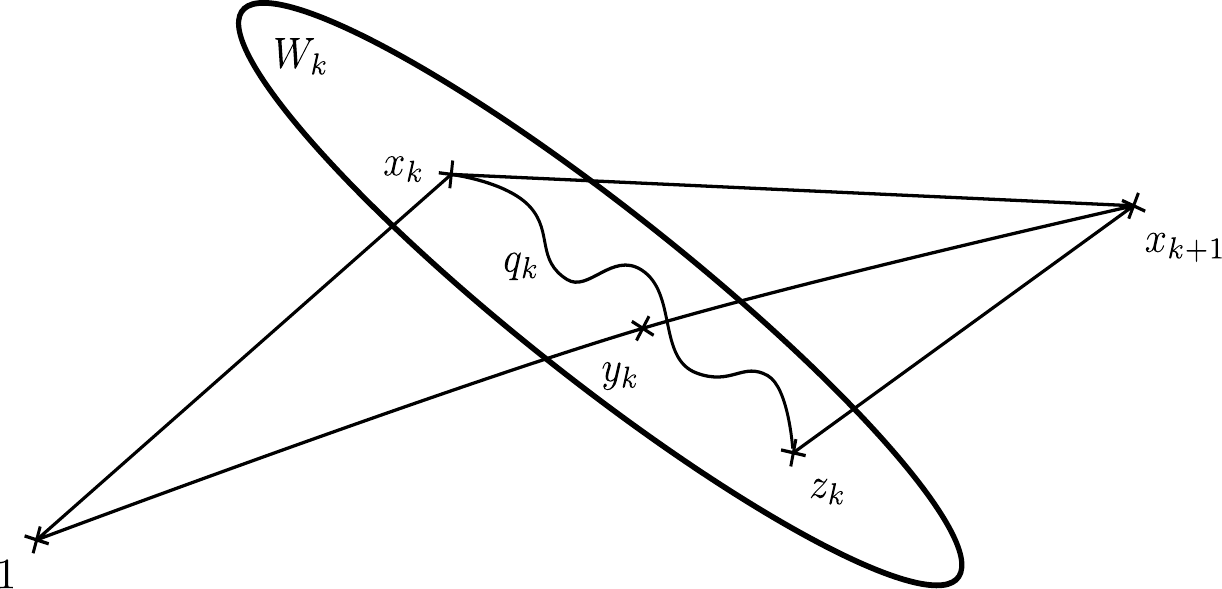}
  \captionof{figure}{Proof of Lemma~\ref{lemma:quasi-geodesic-representatives}}
  \label{picture:lemma3.14}
\end{minipage}%
\begin{minipage}{.5\textwidth}
  \centering
  \includegraphics[width=.9\linewidth]{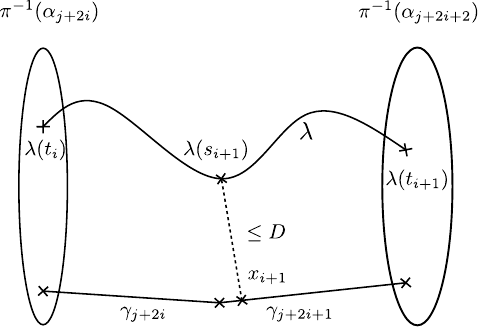}
  \captionof{figure}{Proof of Lemma \ref{lemma:morse_preserving_rel_hyp}}
  \label{picture:morse_preserving_rel_hyp}
\end{minipage}
\end{figure}

\begin{proof}
Let $r$ be an $M$-Morse combinatorial ray. Let $s, t\in [0. \infty)$. We want to show that $\bar{r}$ is a $D = D_4(M)$-quasi-geodesic by showing that
\begin{align} \label{lemma:quasi-geodesic-representatives:eq2}
   \frac{\abs{t-s}}{D} - D\leq d_X(\bar{r}(s), \bar{r}(t))\leq D\abs{t - s} +D.
\end{align} 
We determine the value of $D$ during the proof and it is independent of $r$, $s$ and $t$.

Let $x_0 = e$. For $1\leq k \leq l(r)$, let $W_k$ be the edge group $\pi^{-1}(\alpha_k(r))^+$ and  let $x_k$ be its basepoint $\alpha_k(r)^*$. If $r$ is of finite type we additionally say $x_{l(r)+1}$ is any vertex on $\gamma_{l(r)}(r)$ with $x_{l(r)+1}\in\pi^{-1}(\omega(r))$ and $d_X(e, x_{l(r)+1})\geq d_X(e, x_{l(r)}) +1$. Lemma \ref{lemma:vertex_group_intersection} shows that infinitely many choices for $x_{l(r)+1}$ exist.
Note that for $1\leq k \leq l(r)$ we have 
\begin{itemize}
\item $[e, x_{k+1}]$ intersects $W_k$,
\item $d_X(e, x_{k+1})\geq d_X(e, x_{k})+1$.
\end{itemize}

The points $x_k$ and $x_{k+1}$ both lie on $\gamma_k(r)$, which is $f_1(M)$-Morse by Remark \ref{rem:morsenessofrealizationparts}. Let $\eta$ be the subsegment of $\gamma_k(r)$ from $x_k$ to $x_{k+1}$. Lemma \ref{lemma:monster}\ref{prop:subsegments} about subsegments shows that $\eta$ is $M_1 = f_1^2(M)$-Morse. Let $y_k$ be a point in the intersection $[e, x_{k+1}]\cap W_k$ and let $z_k$ be a closest point to $x_{k+1}$ on $W_k$. We have that
\begin{align}\label{lemma:quasi-geodesic-representatives:eq1}
     d_X(e, x_k)\leq d_X(e, y_k) \qquad \text{and}\qquad d_X(z_k, x_{k+1}) \leq d_X(y_k, x_{k+1}).
\end{align}
Let $q_k$ be a $W_k$-geodesic from $z_k$ to $x_k$. The path $[x_{k+1} ,z_k]q_k$ has endpoints on $[x_k, x_{k+1}]$ and, by Lemma \ref{lemma:properties_of_morseless_stars} (\ref{lemma:vertex_group_morseness}) and Lemma \ref{quasi-geodesic}, is a $3C_2$-quasi-geodesic. Thus $q_k\subset W_k$ is in the $M_1(3C_2)$-neighbourhood of $\eta$ and by Lemma \ref{lemma:properties_of_morseless_stars} (\ref{constant:edge_group_intersection2}), we have that $d_X(z_k, x_k)\leq D_3(M_1, M_1(3C_2)) = K$. 

By the triangle inequality, $d_X(x_k, x_{k+1})\leq d_X(x_{k+1}, z_k) + K$ and thus \eqref{lemma:quasi-geodesic-representatives:eq1} implies
\begin{align}
    d_X(e, x_{k+1}) &= d_X(e, y_k) + d_X(y_k, x_{k+1})\geq d_X(e, x_k) + d_X(z_k, x_{k+1})\geq d_X(e, x_k) + d_X(x_k, x_{k+1}) - K.
\end{align}

Define $a_{k+1}$ via 
\begin{align}
    d_X(e, x_{k+1}) = d_X(e, x_k) + d_X(x_k, x_{k+1}) - a_{k+1}.
\end{align}
Note that $0\leq a_{k+1}\leq K$. Since $d_X(e, x_{k+1})\geq d_X(e, x_k) +1$ we also have $d_X(x_k, x_{k+1}) - a_{k+1}\geq 1$. By induction, we get that 
\begin{align}
    d_X(e, x_{k+1}) = \sum_{j=0}^k(d_X(x_j, x_{j+1}) - a_{j+1}).
\end{align}
Furthermore, for any $b\in [x_k, x_{k+1}]$ we have 
\begin{align}\label{lemma:quasi-geodesic-representatives:eq3}
    d_X(e, x_{k+1}) - d_X(x_{k+1}, b) \leq d_X(e, b) \leq d_X(e, x_k) + d_X(x_k, b).
\end{align}

Let $u = \bar{r}(s)$ and let $v = \bar{r}(t)$. We may assume (by potentially changing $x_{l(r)+1}$) that $u\in [x_n,x_{n+1}]$ and $v\in[x_m, x_{m+1}]$ for some $0\leq n \leq m \leq l(r)$. Let $D=2K\geq 2$. We are ready to start bounding $d_X(u, v)$. If $n = m$, $u$ and $v$ lie on the same geodesic and \eqref{lemma:quasi-geodesic-representatives:eq2} is immediate. From now on we assume $n<m$. Since $\bar{r}$ is a concatenation of geodesics, the upper bound in \eqref{lemma:quasi-geodesic-representatives:eq1} holds. Observe that 
\begin{align}
    t - s = \left( \sum_{j = n}^m d_X(x_j, x_{j+1})\right)  - d_X(x_n, u) - d_X(x_{m+1}, v). 
\end{align}
By \eqref{lemma:quasi-geodesic-representatives:eq3}, 
\begin{align}
     d_X(u, v)\geq  d_X(e, v) - d_X(e, u) &\geq d_X(e, x_{m+1}) - d_X(x_{m+1}, v) - d_X(e, x_n) - d_X(x_n, u) \\
      & = - d_X(x_{m+1}, v)- d_X(x_n, u) + \sum_{j = n}^m (d_X(x_j, x_{j+1}) - a_{j+1}) = A.\\
\end{align}
We can bound $A$ in two ways. On one hand
\begin{align}
    A = t - s - \sum_{j=n}^m a_{j+1}\geq t - s  - K(m - n+1).
\end{align}
If $t - s\geq 2K(m - n - 1)$ we have $(t - s)/2\geq K(m - n -1)$ and hence $A\geq t - s - K(m - n +1)\geq (t-s)/2 - 2K$. Since $D\geq 2$, we are done in this case. On the other hand we know that $d_X(x_j, x_{j+1}) - a_{j+1}\geq 1$ and hence
\begin{align}
     A &= (d_X(x_n, x_{n+1}) - d_X(x_n, u)) - a_{n+1} + (d_X(x_m, x_{m+1}) - d_X(x_{m+1}, v)) - a_{m+1}\\
     & + \sum_{j=n+1}^{m-1} (d_X(x_j, x_{j+1}) - a_{j+1})\\
     &\geq  0 -K + 0 - K + (m - n -1).
\end{align}
Thus, if $t -s\leq 2K(m - n - 1)$, then $d_X(u,v)\geq A\geq (t - s)/D - D$. 
Hence, $\bar{r}$ is indeed a $D=D_4(M)$-quasi-geodesic. 
\end{proof}

\begin{definition} \label{definition:morse-preserving} A graph of groups $\mc G$ is Morse preserving if there exists an increasing function $f_\pres:\mc M\to\mc M$ such that for every $M$-Morse combinatorial ray $r$, its realisation $\bar{r}$ is $f_\pres (M)$-Morse. 
\end{definition}

Not every Morseless star is Morse preserving; the example in the introduction is an example of a Morseless star which is not Morse preserving. 
The Morse preserving property is essential for our proof strategy and it is the main reason Theorem \ref{theorem:star_of_groups} is only stated for relatively hyperbolic Morseless stars. Relative hyperbolicity is a quite strong assumption and it would be interesting to study which weaker assumptions imply that $\mc G$ is Morse preserving.

\begin{lemma}\label{lemma:morse_preserving_rel_hyp}
Every relatively hyperbolic Morseless star $\mc G$ is Morse-preserving.
\end{lemma}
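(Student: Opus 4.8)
The goal is to show that for a relatively hyperbolic Morseless star $\mc G$, there is an increasing function $f_\pres : \mc M \to \mc M$ so that every $M$-Morse combinatorial ray $r$ has an $f_\pres(M)$-Morse realisation $\bar r$. By Lemma \ref{lemma:quasi-geodesic-representatives} we already know $\bar r$ is a $D_4(M)$-quasi-geodesic; the point is to upgrade this to Morseness with a gauge depending only on $M$. The plan is to use the relatively hyperbolic structure on $X$ furnished by Lemma \ref{lemma:realhypassoc}: $X$ is hyperbolic relative to a collection $\mc P$ containing every $P(\alpha)$, $\alpha \in E(T)$, and we have the projection estimates of Lemma \ref{lemma:rel_hyp_projection} with constant $C_1$ and function $D_2$ available.

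The key idea is that a $C$-quasi-geodesic $\eta$ with endpoints on $\bar r$ that fails to stay close to $\bar r$ must do so by ``taking a shortcut'' through one of the peripheral sets $P(\alpha)$ that $\bar r$ passes through; but by Remark \ref{rem:morsenessofrealizationparts} each constituent geodesic $\gamma_k$ of $\bar r$ is $f_1(M)$-Morse, and by Lemma \ref{lemma:properties_of_morseless_stars}(\ref{constant:edge_group_intersection2}) an $f_1(M)$-Morse (quasi-)geodesic meets any bounded neighbourhood of an edge group in a set of controlled diameter. So first I would record that $\bar r$ passes through each set $P(\alpha_k(r))$ only in a uniformly bounded region: indeed $\bar r$ restricted to $\gamma_{k-1} \cup \gamma_k$ is a concatenation of two $f_1(M)$-Morse geodesics meeting at $\alpha_k^*$, hence $f_1^2(M)$-Morse on that stretch, and its intersection with $\mc N_C(\pi^{-1}(\alpha_k(r)))$ has diameter at most $D_3(f_1^2(M), C)$ (together with a similar statement for the other kind of peripheral, $P(v)$ with $\check v \neq 0$, which is a bounded neighbourhood of the corresponding edge group union a vertex group of bounded relevant diameter). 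Crucially, each peripheral set in $\mc P$ contains $\pi^{-1}(\alpha)$ for the edges $\alpha$ on $p(r)$, and distinct edges of $p(r)$ give peripherals whose pairwise projections have diameter $\leq C_1$; so $\bar r$ is essentially a geodesic transversal to a chain of peripherals, with bounded ``penetration'' into each.

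Concretely, given a continuous $(\lambda,\eps)$-quasi-geodesic $\eta$ with endpoints $\eta^-, \eta^+$ on $\bar r$, I would argue as follows. If both endpoints lie on the same $\gamma_k$, Morseness of $\gamma_k$ (gauge $f_1(M)$) does it. Otherwise the endpoints lie on $\gamma_n$ and $\gamma_m$ with $n<m$. For the relatively hyperbolic space $X$, there is a standard structure theorem for quasi-geodesics (one can use Lemma \ref{lemma:rel_hyp_projection}(2),(4) to bound how far $\eta$ stays from a peripheral it ``enters''): either $\eta$ comes within a controlled distance of $\pi^{-1}(\alpha_j(r))$ for each $n< j\leq m$, in which case $\eta$ is forced to track the ``skeleton'' $\bigcup_{j} \pi^{-1}(\alpha_j(r))$ along which $\bar r$ travels, and then on each piece between consecutive edge groups $\eta$ and $\bar r$ have endpoints in a common bounded neighbourhood of the $f_1^2(M)$-Morse subsegment $\gamma_k$, so Lemma \ref{lemma:monster}(\ref{prop:adapted2.5}) bounds the excursion; or $\eta$ skips one of the intermediate peripherals, which is impossible by Lemma \ref{lemma:rel_hyp_projection}(3),(4) once the combinatorial distance along $p(r)$ is at least $2$, since the edges on $p(r)$ form a geodesic in the tree $T$ and hence the corresponding peripherals appear in order along any quasi-geodesic joining points beyond them. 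Assembling the bounds, the neighbourhood of $\bar r$ containing $\eta$ has radius a function of $M$, $\lambda$, $\eps$ only — built from $f_1^2(M)$, $C_1$, $C_2$, $D_2$, $D_3$, $D_4(M)$, and $\delta_{f_1^2(M)}$ — and this defines $f_\pres(M)$. (One then applies the usual upgrading argument to get an increasing $f_\pres$, exactly as in the proof of Lemma \ref{lemma:monster}.)

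The main obstacle I anticipate is the tail: when $r$ is of finite type, $\gamma_{l(r)} = \omega(r)^* \cdot \xi$ is a realisation of a Morse direction of $\partial_* G_{\check\omega}$, which sits inside the vertex group $\pi^{-1}(\omega(r))$, and I must make sure a shortcut $\eta$ cannot escape into the rest of $X$ and come back — but this is controlled because $\pi^{-1}(\omega(r))$ is itself (contained in) a peripheral $P(\omega(r))$ (when $\check\omega \neq 0$) or is handled by relative hyperbolicity of $G_0$ directly, and by Lemma \ref{lemma:vertex_group_intersection} the realisation meets $\pi^{-1}(\omega(r))$ unboundedly while diverging from every edge group, so the entry/exit regions are bounded. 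A second delicate point is bookkeeping the interaction of the two families of peripherals ($P(v)$ for $\check v\neq 0$ versus $\pi^{-1}(\alpha)$ for loop edges at $0$) and checking that ``$\bar r$ crosses them in tree order,'' which I expect to reduce cleanly to the fact that $p(r)$ is a geodesic in $T$ together with Remark \ref{rem:p(alpha)are_the_same}.
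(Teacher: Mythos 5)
Your overall strategy --- chop the test quasi-geodesic $\eta$ where it crosses the edge groups $\pi^{-1}(\alpha_j(r))$ (it must hit them by continuity and the tree structure), use Lemma~\ref{lemma:rel_hyp_projection} to find points of $\bar r$ near the chop points, and then bound each piece of $\eta$ using the Morseness of a short concatenation of the $\gamma_k$'s via Lemma~\ref{lemma:monster}\ref{prop:adapted2.5} --- is exactly the paper's strategy. But the ``second delicate point'' you flag and postpone is a genuine gap, and your plan as stated does not get past it.

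Concretely: knowing $\eta(t_j)\in\pi^{-1}(\alpha_j(r))$ does not put $\eta(t_j)$ anywhere near $\bar r$, since edge groups are infinite. The only tool available to bring $\eta$ and $\bar r$ close together is Lemma~\ref{lemma:rel_hyp_projection}(4): two quasi-geodesics from a peripheral $P$ to a peripheral $Q$ both pass near $\pi_Q(P)$, hence near each other --- \emph{provided} $P\neq Q$. If you chop at every edge, then for consecutive edges $\alpha_j,\alpha_{j+1}$ of $p(r)$ sharing a vertex $v$ with $\check v\neq 0$, Remark~\ref{rem:p(alpha)are_the_same} gives $P(\alpha_j)=P(v)=P(\alpha_{j+1})$, and the projection estimate evaporates. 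The fix, which needs to be stated rather than anticipated as ``bookkeeping,'' is to chop at every \emph{other} edge group $\pi^{-1}(\alpha_{j+2i})$: edges at $T$-distance two share no endpoint, so $P(\alpha_{j+2i})\neq P(\alpha_{j+2i+2})$, the projection lemma applies to both $\eta[t_i,t_{i+1}]$ and $\gamma_{j+2i}\gamma_{j+2i+1}$, and each piece of $\eta$ is then a quasi-geodesic with endpoints near the concatenation $\gamma_{j+2i}\circ\cdots\circ\gamma_{j+2i+3}$ (which is $f_1^4(M)$-Morse rather than your $f_1^2(M)$ --- harmless). Two smaller remarks: the ``$\eta$ skips a peripheral'' alternative you raise is a non-issue (the separating edge groups are literally hit by continuity, no projection estimate needed for that step); and the tail requires no special treatment --- $\gamma_{l(r)}$ behaves like any other $\gamma_k$ in this argument, since the last chop point is $\eta^+$ itself, which lies on $\bar r$ by hypothesis.
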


We prove this, by using the following observation. If $r$ is a combinatorial Morse ray, and $\lambda$ is a quasi-geodesic with endpoints on $\bar{r}$, then $\lambda$ has to come close to $\bar{r}$ regularly. This allows us to ``chop" $\lambda$ into segments and prove that each segment stays close.

\begin{proof}
This proof is depicted in Figure \ref{picture:morse_preserving_rel_hyp}. Let $M$ be a Morse gauge, and let $r$ be an $M$-Morse combinatorial ray. Let $K\geq 1$ and let $\lambda : [0, T]\to X$ be a $K$-quasi-geodesic with endpoints on $\bar{r}$. We may assume that $\lambda^-\in\gamma_j(r)$ and $\lambda^+\in \gamma_k(r)$ for some $0\leq j\leq k\leq l(r)$. For all $i\geq 0$ with $j+2i\leq k$ choose $t_i\in [0, T]$ such that $\lambda(t_i)\in \pi^{-1}(\alpha_{j+2i})$. In this case both $\lambda[t_i, t_{i+1}]$ and $\gamma_{j+2i}(r)\gamma_{j+2i+1}(r)$ are quasi-geodesics from $P(\alpha_{j+2i})$ to $P(\alpha_{j+2i+2})$. Lemma \ref{lemma:rel_hyp_projection} about relative hyperbolicity states that they both pass close to the projection $\pi_{P(\alpha_{j+2i})}(P(\alpha_{j+2i+2}))$. In particular, there exists $x_{i+1}\in \gamma_{j+2i}(r)\gamma_{j+2i+1}(r)$ and $s_{i+1}\in [t_i, t_{i+1}]$ with $d_X(x_{i+1}, \lambda(s_{i+1}))\leq C_1 + D_2(K) +D_2 (D_4(M))= D$. Set $n = \lfloor{\frac{k-j}{2}}\rfloor+1$ and define $s_0 =0, s_n = T, x_0 = \lambda(0)$ and $x_n = \lambda(T)$. For every $0\leq i <n $ the concatenation $p=[x_i, \lambda(s_i)]\lambda[s_i, s_{i+1}][\lambda(s_{i+1}), x_{i+1}]$ is a $(K+2D)$-quasi-geodesic with endpoints on $\gamma = \gamma_{j+2i}(r)\circ\gamma_{j+2i+1}(r)\circ\gamma_{j+2i+2}(r)\circ\gamma_{j+2i+3}(r)$. By Remark \ref{rem:morsenessofrealizationparts} and Lemma \ref{lemma:monster} \ref{prop:multiple_concatenation} about concatenations, the quasi-geodesic $\gamma$ is $M_1 = f_1^4(M)$-Morse. Hence $p$ is contained in the $M_1(K+2D)$ neighbourhood of $\gamma\subset \bar{r}$.
Since this holds for every index $i<n$, the realisation $\bar{r}$ is $f_\pres(M)$-Morse, where $f_\pres(M)(L, C) =  M_1(L+C+2D)$. 
\end{proof}

\begin{lemma}
Every Morseless star $\mc G$ with trivial edge groups is Morse-preserving.
\end{lemma}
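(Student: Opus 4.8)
The plan is to exploit the one feature that distinguishes trivial edge groups. For every edge $\alpha$ of the Bass--Serre tree $T$, the preimage $\pi^{-1}(\alpha)$ is a single edge of $X$; consequently a geodesic of $X$ joining two points of one vertex--group copy $\pi^{-1}(v)$ stays inside $\pi^{-1}(v)$, and a geodesic ray representing a direction of $\mb G_{\check v}$ (viewed in $\mb X$) stays inside $\pi^{-1}(v)$ as well. This is exactly the ``cut point between vertex groups'' phenomenon recalled in Section~\ref{section:morseless_stars}; it is where triviality is used, in contrast with the relatively hyperbolic case of Lemma~\ref{lemma:morse_preserving_rel_hyp}, where one only obtains that competing quasi-geodesics pass \emph{near} a coarse projection. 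In particular, the subcomplex $\pi^{-1}(T_\alpha)$ is attached to the rest of $X$ only through the single edge $\pi^{-1}(\alpha)$, so any continuous path from a point of $X\setminus\pi^{-1}(T_\alpha)$ to a point of $\pi^{-1}(T_\alpha)$ must traverse $\pi^{-1}(\alpha)$ and hence pass through its endpoint $\alpha^*$ lying in $\pi^{-1}(T_\alpha)$. All of this is established in \cite{graph_of_groups1}.

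Now let $r$ be an $M$--Morse combinatorial ray with realisation $\bar r=\gamma_0\circ\gamma_1\circ\cdots$. By Remark~\ref{rem:morsenessofrealizationparts} each $\gamma_i=\gamma_i(r)$ is $f_1(M)$--Morse; by construction $\gamma_{i-1}$ and $\gamma_i$ meet exactly at $\alpha_i(r)^*$, and from the first paragraph $\gamma_i\subseteq\pi^{-1}(T_{\alpha_i(r)})$ for $i\geq 1$ (with $\gamma_0\subseteq X$), while $\gamma_j$ meets $\pi^{-1}(T_{\alpha_{j+1}(r)})$ only at the point $\alpha_{j+1}(r)^*$. Fix a continuous $(\mu,\eps)$--quasi-geodesic $\lambda:[0,T]\to X$ with endpoints on $\bar r$ and put $C=\max\{\mu,\eps\}$, so $\lambda$ is a $C$--quasi-geodesic; say $\lambda(0)\in\gamma_j$ and $\lambda(T)\in\gamma_k$ with $j\leq k$. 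I would show $\lambda\subseteq\mc N_{f_1(M)(C)}(\bar r)$ by induction on $k-j$. If $k=j$, then both endpoints of $\lambda$ lie on the $f_1(M)$--Morse geodesic $\gamma_j$, so $\lambda\subseteq\mc N_{f_1(M)(C)}(\gamma_j)\subseteq\mc N_{f_1(M)(C)}(\bar r)$. If $k>j$, then $\lambda(T)\in\gamma_k\subseteq\pi^{-1}(T_{\alpha_{j+1}(r)})$ while $\lambda(0)$ is either $\alpha_{j+1}(r)^*$ itself or lies outside $\pi^{-1}(T_{\alpha_{j+1}(r)})$; in either case $\lambda$ passes through $\alpha_{j+1}(r)^*$ (at its first entry time into $\pi^{-1}(T_{\alpha_{j+1}(r)})$, which by the first paragraph can only be at $\alpha_{j+1}(r)^*$). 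Cutting $\lambda$ at that time writes it as a concatenation of a $C$--quasi-geodesic with both endpoints on $\gamma_j$ (namely $\lambda(0)$ and $\gamma_j^+=\alpha_{j+1}(r)^*$) and a $C$--quasi-geodesic from $\gamma_{j+1}^-=\alpha_{j+1}(r)^*$ to $\lambda(T)\in\gamma_k$; the former lies in $\mc N_{f_1(M)(C)}(\gamma_j)$ and the latter in $\mc N_{f_1(M)(C)}(\bar r)$ by the inductive hypothesis. As this holds for every competing continuous quasi-geodesic, $\bar r$ is $f_\pres(M)$--Morse with $f_\pres(M)(\mu,\eps):=f_1(M)(\max\{\mu,\eps\})$, which is increasing in $M$ because $f_1$ is; hence $\mc G$ is Morse-preserving.

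The only genuinely load-bearing step is the claim that $\lambda$ must pass through $\alpha_{j+1}(r)^*$: this is precisely where triviality of the edge groups enters, and it is what makes this lemma far shorter than Lemma~\ref{lemma:morse_preserving_rel_hyp}. Everything else is bookkeeping: checking the harmless degenerate cases where $\lambda(0)$ or $\lambda(T)$ equals $\alpha_{j+1}(r)^*$ or sits on the cut edge $\pi^{-1}(\alpha_{j+1}(r))$, and noting that when $r$ has finite type the final piece $\gamma_{l(r)}=\omega(r)^*\cdot\xi$ is a ray contained in $\pi^{-1}(\omega(r))$ (by the ``realisations of directions of $\mb G_{\check v}$ stay inside $\pi^{-1}(v)$'' part of the first paragraph) rather than a segment, which affects none of the above.
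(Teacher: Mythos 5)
Your proof is correct and rests on the same idea as the paper's: with trivial edge groups each $\alpha_i^*$ is a cut vertex, so any competing quasi-geodesic must pass through each $\alpha_{j+1}^*,\ldots,\alpha_k^*$, and chopping at those points leaves pieces with endpoints on the $f_1(M)$-Morse geodesics $\gamma_i$. The only cosmetic differences are that the paper exhibits all cut times $s_{j+1},\ldots,s_k$ at once rather than by induction, and works with the original constants $(K,C)$ so that $f_\pres=f_1$ directly, whereas you pass to $C=\max\{\mu,\eps\}$ and get the marginally weaker but still sufficient gauge $f_\pres(M)(\mu,\eps)=f_1(M)(\max\{\mu,\eps\})$.
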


\begin{proof}
Let $r$ be an $M$-Morse combinatorial ray. We show that $\bar{r}$ is $f_1(M)$-Morse. Let $\lambda: [0, T]\to X$ be a $(K, C)$-quasi-geodesic with endpoints $\lambda^-\in \gamma_j(r)$ and $\lambda^+\in \gamma_k(r)$ for some $0\leq j\leq k \leq l(r)$. For $j+1\leq i \leq k$ removing $\alpha_i^*$ disconnects $X$ and $\lambda^-$ and $\lambda^+$ are in different connected components. Thus, there exists $s_i\in [0, T]$ with $\lambda(s_i) = \alpha_i^*$. Define $s_j = 0$ and $s_{k+1} = T$. With this definition for $j\leq i \leq k$, $\lambda[s_i, s_{i+1}]$ has endpoints on $\gamma_i(r)$, which is $f_1(M)$-Morse by Remark \ref{rem:morsenessofrealizationparts}. Hence $\lambda[s_i, s_{i+1}]\subset \mc N_{f_1(M)(K, C)}(\gamma_i(r))\subset\mc N_{f_1(M)(K, C)}(\bar{r})$. Since this holds for all $j\leq i \leq k$ we have $\lambda[s_j, s_{k+1}] = \lambda\subset \mc N_{f_1(M)(K, C)}(\bar{r})$.
\end{proof}

Now we we are ready to state the main result of this section.

\begin{proposition}\label{prop:equivalence_morse_boundary}
Let $\mc {G}$ be a Morse-preserving Morseless star. The map $\Phi : \delta_* X\to \partial_* X$ defined by $\Phi(r)=[\bar{r}]$ is a well-defined bijection. Moreover, there exists an increasing function $f_\bij: \mc M \to \mc M$ such that if $r\in \comb X$ (resp $\gamma \in \mb X$) is $M$-Morse, then $\Phi(r)$ (resp $\Phi^{-1}(\gamma)$) is $f_\bij(M)$-Morse. 
\end{proposition}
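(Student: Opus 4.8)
The plan is to verify four things in order: (a) $\Phi$ is well-defined, i.e. every $M$-Morse combinatorial ray $r$ has a realisation $\bar r$ that is a Morse \emph{ray} (not just a Morse quasi-geodesic), with gauge controlled by $M$; (b) $\Phi$ is injective; (c) $\Phi$ is surjective; and (d) assemble the quantitative bounds into the function $f_{\bij}$. For (a), note that $\bar r$ is a $D_4(M)$-quasi-geodesic by Lemma \ref{lemma:quasi-geodesic-representatives}, and it is $f_{\pres}(M)$-Morse by the Morse-preserving hypothesis (Definition \ref{definition:morse-preserving}). To see $\bar r$ is unbounded: if $r$ has infinite type, the points $x_k=\alpha_k^*$ satisfy $d_X(1,x_k)\to\infty$ (strictly increasing by at least $1$ each step, as in the proof of Lemma \ref{lemma:quasi-geodesic-representatives}); if $r$ has finite type, the tail $\omega(r)^*\cdot\xi$ is a translate of a genuine ray $\xi$ realising a Morse direction of $G_{\check\omega}$, and by Lemma \ref{lemma:vertex_group_intersection} it escapes every bounded set inside $\pi^{-1}(\omega(r))$. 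So $\bar r$ is a Morse quasi-geodesic ray; straightening it to a geodesic ray with the same endpoint (and applying Lemma \ref{lemma:monster}\ref{prop:realization1}) shows $[\bar r]\in\partial_* X$ is a Morse direction, with gauge $f_2(f_{\pres}(M), D_4(M))$ up to the adjustments in that lemma. This gives the ``$\Phi(r)$ is $f_{\bij}(M)$-Morse'' half of the moreover statement.

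For injectivity, suppose $\bar r$ and $\bar r'$ are equivalent (bounded Hausdorff distance). Using Lemma \ref{lemma:properties_of_morseless_stars}\ref{constant:edge_group_intersection2} (Morse quasi-geodesics diverge from edge groups), the sequence of vertices of $T$ visited by $\pi\circ\bar r$ is eventually determined by $[\bar r]$: if $\bar r$ and $\bar r'$ followed different geodesic paths in $T$, then past the branch point they would have to cross a common separating edge group infinitely often or else remain a bounded distance apart only up to that edge group, contradicting unboundedness of $\mathcal N_C(\bar r)\cap\bar r'$ (Lemma \ref{lemma:unbounded_crossing}) together with the divergence-from-edge-groups remark after Lemma \ref{lemma:properties_of_morseless_stars}. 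Hence $p(r)=p(r')$ (as paths, including whether finite or infinite). In the finite-type case, once the paths agree the tails $\omega(r)^*\cdot\xi$ and $\omega(r')^*\cdot\xi'$ are both contained in $\pi^{-1}(\omega)$-quasi-geodesics that stay bounded-Hausdorff-close, so by Lemma \ref{lemma:properties_of_morseless_stars}\ref{lemma:vertex_group_morseness} (Morseness transfers between $Y$ and $X$) and Lemma \ref{lemma:vertex_group_intersection} the tails represent the same Morse direction in $\mb G_{\check\omega}$; thus $z(r)=z(r')$ and $r=r'$.

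For surjectivity, take any Morse direction $[\gamma]\in\partial_* X$, say $\gamma$ is an $M$-Morse geodesic ray from $1$. Project to the tree: $\pi\circ\gamma$ traces out a path in $T$; by the divergence-from-edge-groups property this path stabilises to a ray or to a finite geodesic path $p$. Define $\alpha_1,\dots$ to be the edges of $p$. Each $\alpha_j$ (made outgoing) is $M'$-Morse for a gauge $M'$ depending only on $M$: the geodesic $[v^*,\alpha_j^*]$ connecting consecutive basepoints has endpoints within bounded distance of $\gamma$ (since $\gamma$ must pass near $\pi^{-1}(\alpha_j)^+$ and the basepoints are the closest points of the relevant edge/vertex groups to $1$), so by Lemma \ref{lemma:monster}\ref{prop:adapted2.5} and \ref{prop:subsegments} it is $f_2(f_1(M),\text{const})$-Morse. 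If $p$ is infinite, set $r=(\alpha_1,\alpha_2,\dots)$. If $p$ is finite with end vertex $\omega$, the tail of $\gamma$ eventually stays inside $\pi^{-1}(\omega)$ up to bounded distance; by Lemma \ref{lemma:properties_of_morseless_stars}\ref{lemma:vertex_group_morseness} this tail is $(M'',\pi^{-1}(\omega))$-Morse, hence $(\omega^*)^{-1}$ applied to it represents a Morse direction $z\in\mb G_{\check\omega}$, and we set $r=(\alpha_1,\dots,\alpha_n;z)$. In both cases $r$ is a Morse combinatorial ray with $\Phi(r)=[\bar r]=[\gamma]$, the last equality by Lemma \ref{lemma:unbounded_crossing} since $\bar r$ and $\gamma$ share an unbounded neighbourhood (they follow the same tree path and, in the finite case, the same tail direction). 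Tracking the gauges through this argument yields $\Phi^{-1}([\gamma])$ is $f_{\bij}(M)$-Morse after enlarging $f_{\bij}$.

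The main obstacle I expect is the tree-path-stabilisation step underlying both injectivity and surjectivity: showing rigorously that a Morse quasi-geodesic ray in $X$ determines a well-defined eventual path in the Bass-Serre tree, and that bounded Hausdorff distance in $X$ forces the two tree paths to coincide. The crucial inputs are the divergence-from-edge-groups consequence of Lemma \ref{lemma:properties_of_morseless_stars}\ref{constant:edge_group_intersection2} and Lemma \ref{lemma:unbounded_crossing}; the delicate point is handling the transition region near the branch point of $T$ and making the "finite vs.\ infinite type, and which end vertex" dichotomy precise, since $\pi\circ\gamma$ need not be monotone or a quasi-geodesic in $T$. The remaining bookkeeping — collecting all the applications of Lemma \ref{lemma:monster} and Lemma \ref{lemma:properties_of_morseless_stars} into a single increasing $f_{\bij}$, using the convention that such functions dominate the identity — is routine.
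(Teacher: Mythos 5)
Your proposal follows essentially the same strategy as the paper's proof, and you correctly identify where the real work lies. But two steps are asserted rather than proved, and both are in fact the crux of the argument.

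First, the step you flag as the ``main obstacle'' --- extracting from a Morse quasi-geodesic a well-defined geodesic path in $T$ and deciding whether it is finite or infinite --- is not something you supply; it is exactly what the paper isolates as Lemma~\ref{lemma:representatives_in_tree}. You invoke ``the divergence-from-edge-groups property'' and Lemma~\ref{lemma:unbounded_crossing}, which are the right ingredients, but the precise statement you need is that the set of outgoing edges $\alpha$ with $\pi(\gamma[t,\infty))\subset T_\alpha$ for large $t$ forms a geodesic starting at $G_0$, and that if this set is finite then $\gamma$ returns to the vertex group $\pi^{-1}(p_\gamma^+)$ unboundedly. Since $\pi\circ\gamma$ is neither monotone nor a quasi-geodesic in $T$, this requires the argument in Lemma~\ref{lemma:representatives_in_tree}, which you should cite (it is proved before the proposition) rather than re-derive or leave implicit. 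As written, your injectivity and surjectivity arguments both rest on this unproved claim.

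Second, in the finite-type surjectivity case you assert that ``the tail of $\gamma$ eventually stays inside $\pi^{-1}(\omega)$ up to bounded distance.'' That is the conclusion, not the hypothesis: a priori Lemma~\ref{lemma:representatives_in_tree}~(2) only gives that $\gamma$ \emph{revisits} $\pi^{-1}(\omega)$ for arbitrarily large times, and between visits it could wander far into subtrees hanging off $\omega$. The paper bridges this gap by choosing $x_n\in\gamma[n,\infty)\cap\pi^{-1}(\omega)$, forming the $\pi^{-1}(\omega)$-geodesics $[\omega^*,x_n]_{\pi^{-1}(\omega)}$, and extracting a limiting $\pi^{-1}(\omega)$-geodesic ray via Arzel\`a--Ascoli, which is then shown to lie in a uniformly bounded neighbourhood of $\gamma$ (and conversely by Lemma~\ref{lemma:unbounded_crossing}). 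Without some such compactness or limiting argument, the existence of a Morse direction $z\in\mb G_{\check\omega}$ with $[\gamma]=[\omega^*\cdot\bar z]$ does not follow. The rest of your proposal --- well-definedness via Lemma~\ref{lemma:quasi-geodesic-representatives} and the Morse-preserving hypothesis, the injectivity case split on $p(r)$ versus $p(r')$, the use of Lemma~\ref{lemma:morse_edges} for the edge gauges, and the bookkeeping for $f_\bij$ --- matches the paper and is fine.
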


Every Morse combinatorial ray $r$ has an associated geodesic path $p(r)$ in $T$, which to a large extent determines $r$. To show that $\Phi$ is surjective one has to, given a Morse quasi-geodesic ray $\gamma$, recover the geodesic path in $T$ it is associated to. Thus we introduce the following definition.

\begin{definition}
Let $\gamma$ be a Morse quasi-geodesic starting at $e$. We say an outgoing edge $\alpha\in E(T)$ lies on $\gamma$ if there exists $t>0$ such that $\pi(\gamma[t, \infty))\subset T_\alpha$.
\end{definition}

\begin{lemma}\label{lemma:representatives_in_tree}
Let $\gamma : [0, \infty) \to X$ be a Morse quasi-geodesic with $\gamma(0)=e$. The following properties hold:

\begin{enumerate}
    \item The edges lying on $\gamma$ form a geodesic $p_\gamma$ in $T$ that starts at $G_0$. 
    \item If there are only finitely many edges lying on $\gamma$, then for every $t>0$, $\pi(\gamma [t,\infty)) \cap p_\gamma^+\neq \emptyset$.
    \item If an outgoing edge $\alpha$ does not lie on $\gamma$ there exists $t>0$ such that $\pi(\gamma[t,  \infty))\cap T_\alpha = \emptyset$
    \item Let $r$ be an $M$-Morse combinatorial ray. The edges lying on its realisation $\bar{r}$ are precisely the edges of $p(r)$.
\end{enumerate}
\end{lemma}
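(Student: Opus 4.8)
The plan is to establish the four items in order, since later items lean on earlier ones, and to use the divergence-from-edge-groups property (Lemma \ref{lemma:properties_of_morseless_stars}(\ref{constant:edge_group_intersection2}) together with Lemma \ref{lemma:unbounded_crossing}) as the main engine. For item (1), I would first check that the set of edges lying on $\gamma$ is linearly ordered by the nesting relation on the subtrees $T_\alpha$: if $\alpha$ and $\beta$ both lie on $\gamma$, witnessed by $t_\alpha$ and $t_\beta$, then taking $t=\max\{t_\alpha,t_\beta\}$ we get $\pi(\gamma[t,\infty))\subset T_\alpha\cap T_\beta$, which is nonempty, so $T_\alpha$ and $T_\beta$ are nested (they cannot be disjoint). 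Nesting of all the $T_\alpha$ means the corresponding edges lie along a single ray or segment from $G_0$ in $T$; consecutiveness of this edge set (no gaps) follows because if $\alpha$ lies on $\gamma$ and $\beta$ is the edge one step closer to $G_0$ on the geodesic $[G_0,\alpha^-]$, then $T_\alpha\subset T_\beta$, so $\beta$ lies on $\gamma$ as well. Hence the edges lying on $\gamma$ form a (finite or infinite) geodesic $p_\gamma$ based at $G_0$.

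For item (3): suppose the outgoing edge $\alpha$ does not lie on $\gamma$, so for every $t$ there is $s\geq t$ with $\pi(\gamma(s))\notin T_\alpha$. If also $\pi(\gamma(s'))\in T_\alpha$ for arbitrarily large $s'$, then by continuity of $\pi\circ\gamma$ and the fact that every path from inside $T_\alpha$ to outside $T_\alpha$ in the tree $T$ crosses the edge $\alpha$, the quasi-geodesic $\gamma$ would meet $\pi^{-1}(\alpha)$ (more precisely $\pi^{-1}(\alpha)^+$) at arbitrarily large parameter values; but this contradicts divergence from edge groups — $\mc N_C(\gamma)\cap\mc N_C(\pi^{-1}(\alpha)^+)$ is bounded by Lemma \ref{lemma:properties_of_morseless_stars}(\ref{constant:edge_group_intersection2}). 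So there is $t>0$ with $\pi(\gamma[t,\infty))\cap T_\alpha=\emptyset$. For item (2): if only finitely many edges lie on $\gamma$, let $v=p_\gamma^+$ (a vertex of $T$) and let $\alpha_1,\dots,\alpha_m$ be the outgoing edges incident to $v$ other than the last edge of $p_\gamma$. None of them lies on $\gamma$ (else $p_\gamma$ would be longer), so by item (3) there is a common $t$ with $\pi(\gamma[t,\infty))$ disjoint from each $T_{\alpha_i}$; combined with item (3) applied to the edges strictly below $p_\gamma$ as well, $\pi(\gamma[t,\infty))$ is forced to remain at the vertex $v$, i.e. $\pi(\gamma[t,\infty))=\{v\}=p_\gamma^+$, which is stronger than what is asked. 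I would need to be slightly careful that there are only finitely many constraints to combine; for edges below $p_\gamma$ this is automatic since $p_\gamma$ is finite, and at $v$ the vertex group is finitely generated so $v$ has finite valence in $T$ — actually $T$ is locally finite because each $S_v$ is finite — so there are finitely many $\alpha_i$.

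For item (4), let $r$ be an $M$-Morse combinatorial ray. By Lemma \ref{lemma:quasi-geodesic-representatives} and the Morse-preserving hypothesis, $\bar r$ is a Morse quasi-geodesic, so items (1)--(3) apply to it. Each edge $\alpha_i(r)$ of $p(r)$ lies on $\bar r$: indeed $\bar r$ eventually travels into $\pi^{-1}(v)$ for $v=\alpha_i^+$ and stays in $T_{\alpha_i}$ after the parameter where it enters $\gamma_i(r)$, because the later pieces $\gamma_{i},\gamma_{i+1},\dots$ all have their $\pi$-image inside $T_{\alpha_i}$ by construction of the realisation. Conversely, if $\alpha$ is an outgoing edge lying on $\bar r$ but not among the $\alpha_i(r)$, then $T_\alpha$ is nested with all the $T_{\alpha_i(r)}$; if $p(r)$ is infinite this is impossible since the $T_{\alpha_i(r)}$ shrink to a single end and $\alpha\notin p(r)$ would force $T_\alpha$ to be disjoint from some $T_{\alpha_i(r)}$; if $p(r)$ is finite with endpoint $v=\omega(r)$, then $\alpha$ would have to lie "beyond" $v$, i.e. $\pi(\bar r[t,\infty))\subset T_\alpha\subsetneq T_v$ for some $t$, but $\bar r$ eventually equals $\omega(r)^*\cdot\xi$ with $\xi$ a realisation of $z(r)\in\mb G_{\check\omega}$, and by Lemma \ref{lemma:vertex_group_intersection} this tail returns to $\pi^{-1}(v)$ infinitely often, contradicting $\pi(\bar r[t,\infty))\subset T_\alpha$. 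Hence the edges lying on $\bar r$ are exactly those of $p(r)$.

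The main obstacle I anticipate is item (2)/(3): turning "the $\pi$-image leaves $T_\alpha$ for arbitrarily large parameters" into "the $\pi$-image is eventually outside $T_\alpha$" rigorously. The clean way is the contrapositive via the edge-group divergence estimate — if $\pi\circ\gamma$ oscillated in and out of $T_\alpha$ infinitely often, continuity forces $\gamma$ to hit the separating edge fibre $\pi^{-1}(\alpha)$ at unboundedly large times, and that fibre lies in a single edge group, which a Morse quasi-geodesic can only fellow-travel for bounded length; everything else is tree combinatorics. I would also take care to invoke local finiteness of $T$ (finite generating sets $S_v$) to keep the set of "competing" edges at a vertex finite when intersecting the finitely many tails in item (2).
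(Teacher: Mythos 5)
Your arguments for items (1) and (3) are correct and match the paper's approach, but your argument for item (2) has a genuine gap. You reduce to ``finitely many outgoing edges incident to $v$'' and take a single $t$ that works for all of them, justified by the claim that $T$ is locally finite because the generating sets $S_v$ are finite. That is false: the Bass--Serre \emph{tree} $T$ is not locally finite here, because the valence of a vertex $gG_w$ is governed by indices $[G_w:H_\alpha]$, which are infinite by the Morseless-star hypothesis that edge groups have infinite index in vertex groups. (You have conflated $T$ with the Bass--Serre \emph{space} $X$, which is locally finite.) With infinitely many outgoing edges $\beta$ at $v$, the times $t_\beta$ from item (3) need not be uniformly bounded, so your argument does not close. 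Moreover your target conclusion $\pi(\gamma[t,\infty))=\{v\}$ is stronger than item (2) asserts and is false in general: by Lemma \ref{lemma:vertex_group_intersection}, a realisation of a finite-type combinatorial ray can leave $\pi^{-1}(\omega(r))$ repeatedly and only \emph{returns} infinitely often, which is exactly why item (2) only claims $\pi(\gamma[t,\infty))\cap p_\gamma^+\neq\emptyset$. The paper proves (2) by contradiction: if $v\notin\pi(\gamma[t,\infty))$ for some $t$, then connectedness puts $\pi(\gamma[t,\infty))$ in a single component $C$ of $T-\{v\}$; since the last edge of $p_\gamma$ lies on $\gamma$, $C$ cannot contain $G_0$, so $C=T_\alpha\cup\{\alpha\}$ for some outgoing $\alpha$ at $v$; boundedness of $\gamma$ near $\pi^{-1}(\alpha)^+$ then forces $\pi(\gamma[s,\infty))\subset T_\alpha$ for large $s$, i.e.\ $\alpha$ lies on $\gamma$, contradicting $v=p_\gamma^+$. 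This avoids any appeal to finitely many edges at $v$.

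There is also a smaller soft spot in your item (4): you assert that the pieces $\gamma_i,\gamma_{i+1},\dots$ of $\bar r$ ``all have their $\pi$-image inside $T_{\alpha_i}$ by construction,'' but an $X$-geodesic between two vertices of $\pi^{-1}(T_{\alpha_i})$ is not forced to stay in $\pi^{-1}(T_{\alpha_i})$: it can cross the edge fibre $\pi^{-1}(\alpha_i)$ and return, since that fibre is not a cut set of $X$. The robust version of this step, which the paper uses, is to observe only that $\pi(\bar r)$ meets $T_{\alpha_i}$ at the discrete times $\alpha_j^*$ for $j\ge i$, and then apply the contrapositive of your own item (3) to conclude $\alpha_i$ lies on $\bar r$.
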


This lemma states that we can indeed recover a geodesic path in $T$ associated to Morse quasi-geodesics.

\begin{proof}
(1): Let $\beta$ be an outgoing edge lying on $\gamma$ and let $\alpha$ be an outgoing edge with $d_T(\alpha, G_0)\leq d_T(\beta, G_0)$. If $\alpha$ is an edge on $[G_0, \beta^+]_T$, then $T_\beta\subset T_\alpha$ and thus $\alpha$ lies on $\gamma$. Otherwise $T_\alpha$ and $T_\beta$ are disjoint and thus $\alpha$ does not lie on $\gamma$. Let $\{\alpha_1, \alpha_2, \ldots\}$ be the outgoing edges of $T$ that lie on $\gamma$ and assume $d_T(G_0, \alpha_k)\leq d_T(G_0, \alpha_{k+1})$ for all $k$. By the observation above $(\alpha_1, \ldots, \alpha_k) = [G_0, \alpha_k^+]_T$. Hence $(\alpha_1, \alpha_2, \ldots)$ is a geodesic path starting at $G_0$.

(2): Let $v = p_\gamma^+$. Assume there exists some $ t > 0$ such that $v\not \in\pi(\gamma[t, \infty))$. Then $\pi(\gamma[t, \infty))$ is contained in a connected component $C$ of $T - \{v\}$. By the definition of $p_\gamma$, $C$ cannot contain $G_0$. Therefore $C$ is of the form $C = T_\alpha\cup \{\alpha\}$ for some outgoing edge $\alpha$ with $\alpha^- = v$. By Lemma \ref{lemma:properties_of_morseless_stars}(\ref{constant:edge_group_intersection2}), $\pi^{-1}(\alpha)\cap \gamma$ is bounded. Hence, there exists some $s>t$ such that $\pi(\gamma[s, \infty))\subset T_\alpha$, implying that $\alpha$ lies on $\gamma$. This is a contradiction to the definition of $v$. Hence $v\in \pi(\gamma[t, \infty))$ for all $t>0$.

(3): Observe: for every outgoing edge $\alpha$ in $T$, $\gamma\cap \pi^{-1}(\alpha)^+$ is bounded by Lemma \ref{lemma:properties_of_morseless_stars} (\ref{constant:edge_group_intersection2}). Hence, for large enough $t$, $\pi(\gamma[t, \infty))$ is contained in a connected component of $T-\{\alpha\}$. This implies $\pi(\gamma[t, \infty))$ is either contained in $T_\alpha$ or disjoint from $T_\alpha$, which implies (3). 

(4): Case 1: Assume that $r$ is of infinite type. Let $\alpha = \alpha_k(r)$ for some $k>0$. For $j\geq k$, $\pi(\alpha_j^*(r))=\alpha_j(r)^+\in T_{\alpha}$. Property (3) implies that $\alpha$ lies on $\gamma$. The edges $\alpha_k(r)$ form an infinite geodesic path, so by (1) there is no other edge that lies on $\gamma$, which concludes (4) in this case. 

Case 2: Assumet that $r$ is of finite type. Lemma \ref{lemma:vertex_group_intersection} implies that $\pi^{-1}(\omega(r))\cap \bar{r}[t, \infty)$ is non-empty for every $t>0$. Thus, every edge $\alpha$ lying on $\bar{r}$ satisfies that $\omega(r)\in T_{\alpha}$. Thus by (3) $\alpha$ lies on $\bar{r}$ if and only if $\omega(r)\in T_{\alpha}$. The edges $\alpha$ such that $\omega(r)\in T_{\alpha}$ are precisely the ones lying on the geodesic path $p(r)$. 
\end{proof}

Before we can start with the proof of Proposition \ref{prop:equivalence_morse_boundary} we need one last technical Lemma. 

\begin{lemma}\label{lemma:morse_edges}
Let $\mc G$ be a Morseless star. There exists an increasing function $f_6: \mc M\to \mc M$ such that the following holds. Let $q = (\alpha_1, \ldots, \alpha_n)$ be a geodesic path in $T$ starting at $G_0$. If there exists $x\in \pi^{-1}(T_{\alpha_n})$ such that $[e, x]$ is a subsegment of an $M$-Morse geodesic, then the edges $\alpha_i$ are $f_6(M)$-Morse for all $1\leq i \leq n$.
\end{lemma}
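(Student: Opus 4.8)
The plan is to pick a single $M$-Morse geodesic (ray or segment) $\sigma$ of which $[1,x]$ is a subsegment, and to extract each edge $\alpha_i$ of the path $q$ as a ``crossing point'' of $\sigma$, showing along the way that the relevant piece of $\sigma$ near $\pi^{-1}(\alpha_i)$ is close to the segment $[v_i^*,\alpha_i^*]$ that defines the Morseness of $\alpha_i$. Write $v_i = \alpha_i^-$, so that $v_0 = G_0$ and $v_i^* = \alpha_{i}^*$ for $i\geq 1$ by construction; we want to bound the Morse gauge of $[v_i^*,\alpha_i^*] = [\alpha_{i-1}^*, \alpha_i^*]$ for each $i$ (with $\alpha_0^* := 1$).

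First I would observe that since $x\in\pi^{-1}(T_{\alpha_n})$ and $q = (\alpha_1,\dots,\alpha_n)$ is the geodesic in $T$ from $G_0$ to $\alpha_n^+$, the geodesic $[1,x]$ must pass through every edge group $\pi^{-1}(\alpha_i)^+$: removing $\pi^{-1}(\alpha_i)^+$ from $X$ separates $1$ from $x$ (this is the basic separation property of the Bass–Serre space recorded in Section \ref{section:graph_of_group}). So choose, for each $i$, a point $y_i\in [1,x]\cap \pi^{-1}(\alpha_i)^+$, with $y_0 = 1$ and $y_{n+1} = x$, ordered along $[1,x]$; since $[1,x]$ is geodesic from $1$ and each $\pi^{-1}(\alpha_i)^+$ is ``further out'' in the tree, the $y_i$ occur in the order $i = 1,2,\dots,n$. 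Then $[1,x]$ is a subsegment of the $M$-Morse $\sigma$, so by Lemma \ref{lemma:monster}\ref{prop:subsegments} each sub-subsegment $[y_{i-1}, y_i]$ of $[1,x]$ is $f_1(M)$-Morse (or $f_1^2(M)$-Morse if one prefers to go through $[1,x]$ first; either way the gauge is controlled by a fixed function of $M$).

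Next, for each $i$ I want to compare $[y_{i-1},y_i]$ with the chosen geodesic $[\alpha_{i-1}^*,\alpha_i^*]$. The key point is that $y_{i-1}$ and $\alpha_{i-1}^*$ both lie in (the edge group) $\pi^{-1}(\alpha_{i-1})^+$ (for $i=1$ we use $y_0 = 1 = \alpha_0^*$), and similarly $y_i, \alpha_i^*\in\pi^{-1}(\alpha_i)^+$. By Lemma \ref{lemma:properties_of_morseless_stars}\eqref{lemma:max_length}, a geodesic between two points of the same edge group that is $f_1(M)$-Morse has length at most $D_3(f_1(M),1)$; applying this to $[y_{i-1},y_i]\cap \pi^{-1}(\alpha_{i-1})^+$ and to $[y_{i-1},y_i]\cap\pi^{-1}(\alpha_i)^+$ — which are $f_1^2(M)$-Morse subsegments by Lemma \ref{lemma:monster}\ref{prop:subsegments} — shows that $y_{i-1}$ is within a bounded distance $\Delta := D_3(f_1^2(M),1)$ of any chosen basepoint in $\pi^{-1}(\alpha_{i-1})^+$, in particular of $\alpha_{i-1}^*$, and likewise $d_X(y_i,\alpha_i^*)\leq\Delta$. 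Hence $[\alpha_{i-1}^*,\alpha_i^*]$ is a geodesic whose endpoints lie in the $\Delta$-neighbourhood of the $f_1(M)$-Morse geodesic $[y_{i-1},y_i]$, so by Lemma \ref{lemma:monster}\ref{prop:adapted2.5} (its ``moreover'' clause for finite segments) $[\alpha_{i-1}^*,\alpha_i^*]$ is $f_2(f_1(M),\Delta)$-Morse. Setting $f_6(M) := f_2(f_1(M), D_3(f_1^2(M),1))$ (or the larger of this and $M$, per the paper's convention on increasing functions) gives the claim, since by Definition \ref{definition:morseness_edges} this is exactly the statement that $\alpha_i$ is $f_6(M)$-Morse.

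The main obstacle I anticipate is the bookkeeping in the previous paragraph: one must be careful that the points $y_i$ really occur in tree order along $[1,x]$ (so that the subsegments $[y_{i-1},y_i]$ genuinely lie between consecutive edge groups and do not backtrack), and that ``$y_{i-1}$ and $\alpha_{i-1}^*$ lie in the same edge group'' is used correctly — the basepoint $\alpha_{i-1}^*$ was chosen to be a closest vertex of $\pi^{-1}(\alpha_{i-1})^+$ to $1$, whereas $y_{i-1}$ is merely \emph{some} point of that edge group on $[1,x]$, so the bound really does need Lemma \ref{lemma:properties_of_morseless_stars}\eqref{lemma:max_length} rather than anything about closest-point projections. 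A secondary subtlety is the case $i=1$ where one of the ``edge groups'' degenerates to the basepoint $1$; there $[y_0,y_1] = [1,y_1]$ and $\alpha_0^* = 1$, so only the bound at the $\pi^{-1}(\alpha_1)^+$ end is needed, and the argument simplifies.
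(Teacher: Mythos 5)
Your overall plan --- pick crossing points $y_i\in[1,x]\cap\pi^{-1}(\alpha_i)^+$, compare them with the basepoints $\alpha_i^*$, and finish with Lemma~\ref{lemma:monster}\ref{prop:adapted2.5} --- is the same as the paper's, and the opening separation argument is correct. But the step where you conclude $d_X(y_{i-1},\alpha_{i-1}^*)\leq\Delta$ does not follow from Lemma~\ref{lemma:properties_of_morseless_stars}\eqref{lemma:max_length}. That item bounds the distance between two edge-group points that are \emph{joined by an $M$-Morse geodesic}; applied to $[y_{i-1},y_i]\cap\pi^{-1}(\alpha_{i-1})^+$ it only controls how long $[1,x]$ lingers inside the edge group. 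It says nothing about $\alpha_{i-1}^*$, because $\alpha_{i-1}^*$ need not lie anywhere near $[1,x]$: it is a closest point of $\pi^{-1}(\alpha_{i-1})^+$ to $1$, and a priori the geodesic $[1,x]$ could enter the edge group far from that closest point. (Your phrase ``within a bounded distance of any chosen basepoint'' cannot literally be right, since edge groups of a Morseless star are infinite.) Your anticipated-obstacles paragraph has it exactly backwards: the missing step is precisely a closest-point projection argument, and Lemma~\ref{lemma:properties_of_morseless_stars}\eqref{lemma:max_length} is not a substitute for it.

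The correct way to get $\alpha_j^*$ close to $[1,x]$ --- which is what the paper does --- uses the defining closest-point property of $\alpha_j^*$ together with Lemma~\ref{quasi-geodesic}. Pick $z_j\in[1,x]\cap\pi^{-1}(\alpha_j)^+$ and let $\lambda_j$ be a $\pi^{-1}(\alpha_j)^+$-geodesic from $\alpha_j^*$ to $z_j$; by Lemma~\ref{lemma:properties_of_morseless_stars}\eqref{lemma:undistorted_vertex_groups} it is a $C_2$-quasi-geodesic in $X$. Since $\alpha_j^*$ minimizes $d_X(1,\cdot)$ over the edge group, it is the closest point on $\lambda_j$ to $1$, so Lemma~\ref{quasi-geodesic} makes $[1,\alpha_j^*]\lambda_j$ a $3C_2$-quasi-geodesic. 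Its endpoints $1$ and $z_j$ lie on the $M$-Morse geodesic $[1,x]$, hence it (and in particular $\alpha_j^*$) is $M(3C_2)$-close to $[1,x]$. From here your final step (Lemma~\ref{lemma:monster}\ref{prop:adapted2.5} applied to $[\alpha_{j-1}^*,\alpha_j^*]$, whose endpoints are now within $M(3C_2)$ of $[1,x]$) closes the proof with $f_6(M)=f_2(M,M(3C_2))$.
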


\begin{proof}
Let $y_0 = z_0=e$ and for $1\leq j \leq n$ define $y_j = \alpha_j^*$ and choose $z_j\in \pi^{-1}(\alpha_j)^+\cap[e, x]$. With these definitions, $\alpha_j$ is $N$-Morse if and only if $[y_{j-1}, y_j]$ is $N$-Morse.  For $1\leq j \leq n$, let $\lambda_j$ be a $\pi^{-1}(\alpha_j)^+$-geodesic from $y_j$ to $z_j$. Since $y_j$ is the closest point to $e$ in $\pi^{-1}(\alpha_j)^+$ the path $[e, y_j]\lambda_j$ is a $3C_2$-quasi-geodesic by Lemma \ref{lemma:properties_of_morseless_stars} (\ref{lemma:vertex_group_morseness}) and Lemma \ref{quasi-geodesic}. Therefore $y_j$ is contained in the $C = M(3C_2)$-neighbourhood of $[e, x]$. Then, by Lemma \ref{lemma:monster} \ref{prop:adapted2.5}, the geodesic $[y_{j-1}, y_j]$ is $f_6(M) = f_2(M, C)$-Morse.
\end{proof}

Now we are ready to prove Proposition \ref{prop:equivalence_morse_boundary}.

\begin{proof}[Proof of Proposition \ref{prop:equivalence_morse_boundary}]
Note that Lemma \ref{lemma:quasi-geodesic-representatives} implies that for every Morse combinatorial Morse ray $r$, its realisation $\bar{r}$ is a quasi-geodesic. Since we assume the $\mc G$ is Morse-preserving $\bar{r}$ is Morse and hence the map $\Phi$ is well-defined.

Injectivity: Let $r_1, r_2$ be Morse combinatorial rays with $r_1\neq r_2$ and we show that $[\bar{r}_1]\neq [\bar{r}_2]$. 

Case 1: $p(r_1)\neq p(r_2)$: By Lemma \ref{lemma:representatives_in_tree} there exists without loss of generality an edge $\alpha$ such that $\alpha$ lies on $\bar{r}_1$ but not on $\bar{r}_2$. Hence, there exist $t > 0$ such that every path from $\bar{r}_1[t, \infty)$ to $\bar{r}_2[t, \infty)$ goes through $\pi^{-1}(\alpha)$. Hence for every constant $C>0$, the intersection $A_C = \mc N_C(\bar{r}_1[t, \infty))\cap \bar{r}_2[t, \infty)$ is contained in the intersection $B_C = \mc N_C (\pi^{-1}(\alpha))\cap \bar{r}_2[t, \infty)$. By Lemma \ref{lemma:properties_of_morseless_stars} (\ref{constant:edge_group_intersection2}) $B_C$ is bounded for every constant $C$. Hence $A_C$ is bounded for every constant $C$. Lemma \ref{lemma:unbounded_crossing} implies that $[\bar{r}_1]\neq [\bar{r}_2]$.

Case 2: $p(r_1) = p(r_2)$: If $p(r_1)= p(r_2)$ they are either both of finite or of infinite type. If $r_1$ and $r_2$ are of infinite type and $p(r_1) = p(r_2)$, they are equal. Thus, we know that $r_1$ and $r_2$ are of finite type. Let $n = l(r_1)=l(r_2)$. We have that $\bar{r}_1$ and $\bar{r}_2$ have bounded Hausdorff distance if and only if $[\gamma_n(r_1)] = [\gamma_n(r_2)]$. Since $r_1\neq r_2$ we have that $[\gamma_n(r_1)] \neq [\gamma_n(r_2)]$ and hence $[\bar{r}_1]\neq [\bar{r}_2]$. 

Surjectivity: Let $\gamma$ be an $M$-Morse geodesic ray starting at $e$. We construct a combinatorial ray $r$ such that $[\gamma] = [\bar{r}]$. 

Case 1: There are infinitely many edges that lie on $\gamma$. Let $p_\gamma = (\alpha_1, \alpha_2, \ldots)$. Let $r$ be the combinatorial ray $r = (\alpha_1, \alpha_2, \ldots)$. By Lemma \ref{lemma:morse_edges}, $r$ is $f_6(M)$-Morse and hence $\bar{r}$ is an $N_1 = f_\pres(f_6(M))$-Morse $D_4(f_6(M))$-quasi-geodesic. For every $j>0$, define $y_j = \alpha_j^*$ and choose $x_j \in \pi^{-1}(\alpha_j)\cap \gamma$. Since the geodesic $[e, y_j]$ has endpoints on $\bar{r}$ it is $N_2 = f_2(N_1, D_4(f_6(M)))$-Morse by Lemma \ref{lemma:monster} \ref{prop:adapted2.5}. Similarly, the geodesic $[e, x_i]$ has endpoints on $\gamma$ and is thus $N_3 = f_2(M, 1)$-Morse by Lemma \ref{lemma:monster} \ref{prop:adapted2.5}. By \ref{lemma:monster} \ref{lemma:triangles} applied to the triangle $([e, x_j][x_j, y_j][y_j, e])$ the geodesic $[x_j, y_j]$ is $N_4 = f_1(\max\{N_2, N_3\})$-Morse. But then, by Lemma \ref{lemma:properties_of_morseless_stars} (\ref{constant:edge_group_intersection2}), $d_X(x_j, y_j)\leq D_3(N_4, 1) = C$. Thus $y_j\in \mc N_C(\gamma)$ for all $j>0$. Lemma \ref{lemma:unbounded_crossing} implies that $[\bar {r}] = \gamma$.

Case 2: There are only finitely many edges lying on $\gamma$. Let $p_\gamma = (\alpha_1, \ldots, \alpha_n)$, $v = p_\gamma^+$ and $Y=\pi^{-1}(v)$. For $n\in \N$ choose $x_n\in \gamma[n,\infty)\cap Y$. Such $x_n$ exist by Lemma \ref{lemma:representatives_in_tree}. Let $\lambda_n = [v^*, x_n]_Y$ and let $M_1 =f_3(f_2(M, C_2))$. By Lemma \ref{lemma:monster} \ref{prop:adapted2.5} and Lemma \ref{lemma:properties_of_morseless_stars}, $\lambda_n$ is $(M_1,Y)$-Morse. By Arzel\`a-Ascoli, a subsequence $(\lambda_{n_k})_k$ of the sequence $(\lambda_n)_n$ converges in $Y$ (and hence in $X$) to a $Y$-geodesic $\lambda$ and by Lemma 2.10 of \cite{Cor16} the $Y$-geodesic $\lambda$ is $(M_1, Y)$-Morse. Let $D = d_X(e, v^*)$. The paths $[e, v^*]\lambda_{n}$ are $(C_2 +D)$-quasi-geodesics with endpoints on $\gamma$ and thus lie in the $M(C_2+D)$-neighbourhood of $\gamma$. Since this holds for all $n$, $\lambda$ also lies in the $M(C_2+D)$-neighbourhood of $\gamma$. By Lemma \ref{lemma:unbounded_crossing} we have that $[\gamma] = \lambda$. Define $z = [(v^*)^{-1}\cdot \lambda]$ and let $r=(\alpha_1, \ldots, \alpha_n; z)$. We then have $[\bar{r}] = [\gamma]$.

Moreover part: Let $r$ be a combinatorial $M$-Morse ray. Since $\mc G$ is Morse-preserving and by Lemma \ref{lemma:quasi-geodesic-representatives}, $\bar{r}$ is an $f_\pres(M)$-Morse $D_4(M)$-quasi-geodesic. Lemma \ref{lemma:monster} \ref{prop:realization1} implies that $\Phi(r)= [\bar{r}]$ is $f_2(f_\pres(M), D_4(M))$-Morse. Conversely, let $\gamma\in \mb X$ be $M$-Morse. If the combinatorial ray $r = \Phi^{-1}(\gamma)$ is of infinite type, then Lemma \ref{lemma:morse_edges} implies that $r$ is $f_6(M)$-Morse. If the combinatorial ray $r = \Phi^{-1}(\gamma)$ is of finite type and length $n$, Lemma \ref{lemma:morse_edges} implies that all edges $\alpha_i(r)$ are $f_6(M)$-Morse. It is left to show that the tail $z(r)$ is suitably Morse. Let $t>0$ such that $\gamma(t)\in \pi^{-1}(\alpha_n(r))^+$. Let $\lambda$ be a $\pi^{-1}(\alpha_n(r))^+$-geodesic from $\omega(r)^*$ to $\gamma(t)$. By Lemma \ref{quasi-geodesic} and Lemma \ref{lemma:properties_of_morseless_stars}(\ref{lemma:undistorted_vertex_groups}) the path $[e, \omega(r)^*]\lambda$ is a $3C_2$-quasi-geodesic and hence $\omega(r)^*$ is in the $C= M(3C_2)$-neighbourhood of $\gamma$. Let $s>0$ be such that $d_X(\gamma(s), \omega(r)^*)\leq C$. By Lemma \ref{lemma:monster} \ref{prop:subsegments} and \ref{prop:realization1}, $\gamma_n(r)$ (and hence $z(r)$) is $f_2(f_1(M), C)$-Morse. This concludes the moreover part and hence the proof of the proposition.
\end{proof}

During the construction of $r = \Phi^{-1}(\gamma)$ for a Morse geodesic ray $\gamma$ we made sure that the edges lying on $\gamma$ were precisely the edges lying on $\bar{r}$. Hence, the bijectivity of $\Phi$ implies the following.

\begin{corollary}
    Let $\gamma$ and $\gamma'$ be Morse geodesic rays starting at $e$ with $[\gamma]=[\gamma']$. Then an edge $\alpha$ lies on $\gamma$ if and only if it lies on $\gamma'$.
\end{corollary}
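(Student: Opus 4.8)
The plan is to read this off directly from the bijectivity of $\Phi$ together with the explicit construction of $\Phi^{-1}$ carried out in the proof of Proposition \ref{prop:equivalence_morse_boundary}. First I would note that, since $[\gamma]=[\gamma']$ as elements of $\partial_* X$, applying the bijection $\Phi^{-1}$ yields a single Morse combinatorial ray $r := \Phi^{-1}([\gamma]) = \Phi^{-1}([\gamma'])$ with $[\bar r] = [\gamma] = [\gamma']$. So it suffices to show that the set of edges lying on a Morse geodesic ray $\eta$ starting at $1$ depends only on $[\eta]$; in fact I will show it equals the edge set of the tree path $p(r)$ associated to $r = \Phi^{-1}([\eta])$.

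For the key step I would revisit the two cases of the surjectivity argument in the proof of Proposition \ref{prop:equivalence_morse_boundary}. In the case where infinitely many edges lie on $\eta$, the combinatorial ray is taken to be $r = (\alpha_1,\alpha_2,\dots) = p_\eta$, so by construction the edges lying on $\eta$ are exactly the edges of $p(r) = p_\eta$. In the case where only finitely many edges lie on $\eta$, we had $p_\eta = (\alpha_1,\dots,\alpha_n)$ and $r = (\alpha_1,\dots,\alpha_n;z)$, so again the edges lying on $\eta$ are exactly the edges of $p(r) = p_\eta$. Alternatively, one may invoke Lemma \ref{lemma:representatives_in_tree}(4), which states that the edges lying on $\bar r$ are precisely the edges of $p(r)$, together with the observation recorded just before the corollary that the construction of $\Phi^{-1}$ was arranged so that the edges lying on $\eta$ coincide with the edges lying on $\bar r$.

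Combining these, an edge $\alpha$ lies on $\gamma$ if and only if it is an edge of $p\bigl(\Phi^{-1}([\gamma])\bigr) = p(r)$, if and only if it is an edge of $p\bigl(\Phi^{-1}([\gamma'])\bigr) = p(r)$, if and only if it lies on $\gamma'$, which is the claim. I do not expect any genuine obstacle here: the only subtlety is making sure the construction of $\Phi^{-1}$ really does produce a combinatorial ray whose tree path is $p_\eta$, and this was already verified inside the proof of Proposition \ref{prop:equivalence_morse_boundary}, so no new estimates are needed.
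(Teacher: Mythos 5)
Your proof is correct and follows essentially the same reasoning the paper intends: the sentence preceding the corollary records that the construction of $\Phi^{-1}$ in the surjectivity argument makes $p(\Phi^{-1}([\eta]))$ coincide with the set of edges lying on $\eta$, and bijectivity of $\Phi$ then gives the claim. Your unpacking of the two cases of the surjectivity proof and the alternative appeal to Lemma~\ref{lemma:representatives_in_tree}(4) are both faithful to that argument.
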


\subsection{Further properties of Morseless stars}

In following sections we need some more properties of Morseless stars and relatively hyperbolic Morseless stars. We will state and prove them here.

\begin{lemma}\label{lemma:cosets_morseness_vs_closeness}
Let $\mc G$ be a Morseless star. There exist increasing functions $D_5: \mc M \times \R_{\geq 1}\to \R_{\geq 1}$ and $f_{\emp} : \mc M\times \R_{\geq 1}\to \mc M$ such that the following holds. Let $\alpha\in E(T)$ be an outgoing edge, $v=\alpha^-$ and let $q$ be a closets point to $v^*$ on $\pi^{-1}(\alpha)^-$. If $\gamma$ is an $M$-Morse $C$-quasi-geodesic from $v^*$ to some $p\in\pi^{-1}(\alpha)$, then $d_X(p,q)\leq D_5(M, C)$ and $[v^*, q]$ is $f_{\emp}(M, C)$-Morse.
\end{lemma}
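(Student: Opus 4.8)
The plan is to reduce the statement to the already-established properties of Morseless stars, principally Lemma \ref{lemma:properties_of_morseless_stars}(\ref{constant:edge_group_intersection2}). First I would observe that $q$ lies on $\pi^{-1}(\alpha)^-$, which is an edge group (identified via $\iota$ with a copy of $H_{\check\alpha}$), and that by definition $d_X(v^*, q) = d_X(v^*, \pi^{-1}(\alpha)^-)$. Next I would note that both $p$ and $q$ lie in the $C$-neighbourhood (in fact $q$ lies on it, and $p$ is the endpoint of $\gamma$) of the $M$-Morse quasi-geodesic $\gamma$, or of a suitable quasi-geodesic built from $\gamma$. More precisely, since $\gamma$ is an $M$-Morse $C$-quasi-geodesic from $v^*$ to $p\in\pi^{-1}(\alpha)$, and since $q$ is a closest point on $\pi^{-1}(\alpha)^-$ to $v^*$, I would form the concatenation of a geodesic $[v^*,q]$ with a short edge-group path from $q$ across $\alpha$ (or just work with $\pi^{-1}(\alpha)$ directly, which contains both endpoints $\pi^{-1}(\alpha)^-$ and $\pi^{-1}(\alpha)^+$ at distance $1$) and apply Lemma \ref{quasi-geodesic} together with Lemma \ref{lemma:properties_of_morseless_stars}(\ref{lemma:undistorted_vertex_groups},\ref{lemma:vertex_group_morseness}) to see that $[v^*,q]$ followed by the edge to $\pi^{-1}(\alpha)^+$ is a $3C_2$-quasi-geodesic whose endpoint is in a bounded neighbourhood of $\pi^{-1}(\alpha)$. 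The upshot is that $q$ (up to a bounded error, using that the closest-point projection to an edge group is coarsely well-defined via Lemma \ref{lemma:properties_of_morseless_stars} and the quasi-isometric embedding) lies in $\mc N_{C'}(\gamma)$ for a constant $C'$ depending only on $M$, $C$, and $C_2$.

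Having placed both $p$ and $q$ in a uniform neighbourhood of $\gamma$, I would invoke Lemma \ref{lemma:properties_of_morseless_stars}(\ref{constant:edge_group_intersection2}) with the edge group $W = [\pi^{-1}(\alpha)^+]$ (or its reverse, whichever is $\pi^{-1}(\alpha)^-$): the intersection $\mc N_{C'}(\gamma)\cap \mc N_{C'}(W)$ has diameter at most $D_3(M', C')$ for an appropriate $M'$, and since $p$ and $q$ are both in this intersection we conclude $d_X(p,q)\leq D_3(M',C') + (\text{bounded error})$. This gives the first conclusion with $D_5(M,C)$ defined to be this bound. For the second conclusion, that $[v^*,q]$ is $f_{\emp}(M,C)$-Morse, I would use that $[v^*,q]$ and $\gamma$ have endpoints within $D_5(M,C) + C$ of each other (both start at $v^*$, and $q$ is within $D_5(M,C)$ of $p$ which is the endpoint of $\gamma$), so the concatenation $\gamma$ followed by a geodesic $[p,q]$ is a quasi-geodesic with controlled constant, and then Lemma \ref{lemma:monster}(\ref{prop:adapted2.5}) (Morseness of neighbourhoods, applied since $[v^*,q]$ has endpoints in a bounded neighbourhood of the $M$-Morse quasi-geodesic $\gamma$) gives that $[v^*,q]$ is $f_2(M, C'')$-Morse for a suitable $C''$; set $f_{\emp}(M,C)$ to be this gauge.

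The main obstacle I anticipate is the coarse control of the point $q$: the lemma picks $q$ as \emph{a} closest point to $v^*$ on $\pi^{-1}(\alpha)^-$, but the quasi-geodesic $\gamma$ ends at an arbitrary point $p$ of $\pi^{-1}(\alpha)$, and one must argue that the closest-point projection and the endpoint of $\gamma$ cannot be far apart \emph{without} already assuming the conclusion. The resolution is exactly Lemma \ref{lemma:properties_of_morseless_stars}(\ref{constant:edge_group_intersection2}): the diameter bound there is precisely what forces any two points of an edge group that both lie near a common Morse quasi-geodesic to be uniformly close. One has to be a little careful that the Morse gauge $M'$ fed into $D_3$ depends only on $M$, $C$ (via the subsegment and neighbourhood functions $f_1,f_2$ of Lemma \ref{lemma:monster} and the constant $C_2$), so that the resulting $D_5$ and $f_{\emp}$ are genuinely functions of $(M,C)$ alone; I would track this dependence explicitly but would not belabour the arithmetic, since it is the same bookkeeping as in the proofs of Lemma \ref{lemma:quasi-geodesic-representatives} and Lemma \ref{lemma:morse_edges}.
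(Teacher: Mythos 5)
Your overall strategy is right, and it does match the paper's: the crux is Lemma~\ref{lemma:properties_of_morseless_stars}(\ref{constant:edge_group_intersection2}), and you correctly identify that once $p$ and $q$ both lie in a uniformly bounded neighbourhood of $\gamma$, the diameter bound gives $d_X(p,q)\leq D_5(M,C)$, and Lemma~\ref{lemma:monster}\ref{prop:adapted2.5} then gives Morseness of $[v^*,q]$. However, the step where you place $q$ in $\mc N_{C'}(\gamma)$ has a genuine gap. The concatenation you form --- $[v^*,q]$ followed by ``a short edge-group path from $q$ across $\alpha$'' --- has endpoints $v^*$ and some vertex at distance $1$ from $q$, not the endpoints of $\gamma$. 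The Morse condition on $\gamma$ only controls quasi-geodesics with both endpoints \emph{on} $\gamma$, so this concatenation tells you nothing about the position of $q$ relative to $\gamma$. Your fallback --- that ``the closest-point projection to an edge group is coarsely well-defined'' --- is also not available at this level of generality: the paper has such a projection tool only for relatively hyperbolic Morseless stars (Lemma~\ref{lemma:rel_hyp_projection}), whereas Lemma~\ref{lemma:cosets_morseness_vs_closeness} is asserted for all Morseless stars, and for a general undistorted subgroup closest-point projection is not coarsely well-defined. In effect you end up needing the conclusion you are trying to prove.

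The fix is small and is exactly what the paper does: let $\lambda$ be a $\pi^{-1}(\alpha)$-geodesic from $q$ all the way to $p$, not merely across the edge. Since $q$ is a closest point on $\pi^{-1}(\alpha)^-$ to $v^*$, Lemma~\ref{quasi-geodesic} together with the quasi-isometric embedding from Lemma~\ref{lemma:properties_of_morseless_stars}(\ref{lemma:undistorted_vertex_groups}) makes $\gamma'=[v^*,q]\lambda$ a $3C_2$-quasi-geodesic, and now $\gamma'$ shares both endpoints $v^*$ and $p$ with $\gamma$. The $M$-Morseness of $\gamma$ therefore puts $\gamma'$, and in particular $q$, inside $\mc N_{M(3C_2)}(\gamma)$. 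From that point your appeal to Lemma~\ref{lemma:properties_of_morseless_stars}(\ref{constant:edge_group_intersection2}) (with radius $\max\{C, M(3C_2)\}$) and then to Lemma~\ref{lemma:monster}\ref{prop:adapted2.5} goes through exactly as you describe, yielding $D_5(M,C) = D_3(M,\max\{C,M(3C_2)\})$ and $f_{\emp}(M,C)=f_2(M,\max\{C,M(3C_2)\})$.
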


\begin{proof}
Let $\lambda$ be a $\pi^{-1}(\alpha)$-geodesic from $q$ to $p$. By Lemma \ref{lemma:properties_of_morseless_stars} (\ref{lemma:undistorted_vertex_groups}) and Lemma \ref{quasi-geodesic}, the concatenation $\gamma' = [v^*, q]\lambda$ is a $3C_2$-quasi-geodesic and thus is in the $M(3C_2)$-neighbourhood of $\gamma$. By Lemma \ref{lemma:properties_of_morseless_stars} (\ref{constant:edge_group_intersection2}) about intersections of Morse quasi-geodesics with edge groups applied to $\gamma$, $d_X(p, q)\leq \mathrm{diam}(\mc N_{M(3C_2)}(\gamma)\cap \mc N_{M(3C_2)}(\pi^{-1}(\alpha)^+) \leq D_3(M, \max\{C, M(3C_2)\}) = D_5(M, C)$. By Lemma \ref{lemma:monster} \ref{prop:adapted2.5}, the geodesic $[v^*, q]$ is $f_\emp(M) = f_2(M, \max\{C,M(3C_2)\})$-Morse.
\end{proof}

\begin{lemma}\label{lemma:close_projections}
Let $\mc G$ be a relatively hyperbolic Morseless star. There exists an increasing function $D_6 : \mc M\to \R_{\geq 1}$ such that the following holds. Let $\alpha\in E(T)$ be an outgoing edge, $v = \alpha^-$ and let $q$ be a closets point to $v^*$ on $\pi^{-1}(\alpha)^-$. If $[v^*, q]$ is $M$-Morse then $d_X(\bar{\alpha}^*, q)\leq D_6(M)$.
\end{lemma}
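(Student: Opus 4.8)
The plan is to compare two points on the edge group $\pi^{-1}(\alpha)^-$: the point $\bar\alpha^*$, which is (a closest point to $1$ on) $\pi^{-1}(\alpha)^-$, and the point $q$, which is a closest point to $v^*$ on $\pi^{-1}(\alpha)^-$. Both $v^*$ and $1$ lie on a common $M'$-Morse geodesic (namely $[1,v^*]$, which is $M'$-Morse for $M'$ depending only on $M$ by Lemma~\ref{lemma:morse_edges} or directly by the triangle inequality applied to $[1,v^*]$ being a subsegment of a realisation of an edge), so the configuration $1$, $v^*$, $q$, $\bar\alpha^*$ all interacts with the edge group $W = \pi^{-1}(\alpha)^-$, to which we can apply the ``bounded intersection'' property Lemma~\ref{lemma:properties_of_morseless_stars}~(\ref{constant:edge_group_intersection2}) and the relatively hyperbolic projection estimates of Lemma~\ref{lemma:rel_hyp_projection}.

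First I would set up a Morse quasi-geodesic through $1$, $v^*$, $q$ and the edge group. Concretely, let $\lambda$ be a $W$-geodesic from $q$ to $\bar\alpha^*$ (both points lie on $W=\pi^{-1}(\alpha)^-$). By Lemma~\ref{lemma:properties_of_morseless_stars}~(\ref{lemma:undistorted_vertex_groups}), $\lambda$ is a $C_2$-quasi-geodesic in $X$. Since $\bar\alpha^*$ is a closest point to $1$ on $W$, the concatenation $\eta = [1,\bar\alpha^*]\cdot\bar\lambda$ (where $\bar\lambda$ runs from $\bar\alpha^*$ to $q$) is, by Lemma~\ref{quasi-geodesic}, a $3C_2$-quasi-geodesic; likewise, since $q$ is a closest point to $v^*$ on $W$, the concatenation $[v^*,q]\cdot\lambda'$ for $\lambda'$ a $W$-geodesic from $q$ into $W$ is a $3C_2$-quasi-geodesic. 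The key point is then that the geodesic $[1,v^*]$ is $M'$-Morse with $M'$ depending only on $M$ (here I use that $[v^*,q]$ is $M$-Morse together with Lemma~\ref{lemma:monster}~\ref{lemma:triangles} on the triangle $([1,v^*],[v^*,q],[q,1])$; and $[1,v^*]$ is a subsegment of $[1,q]$-type realisations, so one gets a uniform bound), so the geodesic $[1,q]$ is $M''$-Morse for $M''$ depending only on $M$. Thus both $\bar\alpha^*$ and $q$ lie in a uniformly bounded neighbourhood of the $M''$-Morse geodesic $[1,q]$ (for $\bar\alpha^*$, use that $[1,\bar\alpha^*]$ has endpoints close to $[1,q]$ and apply Lemma~\ref{lemma:get_close_again} or Lemma~\ref{lemma:monster}~\ref{prop:adapted2.5}), and both lie on the edge group $W$.

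Now I would invoke Lemma~\ref{lemma:properties_of_morseless_stars}~(\ref{constant:edge_group_intersection2}): the set $\mc N_{C'}([1,q]) \cap \mc N_{C'}(W)$ has diameter at most $D_3(M'', C')$ for the relevant uniform constant $C'$, and both $\bar\alpha^*$ and $q$ lie in this intersection. Hence $d_X(\bar\alpha^*, q) \leq D_3(M'', C')$, and we set $D_6(M)$ equal to this quantity (monotonised to be increasing in $M$ as in the paragraph after Lemma~\ref{lemma:monster}). I expect the main obstacle to be the bookkeeping needed to certify that $[1,q]$ (equivalently $[1,v^*]$) is Morse with a gauge depending only on $M$: one must chase through the fact that $[v^*,q]$ being $M$-Morse forces $[1,v^*]$ to be controlled, which requires observing that $v^*$ is the basepoint of an outgoing edge and that $[1,v^*]$ is (close to) a concatenation of edge realisations each of which is $f_1$ of the relevant gauge — alternatively, one can sidestep this by noting $d_X(1,\bar\alpha^*) = d_X(1,W)$ and $d_X(1,v^*) = d_X(1,\pi^{-1}(\alpha^-))$ and using the relatively hyperbolic projection Lemma~\ref{lemma:rel_hyp_projection}~(2) directly to compare $\pi_W(1) = \bar\alpha^*$ with $\pi_W(v^*)$, but then one still needs $v^*$ to project close to $q$, which again uses that $[v^*,q]$ is a genuine closest-point geodesic of bounded Morse gauge. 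Either route is routine once the Morse-gauge dependence is pinned down.
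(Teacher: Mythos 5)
Your main route has a genuine gap. You assert that $[1,v^*]$ is $M'$-Morse with $M'$ depending only on $M$, citing Lemma~\ref{lemma:morse_edges} or the triangle lemma, but neither delivers this. Applying Lemma~\ref{lemma:monster}~\ref{lemma:triangles} to $([1,v^*],[v^*,q],[q,1])$ would let you deduce the Morseness of one side from the other two, but you control only $[v^*,q]$, so this is circular. Lemma~\ref{lemma:morse_edges} requires that some $[1,x]$ is already a subsegment of an $M$-Morse geodesic --- exactly what you don't have. In fact the Morse gauge of $[1,v^*]$ is simply not constrained by the hypothesis: $[1,v^*]$ traverses the Bass--Serre tree from $G_0$ out to $v$, and the Morse gauges of the edges along that path (and of the geodesics inside the intermediate vertex groups, which in a relatively hyperbolic vertex group may wander deep into peripherals) are completely independent of the gauge $M$ of the single outgoing edge $\alpha$ at $v$. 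So the plan of placing both $\bar\alpha^*$ and $q$ in a bounded neighbourhood of a uniformly Morse geodesic $[1,q]$ never gets off the ground --- you cannot produce a uniformly Morse geodesic emanating from $1$.

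Your sidestep is the right idea and is essentially what the paper does, but it is not quite correct as stated either: you propose using the closest-point projection $\pi_W$ onto $W=\pi^{-1}(\alpha)^-$, but $W$ is never an element of the peripheral collection $\mc P$ (by Definition~\ref{def:subsetsassociated} it is a proper subset of $P(\alpha)\in\mc P$), so Lemma~\ref{lemma:rel_hyp_projection} gives you no control over $\pi_W$. The paper instead works with $\pi_{P(\alpha)}$ and splits into three cases according to the position of $v=\alpha^-$. In the main case $\check v\neq 0$ it locates a point $p$ on $\gamma=[1,\bar\alpha^*]$ with $d_X(p,v^*)\leq 3C_1+1$: here $\gamma$ runs between two distinct peripherals $P(\beta)\ni 1$ and $P(\alpha)=P(v)$, so by Lemma~\ref{lemma:rel_hyp_projection} it passes within $C_1$ of $\pi_{P(\alpha)}(P(\beta))$, which has small diameter and is close to $\pi_{P(\alpha)}(1)$, which in turn is within $C_1+1$ of $v^*$. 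Having found $p$, one forms the quasi-geodesic $[v^*,p]\cdot[p,\bar\alpha^*]\cdot\lambda$ (with $\lambda$ a $\pi^{-1}(\alpha)^-$-geodesic from $\bar\alpha^*$ to $q$, and using that $\bar\alpha^*$ is closest to $p$ on $\lambda$); it has both endpoints on $[v^*,q]$, the one geodesic whose gauge you do control, and Lemma~\ref{lemma:properties_of_morseless_stars}~(\ref{constant:edge_group_intersection2}) then gives the bound. The remaining cases $v=G_0$ and $\check v=0$, $v\neq G_0$ are handled separately (by Lemma~\ref{lemma:cosets_morseness_vs_closeness}, respectively a direct projection-diameter comparison). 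The essential repair to your plan is to anchor the whole argument on $[v^*,q]$, not on $[1,v^*]$ or $[1,q]$.
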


\begin{proof}

Let $\beta$ be an outgoing edge adjacent to $G_0$ such that $\pi^{-1}(\beta)$ contains $e$. Let $\gamma$ be a geodesic from $e$ to $\bar\alpha^*$. Define $A=P(\alpha)$ and $B=P(\beta)$.
We distinguish three cases:

Case 1: $v = G_0$. In this case, $\bar\alpha^*$ is a closest point of $v^*$ to $\pi^{-1}(\alpha)^-$. We apply Lemma \ref{lemma:cosets_morseness_vs_closeness} with $p = q$ and $q = \bar\alpha^*$. So in this case $D_6(M)\geq D_5(M, 1)$ works.

Case 2: $\check v =0$. Let $\beta_1$ be the outgoing edge of $T$ with $\beta_1^+ = v$. By Remark \ref{rem:p(alpha)are_the_same}, $P(\beta_1)\neq A$. Let $p\in \gamma\cap \pi^{-1}(\beta_1)\subset P(\beta_1)$. By Lemma \ref{lemma:rel_hyp_projection}, $d_X(\pi_A(v^*), q)\leq C_1$, $d_X(\pi_A(p), \bar\alpha^*)\leq C_1$ and $d_X(\pi_A(p), \pi_A(v^*))\leq C_1$. By the triangle inequality, $D_6(M)\geq 3C_1$ works.

\begin{figure}\centering
\includegraphics[width= \linewidth]{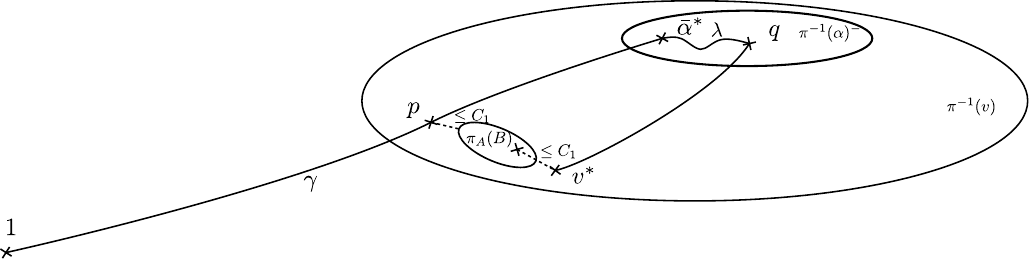}
\caption{Proof of Lemma \ref{lemma:close_projections}}
\label{picture:proof_of_lemma_close_projections}
\end{figure}

Case 3: $\check v\neq 0$. This is case is depicted in Figure \ref{picture:proof_of_lemma_close_projections}.  First we show that there exists a point $p$ on $\gamma$ such that $d_X(p, v^*)\leq 3C_1+1$. If $\beta^+ = v$, then $d_X(e, v^*) = 1$ and hence choosing $p = \gamma(0)$ works. If $\beta^+\neq v$, then $\alpha$ and $\beta$ do not share an endpoint and thus $ A\neq B$. Furthermore, $A=P(v)$. Note that $\gamma$ is a geodesic from $B$ to $A$. By Lemma \ref{lemma:rel_hyp_projection} there exists a point $p$ on $\gamma$ such that $d_X(p, \pi_A(B))\leq C_1$. Also by Lemma \ref{lemma:rel_hyp_projection}, $\mathrm{diam}(\pi_A(B))\leq C_1$ and $d_X(v^*, \pi_A(e))\leq C_1 +1$. By the triangle inequality, $d_X(p, v^*)\leq 3C_1+1$. Thus in both cases, a point $p$ on $\gamma$ with the desired properties indeed exists.

Let $\lambda$ be a $\pi^{-1}(\alpha)^-$-geodesic from $\bar\alpha^*$ to $q$. By definition, $\bar {\alpha} ^*$  is the closest point on $\lambda$ to $p$. Thus by Lemma \ref{quasi-geodesic} and Lemma \ref{lemma:properties_of_morseless_stars} (\ref{lemma:undistorted_vertex_groups}), $[p, \bar{\alpha}^*]\lambda$ is a $3C_2$-quasi-geodesic and hence $\gamma' = [v^*, p][p, \bar\alpha^*]\lambda$ is a $(3C_1+3C_2+1)$-quasi-geodesic with endpoints on $[v^*, q]$. Thus by Lemma \ref{lemma:properties_of_morseless_stars} (\ref{constant:edge_group_intersection2}), $D_6(M)\geq  D_3(M, M(3C_1+3C_2+1))$ works.
\end{proof}

\begin{lemma}\label{lemma:compare_morseness_of_repr}
Let $\mc G$ be a relatively hyperbolic Morseless star. There exists an increasing function $f_8: \mc M\to \mc M$ such that the following holds. Let $\alpha\in E(T)$ be an outgoing edge, $v=\alpha^-$ and let $x\in \pi^{-1}(\alpha)^-$. If $[v^*, x]$ is $M$-Morse, then $[v^*, \bar\alpha^*]$ and $[v^*, \alpha^*]$ are $f_8(M)$-Morse.
\end{lemma}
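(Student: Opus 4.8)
The plan is to deduce this purely by chaining Lemmas~\ref{lemma:cosets_morseness_vs_closeness} and~\ref{lemma:close_projections} together with the ``Morseness of neighbourhoods'' property Lemma~\ref{lemma:monster} \ref{prop:adapted2.5}; all of the relative-hyperbolicity content is already packaged into Lemma~\ref{lemma:close_projections}, so what remains is essentially bookkeeping.

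First I would let $q$ be a closest point to $v^*$ on $\pi^{-1}(\alpha)^-$. Since $[v^*, x]$ is an $M$-Morse $1$-quasi-geodesic from $v^*$ to the point $x$, which lies in $\pi^{-1}(\alpha)^-\subseteq \pi^{-1}(\alpha)$, Lemma~\ref{lemma:cosets_morseness_vs_closeness} applied with $C=1$ and $p=x$ gives that $[v^*, q]$ is $M_1 := f_\emp(M,1)$-Morse (it also gives $d_X(x,q)\leq D_5(M,1)$, which we will not need). Feeding this $M_1$-Morse geodesic $[v^*, q]$ into Lemma~\ref{lemma:close_projections} yields $d_X(\bar\alpha^*, q)\leq D_6(M_1)$, and since $\bar\alpha^*$ and $\alpha^*$ are joined by an edge of $X$ by construction, also $d_X(\alpha^*, q)\leq D_6(M_1)+1$.

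Now both geodesics $[v^*, \bar\alpha^*]$ and $[v^*, \alpha^*]$ have one endpoint equal to $v^*$, which lies on $[v^*, q]$, and their other endpoint within $D_6(M_1)+1$ of the endpoint $q$ of $[v^*, q]$. Hence both are finite $C$-quasi-geodesic segments (with $C = D_6(M_1)+1\geq 1$) whose endpoints lie in the $C$-neighbourhood of the $M_1$-Morse geodesic $[v^*, q]$, so by the moreover part of Lemma~\ref{lemma:monster} \ref{prop:adapted2.5} both are $f_2(M_1, C)$-Morse. Setting $f_8(M) := f_2\big(f_\emp(M,1),\, D_6(f_\emp(M,1))+1\big)$ therefore works, and $f_8$ is increasing because $f_\emp$, $D_6$ and $f_2$ are.

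There is no serious obstacle here; the only point that needs a moment's care is verifying that the hypotheses of Lemma~\ref{lemma:cosets_morseness_vs_closeness} are met — that $[v^*,x]$ indeed terminates in $\pi^{-1}(\alpha)$ (it terminates in $\pi^{-1}(\alpha)^-$, a subset of $\pi^{-1}(\alpha)$) and that, under this paper's conventions, a geodesic qualifies as a $1$-quasi-geodesic so that we may take $C=1$.
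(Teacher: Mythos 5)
Your proposal is correct and follows exactly the same three-step chain the paper uses: Lemma~\ref{lemma:cosets_morseness_vs_closeness} to upgrade $[v^*,q]$ to $f_\emp(M,1)$-Morse, Lemma~\ref{lemma:close_projections} to bound $d_X(\bar\alpha^*,q)$ and hence $d_X(\alpha^*,q)$, and Lemma~\ref{lemma:monster}~\ref{prop:adapted2.5} to conclude, yielding the same final expression $f_8(M)=f_2(f_\emp(M,1),\,D_6(f_\emp(M,1))+1)$.
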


\begin{proof}
Let $q$ be the closet point of $v^*$ on $\pi^{-1}(\alpha)^-$. By Lemma \ref{lemma:cosets_morseness_vs_closeness} the geodesic $[v^*, q]$ is $M_1 = f_\emp(M, 1)$-Morse. By Lemma \ref{lemma:close_projections}, $d_X(\bar{\alpha}^*, q)\leq D_6(M_1)$ and hence $d_X(\alpha^*, q)\leq D_6(M_1)+1$. By Lemma \ref{lemma:monster} \ref{prop:adapted2.5} the geodesics $[v^*, \bar{\alpha}^*]$ and $[v^*, \alpha^*]$ are $f_8(M) = f_2(M_1, D_6(M_1)+1)$-Morse. 
\end{proof}

\begin{lemma} \label{lemma:distance_of_geo} Let $\mc G$ be a Morse-preserving Morseless star, let $M$ be a Morse gauge and let $(\alpha_1, \ldots, \alpha_n)$ be a geodesic path in $T$. There exists a constant $C_3$ such that the following holds. If $\gamma$ is an $M$-Morse geodesic ray starting at $e$ such that the combinatorial ray $r = \Phi^{-1}([\gamma])$ has length at least $n$ and $\alpha_n(r) = \alpha_n$, then $d_X(\gamma(t), \gamma_n(r)(t))\leq C_3$ for all $0\leq t \leq l(\gamma_n(r))$.
\end{lemma}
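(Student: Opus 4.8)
The plan is to compare the two concatenations $\bar r = \gamma_0\circ\cdots\circ\gamma_n\circ\cdots$ and $\gamma$ directly, using that both pass through (points close to) the same sequence of edge groups $W_1,\dots,W_n$ with $W_i = \pi^{-1}(\alpha_i)^+$, and then apply the ``get close again'' lemma. First I would fix notation: let $r = \Phi^{-1}([\gamma])$, so by Proposition \ref{prop:equivalence_morse_boundary} the ray $r$ is $f_\bij(M)$-Morse, and by Lemma \ref{lemma:representatives_in_tree}(4) the edges lying on $\gamma$ are exactly the edges of $p(r)$, in particular $\alpha_1,\dots,\alpha_n$ all lie on $\gamma$. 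For each $i\le n$, since $\alpha_i$ lies on $\gamma$, the intersection $\gamma\cap\pi^{-1}(\alpha_i)$ is nonempty; using Lemma \ref{lemma:properties_of_morseless_stars}(\ref{constant:edge_group_intersection2}) together with Lemma \ref{quasi-geodesic} and the fact that $\alpha_i^\ast$ is a closest point of $1$ to $W_i$, I would show (exactly as in the surjectivity argument of Proposition \ref{prop:equivalence_morse_boundary}, Case 1, and as in Lemma \ref{lemma:morse_edges}) that $d_X(\gamma(s_i),\alpha_i^\ast)\le C$ for some $s_i$, where $C$ depends only on $M$ (through $f_6, f_2, f_1, D_3$). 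The key extra point I need is a uniform bound on the parameters: since $\bar r$ is a $D_4(f_\bij(M))$-quasi-geodesic and $[\gamma(s_i),\alpha_i^\ast]$ has length $\le C$, the value $s_i$ differs from the arclength position of $\alpha_i^\ast$ along $\bar r$ by a bounded amount.

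The main step is then a local comparison between consecutive ``checkpoints''. On the segment of $\bar r$ between $\alpha_i^\ast$ and $\alpha_{i+1}^\ast$ — which is the geodesic $\gamma_i(r)$, an $f_1(f_\bij(M))$-Morse geodesic by Remark \ref{rem:morsenessofrealizationparts} — and the corresponding subsegment of $\gamma$ between $\gamma(s_i)$ and $\gamma(s_{i+1})$, both have endpoints within $C$ of each other. A triangle/quadrilateral argument (Lemma \ref{lemma:monster}\ref{lemma:triangles} or \ref{cond:n_gon}) shows the subsegment of $\gamma$ lies in a bounded neighbourhood of $\gamma_i(r)$ and conversely, with the bound depending only on $M$ via the fixed functions. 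Crucially these bounds are uniform in $i$ because all the Morse gauges involved are controlled by $f_1(f_\bij(M))$ and $M$. Finally I would handle the tail on $\gamma_n(r)$: if $r$ is of infinite type this is another segment $\gamma_n(r)$ handled as above; if $r$ is of finite type, $\gamma_n(r) = \omega(r)^\ast\cdot\xi$ is $f_1(f_\bij(M))$-Morse and $[\gamma]=[\bar r]$, so $\gamma$ and $\gamma_n(r)$ are both $f_1(f_\bij(M))$-Morse rays with $[\gamma]=[\bar r]$; but I only need the statement up to $t=l(\gamma_n(r))$, which for infinite $l$ means all of $\gamma_n(r)$ — here I would instead just note $\gamma_n(r)$ and (a sub-ray of) $\gamma$ are Hausdorff-close by Lemma \ref{lemma:monster}\ref{lemma:close_representatives}, after shifting basepoint by the bounded amount $d_X(1,\omega(r)^\ast)$.

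To get the pointwise-at-equal-parameter bound $d_X(\gamma(t),\gamma_n(r)(t))\le C_3$ (rather than just Hausdorff closeness), I would apply Lemma \ref{lemma:get_close_again}: knowing $d_X(\gamma(s_{n}+t'),\gamma_n(r)(t'))$ is bounded at the single ``matching'' time and that both are geodesics fellow-travelling on an initial segment, plus the fact that $\gamma_{n-1}(r)$ ended at a point $C$-close to $\gamma$, lets me bootstrap from the previous checkpoints to conclude $|s_n - (\text{arclength of }\alpha_n^\ast\text{ on }\gamma)|$ and $|s_i - (\text{arclength of }\alpha_i^\ast\text{ on }\bar r)|$ are uniformly bounded, so that reparametrisation errors do not accumulate. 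Then $\gamma$ restricted to $[s_n, s_n+l(\gamma_n(r))]$ and $\gamma_n(r)$ restricted to $[0,l(\gamma_n(r))]$ are two geodesics from points within $C$ of each other, both $f_1(f_\bij(M))$-Morse and with the same (or Hausdorff-close) endpoint-behaviour, so Lemma \ref{lemma:get_close_again} gives the pointwise bound with $C_3$ depending only on $M$ and the geodesic path length $n$ (through the number of checkpoints). I expect the main obstacle to be controlling the accumulation of reparametrisation discrepancies across the $n$ checkpoints — making sure the constant genuinely depends only on $M$ and $n$ and not on $\gamma$ — which is exactly why the uniform quasi-geodesic constant $D_4(f_\bij(M))$ from Lemma \ref{lemma:quasi-geodesic-representatives} and the uniformity of the $f_i$ from Lemma \ref{lemma:monster} are needed.
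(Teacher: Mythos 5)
The paper's proof is much shorter and avoids the checkpoint machinery entirely. It just observes that $\bar r$ is a Morse quasi-geodesic whose Morse gauge $f_\pres(f_\bij(M))$ and quasi-geodesic constant $D_4(f_\bij(M))$ are controlled by $M$ alone, and since $[\bar r]=[\gamma]$, Lemma \ref{lemma:monster}\ref{lemma:close_representatives} bounds the Hausdorff distance between $\bar r$ and $\gamma$ by some $K_3$ depending only on $M$. Since $\gamma_n(r)$ is a subsegment (or subray) of $\bar r$ whose starting point $\gamma_n(r)(0)=\alpha_n^*$ is a fixed vertex at distance $K_1=d_X(1,\alpha_n^*)$ from $\gamma(0)=1$, a single triangle inequality converts the Hausdorff bound into the pointwise bound: for each $s$ there is $t$ with $d_X(\gamma_n(r)(s),\gamma(t))\le K_3$, and comparing $d_X(1,\cdot)$ of both sides forces $|t-s|\le K_1+K_3$, whence $d_X(\gamma(s),\gamma_n(r)(s))\le 2K_1+2K_3$. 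Your plan reaches essentially this conclusion, and you do cite \ref{lemma:close_representatives} in passing, but you route it through a checkpoint-by-checkpoint fellow-travelling argument plus \ref{lemma:get_close_again}, which buys nothing here and introduces the very accumulation problem you then have to worry about. You also miss where the constant actually comes from: $C_3$ does not depend ``only on $M$ and the geodesic path length $n$''; it depends on the actual geodesic path through $K_1=d_X(1,\alpha_n^*)$, which is not controlled by $n$ alone. Relatedly, applying \ref{lemma:get_close_again} after shifting $\gamma$'s parameter by $s_n$ would bound $d_X(\gamma(s_n+t),\gamma_n(r)(t))$, not $d_X(\gamma(t),\gamma_n(r)(t))$; recovering the latter forces you to re-add $s_n\approx K_1$ into the constant, which is exactly the offset the paper's direct triangle inequality handles in one line. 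So the approach is workable but over-engineered, and the description of the final constant's dependence is not quite right.
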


Note that the constant $C_3$ is allowed to depend on the geodesic path $(\alpha_1, \ldots, \alpha_n)$ (and hence on $n$) and the Morse gauge $M$.

\begin{proof}
Let $K_1$ be the distance from $e$ to $\alpha_n^*$, that is, $K_1$ is the distance from $\gamma(0)$ to $\gamma_n(r)(0)$. The geodesic $\gamma$ is $M$-Morse, and thus $r$ is $M_1 = f_\bij(M)$-Morse by Proposition \ref{prop:equivalence_morse_boundary}. Lemma \ref{lemma:quasi-geodesic-representatives} shows that $\bar{r}$ is a $K_2 = D_4(M_1)$-Morse $M_2 =f_\pres(M_1)$-quasi-geodesic. We have that $[\bar{r}] = [\gamma]$. Hence, Lemma \ref{lemma:monster}\ref{lemma:close_representatives} about the distance of representatives shows that the Hausdorff distance between $\bar{r}$ and $\gamma$ is at most $K_3 =D_1(M_2, K_2)$. Let $C_3 = 2K_3 +2K_1$. We have that for every $s$ in the domain of $\gamma_n(r)$ there exists $t\geq$ such that $d_X(\gamma_n(r)(s), \gamma(t))\leq K_3$. Since $\gamma_n(r)$ and $\gamma$ are geodesics, we also have that $s-K_1-K_3 \leq t \leq K_1+K_3+s$ and hence $d_X(\gamma(s), \gamma_n(r)(s))\leq C_3$.
\end{proof}

\begin{lemma}\label{lemma:join_vertex_group}
Let $\mc G$ be a Morseless star. There exists an increasing function $D_\ver : \mc M\to \R_{\geq 1}$ such that the following holds. If $\gamma_1$ and $\gamma_2$ are $M$-Morse geodesics starting at $e$ and such that $\gamma_1(t_1)$ and $\gamma_2(t_2)$ are in $\pi^{-1}(\alpha)^+$ for some $t_1, t_2\geq 0$ and an edge $\alpha\in E(T)$, then $d_X(\gamma_1(s), \gamma_2(s))\leq D_\ver(M)$ for all $0\leq s \leq \max\{t_1, t_2\}$.
\end{lemma}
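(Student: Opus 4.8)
The statement says: if two $M$-Morse geodesics $\gamma_1,\gamma_2$ from $1$ both hit the same edge group $\pi^{-1}(\alpha)^+$ (at parameters $t_1,t_2$), then they stay uniformly close up to $\max\{t_1,t_2\}$. The key idea is that the endpoint $\pi^{-1}(\alpha)^+$ and the basepoint $1$ lie in a uniformly small part of the Bass-Serre tree, so the initial segments $\gamma_i[0,t_i]$ are both ``almost geodesics to the edge group'', and since edge groups are thin (Lemma \ref{lemma:properties_of_morseless_stars}(\ref{lemma:max_length})) their endpoints inside $\pi^{-1}(\alpha)^+$ are forced close together; then Morseness glues everything.

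First I would set up the comparison point. Since $\gamma_1(t_1),\gamma_2(t_2)\in \pi^{-1}(\alpha)^+$, the geodesic $[\gamma_1(t_1),\gamma_2(t_2)]$ has endpoints on the $M$-Morse geodesic $\gamma_1$ (after restricting — $\gamma_1[0,t_1]$ is $f_1(M)$-Morse by Lemma \ref{lemma:monster}\ref{prop:subsegments}), hence is $M_1 := f_2(M,1)$-Morse by Lemma \ref{lemma:monster}\ref{prop:adapted2.5}; but its endpoints also lie in the edge group $W := \pi^{-1}(\alpha)^+$, so by Lemma \ref{lemma:properties_of_morseless_stars}(\ref{lemma:max_length}) we get $d_X(\gamma_1(t_1),\gamma_2(t_2))\le D_3(M_1,1)=:K_0$. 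In particular $|t_1-t_2|\le K_0$ since these are geodesics from $1$. WLOG assume $t_1\le t_2$.

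Next I would use Morseness of $\gamma_1$ to control the tail of $\gamma_2$ near $t_2$. Consider the concatenation $\eta := \gamma_1[0,t_1]\cdot[\gamma_1(t_1),\gamma_2(t_2)]$; this is a quasi-geodesic (Lemma \ref{quasi-geodesic}) with endpoints $1=\gamma_2(0)$ and $\gamma_2(t_2)$, so it is a $3$-quasi-geodesic from $\gamma_2(0)$ to $\gamma_2(t_2)$ staying within $M(3)$ of $\gamma_2$; conversely, since $\gamma_2$ is $M$-Morse, $\gamma_2[0,t_2]\subset\mc N_{M_1(3)}(\eta)\subset \mc N_{M_1(3)+K_0}(\gamma_1)$ roughly (being careful: $\eta$ is $M_1$-Morse, so $\gamma_2[0,t_2]$, which is a geodesic with endpoints on... no, the inclusion I want is $\gamma_2[0,t_2]\subset \mc N(\gamma_1[0,t_1]\cdot[\gamma_1(t_1),\gamma_2(t_2)])$ because $\gamma_2[0,t_2]$ is a geodesic, hence quasi-geodesic, with endpoints on the $M$-Morse — wait, $\eta$ is not $M$-Morse a priori; rather $\gamma_2$ is $M$-Morse, and $\eta$ is a quasi-geodesic with endpoints on $\gamma_2$, so $\eta\subset\mc N_{M(3)}(\gamma_2)$). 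So I get $\gamma_1(t_1)\in\mc N_{M(3)}(\gamma_2)$ and symmetrically I can reverse roles. The cleanest route: $\gamma_1(t_1)$ is within $M(3)$ of $\gamma_2$ at some parameter $s_0$; but $\gamma_1(t_1)$ and $\gamma_2(t_2)$ are $K_0$-close and $\gamma_1[0,t_1],\gamma_2[0,s_0]$ are both geodesics from $1$, so $|t_1-s_0|\le M(3)$ and hence $|s_0-t_2|$ is bounded, forcing $s_0$ close to $t_2$. Then for every $s\le t_1$ I look at the two geodesics $\gamma_1[0,s]$ and $\gamma_2[0,s]$: they share endpoint $1$, and I want them close. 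Here I would invoke Lemma \ref{lemma:get_close_again}: having just shown $\gamma_1(t_1)$ is boundedly close to $\gamma_2(t_1)$ (use $|s_0-t_1|\le M(3)$ plus geodesic speed to pass from $\gamma_2(s_0)$ to $\gamma_2(t_1)$), and since $\gamma_1[0,t_1]$ is $f_1(M)$-Morse, I can bound $d_X(\gamma_1(s),\gamma_2(s))$ uniformly for $s\in[0,t_1]$ — the standard ``two Morse geodesics from the same point with boundedly-close endpoints stay $\delta$-close'' argument (Lemma \ref{lemma:monster}\ref{lemma:close_representatives} applied to the two geodesics $\gamma_1[0,t_1]$ and a geodesic $[1,\gamma_2(t_1)]$, which by Lemma \ref{lemma:monster}\ref{prop:geodesic_segments} is Morse and by \ref{prop:adapted2.5} close to $\gamma_2[0,t_1]$ since its endpoint is $K$-close to $\gamma_1(t_1)$). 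Finally, for $s\in[t_1,t_2]$ I use $|t_1-t_2|\le K_0$: $\gamma_2(s)$ is within $K_0$ of $\gamma_2(t_1)$ which is boundedly close to $\gamma_1(t_1)$; and $\gamma_1(s)$ — but $\gamma_1$ might not be defined past $t_1$... actually the statement only requires $s\le \max\{t_1,t_2\}=t_2$ and $\gamma_1$ is a ray so it is defined, and $d_X(\gamma_1(s),\gamma_1(t_1))=|s-t_1|\le K_0$. Combining, $d_X(\gamma_1(s),\gamma_2(s))$ is bounded by a constant depending only on $M$ (through $C_2$, $D_3$, $\delta_{M}$, $M$), which we call $D_\ver(M)$.

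\textbf{Main obstacle.} The delicate point is the middle bootstrapping: turning the weak statement ``$\gamma_1(t_1)$ lies in a bounded neighborhood of $\gamma_2$'' into the strong pointwise estimate $d_X(\gamma_1(s),\gamma_2(s))\le D_\ver(M)$ for all $s\le t_1$. One must be careful that the matching parameter $s_0$ on $\gamma_2$ is genuinely close to $t_1$ (not merely that the points are close) — this uses that both are geodesic segments emanating from the common point $1$, so the triangle inequality pins down $|s_0 - t_1|$. After that, the ``fellow-traveling of Morse geodesics with the same origin and nearby endpoints'' is exactly the content available from Lemma \ref{lemma:get_close_again} together with Lemma \ref{lemma:monster}\ref{lemma:close_representatives}, so it is routine; but assembling the constants and handling the short interval $[t_1,t_2]$ separately requires some bookkeeping. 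I do not expect any conceptual difficulty beyond that.
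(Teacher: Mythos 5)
Your first step contains a circular error. You claim that the geodesic $[\gamma_1(t_1),\gamma_2(t_2)]$ ``has endpoints on the $M$-Morse geodesic $\gamma_1$'' and invoke Lemma~\ref{lemma:monster}\ref{prop:adapted2.5}. But $\gamma_2(t_2)$ lies on $\gamma_2$, not on $\gamma_1$, and knowing that $\gamma_2(t_2)$ is close to $\gamma_1$ is essentially what the lemma you are trying to prove is supposed to deliver; as written, the hypothesis of \ref{prop:adapted2.5} simply fails. The correct route — and the one the paper takes — is to apply the triangle property, Lemma~\ref{lemma:monster}\ref{lemma:triangles}, to the geodesic triangle with sides $\gamma_1[0,t_1]$, $\gamma_2[0,t_2]$ and $\eta=[\gamma_1(t_1),\gamma_2(t_2)]$: the first two sides are $f_1(M)$-Morse by the subsegment property, hence $\eta$ is $f_1^2(M)$-Morse, and then Lemma~\ref{lemma:properties_of_morseless_stars} (either (\ref{lemma:max_length}) as you use, or (\ref{constant:edge_group_intersection2}) as the paper uses) bounds $d_X(\gamma_1(t_1),\gamma_2(t_2))$ by $K_1=D_3(f_1^2(M),1)$. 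Your appeal to Lemma~\ref{lemma:max_length} afterwards is fine; only the justification that $\eta$ is Morse is broken.

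The remainder of your proposal meanders but does, after several retractions, land on the paper's actual mechanism: the concatenation $\gamma_1[0,t_1]\cdot\eta$ is a $K_1$-quasi-geodesic with both endpoints on $\gamma_2$, hence lies in the $M(K_1)$-neighbourhood of $\gamma_2$; for each $s\le t_1$ this produces an $s'$ with $d_X(\gamma_1(s),\gamma_2(s'))\le M(K_1)$, and because $\gamma_1,\gamma_2$ are geodesics sharing the origin $1$, the triangle inequality forces $\abs{s-s'}\le M(K_1)$, giving the pointwise bound. Your extra detour through Lemma~\ref{lemma:get_close_again} is unnecessary and, strictly speaking, not directly applicable (that lemma presupposes pointwise closeness on an initial interval, which is the thing being proved); the direct parameter-matching argument is both shorter and what the paper uses. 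The handling of $s\in[\min\{t_1,t_2\},\max\{t_1,t_2\}]$ via $\abs{t_1-t_2}\le K_1$ is the right idea. In short: fix the first step by replacing the incorrect appeal to \ref{prop:adapted2.5} with the triangle lemma; the rest of your plan reduces to the paper's proof once cleaned up.
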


\begin{proof}
Let $\eta$ be a geodesic from $\gamma_1(t_1)$ to $\gamma_2(t_2)$. The geodesics $\gamma_1[0, t_1]$ and $\gamma_2[0, t_2]$ are subsegments of $\gamma_1$ and $\gamma_2$ respectively and thus $f_1(M)$-Morse by Lemma \ref{lemma:monster}\ref{prop:subsegments}. Applying Lemma \ref{lemma:monster}\ref{lemma:triangles} to the triangle with sides $\gamma_1[0, t_1]$, $\gamma_2[0, t_2]$ and $\eta$ we get that $\eta$ is $M_1 = f_1^2(M)$-Morse. Since the endpoints of $\eta$ lie in $\pi^{-1}(\alpha)^+$, their distance is at most $K_1 = D_3(M_1, 1)$ by Lemma \ref{lemma:properties_of_morseless_stars}\eqref{constant:edge_group_intersection2} about the intersection of Morse geodesics and edge groups. Thus, $\gamma_1[0, t_1]\eta$ is a $K_1$-quasi-geodesic with endpoints on $\gamma_2$ and for every $s_1\leq t_1$ there exists a real number $s_1\geq 0$ such that $d_X(\gamma_1(s_1), \gamma_2(s_2))\leq M(K_1) = K_2$. Since $\gamma_1$ and $\gamma_2$ are geodesics, we have that $s_1- K_2\leq s_2\leq s_1 + K_2$ and hence the statement holds for $D_\ver(M) = 2K_2$.
\end{proof}

Let $v\in V(T)$ and $x\in \pi^{-1}(v)$ be vertices. We can go from $v^*$ to $x$ using only edges labelled with elements from the group $G_{\check{v}}$. Thus the path from $v^*$ to $x$ corresponds to a group element in $G_{\check{v}}$. We want to formalise this with the following definition. 

\begin{definition}\label{def:xg}
    Let $x$ be a vertex in $\pi^{-1}(v)$ for some vertex $v\in V(T)$. We have that $v^* = (g, j)$ for some $g\in G$ and $j\in V(\Gamma)$. We define $x^G$ as the element of $G$ which satisfies $(gx^G, j) = x$.
\end{definition}

\begin{remark}
Since $x$ is in $\pi^{-1}(v)$ the element $x^G$ is actually in $G_{\check{v}}$. 
\end{remark}

Next we want to formalise the correspondence between edges in $T$ and cosets of vertex groups. Let $v\in V(T)$ be a vertex and let the projection of $v$ to $\Gamma$ be $i = \check{v}$. Furthermore, let $\beta\in E(\Gamma)$ be an edge whose source is $i$ and denote by $A$ the set of edges of $E(T)$ whose source is $v$ and whose projection to $\Gamma$ is $\beta$. We can use the definition from above to construct a map $\tilde\psi : A \to G_i$. Namely, we will send outgoing edges $\alpha\in A$ to $(\bar\alpha^*)^G$ and non-outgoing edges $\alpha\in A$ to $(\alpha^*)^G$. Since edges of $A$ correspond to cosets of $H_\beta$, the map $\tilde\psi$ actually induces a bijection $\psi : A\to G_i/H_\beta$.

\begin{lemma}\label{lemma:properties_of_xg}
Let $\mc G$ be a Morseless star. There exists an increasing function $f_\xgg: \mc M\to \mc M$ such that the following holds. Let $v\in V(T)$ be a vertex and $\alpha\in E(T)$ be an outgoing edge whose source is $v$. If $\alpha$ is $M$-Morse, then $(\bar\alpha^*)^G$ is $(f_\xgg(M), G_{\check{v}})$-Morse. Conversely, if $(\bar\alpha^*)^G$ is $(M, G_{\check{v}})$-Morse, then $\alpha$ is $f_\xgg(M)$-Morse.
\end{lemma}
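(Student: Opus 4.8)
The statement relates Morseness of an outgoing edge $\alpha$ with source $v$ (i.e. Morseness of the geodesic $[v^*,\alpha^*]$, or equivalently $[v^*,\bar\alpha^*]$ up to the functions already established) to the intrinsic Morseness of the group element $(\bar\alpha^*)^G\in G_{\check v}$ with respect to the Cayley graph $\Cay(G_{\check v},S_{\check v})$. The key point to exploit is that $\pi^{-1}(v)$ is a copy of $\Cay(G_{\check v},S_{\check v})$ sitting inside $X$ as a $C_2$-quasi-isometric embedding (Lemma~\ref{lemma:properties_of_morseless_stars}\eqref{lemma:undistorted_vertex_groups}), and that translating by $v^*$ identifies the vertex $\bar\alpha^*$ of $\pi^{-1}(v)$ with the group element $(\bar\alpha^*)^G$ under the map $\iota_{\check v}$. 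So everything is really a statement about translating Morseness between $d_X$ and $d_{\pi^{-1}(v)}$, and Lemma~\ref{lemma:properties_of_morseless_stars}\eqref{lemma:vertex_group_morseness} is exactly the tool for that.

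First I would reduce to $v = G_0$, hence $v^* = 1$: since $\pi_1(\mc G)$ acts on $X$ by isometries and the action is transitive on vertices of $T$ with the same $\Gamma$-image, left-translating by $(v^*)^{-1}$ (as an element of $\pi_1(\mc G)$) carries $\pi^{-1}(v)$ to $\pi^{-1}(G_0')$ for the appropriate vertex $G_0'$ of $T$ with $\check{G_0'}=\check v$, sends $v^*$ to $1$, and is compatible with the definition of $(\cdot)^G$. After this reduction, set $j = \check v$, let $Y = \pi^{-1}(v)$ (so $\iota_j$ identifies $Y$ with $\Cay(G_j,S_j)$), and let $g = (\bar\alpha^*)^G \in G_j$, so that under this identification $g$ corresponds to the vertex $\bar\alpha^* \in Y$. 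Note $\bar\alpha^*$ lies on $[v^*,\alpha^*]$ and $d_X(1,\bar\alpha^*) = d_X(1,\pi^{-1}(\alpha)^-)$, and by Lemma~\ref{lemma:compare_morseness_of_repr} the geodesics $[v^*,\alpha^*]$, $[v^*,\bar\alpha^*]$ are Morse with comparable gauges; I will work with $[1,\bar\alpha^*]$ throughout. Actually care is needed: $[1,\bar\alpha^*]$ is a geodesic in $X$, not necessarily contained in $Y$, whereas $(\bar\alpha^*)^G$ being $(M,G_j)$-Morse refers to a geodesic inside $Y$. This is precisely the gap that Lemma~\ref{lemma:properties_of_morseless_stars}\eqref{lemma:vertex_group_morseness} bridges.

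For the forward direction: assume $\alpha$ is $M$-Morse, so $[1,\bar\alpha^*]$ is $f_8(M)$-Morse (Lemma~\ref{lemma:compare_morseness_of_repr}) and is a geodesic contained in $Y$ — here I should double-check that $[1,\bar\alpha^*]$ can be taken inside $Y$: since $1 = v^*$ and $\bar\alpha^*$ both lie in $\pi^{-1}(v)$ and $\pi^{-1}(v)$ is connected (it is a coset Cayley graph), and moreover $d_X(1,\bar\alpha^*)$ is realized within $\pi^{-1}(\alpha^-)$'s vertex group structure; if the $X$-geodesic happens not to lie in $Y$, Lemma~\ref{lemma:monster}\ref{prop:geodesic_segments} lets me replace it, but the cleanest route is to apply the second implication of Lemma~\ref{lemma:properties_of_morseless_stars}\eqref{lemma:vertex_group_morseness} directly to the $X$-geodesic $[1,\bar\alpha^*]$, concluding that the corresponding $Y$-geodesic is $(f_3(f_8(M)),Y)$-Morse, i.e. $(\bar\alpha^*)^G$ is $(f_3(f_8(M)),G_j)$-Morse. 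For the converse: assume $(\bar\alpha^*)^G$ is $(M,G_j)$-Morse, so there is an $(M,Y)$-Morse $Y$-geodesic from $1$ to $\bar\alpha^*$; by the first implication of Lemma~\ref{lemma:properties_of_morseless_stars}\eqref{lemma:vertex_group_morseness} it is an $f_3(M)$-Morse $C_2$-quasi-geodesic in $X$, hence by Lemma~\ref{lemma:monster}\ref{prop:geodesic_segments} (applied after replacing it with an honest $X$-geodesic with the same endpoints, using the realisation/neighbourhood machinery of Lemma~\ref{lemma:monster}\ref{prop:adapted2.5}) the $X$-geodesic $[1,\bar\alpha^*]$ is $f_2(f_3(M),C_2)$-Morse; then Lemma~\ref{lemma:monster}\ref{prop:adapted2.5} again (together with $\alpha^*$ being at distance $1$ from $\bar\alpha^*$) gives that $[v^*,\alpha^*]$ is Morse with a controlled gauge, so $\alpha$ is $f_{\xgg}(M)$-Morse. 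Taking $f_{\xgg}$ to be the maximum of the two resulting gauges (and enlarging to be increasing and $\geq M$, per the convention) finishes the proof.

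The main obstacle is the bookkeeping at the interface between $d_X$-geodesics and $d_Y$-geodesics: the vertex $\bar\alpha^*$ is defined via a closest-point condition in $X$, but Morseness of $(\bar\alpha^*)^G$ is an intrinsic statement in $\Cay(G_j,S_j)$, and one must be careful that $[1,\bar\alpha^*]$ is (or can be replaced by) a geodesic realizing the situation to which Lemma~\ref{lemma:properties_of_morseless_stars}\eqref{lemma:vertex_group_morseness} applies. Everything else is a routine chaining of the functions $f_3$, $f_8$, $f_2$ already supplied; no genuinely new estimate is needed.
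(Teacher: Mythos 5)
Your overall plan coincides with the paper's: translate so $v^*$ is the basepoint, pass between $[v^*,\alpha^*]$ and $[v^*,\bar\alpha^*]$ using $d_X(\alpha^*,\bar\alpha^*)=1$, and switch between ambient Morseness and intrinsic $G_{\check v}$-Morseness via Lemma~\ref{lemma:properties_of_morseless_stars}\eqref{lemma:vertex_group_morseness}. However, there is a genuine scope error: you invoke Lemma~\ref{lemma:compare_morseness_of_repr} to assert that $[v^*,\alpha^*]$ and $[v^*,\bar\alpha^*]$ have comparable gauges, but that lemma is stated and proved only for \emph{relatively hyperbolic} Morseless stars, whereas Lemma~\ref{lemma:properties_of_xg} must hold for an arbitrary Morseless star. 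As written that step does not apply. The invocation is also unnecessary: since $\bar\alpha^*$ lies on a geodesic $[v^*,\alpha^*]$, one direction is Lemma~\ref{lemma:monster}\ref{prop:subsegments}, and the other follows from Lemma~\ref{lemma:monster}\ref{prop:adapted2.5} because $[v^*,\alpha^*]$ has endpoints in the $1$-neighbourhood of $[v^*,\bar\alpha^*]$ --- this is exactly the paper's route, and what you yourself do in the final step. Simply delete the appeal to Lemma~\ref{lemma:compare_morseness_of_repr}.

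Your second concern, that the $X$-geodesic $[1,\bar\alpha^*]$ need not be contained in $Y=\pi^{-1}(v)$, is legitimate, but the fixes you gesture at (Lemma~\ref{lemma:monster}\ref{prop:geodesic_segments}, or ``applying the second implication directly'') do not resolve it, since both still require a path that actually lives in $Y$. The clean repair is to first pass to the $Y$-geodesic $[1,\bar\alpha^*]_Y$, which has the same endpoints and is a $C_2$-quasi-geodesic in $X$ by Lemma~\ref{lemma:properties_of_morseless_stars}\eqref{lemma:undistorted_vertex_groups}, hence $f_2(M,C_2)$-Morse in $X$ by Lemma~\ref{lemma:monster}\ref{prop:adapted2.5}; only then invoke Lemma~\ref{lemma:properties_of_morseless_stars}\eqref{lemma:vertex_group_morseness} for that path, which does live in $Y$. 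With these two repairs the argument is correct and matches the paper's.
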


In other words, if $\alpha$ is $M$-Morse, $\tilde{\psi}(\alpha)$ is $(f_\xgg(M), G_{\check{v}})$-Morse and if $\tilde{\psi}(\alpha)$ is $(M, G_{\check{v}})$-Morse, then $\alpha$ is $f_\xgg(M)$-Morse. 

\begin{proof}
Let $i = \check {v}$ and let $g = (\bar\alpha^*)^G$. The geodesic $[v^*, \bar\alpha^*]$ is a translation of the geodesic $[(1, i), (g, i)]$. So, if one of them is $M$-Morse, so is the other. Furthermore, the distance of $\alpha^*$ and $\bar\alpha^*$ is $1$. The Lemma thus follows from Lemma \ref{lemma:monster}\ref{prop:adapted2.5} about geodesics with endpoints in the neighbourhood of Morse geodesics and Lemma \ref{lemma:properties_of_morseless_stars}\eqref{lemma:vertex_group_morseness}, which gives us control over Morse gauges when switching from one viewpoint to another.
\end{proof}

Given a group $H$, an element $x\in H$ and a biinfinite Morse geodesic line $\lambda$ satisfying $\lambda(0) = e$, called the \emph{baseline}, one can construct a ray $\lambda_x$ starting at the basepoint and corresponding to $x$ as follows. If $x\cdot \lambda (t)$ is a closest point on $x\cdot \lambda$ to $e$ and $t>0$, then replace $\lambda$ by $\lambda^{-1}$ (here $\lambda^{-1}$ denotes the geodesic line defined as $\lambda^{-1}(t) = \lambda(-t)$). Choose $\lambda_x$ as a realisation of $x\cdot \lambda[0, \infty)$. This construction is used to do the following; given a $G_i$-geodesic line $\lambda$ (the baseline) and an element $x\in G_i$, we get a $G_i$-geodesic ray $\lambda_x$ corresponding to $x$. Note that with this setup, $\lambda_x$ is a $C_2$-quasi-geodesic. Furthermore, for a vertex $v\in V(T)$ and vertex $x\in \pi^{-1}(v)$ we want to have a (quasi)-geodesic ray starting at $v^*$ and passing close to $x$. To do so, we will use the above construction in the vertex group and then translate the ray. The following definition makes this more precise.

\begin{definition} \label{def:corresponding_rays}
Let $v\in V(T)$ be a vertex, let $i= \check v$ and let $\lambda$ be a Morse $G_i$-geodesic line going through $(1, i)$. For a vertex $x\in \pi^{-1}(v)$ and $y = x^G$ define $\lambda_{x, v}$ as $v^*\cdot \lambda_y$. Furthermore, we define the Morse direction of $x$, denoted by $\gamma_x$, as $\gamma_{x} = [\lambda_y]\in \mb G_i$. 
\end{definition}

With this definition, $\lambda_{x, v}$ is a $C_2$-quasi-geodesic and $x\cdot \gamma_{x} = [\lambda_{x, v}]$. Furthermore, $\lambda_{x, v}$ is a $\pi^{-1}(v)$-geodesic realisation  of $x\cdot \lambda[0, \infty)$ starting at $v^*$. The following lemma states some properties of $\lambda_{x, v}$. Property \eqref{prop:morseness} and \eqref{prop:contained_neighbourhoods} are equivalents of properties proven for $\lambda_y$ in Lemma 2.27  of \cite{graph_of_groups1}. However, because of the slightly different setting it is easier to prove them from scratch then use Lemma 2.27 of \cite{graph_of_groups1} to  prove them.

\begin{lemma}\label{lemma:corresponding_geodesic}
There exist increasing functions $f_\lam : \mc M\to \mc M$ and $D_\dlam : \mc M\times \R_{\geq 1}\to \R$ such that the following holds. Let $v\in V(T)$ be a vertex, let $i = \check v$, let $x\in \pi^{-1}(v)$ be a vertex and let $\lambda$ be a $G_i$-geodesic line satisfying $\lambda(0) = (1, i)$. 
\begin{enumerate}
    \item If $\lambda$ and one of $[v^*, x]$ and $\lambda_{x, v}$ are $M$-Morse, the other is $f_8(M)$-Morse. The same holds when replacing $\lambda_{x, v}$ by $\gamma_x$.\label{prop:morseness} 
    \item If $\gamma_x$ and $[v^*, x]$ are both $M$-Morse and $d_X(v^*, x)\geq D_\dlam(M, k)$, then 
    \begin{align}
        x\in \tilde O_k^M([\lambda_{x, v}], v^*) \qquad \text{and} \qquad [\lambda_{x, v}]\in O_k^M(x, v^*).
    \end{align}\label{prop:contained_neighbourhoods}
    \item Assume $[v^*, x]$ is $M$-Morse, $\lambda$ is $N$-Morse, and $y$ is on $\lambda_{x, v}$. If $z$ is on $[v^*, y]$ and $d_X(z, v^*)\geq D_\dlam(M,  d_X( v^*, x) )$, then $[x, z]$ is $f_8(N)$-Morse.\label{prop:nice_connections}
\end{enumerate}
\end{lemma}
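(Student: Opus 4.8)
The plan is to reduce all three statements to one geometric fact: $\lambda_{x,v}$ fellow-travels, at bounded distance, the concatenation of $[v^*,x]$ with a realisation of the translated base ray $x\cdot\lambda[0,\infty)$. I would set up notation as follows. Write $i=\check v$ and $y=x^G\in G_i$, so $[v^*,x]$ is the $v^*$-translate of the $X$-geodesic $[(1,i),(y,i)]$ and $\lambda_{x,v}=v^*\cdot\lambda_y$. Let $\nu$ be a $\pi^{-1}(v)$-geodesic realisation of $x\cdot\lambda[0,\infty)$ starting at $x$; by Lemma \ref{lemma:properties_of_morseless_stars}(\ref{lemma:vertex_group_morseness}) it is a $C_2$-quasi-geodesic in $X$, and if $\lambda$ is $N$-Morse then $\nu$ is $f(N)$-Morse for a fixed increasing $f$ built from $f_1,f_2,f_3$ (translate, subray, realisation, change of metric). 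The flip in the definition of $\lambda_y$ makes $x=\nu(0)$ a coarsely closest point of $\nu$ to $v^*$ (defect governed by the Morse gauge of $\nu$), so by Lemma \ref{quasi-geodesic}(i) the path $\mu:=[v^*,x]\cdot\nu$ is a quasi-geodesic ray from $v^*$ with $[\mu]=[\lambda_{x,v}]$, and $\mu$, $\lambda_{x,v}$ are $C$-quasi-geodesics for a uniform $C$. Since $\lambda_{x,v}$ is also a $C_2$-quasi-geodesic realisation of the direction $v^*\cdot\gamma_x$, whenever one of $\gamma_x$, $\lambda_{x,v}$, $\mu$ is Morse, Lemma \ref{lemma:monster}\ref{prop:realization1} and \ref{lemma:close_representatives} make all of them Morse with comparable gauges and within bounded Hausdorff distance of one another.

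Part (\ref{prop:morseness}): if $\lambda$ and $[v^*,x]$ are $M$-Morse, then $\mu=[v^*,x]\cdot\nu$ is a concatenation of two $f(M)$-Morse quasi-geodesics, hence $f_1(f(M))$-Morse by Lemma \ref{lemma:monster}\ref{cond:concatenation2}; since $\lambda_{x,v}$ is a bounded-constant quasi-geodesic ray from $v^*$ in the class $[\mu]$, it is $f_2(f_1(f(M)),C)$-Morse by Lemma \ref{lemma:monster}\ref{prop:realization1}, which is $\le f_8(M)$ after enlarging $f_8$; the $\gamma_x$-version follows since $\gamma_x=(v^*)^{-1}\cdot[\lambda_{x,v}]$ and $\lambda_{x,v}$ is a quasi-geodesic realisation of $v^*\cdot\gamma_x$. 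Conversely, if $\lambda$ and $\lambda_{x,v}$ (equivalently $\gamma_x$) are $M$-Morse, then $\mu$ is Morse with gauge controlled by $M$, and $[v^*,x]$, an initial subsegment of $\mu$, is $f_8(M)$-Morse by Lemma \ref{lemma:monster}\ref{prop:subsegments}.

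Part (\ref{prop:contained_neighbourhoods}): assume $\gamma_x$ and $[v^*,x]$ are $M$-Morse and fix an $M$-Morse $v^*$-realisation $\xi$ of $v^*\cdot\gamma_x=[\lambda_{x,v}]$ (it exists since $\gamma_x$ is $M$-Morse). By Part (\ref{prop:morseness}) and Lemma \ref{lemma:monster}\ref{lemma:close_representatives}, $\xi$, $\lambda_{x,v}$, $\mu$ pairwise $K_0(M)$-fellow-travel, so $x=\mu(d_X(v^*,x))$ lies within $2K_0(M)$ of $\xi(d_X(v^*,x))$. A triangle/concatenation argument (Lemma \ref{lemma:monster}\ref{prop:adapted2.5}, \ref{cond:concatenation2}), using that $[v^*,x]$ and $\xi$ are geodesics from $v^*$ ending $2K_0(M)$-close, then gives a constant $K_1(M)$ with $d_X([v^*,x](t),\xi(t))\le K_1(M)$ for all $t\in[0,d_X(v^*,x)]$. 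Applying Lemma \ref{lemma:get_close_again} to the $M$-Morse ray $\xi$ and the segment $[v^*,x]$ (the lemma applies to segments), with $K=K_1(M)$ and $D=d_X(v^*,x)$, gives $d_X([v^*,x](t),\xi(t))<\delta_M$ for $t\in[0,d_X(v^*,x)-2K_1(M)]$ provided $d_X(v^*,x)\ge 6K_1(M)$. Taking $D_\dlam(M,k):=\max\{6K_1(M),\,k+2K_1(M)\}+K_0(M)$ makes $[0,k]$ fall in this range, so the geodesic $[v^*,x]$ lies in $\tilde{O}_k^M(\xi,v^*)$ and the ray $\xi$ lies in $O_k^M([v^*,x],v^*)$; unwinding the definitions this is exactly $x\in\tilde{O}_k^M([\lambda_{x,v}],v^*)$ and $[\lambda_{x,v}]\in O_k^M(x,v^*)$.

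Part (\ref{prop:nice_connections}) and the main obstacle: here $[v^*,x]$ is $M$-Morse and $\lambda$ is $N$-Morse, so by Part (\ref{prop:morseness}) $\lambda_{x,v}$ is Morse (gauge controlled by $M$ and $N$) and stays within bounded distance of $\mu=[v^*,x]\cdot\nu$; since $v^*,y\in\lambda_{x,v}$, the geodesic $[v^*,y]$ lies in a bounded neighbourhood of $\lambda_{x,v}$ (Lemma \ref{lemma:monster}\ref{prop:adapted2.5}). The part of $\mu$ before $x$ is the $M$-Morse geodesic $[v^*,x]$ of length exactly $d_X(v^*,x)$, and propagating this through the fellow-traveling shows that the ``initial'' part of $[v^*,y]$ — the part one must avoid so that $[x,z]$ sees only the $N$-Morse tail — is an initial arc whose length exceeds $d_X(v^*,x)$ only by a bounded amount; hence once $d_X(v^*,z)\ge D_\dlam(M,d_X(v^*,x))$ for a suitably large $D_\dlam$, the point $z$ lies on the part of $[v^*,y]$ that shadows $\nu$. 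Then $[x,z]$ has one endpoint $x=\nu(0)$ on $\nu$ and the other endpoint $z$ within a bounded distance $K(M,N)$ of $\nu$, so by Lemma \ref{lemma:monster}\ref{prop:adapted2.5} it is $f_2(f(N),K(M,N))$-Morse, i.e. $f_8(N)$-Morse after enlarging $f_8$. The delicate point, which I expect to be the real work, is the constant bookkeeping here: one must verify that the threshold can be taken as a function of $M$ and $d_X(v^*,x)$ only (the $N$-Morse gauge of $\lambda$ entering solely through the bound $K(M,N)$ in the conclusion), and this rests precisely on the fact that the bad initial part of $[v^*,y]$ shadows the $M$-Morse geodesic $[v^*,x]$, whose length and coarse geometry are governed by $M$ alone.
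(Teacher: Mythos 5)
Your Parts (\ref{prop:morseness}) and (\ref{prop:contained_neighbourhoods}) follow essentially the same geometry the paper uses: your $\mu=[v^*,x]\cdot\nu$ is the third side of the $C_2$-quasi-geodesic triangle $(x\cdot\lambda[0,\infty),\,[v^*,x],\,\lambda_{x,v})$ that the paper works with directly via Lemma~\ref{lemma:monster}\ref{lemma:triangles}, and the closest-point and fellow-travelling steps match the paper's use of Lemma~\ref{lemma:monster}\ref{prop:realization1}, \ref{lemma:close_representatives} and Lemma~\ref{lemma:get_close_again}. Those parts are correct.

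Part (\ref{prop:nice_connections}) has a genuine gap, and you flag the wrong worry. The threshold $D_\dlam$ is not the problem: in the paper it ends up being $K_2=C_2(C_2(2d_X(v^*,x)+C_2)+C_2)$, a purely metric estimate of how far along $\lambda_{x,v}$ one must travel to pass the closest-point projection of $x$, depending only on $d_X(v^*,x)$ and $C_2$, so depending on ``$M$ and $d_X(v^*,x)$ only'' is automatic. The real issue is that your \emph{conclusion} depends on $M$. You place $z$ within a distance $K(M,N)$ of $\nu$, where $K(M,N)$ is governed by the Morse gauge of $\lambda_{x,v}$; by your own Part~(\ref{prop:morseness}), with $[v^*,x]$ being $M$-Morse and $\lambda$ being $N$-Morse, that gauge is controlled by $\max\{M,N\}$, so $K$ genuinely involves $M$. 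Then $f_2(f(N),K(M,N))$ is increasing in $K$, hence also involves $M$, and ``enlarging $f_8$'' cannot erase that: $f_8$ is a fixed function of a single Morse gauge, and the statement requires the Morse gauge of $[x,z]$ to be a function of $N$ alone. The whole point of Part~(\ref{prop:nice_connections}) is that $M$ must vanish from the output.

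The paper escapes this by never comparing $[v^*,y]$ to $\lambda_{x,v}$ at all. Instead, with $x_1$ the closest point of $\lambda_{x,v}$ to $x$, it forms $\eta=[x,x_1]\lambda_{x,v}[t_0,\infty)$, a $3C_2$-quasi-geodesic ray \emph{starting at $x$} with $[\eta]=[x\cdot\lambda[0,\infty)]$. Lemma~\ref{lemma:monster}\ref{prop:realization1} then bounds the Morse gauge of $\eta$ by that of $x\cdot\lambda[0,\infty)$, i.e.\ by $N$ alone with no $M$ entering; the ensuing Hausdorff bound from Lemma~\ref{lemma:monster}\ref{lemma:close_representatives} and the final application of Lemma~\ref{lemma:monster}\ref{prop:adapted2.5} to $[x,z]$ therefore produce constants depending only on $N$. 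Your $\mu$-framework can be repaired along the same lines, but the essential move is to extract the Morse control on the far part of $[v^*,y]$ from $x\cdot\lambda[0,\infty)$ via the realisation mechanism, not from $\lambda_{x,v}$ via its own $M$-dependent Morse gauge.
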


\begin{proof}
\eqref{prop:morseness}: If $\lambda$ is $M$-Morse, then the subray $\lambda[0, \infty)$ is $M_1 = f_2(M, C_2)$-Morse by Lemma \ref{lemma:monster}\ref{prop:subsegments}. Observe that $x\cdot\lambda[0, \infty)$, $[v^*, x]$ and $\lambda_{x, v}$ form a $C_2$-quasi-geodesic triangle. Thus, if one of $[v^*, x]$ and $\lambda_{x, v}$ is $M$-Morse, then the other is $f_2(M_1, C_2)$-Morse by Lemma \ref{lemma:monster}\ref{lemma:triangles}. Thus \eqref{prop:morseness} holds for $\lambda_{x, v}$. Let $\gamma$ be a realisation of $v^*\cdot \gamma_x$ starting at $v^*$. If $\lambda_{x, v}$ is $N$-Morse, then $\gamma$, and hence $\gamma_x$ is $f_2(N, C_2)$-Morse by Lemma \ref{lemma:monster}\ref{prop:realization1}. On the other hand, if $\gamma_x$ is $N$-Morse, there exists an $N$-Morse realisation of $v^*\cdot\gamma_x$ starting at $v^*$. Then, by Lemma \ref{lemma:monster}\ref{prop:realization1}, $\lambda_{x, v}$ is $f_2(N, C_2)$-Morse. Thus \eqref{prop:morseness} still holds if we replace $\lambda_{x, v}$ by $\gamma_x$.

\eqref{prop:contained_neighbourhoods}: Assume that $\gamma_x$ and $[v^*, x]$ are $M$-Morse. Let $x_0 = x\cdot \lambda(t)$ be a closest point on $x\cdot \lambda$ to $v^*$. Lemma \ref{quasi-geodesic} implies that $\gamma = [v^*, x_0]x\cdot\lambda[t, \infty)$ is a $3C_2$-quasi-geodesic. We can assume that $t\leq 0$ or equivalently, that $x$ lies on $\gamma$. Let $\eta$ be an $M$-Morse realisation of $[\lambda_{x, v}] = v^*\cdot\gamma_x$ starting at $v^*$. Applying Lemma \ref{lemma:monster}\ref{prop:realization1} to $\eta$ and $\gamma$ we get that $\gamma$ is $M_1 = f_2(M, 3C_2)$-Morse. Applying Lemma \ref{lemma:monster}\ref{lemma:close_representatives} to $\eta$ and $\gamma$ we get that their Hausdorff distance is at most $K_1 = D_1(M_1, 3C_2)$. In particular, there exists $s_0\in [0, \infty)$ such that $d_X(x, \eta(s_0))\leq K_1$. Since $\eta$ is $M$-Morse, the $K_1$-quasi-geodesic $[v^*, x][x, \eta(s_0)]$ is contained in the $K_2 = M(K_1)$-neighbourhood of $\eta$. Since $\eta$ and $[v^*, x]$ are geodesics with the same starting poiont, $d_X(\eta(s), [v^*, x](s))\leq 2K_2$ for all $s\leq d_X(v^*, x)$ by the triangle inequality. Thus we can set $D_\dlam(M, k)= \max\{12K_2, k+4K_2\}$ and Lemma \ref{lemma:get_close_again} concludes the proof. 

\eqref{prop:nice_connections}: By Lemma \ref{lemma:monster}\ref{prop:subsegments} about subsegments of quasi-geodesics, $\lambda[0, \infty)$ is $N_1 = f_2(N, C_2)$-Morse. Let $x_1 = \lambda_{x ,v}(t_0)$ be the closest point on $\lambda_{x, v}$ to $x$. Observe that $d_X( x, x_1)\leq d_X(v^*, x)$. Hence by the triangle inequality, $d_X(v^*, x_1)\leq 2 d_X( v^*,x)$. Since $\lambda_{x, v}$ is a $C_2$-quasi-geodesic we have that $t_0\leq C_2(2d_X(v^*, x) + C_2) = K_1$. In particular, if $y = \lambda_{x, v}(t)$ lies on $\lambda_{x,v}$ and $d_X(v^*, y)\geq C_2(K_1 +C_2) = K_2$, then $t\geq t_0$. By Lemma \ref{quasi-geodesic}, the concatenation $\eta = [x, x_1]\lambda_{x, v}[t_0, \infty)$ is a $3C_2$-quasi-geodesic. Since $[\eta] = [x\cdot\lambda[0,\infty)]$, Lemma \ref{lemma:monster}\ref{prop:realization1} about the Morseness of realisations implies that $\eta$ is $N_2 = f_2(N_1, 3C_2)$-Morse. Lemma \ref{lemma:monster}\ref{lemma:close_representatives} about the distance of realisations implies that the Hausdorff distance between $\eta$ and $x\cdot \lambda[0, \infty)$ is at most $K_3 = D_1(N_2, 3C_2)$. Define $D_\dlam(M, d_X(v^*, x)) = K_2$. Let $y$ be on $\lambda_{x, v}$ and let $z$ be a point on $[v^*, y]$ such that $d_X(v^*, z)\geq D_\dlam(M, d_X(v^*, x))$. Since $d_X(v^*, y)\geq d_X(v^*, z)\geq K_2$ the proof above shows that $d_X(y, x\cdot \lambda[0, \infty))\leq K_3$. Let $x_2$ be the closest point to $x$ on $[v^*, y]$. We have that $d_X(x_2, x)\leq d_X(v^*, x)$ and hence $d_X(v^*, x_2)\leq 2d_X(v^*, x)$. Let $\gamma$ the subsegment of $[v^*, y]$ from $x_2$ to $y$. Let $q$ be a closest point to $y$ on $x\cdot \lambda[0, \infty)$. By Lemma \ref{quasi-geodesic} the concatenation $[x, x_2]\gamma$ is a $(3,0)$-quasi-geodesic. Since $d_X(y, q)\leq K_3$, the concatenation $\eta' = [x, x_2]\gamma[y, q]$ is a $3K_3$-quasi-geodesic. It has endpoints on $x\cdot\lambda[0, \infty)$ and hence is contained in the $K_4 =N_1(3K_3)$-neighbourhood of $x\cdot \lambda[0, \infty)$. Since $d_X(v^*, z)\geq K_2\geq 2d_X(v^*, x)$, the point $z$ lies on $\gamma$ and hence on $\eta'$. Hence both $x$ and $z$ lie in the $K_4$-neighbourhood of $x\cdot \lambda[0, \infty)$, Lemma \ref{lemma:monster}\ref{prop:adapted2.5} implies that $[x, z]$ is $f_\lam(N) = f_2(N_1, K_4)$-Morse. 
\end{proof}

\section{Homeomorphisms of the Morse boundary}\label{section:homeo_tools}

In this section we develop tools to determine when maps $q: \mb G \to \mb H$, for some finitely generated groups $G$ and $H$ are homeomorphisms. If $q$ is a map from $\mb G$ to $\mb H$ then $q$ is continuous if and only if there exists a function $f : \mc M\to \mc M$ such that the following holds. For every $M$-Morse direction $z\in \mb X$ and integer $n$ there exists an integer $m$ such that $q(O_m^M(z))\subset O_n^{f(M)}(q(z))$. In Section 2.4 of \cite{graph_of_groups1}, this is explained in more detail. To show that $q$ is a homeomorphism we only need to prove that it is bijective and both $q$ and its inverse $q^{-1}$ are continuous.

\subsection{Neighbourhoods of the combinatorial Morse boundary} 
Let $\mc G$ be a Morse preserving Morseless star. Here, we define neighbourhoods for the combinatorial Morse boundary $\comb X$ and show how they correspond to neighbourhoods of the Morse boundary. Let $r$ be a combinatorial $M$-Morse ray. 
If $r$ is of finite type let $n= l(r)$ and $v = \omega(r)$ and define 
\begin{align}
    V_k^M(r) = \{r'\in \comb ^M X | \text{ $l(r')\geq n$ and $\gamma_n(r')\in \tilde{O}_k^{f_1(M)} (\gamma_n(r), v^*)$}\}.
\end{align}
If $r$ is of infinite type, define
\begin{align}
    V_k^M(r) = \{r'\in \comb ^M X\mid \text{$l(r')\geq k$ and  $\alpha_k(r') =\alpha_k(r)$} \}.
\end{align}

Observe that if $\alpha_k(r) = \alpha_k(r')$, then $\alpha_j(r) = \alpha_j(r')$ for all $1\leq j \leq k$. 

Let $r\in \comb^M X$ be a combinatorial $M$-Morse ray. Recall that $\gamma_j(r)$ is $f_1(M)$-Morse for all $0\leq j \leq l(r)$ and that we have a bijection $\Phi : \comb X \to \mb X$ defined by $r\mapsto [\bar{r}]$, which sends $M$-Morse combinatorial rays to $f_\bij(M)$-Morse directions and $M$-Morse directions to $f_\bij(M)$-Morse rays.  

\begin{lemma}\label{lemma:neighbourhood_inclusions}
Let $\mc G$ be a Morse preserving Morseless star. Let $r\in \comb ^M X$ and let $z\in \mb ^M X$. For every integer $n$ there exists an integer $m$ such that 
\begin{enumerate}[label= \roman*)]
    \item $\Phi^{-1}(O_m^M(z))\subset V_n^{f_\bij(M)}(\Phi^{-1}(z))$.
    \item $\Phi(V_m^M(r))\subset O_n^{f_\bij(M)}(\Phi(r))$.
\end{enumerate}
\end{lemma}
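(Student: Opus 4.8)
The plan is to establish the two inclusions separately, in each case distinguishing whether the combinatorial ray in question is of finite or of infinite type; I treat (ii) first, as it is the shorter direction. Write $N=f_\bij(M)$; by Proposition \ref{prop:equivalence_morse_boundary} the direction $\Phi(r)$ is $N$-Morse and $\Phi^{-1}(z)$ is an $N$-Morse combinatorial ray, and by Lemma \ref{lemma:quasi-geodesic-representatives} together with the Morse-preserving hypothesis (Definition \ref{definition:morse-preserving}) every $N$-Morse combinatorial ray has an $f_\pres(N)$-Morse $D_4(N)$-quasi-geodesic realisation. Note that $m$ may depend on $z$ (respectively on $r$) and on $n$, so no uniformity over the centre of the neighbourhood is needed. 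For (ii), let $r'\in V_m^M(r)$: the constraints defining $V_m^M(r)$ force $\bar r$ and $\bar{r'}$ to coincide on $\gamma_0,\dots,\gamma_{l(r)-1}$ and, if $r$ is of finite type, additionally to be $\delta_{f_1(M)}$-close at matched parameters along their $\gamma_{l(r)}$-parts up to parameter $m$ (if $r$ is of infinite type they coincide up to the $m$-th edge). In both cases $\bar r$ and $\bar{r'}$ fellow-travel at matched parameters, within a fixed constant, on an initial interval whose length grows with $m$; by Lemma \ref{lemma:monster}\ref{lemma:close_representatives} the same then holds for $N$-Morse geodesic realisations $\xi$ of $\Phi(r)$ and $\xi'$ of $\Phi(r')$, and Lemma \ref{lemma:get_close_again} (applied with the $N$-Morse geodesic $\xi$) upgrades this to $d_X(\xi(t),\xi'(t))<\delta_N$ on $[0,n]$ once $m$ is large enough, so that $\Phi(r')\in O_n^N(\xi)\subseteq O_n^{f_\bij(M)}(\Phi(r))$.

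For (i), Remark \ref{remark:property_of_neighbourhoods} lets us assume that the realisation $\gamma'$ of $z'\in O_m^M(z)$ witnessing membership satisfies $d_X(\gamma'(t),\xi(t))<\delta_M$ for $t\in[0,m-18\delta_M]$, where $\xi$ is a fixed $M$-Morse realisation of $z$. The first step is to recover the combinatorial skeleton: for $m$ large, the first $n$ edges (if $r=\Phi^{-1}(z)$ is of infinite type) respectively the first $l(r)$ edges (if $r$ is of finite type) of $\Phi^{-1}(z')$ coincide with those of $r$. This rests on two consequences of Lemma \ref{lemma:properties_of_morseless_stars}\eqref{constant:edge_group_intersection2} --- a Morse geodesic ray diverges from every edge-group coset $\pi^{-1}(\alpha)$, and meets a bounded neighbourhood of a fixed one only over a parameter window of bounded length --- together with the elementary fact that any path in $X$ from $\pi^{-1}(T_\alpha)$ to its complement passes through $\pi^{-1}(\alpha)^+$. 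Taking $\alpha$ to be the last relevant edge of $p(r)$ (this step is vacuous if $p(r)$ is empty): once $\xi$ has moved more than $\delta_M$ past $\pi^{-1}(\alpha)^+$ into $\pi^{-1}(T_\alpha)$ --- which occurs after a time depending only on $z$ and $\alpha$ --- the point $\gamma'(t)$ lies in $\pi^{-1}(T_\alpha)$ too for every such $t\le m-18\delta_M$; and if $m$ exceeds that time plus $D_3(M,1)+18\delta_M$, then $\gamma'$ cannot cross back out of $\pi^{-1}(T_\alpha)$ without contradicting the bounded-window estimate, so $\alpha$ and every edge of $p(r)$ before it lies on $\gamma'$. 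By uniqueness of geodesics in $T$ this identifies the initial edges of $\Phi^{-1}(z')$ with those of $r$, which in the infinite-type case is already the statement $\Phi^{-1}(z')\in V_n^N(r)$.

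When $r$ is of finite type with $n_0=l(r)$ and $v=\omega(r)$, it remains to check $\gamma_{n_0}(\Phi^{-1}(z'))\in\tilde O_n^{f_1(N)}(\gamma_{n_0}(r),v^*)$. Having matched the first $n_0$ edges, both $\gamma_{n_0}(r)$ and $\gamma_{n_0}(\Phi^{-1}(z'))$ start at $v^*$; Lemma \ref{lemma:distance_of_geo} applied to $\xi$ and to $\gamma'$ with the fixed path $(\alpha_1(r),\dots,\alpha_{n_0}(r))$ and gauge $M$ produces one constant $C_3$ with $d_X(\xi(t),\gamma_{n_0}(r)(t))\le C_3$ and $d_X(\gamma'(t),\gamma_{n_0}(\Phi^{-1}(z'))(t))\le C_3$ for all relevant $t$; combined with the $\delta_M$-closeness of $\xi$ and $\gamma'$, this makes $\gamma_{n_0}(r)$ and $\gamma_{n_0}(\Phi^{-1}(z'))$ fellow-travel at matched parameters within $2C_3+\delta_M$ on an initial interval, and since $\gamma_{n_0}(r)$ is $f_1(N)$-Morse (Remark \ref{rem:morsenessofrealizationparts}) Lemma \ref{lemma:get_close_again} upgrades this to $\delta_{f_1(N)}$-closeness on $[0,n]$.

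The step I expect to be the main obstacle is making sure this last fellow-travelling interval has length at least $\sim n$, i.e.\ ruling out that $\gamma_{n_0}(\Phi^{-1}(z'))$ is a short segment --- equivalently, that for $z'$ close to $z$ the ray $\Phi^{-1}(z')$ branches away from $v$ at an edge $\beta=\alpha_{n_0+1}(\Phi^{-1}(z'))$ with $d_X(v^*,\beta^*)$ small. The resolution uses that $z$ is of finite type, so no outgoing edge at $v$ lies on $\xi$: by local finiteness of $X$ there are only finitely many outgoing edges $\beta$ at $v$ with $d_X(v^*,\beta^*)$ below the threshold dictated by $2C_3+\delta_M$ and $n$; for each, Lemma \ref{lemma:representatives_in_tree}(3) supplies a time past which $\xi$ never re-enters $\pi^{-1}(T_\beta)$; and if $\Phi^{-1}(z')$ branched to such a $\beta$, then $\beta$ would lie on $\gamma'$ and (by the divergence and bounded-window estimates again) force $\gamma'$ into the interior of $\pi^{-1}(T_\beta)$ at a parameter bounded in terms of $z$ and $n$, contradicting the $\delta_M$-closeness of $\gamma'$ and $\xi$ once $m$ exceeds the maximum of these finitely many times (plus the usual $D_3$ and $18\delta_M$ buffers). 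Choosing such an $m$ completes the proof.
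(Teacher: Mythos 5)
Your outline matches the paper's: split on finite vs.\ infinite type, recover the $T$-skeleton by using the divergence of Morse geodesics from edge-group cosets (Lemma~\ref{lemma:properties_of_morseless_stars}\eqref{constant:edge_group_intersection2}), compare the tails via Lemma~\ref{lemma:distance_of_geo}, and finish with Lemma~\ref{lemma:get_close_again}. Where you genuinely depart is the step you flag as ``the main obstacle'' in (i), bounding $l(\gamma_{n_0}(\Phi^{-1}(z')))$ from below: you enumerate the finitely many outgoing edges $\beta$ at $v$ with $d_X(v^*,\beta^*)$ below the required threshold (local finiteness of $X$), invoke Lemma~\ref{lemma:representatives_in_tree}(3) for each to get a time past which $\xi$ avoids $\pi^{-1}(T_\beta)$, and then rule out branching to any such $\beta$ once $m$ dominates all these times. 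The paper instead imposes the extra constraint $\gamma(m)\in\pi^{-1}(v)$ (possible by Lemma~\ref{lemma:vertex_group_intersection}) and argues directly: $\gamma'$ lies within a controlled distance of $\pi^{-1}(\beta)$ both at time $l(\gamma_l')$ (from Lemma~\ref{lemma:distance_of_geo}) and at some $t\geq m$ (from $\gamma'\in O_m^M(\gamma)$ together with $\gamma(m)\in\pi^{-1}(v)$), so the bounded-window estimate of Lemma~\ref{lemma:properties_of_morseless_stars}\eqref{constant:edge_group_intersection2} bounds $t-l(\gamma_l')$ and hence gives $l(\gamma_l')\geq m - D_3(M,K_1)$. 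The paper's version is shorter and needs neither local finiteness nor a case-by-case enumeration; yours avoids the constraint $\gamma(m)\in\pi^{-1}(v)$ at the cost of a rather elliptical step --- ``force $\gamma'$ into the interior of $\pi^{-1}(T_\beta)$ at a parameter bounded in terms of $z$ and $n$'' really requires a second application of Lemma~\ref{lemma:distance_of_geo}, this time to $\gamma'$, to locate when it first meets $\mc N_{C_3}(\pi^{-1}(\beta)^+)$, followed again by the bounded-window estimate, before the disjointness from $\xi$ gives a contradiction. A smaller imprecision occurs in (ii): Lemma~\ref{lemma:monster}\ref{lemma:close_representatives} does not directly yield matched-parameter fellow-travelling of $\xi$ and $\xi'$; it is not even applicable to the pair $(\xi,\xi')$ since they represent distinct boundary points, and applied to $(\xi,\bar r)$ and $(\xi',\bar{r'})$ it produces only Hausdorff estimates, which must then be upgraded to matched-parameter estimates by a reparametrisation argument (as is done inside the proof of Lemma~\ref{lemma:distance_of_geo}). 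The paper sidesteps this by invoking Lemma~\ref{lemma:join_vertex_group} in the infinite-type case and Lemma~\ref{lemma:distance_of_geo} in the finite-type case, each of which gives the matched-parameter statement in one step.
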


\begin{figure}\centering
\includegraphics[width= \linewidth]{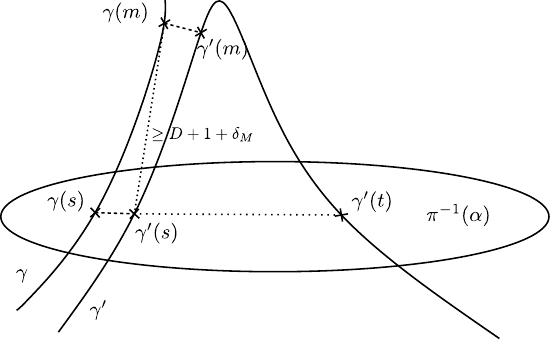}
\caption{Proof of Lemma \ref{lemma:neighbourhood_inclusions}}
\label{picture:proof_of_lemma_4.1}
\end{figure}

\begin{proof}
(i): Case 1: The combinatorial ray $\Phi^{-1}(z)$ is of infinite type. This part of the proof is depicted in Figure \ref{picture:proof_of_lemma_4.1}. Let $\gamma$ be an $M$-Morse realisation of $z$ and let $\alpha = \alpha_n(\Phi^{-1}(z))$. In light of Lemma \ref{lemma:representatives_in_tree} it is enough to find $m$ such that $\alpha$ lies on $\gamma'$ for every geodesic ray $\gamma'\in O^M_m(\gamma)$, because it implies that $\alpha_n(\Phi^{-1}([\gamma']))=\alpha$ and we know that $\Phi^{-1}([\gamma'])$ is $f_\bij(M)$-Morse. Recall that Lemma \ref{lemma:properties_of_morseless_stars}\ref{constant:edge_group_intersection2} states that $D_3$ bounds the intersection of neighbourhoods of edge groups with Morse geodesics. Let $D = D_3(M, \delta_M)$, let $s\geq0$ such that $\gamma(s)\in \pi^{-1}(\alpha)^-$ and choose $m\geq s$ such that $\pi(\gamma(m))\in T_\alpha$ and $d_X(\gamma(m), \pi^{-1}(\alpha)^-)\geq D+ \delta_M+1$. Let $\gamma'\in O_m^M(\gamma)$ and assume $\alpha$ does not lie on $\gamma'$. We have that $d_X(\gamma(m), \gamma'(m))\leq \delta_M$ and hence $\pi(\gamma'(m))\in T_\alpha$. Thus, there exists $t\geq m$ such that $\gamma'(t)\in\pi^{-1}(\alpha)^-$. Since $\gamma'(s)$ and $\gamma'(t)$ are both in the $\delta_M$-neighbourhood of $\pi^{-1}(\alpha)$ we have by Lemma \ref{lemma:properties_of_morseless_stars} (\ref{constant:edge_group_intersection2}) that $d_X(\gamma'(s), \gamma'(t))\leq D$. Observe that $d_X(\gamma'(m), \gamma'(t))\geq d_X(\gamma'(m), \pi^{-1}(\alpha))\geq D+1$. Together this gives $D\geq d_X(\gamma'(s), \gamma'(t))\geq D+1$, which is a contradiction. Hence, our choice of $m$ works in this case.

Case 2: The combinatorial ray $\rho = \Phi^{-1}(z)$ is of finite type. Let $\gamma$ be an $M$-Morse realisation of $z$, let $l= l(\rho)$, $\gamma_l = \gamma_l(\rho)$ and $\alpha = \alpha_l(\rho)$. By the same argumentation as above, there exists $m_1$ such that $\alpha$ lies on $\gamma'$ for every $\gamma'\in O_{m_1}^M(\gamma)$. Let $K_1 = \max\{C_3, \delta_M\}$ (we get $C_3$ by applying Lemma \ref{lemma:distance_of_geo} to $p(\rho)$) and $K_2 = 2K_1 + \delta_M$. Let $m_2 = \max\{n+2K_2, 6K_2\}$ and let $m \geq \max\{m_1, m_2 + D_3(M, K_1)\}$ and such that $\gamma(m)$ is in the vertex group $\pi^{-1}(\alpha^+)$. Such an $m$ exists by Lemma \ref{lemma:vertex_group_intersection}. 

Let $\gamma'\in O_m^M(\gamma)$ be a geodesic ray $\rho'=\Phi^{-1}([\gamma'])$ a combinatorial Morse ray and $\gamma_l' = \gamma_l(\rho')$.

We have that $\rho$ and $\rho'$ are $f_\bij(M)$-Morse. By Lemma \ref{lemma:monster}\ref{prop:realization1} or \ref{prop:geodesic_segments} (depending on whether $\gamma_l$ and $\gamma_l'$ are rays or segments) $\gamma_l$ and $\gamma_l'$ are $M_1 = f_1(f_\bij(M))$-Morse. Moreover, by Lemma \ref{lemma:distance_of_geo} we have that 
\begin{align}
d_X(\gamma(s), \gamma_l(s))\leq K_1 \quad \text{and} \quad d_X(\gamma'(s'), \gamma_l'(s'))\leq K_1,
\end{align}
for all $0\leq s \leq l(\gamma_l)$ and $0\leq s'\leq l(\gamma_l')$.

We first prove that $l(\gamma_{l}')\geq m_2$. If $l(\gamma_l')\geq m$ we are done so we assume $l(\gamma_l')\leq m$. Let $\beta = \alpha_{l+1}(\rho')$. Recall that $\gamma(m)\in \pi^{-1}(\alpha^+)$. Thus since $\gamma'\in O_m^M(\gamma)$ we have that if $\gamma'(m)\in T_{\beta}$, then $d_X(\gamma'(m), \pi^{-1}(\beta))\leq \delta_M$. If $\gamma'(m)\not \in T_{\beta}$ there exists $t\geq m$ such that $\gamma'(t)\in \pi^{-1}(\beta)^+$ since $\beta$ lies on $\gamma'$. So in either case, there exists $t\geq m$ such that $d_X(\gamma'(t), \pi^{-1}(\beta))\leq \delta_M\leq K_1$. We also have that $\gamma_l'^+$ lies in $\pi^{-1}(\beta)^+$. Hence, by Lemma \ref{lemma:distance_of_geo}, $d_X(\gamma'(l(\gamma_l')), \pi^{-1}(\beta))\leq C_3\leq K_1$. Lemma \ref{lemma:properties_of_morseless_stars} (\ref{constant:edge_group_intersection2}) implies that $d_X(\gamma'(l(\gamma_l')), \gamma'(t)) = t  - l(\gamma_l') \leq  D_3(M, K_1)$. In particular, $l(\gamma_l') \geq m  - D_3(M, K_1) \geq m_2$.

By the triangle inequality and Lemma \ref{lemma:distance_of_geo}, $d_X(\gamma_l(s), \gamma_l'(s))\leq 2C_3 +\delta_M\leq 2K_1 + \delta_M = K_2$ for $0\leq s\leq m_2$. Thus, by Lemma \ref{lemma:get_close_again}, $d_X(\gamma_l(s), \gamma_l'(s))\leq \delta_{M_1}$ for all $0\leq s\leq n$, which implies $\gamma_l'(s)\in O_n^{M_1}(\gamma_l)$ and thus $\rho'\in V_n^{f_\bij(M)}(\rho)$.

(ii): Case 1: $r$ is of infinite type. We have that $\Phi(r)$ is $M_1 = f_\bij(M)$-Morse. Let $\gamma$ be an $M_1$-Morse realisation of $\Phi(r)$. Recall that $D_\ver$ as defined in Lemma \ref{lemma:join_vertex_group} bounds the distance of two $M$-Morse realisations with endpoints in the same edge group. Define $m = \max\{n+2D_\ver(M_1), 6D_\ver(M_1)\}$. Let $r'\in V_m^M(r)$ be a combinatorial ray. We have to show that the $M_1$-Morse realisation $\gamma'$ of $r'$ is in $O_n^{M_1}(\gamma)$. We have that $\alpha_m(r)$ lies on $\gamma$ and $\gamma'$. Hence there exist $t_1, t_2\geq m$ such that $\gamma(t_1)$ and $\gamma(t_2)$ are in $\pi^{-1}(\alpha_m(r))^+$. By Lemma \ref{lemma:join_vertex_group} we have that $d_X(\gamma(t), \gamma'(t))\leq D_\ver(M_1)$ for all $0\leq t \leq m$ and hence Lemma \ref{lemma:get_close_again} concludes the proof.

Case 2: $r$ is of finite type. Again, $\Phi(r)$ is $M_1 = f_\bij(M)$-Morse. Let $\gamma $ be an $M_1$-Morse realisation of $\Phi(r)$ and let $l = l(r)$. We apply Lemma \ref{lemma:distance_of_geo} to the path $p(r)$ and the Morse gauge $M_1$ to get a constant $C_3$. Define $D= 2C_3 + \delta_{f_1(M)}$ and define $m = \max\{n+ 2D, 6D\}$.  Let $r'\in V_m^M(r)$ be a combinatorial ray. We have to show that the $M_1$-Morse realisation $\gamma'$ of $\Phi(r')$ is in $O_n^{M_1}(\gamma)$. By Lemma \ref{lemma:distance_of_geo} we have that $d_X(\gamma(s), \gamma_l(r)(s))\leq C_3$ and $d_X(\gamma'(s), \gamma_l(r')(s))\leq C_3$ for $0\leq s \leq m$. We also have that $d_X(\gamma_l(r)(s), \gamma_l(r')(s))\leq\delta_{f_1(M)}$ for all $0\leq s \leq m$. Hence by the triangle inequality, $d_X(\gamma(s), \gamma'(s))\leq D$ for all $0\leq s\leq m$. Lemma \ref{lemma:get_close_again} concludes the proof.

\end{proof}

\subsection{Homeomorphisms from Bass-Serre tree maps}\label{sec:local-bijections}

Let $\mc G$ and $\mc G'$ be Morse preserving Morseless stars. We use the notation from Section \ref{section:graph_of_group} and just add a $'$ to the objects corresponding to the graph of groups $\mc G'$. That is, we denote the Bass-Serre space of $\mc G'$ by $X'$, the Bass-Serre tree of $\mc G'$ by $T'$ and so on. 

\begin{definition}[Bass-Serre tree map]
    We say a map $\phi : V(T)\to V(T')$ is a Bass-Serre tree map if it satisfies the following conditions.
    \begin{enumerate}
        \item $\phi(G_0) = G_0'$.\label{cond:start_at_G0}
        \item (Injectivity) The map $\phi$ is injective.\label{cond:injectivity}
        \item (Nestedness) For vertices $v, w\in V(T)$ with $v\in T_w$ we have that $\phi(v)\in T_{\phi(w)}'$. Conversely, if $v, w\in V(T)$ and $\phi(v)\in T_{\phi(w)}'$, then $v\in T_w$.\label{cond:paths_exist}
        \item (Coarse surjectivity) There exists a constant $C_\tree$ such that for every vertex $v\in V(T)$ and vertex $w\in T_{\phi(v)}'$ with $d_{T'}(w, \phi(v))\geq C_4$ there exists an outgoing edge $\alpha\in E(T)$ with $\alpha^- = v$ and $w\in T_{\phi(\alpha^+)}'$. \label{cond:bounded_jumps}
        \item (Partial surjectivity) Every vertex $v\in V(T')$ where the vertex group $G_{\check v}'$ has non-empty Morse boundary is in the image of $\phi$. \label{cond:surj_on_mores}
        \item (Boundary homeomorphisms) For every vertex $v\in V(T)$, there exists a homeomorphism $p_v : \mb G_{\check{v}} \to \mb  G_{\check{\phi(v)}}'$. \label{cond:homeo}
        \item (Morse conditions) There exists an increasing function $f_\ftree : \mc M\to \mc M$ such that the following conditions hold. \label{cond:morseness}
        \begin{enumerate}
            \item If $\alpha\in E(T)$ is outgoing and $M$-Morse, then the edges on the geodesic $[\phi(\alpha^-), \phi(\alpha^+)]_{T'}$ are $f_\ftree(M)$-Morse. Conversely, if $\alpha\in E(T)$ is an outgoing edge and all edges on the geodesic $[\phi(\alpha^-), \phi(\alpha^+)]_{T'}$ are $M$-Morse, then $\alpha$ is $f_\ftree(M)$-Morse. \label{cond:morseness:edges}
            \item Let $v\in V(T)$ be a vertex. If $z\in \mb  G_{\check {v}}$ is $M$-Morse, then $p_v(z)$ is $f_\ftree(M)$-Morse. Conversely if $z\in  \mb  G_{\check {v}}$ and $p_v(z)$ is $M$-Morse, then $z$ is $f_\ftree (M)$-Morse.\label{cond:morseness:tails}
        \end{enumerate}
    \end{enumerate}
\end{definition}

\begin{remark}\label{remark:no_injectivity_needed}
Injectivity is implied by the other conditions. Namely, if $u, v\in V(T)$ are vertices such that $\phi(u) = \phi(v)$, then in particular $\phi(u)\in T'_{\phi(v)}$ and $\phi(v)\in T'_{\phi(u)}$. Hence nestedness implies that $u\in T_v$ and $v\in T_u$. The latter can only be true if $u = v$. Thus, to prove a map $\phi : V(T)\to V(T')$ is a Bass-Serre tree map, we do not need to prove injectivity. 
\end{remark}

Given a Bass-Serre tree map $\phi$ we can define a map, which we denote by $\bar{\phi}$, from $\comb X$ to $\comb X'$ as follows: Let $r\in \comb X$ be a Morse combinatorial ray. 
\begin{itemize}
    \item Assume $r= (\alpha_1, \ldots, \alpha_n, z)$ is of finite type. Let $(\beta_1, \ldots, \beta_n)$ be the geodesic path from $G_0'$ to $\phi(\alpha_n^+)$. Define 
    \begin{align}
        \bar{\phi}(r) = (\beta_1, \ldots, \beta_n; p_{\alpha_n^+}(z)).
    \end{align}
    \item Assume $r = (\alpha_1, \alpha_2, \ldots)$ is of infinite type. Conditions \eqref{cond:start_at_G0} and \eqref{cond:paths_exist} imply that the path $\gamma$ in $T'$ described by the infinite concatenation 
    \begin{align}
        \gamma = [G_0', \phi(\alpha_1^+)]_{T'}[\phi(\alpha_1^+), \phi(\alpha_2^+)]_{T'}[\phi(\alpha_2^+), \phi(\alpha_3^+)]_{T'}\ldots
    \end{align}
    is in fact a geodesic path. Let $\gamma = (\beta_1, \beta_2, \ldots)$ for edges $\beta_i \in T'$. Define 
    \begin{align}
    \bar{\phi}(r) = (\beta_1, \beta_2, \ldots).
    \end{align}
    
\end{itemize}

Condition \eqref{cond:morseness} implies that if $r\in \comb X$ is an $M$-Morse combinatorial ray, then $\bar{\phi}(r)$ is $f_\ftree(M)$-Morse and that conversely, if $r$ is a combinatorial Morse ray and $\bar{\phi}(r)$ is $M$-Morse, then $r$ is $f_\ftree(M)$-Morse. 

\begin{lemma}
Let $\phi : V(T)\to V(T')$ be a Bass-Serre tree map. The induced map $\bar\phi : \comb X\to \comb X'$ is bijective. 
\end{lemma}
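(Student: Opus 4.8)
The plan is to prove that $\bar\phi$ is injective and surjective directly, treating finite-type and infinite-type combinatorial rays separately (note that $\bar\phi$ visibly sends each type to itself, so the two cases never mix). The ingredients are injectivity of $\phi$ (Remark~\ref{remark:no_injectivity_needed}), nestedness~\eqref{cond:paths_exist}, coarse surjectivity~\eqref{cond:bounded_jumps}, partial surjectivity~\eqref{cond:surj_on_mores}, the boundary homeomorphisms $p_v$ of~\eqref{cond:homeo}, and the Morse conditions~\eqref{cond:morseness}; the last of these will be used only to certify that the combinatorial rays we produce actually lie in $\comb X$ (i.e.\ are Morse), the rest of the argument being tree bookkeeping.

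For injectivity, suppose $\bar\phi(r)=\bar\phi(r')$. In the finite-type case, the end vertices of $\bar\phi(r)$ and $\bar\phi(r')$ agree, so $\phi(\omega(r))=\phi(\omega(r'))$, hence $\omega(r)=\omega(r')$ by injectivity of $\phi$; then $p(r)$ and $p(r')$ are both the geodesic in $T$ from $G_0$ to this common vertex, hence equal, and the tails agree because $p_{\omega(r)}$ is a bijection. In the infinite-type case I would recover $r$ from $\bar\phi(r)$: I claim that the vertices of the ray $p(\bar\phi(r))$ lying in $\operatorname{im}\phi$ are exactly $\{\phi(\alpha_j^+(r)):j\ge 0\}$, with $\alpha_0^+(r)=G_0$. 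One inclusion is immediate from the definition of $\bar\phi$; for the other, if $\phi(u)$ lies on $p(\bar\phi(r))$ then it lies on some segment $[\phi(\alpha_j^+(r)),\phi(\alpha_{j+1}^+(r))]_{T'}$, so $\phi(u)\in T'_{\phi(\alpha_j^+(r))}$ and $\phi(\alpha_{j+1}^+(r))\in T'_{\phi(u)}$; by nestedness $u\in T_{\alpha_j^+(r)}$ and $\alpha_{j+1}^+(r)\in T_u$, which forces $u$ onto the length-one geodesic $[\alpha_j^+(r),\alpha_{j+1}^+(r)]_T$ and hence $u\in\{\alpha_j^+(r),\alpha_{j+1}^+(r)\}$ (the initial segment $[G_0',\phi(\alpha_1^+(r))]_{T'}$ is handled identically). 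Pulling this set back through the injective $\phi$ recovers the nested chain of vertices of $p(r)$, hence $r$; so $\bar\phi$ is injective.

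For surjectivity, let $s\in\comb X'$. If $s=(\beta_1,\dots,\beta_m;w)$ is of finite type, then $w$ is a Morse direction, so the vertex group at $\omega(s)$ has non-empty Morse boundary and partial surjectivity gives $\omega(s)=\phi(v)$ for some $v\in V(T)$; I would set $r=(\alpha_1,\dots,\alpha_n;p_v^{-1}(w))$, where $(\alpha_1,\dots,\alpha_n)$ is the geodesic from $G_0$ to $v$. Since $[G_0',\phi(v)]_{T'}=p(s)$ has all edges Morse, each image path $[\phi(\alpha_j^-),\phi(\alpha_j^+)]_{T'}$ is a sub-path of $p(s)$, so~\eqref{cond:morseness:edges} makes every $\alpha_j$ Morse and~\eqref{cond:morseness:tails} makes $p_v^{-1}(w)$ Morse; thus $r\in\comb X$, and $\bar\phi(r)=(\beta_1,\dots,\beta_m;w)=s$ by uniqueness of geodesics in a tree. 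If $s=(\beta_1,\beta_2,\dots)$ is of infinite type with underlying geodesic ray $p(s)$ from $G_0'$, I would build vertices $v_0=G_0,v_1,v_2,\dots$ in $T$ inductively: given $v_j$ with $\phi(v_j)$ on $p(s)$, let $w$ be the vertex of $p(s)$ at distance $C_4$ past $\phi(v_j)$; then $w\in T'_{\phi(v_j)}$, so coarse surjectivity yields an outgoing edge $\alpha$ with $\alpha^-=v_j$ and $w\in T'_{\phi(\alpha^+)}$, and I put $v_{j+1}=\alpha^+$. Nestedness and injectivity of $\phi$ give $\phi(v_{j+1})\in T'_{\phi(v_j)}\setminus\{\phi(v_j)\}$, while $w\in T'_{\phi(v_{j+1})}$ with $w$ on $p(s)$ shows $\phi(v_{j+1})$ is again on $p(s)$; thus $d_{T'}(G_0',\phi(v_j))$ is strictly increasing, hence tends to $\infty$, and $\{\phi(v_j)\}_j$ is cofinal in $p(s)$. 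Letting $r=(\alpha_1,\alpha_2,\dots)$ be the geodesic ray in $T$ through the $v_j$ (consecutive ones differing by one edge), the same argument as above makes all $\alpha_j$ Morse, so $r\in\comb X$, and the underlying ray of $\bar\phi(r)$ is the geodesic ray through the cofinal set $\{\phi(v_j)\}$, which equals $p(s)$ by uniqueness of geodesic rays in a tree; hence $\bar\phi(r)=s$.

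I expect the infinite-type surjectivity to be the main obstacle. Because $\phi$ distorts tree distances, one cannot simply follow $p(s)$ edge by edge, and coarse surjectivity only promises to land back on $p(s)$ after a jump of a priori unbounded $T$-length; the key is to probe a fixed distance $C_4$ ahead on $p(s)$ at each step, which simultaneously triggers coarse surjectivity and forces the construction to descend $p(s)$ strictly, after which cofinality and uniqueness of geodesic rays in a tree finish the job.
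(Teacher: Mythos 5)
Your proposal is correct and follows essentially the same approach as the paper: split into finite- and infinite-type cases, use injectivity of $\phi$ and the bijectivity of $p_v$ for finite-type injectivity, use nestedness for infinite-type injectivity, use partial surjectivity plus $p_v$ for finite-type surjectivity, and use coarse surjectivity plus nestedness for infinite-type surjectivity, with the Morse conditions used only to certify membership in $\comb X$. The two places where you differ cosmetically are (a) infinite-type injectivity, where you recover $r$ from $\bar\phi(r)$ by identifying $\{\phi(\alpha_j^+(r))\}$ as precisely the vertices of $p(\bar\phi(r))$ that lie in $\operatorname{im}\phi$, whereas the paper runs a contradiction argument from the smallest index of disagreement — these are dual presentations of the same nestedness argument — and (b) infinite-type surjectivity, where you spell out the inductive construction of the ray in $T$ (probing $C_4$ ahead on $p(s)$ at each step and checking the jump lands back on $p(s)$), which the paper compresses into a one-line assertion that ``at least every $C_4$-th vertex of $p(s)$ is in the image.'' Your version is slightly more detailed but the underlying ideas coincide.
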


\begin{proof}
Injectivity: Let $r, r'\in \comb X$ be combinatorial rays with $\bar\phi(r)=\bar\phi(r')$. By the definition of $\phi$ this implies that they are of the same type.

Case 1: The combinatorial rays $r, r'$ are of finite type. Then, Conditions \eqref{cond:injectivity} and \eqref{cond:homeo} imply that $\omega(r) = \omega(r')$ and $z(r) = z(r')$, which in turn implies that $r = r'$.

Case 2: The combinatorial rays $r$ and $r'$ are of infinite type. For all $j>0$ define $v_j = \phi(\alpha_j(r)^+)$ and $v_j' = \phi(\alpha_j(r')^+)$. Assume by contradiction that $r\neq r'$ and let $k$ be the smallest index such that $\alpha = \alpha_k(r)\neq \alpha_k(r') = \beta$. By the definition of $\bar \phi (r)$ we know that both $v_k$ and $v_k'$ lie on the geodesic path $p(\bar\phi(r))$. Without loss of generality, $d_{T'}(G_0', v_k)\leq d_{T'}(G_0', v_k')$ and hence $v_k'\in T_{v_k}'$. Condition \eqref{cond:paths_exist} implies that $\beta^+\in T_{\alpha^+}$. The vertices $\alpha^+$ and $\beta^+$ both have distance $k$ from $G_0$. Therefore, $\beta^+\in T_{\alpha^+}$ implies that $\beta^+ = \alpha^+$. By the minimality of $k$, we know that $\beta^- = \alpha^-$. Hence $\beta = \alpha$. This is a contradiction and thus concludes the proof of injectivity.

Surjectivity: Let $s\in \comb X'$ be a combinatorial ray. If $s$ is of finite type, then Conditions \eqref{cond:surj_on_mores} and \eqref{cond:homeo} imply that there exists some combinatorial ray $r\in \comb X$ with $\bar\phi(r) = s$. If $s$ is of infinite type, then Condition \eqref{cond:bounded_jumps} implies that at least every $C_\tree$-th vertex on $p(s)$ is in the image $\phi(V(T))$. Hence, there exists a combinatorial ray $r$ such that $\bar\phi(p(r))$ induces the path $p(s)$. Thanks to condition \eqref{cond:morseness:edges} we know that $r$ is Morse and hence $\bar\phi(r) = s$.

\end{proof}

Since we now know that $\bar\phi: \comb X\to \comb X'$ is bijective, we can talk about its inverse $\bar\phi^{-1} : \comb X'\to \comb X$. In particular, we are able to state the following definition. 

\begin{definition}\label{def:bass_serre_continuity}
    We say that a Bass-Serre tree map $\phi$ is continuous if the following conditions hold. 
    \begin{enumerate}[label = (\roman*)]
        \item For every combinatorial $M$-Morse ray $r\in \comb X$ and every integer $n$ there exists an integer $m$ such that $\bar\phi(V_m^M(r))\subset V_n^{f_\ftree(M)}(\bar\phi(r))$.\label{cond:phi}
        \item For every combinatorial $M$-Morse ray $r'\in \comb X'$ and every integer $n$ there exists an integer $m$ such that $\bar\phi^{-1}(V_m^M(r'))\subset V_n^{f_\ftree(M)}(\bar\phi^{-1}(r'))$.\label{cond:phi_inverse}
    \end{enumerate}
\end{definition}

Being continuous is exactly the condition we need from a Bass-Serre tree map $\phi$ to show it induces a homeomorphism on the Morse boundary.

\begin{proposition}\label{prop:bass_serre_implies_homeo}
    Let $\phi: V(T)\to V(T')$ be a continuous Bass-Serre tree map. Then, the map $\phi_*  = \Phi'\circ \bar\phi\circ\Phi^{-1}: \mb X\to \mb X'$ is a homeomorphism. 
\end{proposition}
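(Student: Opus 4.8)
The plan is to establish that $\phi_* = \Phi' \circ \bar\phi \circ \Phi^{-1}$ is a bijection and that both $\phi_*$ and $\phi_*^{-1}$ are continuous. Bijectivity is immediate: $\Phi$ and $\Phi'$ are bijections by Proposition~\ref{prop:equivalence_morse_boundary}, and $\bar\phi$ is a bijection by the preceding lemma. The content is therefore entirely in the continuity statements, which I would deduce from the continuity of the Bass-Serre tree map $\phi$ (Definition~\ref{def:bass_serre_continuity}) together with the neighbourhood-comparison Lemma~\ref{lemma:neighbourhood_inclusions}.

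For continuity of $\phi_*$: recall from the discussion opening Section~\ref{section:homeo_tools} that a map $\mb X \to \mb X'$ is continuous if and only if there is a function $\mc M \to \mc M$ so that for every $M$-Morse direction $z$ and every $n$ there is an $m$ with the image of $O_m^M(z)$ landing in $O_n^{f(M)}(\phi_*(z))$. So fix an $M$-Morse direction $z \in \mb X$ and an integer $n$. Set $r = \Phi^{-1}(z) \in \comb^M X$; this ray is $f_\bij(M)$-Morse. By Lemma~\ref{lemma:neighbourhood_inclusions}(ii) applied to $\bar\phi(r)$ (which is $f_\ftree(f_\bij(M))$-Morse), there is an integer $m_1$ with $\Phi'(V_{m_1}^{f_\ftree(f_\bij(M))}(\bar\phi(r))) \subset O_n^{g_1(M)}(\Phi'(\bar\phi(r)))$ for a suitable composite gauge $g_1$. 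By continuity of $\phi$, Definition~\ref{def:bass_serre_continuity}\ref{cond:phi}, there is $m_2$ with $\bar\phi(V_{m_2}^{f_\bij(M)}(r)) \subset V_{m_1}^{f_\ftree(f_\bij(M))}(\bar\phi(r))$. Finally, by Lemma~\ref{lemma:neighbourhood_inclusions}(i), there is an integer $m$ with $\Phi^{-1}(O_m^M(z)) \subset V_{m_2}^{f_\bij(M)}(r)$. Chaining these three inclusions through $\bar\phi$ and then $\Phi'$ gives $\phi_*(O_m^M(z)) \subset O_n^{g_1(M)}(\phi_*(z))$, as required; one checks the output gauge depends only on $M$ (through the fixed functions $f_\bij$, $f_\ftree$, and the gauges furnished by Lemma~\ref{lemma:neighbourhood_inclusions}), which is what continuity demands.

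For continuity of $\phi_*^{-1} = \Phi \circ \bar\phi^{-1} \circ \Phi'^{-1}$: this is the mirror argument, now invoking Lemma~\ref{lemma:neighbourhood_inclusions} for the pair $(\mc G', X')$, Definition~\ref{def:bass_serre_continuity}\ref{cond:phi_inverse} for the continuity of $\bar\phi^{-1}$, and the same bookkeeping. Since $\phi_*$ is a continuous bijection with continuous inverse, it is a homeomorphism.

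The only mild subtlety I anticipate is matching up the Morse-gauge parameters consistently as one passes back and forth between the combinatorial side (where neighbourhoods $V_k^M$ are indexed by a gauge that may grow under $\Phi$, $\bar\phi$) and the geometric side (where $O_n^M$ lives). Concretely, Lemma~\ref{lemma:neighbourhood_inclusions}(i) outputs a $V_n$ at gauge $f_\bij(M)$, and one must feed exactly that gauge into the continuity hypothesis for $\phi$ and then into Lemma~\ref{lemma:neighbourhood_inclusions}(ii); since all the functions involved ($f_\bij$, $f_\ftree$, and the implicit ones in Lemma~\ref{lemma:neighbourhood_inclusions}) are increasing and composable, and since $O_n^M \subset O_n^{M'}$ for $M \le M'$ by Remark~\ref{remark:property_of_neighbourhoods}, these compositions close up without circularity. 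This is routine but is the step that needs to be written carefully; everything else is a formal diagram chase.
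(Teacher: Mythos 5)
Your proposal is correct and follows essentially the same route as the paper: bijectivity from Proposition~\ref{prop:equivalence_morse_boundary} and the bijectivity lemma for $\bar\phi$, then a three-step chain through Lemma~\ref{lemma:neighbourhood_inclusions}(i), Definition~\ref{def:bass_serre_continuity}\ref{cond:phi}, and Lemma~\ref{lemma:neighbourhood_inclusions}(ii), with the analogous argument for $\phi_*^{-1}$. The ``subtlety'' you flag about tracking gauges is exactly what the paper handles by fixing $f = f_\bij \circ f_\ftree \circ f_\bij$ up front, so your $g_1$ is $f_\bij\circ f_\ftree\circ f_\bij$ and the bookkeeping closes as you expect.
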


Recall that $\Phi : \delta_* X\to \partial_* X$ (and hence $\Phi'$) is defined in Proposition~\ref{prop:equivalence_morse_boundary} and is a bijection from the combinatorial Morse boundary to the Morse boundary.

\begin{proof}
We know that $\Phi, \Phi'$ and $\bar\phi$ are all bijective. So we only have to show that $\phi_*$ and $\phi^{-1}_*$ are continuous. 

Define $f= f_\bij \circ f_\ftree \circ f_\bij$, let $M$ be a Morse gauge and define $M_1 = f_\bij(M)$, $M_2 = f_\ftree(M_1)$ and $M_3 = f(M)=f_\bij(M_2)$. Let $z\in \mb X$ be an $M$-Morse direction and let $n$ be an integer. By Lemma \ref{lemma:neighbourhood_inclusions} there exists $m_1$ such that for $A = V_{m_1}^{M_2}(\bar\phi (\Phi^{-1}(z)))$ we have that $\Phi'(A)\subset O_n^{M_3}(\phi_*(z))$. Since $\phi$ is continuous there exists $m_2$ such that for $B = V_{m_2}^{M_1}(\Phi^{-1}(z))$ we have $\bar\phi(B)\subset A$. Then again by Lemma \ref{lemma:neighbourhood_inclusions} there exists $m$ such that for $C = O_m^M(z)$ we have that $\Phi^{-1}(C)\subset B$. This implies $\phi_*(C) \subset O_n^{M_3}(\phi_*(z))$. Hence, $\phi_*$ is continuous. The proof that $\phi_*^{-1}$ is continuous works analogously.
\end{proof}

\subsection{Constructing continuous Bass-Serre tree maps}

In Section \ref{section:relativemap} and \ref{section:doublemap} we will construct some maps that can be viewed as local bijections of the Bass-Serre tree. Here, we will show that if these local bijections are nicely behaved they induce a continuous Bass-Serre tree map and hence a homeomorphism of the Morse boundary.

Let $v\in V(T)$ be a vertex. The denote the set of edges $\alpha\in E(T)$ which are outgoing and have $\alpha^- = v$ by $E_v$. For an edge $\beta\in E(\Gamma)$ we denote the set of edges $\alpha\in E(T)$ which are outgoing, have $\alpha^-=v$ and have $\check \alpha = \beta$ by $E_{v, \beta}$. With this notation, we have 

\begin{align}
    E_v = \sqcup_{\substack{\beta\in E(\Gamma),\\ \beta^- = \check v}} E_{v, \beta}.
\end{align}

Let $v\in V(T)$ be a vertex, a local bijection $q_v$ of depth $C$ and Morseness $f$ is a map $q_v : E_v^+\cup \{v\}\to V(T')$ together with a constant $C$ and an increasing function $f: \mc M\to \mc M$ such that the following conditions hold. 
\begin{enumerate}[label = (\Roman*)]
\item (Nestedness) For every edge $\alpha\in E_v$ we have that $q_v(\alpha^+)\in T_{q_v(v)}'$ and $q_v(\alpha^+)\neq v$. \label{cond2:nestedness}
\item (Coarse surjectivity) If $w\in T_{q_v(v)}'$ is a vertex with $d_{T'}(w, q_v(v))\geq C$, then there exists an edge $\alpha \in E_v$ such that $w\in T_{q_v(\alpha^+)}$. \label{cond2:coars_surj}
\item (Non-nestedness) If $\alpha, \beta\in E_v$ are edges and $q_v(\alpha^+)\in T_{q_v(\beta^+)}'$, then $\alpha = \beta$. \label{cond2:injectivity}
\item (Partial surjectivity) If there exists an edge $\alpha\in E_v$ and a vertex $u\in V(T')$ with non-empty Morse boundary $\mb G_{\check u}$ and such that $u \in T_{q_v(v)}'$ and $q_v(\alpha^+)\in T_u'$, then either $u=q_v(v)$ or $u = q_v(\alpha^+)$.\label{cond2:partial_surj}
\item (Morse condition) If $\alpha\in E_v$ is $M$-Morse, then every edge on the geodesic from $q_v(v)$ to $q_v(\alpha^+)$ is $f(M)$-Morse. Conversely, if $\alpha\in E_v$ is an edge and every edge on the geodesic from $q_v(v)$ to $q_v(\alpha^+)$ is $M$-Morse, then $\alpha$ is $f(M)$-Morse. \label{cond2:morseness}
\end{enumerate}

\begin{figure}
    \centering
    \includegraphics{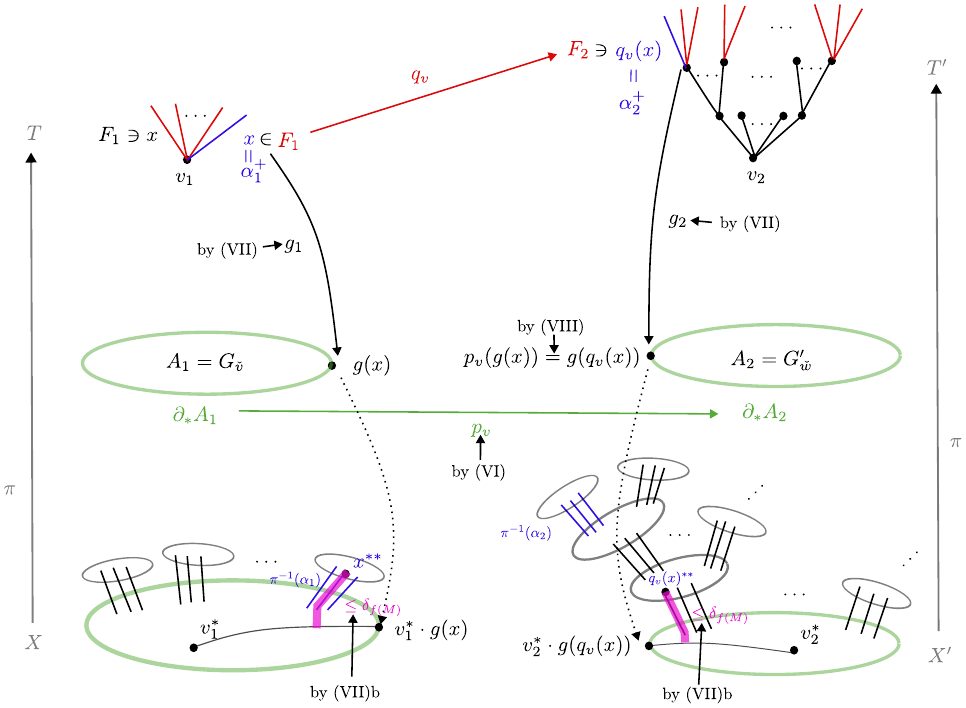}
    \caption{Depiction of a geodeisc local bijection.}
    \label{fig:geolo}
\end{figure}

Let $v\in V(T)$ be a vertex. Let $q_v$ be a local bijection of depth $C$ and Morseness $f$. We need to set up some notation which is more symmetrical. Let $w = q_v(v)$, we denote the vertex group $G_{\check{v}}$ by $A_1$ and the vertex group $G_{\check w}'$ by $A_2$. Sometimes we denote $v$ by $v_1$, $w$ by $v_2$, $d_X$ by $d_1$ and $d_{X'}$ by $d_2$. Furthermore, we denote $E_v^+$ by $F_1$ and $q_v(E_v^+)$ by $F_2$. For $i \in \{1, 2\}$ and a Morse gauge $M$, define $F_i^M\subset F_i$ as the set of all vertices $x\in F_i$ for which every edge on the geodesic path from $v_i$ to $x$ is $M$-Morse. For $i\in \{1, 2\}$ and $x\in F_i$, let $\alpha$ be the first edge on the geodesic path from $v_i$ to $x$. We define $x^{**}$ as $\alpha^*$. With this notation, if $i=1$, then $x^{**}$ is equal to $x^*$. Even though not technically true, one can think of $x^{**}$ as a projection of $x$ to $v_i^*\cdot A_i$.

We say that the local bijection $q_v$ is \emph{geodesic} if $\mb A_1$ is homeomorphic to $\mb A_2$ and either $\mb A_1$ is empty or all of the following conditions hold: 
\begin{enumerate}[resume, label = (\Roman*)]
    \item (Homeomorphism) There exist a homeomorphism $p_v : \mb A_1\to \mb A_2$, such that 
    \begin{enumerate}
        \item If $z\in \mb A_1$ is $M$-Morse, then $p_v(z)$ is $f(M)$-Morse. 
        \item If $p_v(z)\in \mb A_2$ is is $M$-Morse, then $z$ is $f(M)$-Morse. 
    \end{enumerate}\label{cond2:morseness:tails}
    \item (Geodesics) There exists an increasing function $D_v : \mc M\times \R_{\geq 1}\to \R_{\geq 1}$ an maps $g_j: F_j \to \mb A_j$ for $j\in \{1, 2\}$ whose union $g_1\cup g_2$ we denote by $g$ and such that the following conditions hold for every integer $k$, vertex $x\in F_i$ and $i\in \{1, 2\}$.
    \begin{enumerate}
        \item  If $x\in F_i^M$, then $g(x)$ is $f(M)$-Morse. Conversely, if $g(x)$ is $M$-Morse, then $x\in F_i^{f(M)}$.\label{cond2:closeness1}
        \item If $x\in F_i^M$ and $d_i(x^{**}, v_i^*)\geq D_v(M, k)$, then 
        \begin{align}
        x^{**}\in \tilde{O}_k^{f(M)}(v_i^*\cdot g(x), v_i^*) \quad \text{and}\quad v_i^*\cdot g(x)\in O_k^{f(M)}(x^{**}, v_i^*).
        \end{align}\label{cond2:closeness}
    \end{enumerate}\label{cond2:geodesics}
    \item (Commutativity) For every $x\in F_1$ we have that $p_v(g(x)) = g(q_v(x))$.\label{cond2:commutativity}
\end{enumerate}

Roughly speaking, the conditions above state that $q_v$ ``factors through'' a homeomorphism 
$p_v$, which is a homeomorphism between the boundaries of $A_1$ and $A_2$. This is depicted in Figure \ref{fig:geolo}. The maps $g_i$ (whose union is $g$) map edges $x\in F_i$ to points $g(x)\in \partial_* A_i$ such that ``Morseness is respected'' (Condition \ref{cond2:closeness1}) and any $v_i^*$-realisation of $v_i^*\cdot g(x)$ passes ``close'' by $x^{**}$ (\ref{cond2:closeness}).

\begin{proposition}\label{prop:local_bij_imply_bass_serre_map}
Let $C$ be a constant, $f: \mc M\to \mc M$ be an increasing function and let $\{q_v\}_{v\in V(T)}$ be a collection of geodesic local bijections of depth $C$ and Morseness $f$. 
If $q_{G_0}(G_0) = G_0'$ and $q_{\alpha^+}(\alpha^+) = q_{\alpha^-}(\alpha^+)$ for every outgoing edge $\alpha \in E(T)$, then the map $\phi : V(T)\to V(T')$ defined by $\phi(v) = q_v(v)$ is a continuous Bass-Serre tree map.
\end{proposition}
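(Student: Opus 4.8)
The plan is to verify that the map $\phi(v) = q_v(v)$ satisfies each of the seven conditions in the definition of a Bass-Serre tree map, and then the two continuity conditions of Definition~\ref{def:bass_serre_continuity}. The key structural fact making this work is the compatibility hypothesis $q_{\alpha^+}(\alpha^+) = q_{\alpha^-}(\alpha^+)$: it lets us relate $\phi$ restricted to a neighbourhood of $v$ with the single local bijection $q_v$, so that the ``local'' data $\{q_v\}$ glues into a globally coherent map. I would first record, by induction on $d_T(G_0, v)$ using this compatibility relation together with Nestedness~\ref{cond2:nestedness}, that for any outgoing edge $\alpha$ with $\alpha^- = v$ the geodesic $[\phi(v),\phi(\alpha^+)]_{T'}$ is exactly the geodesic from $q_v(v)$ to $q_v(\alpha^+)$, and that these concatenate along a geodesic path from $G_0'$ to $\phi(w)$ for any $w$; this is the ``nestedness along paths'' backbone and it immediately gives Condition~\eqref{cond:start_at_G0} (from $q_{G_0}(G_0)=G_0'$) and feeds into \eqref{cond:paths_exist} and \eqref{cond:morseness:edges}.

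Next I would check the individual conditions. Condition~\eqref{cond:paths_exist} (Nestedness): the forward direction follows by chaining \ref{cond2:nestedness} along the path from $w$ to $v$; the converse uses \ref{cond2:injectivity} (Non-nestedness) locally at each vertex to rule out $\phi(v)\in T'_{\phi(w)}$ when $v\notin T_w$ --- one walks along $T$ and at the branch vertex invokes non-nestedness of the two distinct outgoing edges. Condition~\eqref{cond:injectivity} is free by Remark~\ref{remark:no_injectivity_needed}. Condition~\eqref{cond:bounded_jumps} (Coarse surjectivity) is \ref{cond2:coars_surj} applied at $v$ with $C_\tree = C$. Condition~\eqref{cond:surj_on_mores} (Partial surjectivity): given $u\in V(T')$ with $\mb G_{\check u}'\neq\emptyset$, I would find it in the image by walking down from $G_0'$ toward $u$, repeatedly applying \ref{cond2:coars_surj} to stay on the image path and \ref{cond2:partial_surj} to catch $u$ exactly when we pass through it (the point being that any vertex with nonempty Morse boundary that is sandwiched between $q_v(v)$ and some $q_v(\alpha^+)$ must coincide with one of the endpoints). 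Condition~\eqref{cond:homeo} is supplied by the homeomorphisms $p_v$ from \ref{cond2:morseness:tails}, and Condition~\eqref{cond:morseness:tails} is precisely \ref{cond2:morseness:tails}(a),(b). Condition~\eqref{cond:morseness:edges} is \ref{cond2:morseness} applied at $v=\alpha^-$, combined with the backbone fact that $[\phi(\alpha^-),\phi(\alpha^+)]_{T'}$ is the $q_v$-geodesic. With $f_\ftree$ taken to be (a suitable increasing majorant of) $f$, Condition~\eqref{cond:morseness} holds.

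For continuity (Definition~\ref{def:bass_serre_continuity}), I would treat the finite-type and infinite-type cases of a combinatorial ray $r$ separately. For $r$ of infinite type, $V_m^M(r)$ records agreement of $\alpha_m$; since $\phi$ sends the path $p(r)$ to the path $p(\bar\phi(r))$ by concatenating $q_v$-geodesics, agreement up to depth $m$ in $T$ forces agreement up to depth $\approx m/C$ (or better) in $T'$, so a linear choice of $m$ in $n$ works --- here one only uses the nestedness backbone, not the geodesic structure. For $r$ of finite type with end vertex $\omega$, $V_m^M(r)$ records that $\gamma_n(r')$ stays $\delta$-close to $\gamma_n(r)$ up to time $m$; after applying $\bar\phi$ the tail becomes $p_{\omega}(z(r))$, and I would use the Geodesics condition \ref{cond2:geodesics} together with $g_i$ and the commutativity \ref{cond2:commutativity} $p_v(g(x)) = g(q_v(x))$ to translate closeness of realizations of tails in the vertex group of $\omega$ into closeness of realizations in the vertex group of $\phi(\omega)$; the continuity of $p_\omega$ on the Morse boundary (plus the uniform Morse control from \ref{cond2:morseness:tails}) then yields the required $m$. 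The same argument with the roles of $T$ and $T'$ swapped handles \ref{cond:phi_inverse}, using that $\{q_v^{-1}\}$ (well-defined on the relevant subsets by \ref{cond2:injectivity} and \ref{cond2:coars_surj}) is again a system of geodesic local bijections with the same constants.

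The main obstacle I anticipate is the finite-type continuity step: matching up the neighbourhood basis $V_k^M$ on the combinatorial side, which is defined in terms of the $\tilde O_k$-neighbourhoods of the realization segments $\gamma_n(r)$ starting at $\omega^*$, with the homeomorphism $p_\omega$ on $\mb G_{\check\omega}$, whose continuity is only known abstractly. The bridge has to be condition~\ref{cond2:geodesics}\ref{cond2:closeness}, which says that when $x$ is far enough from $v_i^*$ the segment $x^{**}$ and the ray $v_i^*\cdot g(x)$ are $\tilde O_k$-close; combined with commutativity this is exactly what converts ``$x$ close to $y$ in the tree sense'' into ``$g(x)$ close to $g(y)$ in the Morse boundary sense'' and back. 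Getting the quantifiers in the right order --- choosing the depth $m$ large enough to first enter the far regime of \ref{cond2:closeness}, then invoke continuity of $p_\omega$, then come back through \ref{cond2:closeness} on the $T'$ side --- is the delicate bookkeeping, but it is structurally parallel to the proof of Lemma~\ref{lemma:neighbourhood_inclusions} and to the corresponding argument in \cite{graph_of_groups1}, so I would model it on those.
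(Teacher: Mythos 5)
Your plan matches the paper's proof essentially point-for-point: chaining Nestedness~\ref{cond2:nestedness} and Non-nestedness~\ref{cond2:injectivity} at the branch vertex for tree-map nestedness and injectivity, Coarse surjectivity~\ref{cond2:coars_surj} with $C_\tree=C$, the sandwich argument plus~\ref{cond2:partial_surj} for partial surjectivity, \ref{cond2:morseness} and~\ref{cond2:morseness:tails} for the Morse conditions, and then Lemma~\ref{lemma:neighbourhood_inclusions}-style bookkeeping for continuity, with infinite-type rays handled by depth agreement and finite-type rays by shuttling through~\ref{cond2:closeness} and Commutativity~\ref{cond2:commutativity}. The one ingredient the paper makes explicit that your sketch leaves implicit is a local-finiteness observation (only finitely many $x\in F_i^M$ with $d_i(v_i^*,x^{**})$ bounded), which is exactly what lets you choose the depth threshold large enough that $d_2(\phi(x)^{**},v_2^*)$ is also forced large, so that~\ref{cond2:closeness} can be invoked on the $T'$ side when ``coming back''.
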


\begin{proof}
We will first show that $q$ is a Bass-Serre tree map. Hence, we need to show that Conditions \eqref{cond:start_at_G0} -  \eqref{cond:morseness} are satisfied. \\
\eqref{cond:start_at_G0}: This follows from the assumption that $q_{G_0}(G_0) = G_0'$.\\
\eqref{cond:paths_exist}: Let $v, w\in V(T)$ be vertices such that $w\in T_v$. Applying \ref{cond2:nestedness} inductively to the vertices of the geodesic path from $v$ to $w$, we get that $\phi(w) \in T_{\phi(v)}'$. If $v\neq w$, then \ref{cond2:nestedness} also implies that $\phi(v)\neq \phi(w)$.

Conversely, let $v, w\in V(T)$ be vertices such that $\phi(w)\in T_{\phi(v)}'$. We have to show that $w\in T_v$. Let $u$ be the lowest common ancestor of $v$ and $w$, or in other words, let $u\in V(T)$ be the vertex furthest away from $G_0$ which satisfies $v, w\in T_u$. If $v = u$, we are done. If $w = u$, then $v\in T_w$ and by the argumentation above, $\phi(v)\in T_{\phi(w)}'$. Together with $\phi(w)\in T_{\phi(v)}'$ this implies that $\phi(v) = \phi(w)$. The observation above shows that if $v\in T_w - \{w\}$, then $\phi(v)\neq \phi(w)$. Hence, $v = w$ and we are done. 

From now on we assume that neither $v$ nor $w$ are equal to $u$. We show that this can never happen. Let $\alpha$ and $\beta$ be edges in $E_u$ lying on the path from $u$ to $v$ and from $u$ to $w$ respectively. Since $u$ is the lowest common ancestor of $v$ and $w$, the edges $\alpha$ and $\beta$ are distinct. Denote $\alpha^+$ by $x$ and $\beta ^+$ by $y$.  Condition \ref{cond2:injectivity} implies that $\phi(x)\not\in T_{\phi(y)}'$ and $\phi(y)\not\in T_{\phi(x)}'$. Hence, $T_{\phi(x)}'$ and $T_{\phi(y)}'$ are disjoint. By the proof from above we also know that $\phi(v)\in T_{\phi(x)}'$ and $\phi(w)\in T_{\phi(y)}'$. But $T_{\phi(v)}'$ is a subset of $T_{\phi(x)}'$ and contains $\phi(w)$. This is a contradiction to $T_{\phi(x)}'$ and $T_{\phi(y)}'$ being disjoint. \\
\eqref{cond:injectivity}: Remark \ref{remark:no_injectivity_needed} shows that this follows from \eqref{cond:paths_exist}. \\
\eqref{cond:bounded_jumps}: Condition \ref{cond2:coars_surj} implies that \eqref{cond:bounded_jumps}  is satisfied for the constant $C_4 = C$. \\
\eqref{cond:surj_on_mores}: We first show that for every vertex $w\in V(T')$ there exists an outgoing edge $\alpha\in E(T)$ such that $w\in T_{\phi(\alpha^-)}'$ and $\phi(\alpha^+)\in T_w$.

Let $w\in V(T')$ be a vertex and let $v\in V(T)$ be the vertex furthest away from $G_0$ such that $w\in T_{\phi(v)}'$. Let $w'$ be a vertex in $T_w'$ at distance at least $C$ from $w$. By \ref{cond2:coars_surj} there exists an edge $\alpha\in E_v$ such that $w'\in T_{\phi(\alpha^+)}'$. Since $T_{\phi(\alpha^+)}'$ and $T_w'$ both contain $w'$ they are not disjoint. Hence, we either have $w\in T_{\phi(\alpha^+)}'$ or $\phi(\alpha^+)\in T_w'$. By the definition of $v$, we know that $w\not\in T_{\phi(\alpha^+)}'$. Thus $\phi(\alpha^+)\in T_w'$.

If $w\in V(T')$ is a vertex such that the group $G_{\check w}'$ has non-empty Morse boundary, then Condition \ref{cond2:partial_surj} implies that $w =  \phi(v)$ or $w = \phi(\alpha^+)$. In particular, $w$ is indeed in the image of $\phi$. \\
\eqref{cond:homeo}: This holds since we require that the local bijections are geodesic. If for a vertex $v\in V(T)$ the Morse boundary $\mb G_{\check v}$ is not empty, the bijection $p_v$ comes from Condition \ref{cond2:morseness:tails}.\\
\eqref{cond:morseness}: Any function $f_\ftree  \geq f$ works. Condition \eqref{cond:morseness:edges} follows from Condition \ref{cond2:morseness} and Condition \eqref{cond:morseness:tails} follows from Condition \ref{cond2:morseness:tails}. Later on, we will see that $f_\ftree$ has more conditions it has to satisfy and determine $f_\ftree$ accordingly. \\

Next, we need to prove that the Bass-Serre tree map $\phi$ is continuous. More precisely, we need to prove that the conditions in Definition \ref{def:bass_serre_continuity} hold. This proof is similar to the proof of Proposition 4.3 in \cite{graph_of_groups1}, although the notation is quite different. We first show that these conditions hold for combinatorial rays of infinite type. 

Let $r\in \comb X$ be an $M$-Morse combinatorial ray of infinite type and let $n$ be an integer. If $s\in V_n^M(r)$, then both $p(\bar\phi(s))$ and $p(\bar\phi(r))$ pass through $\phi(\alpha_n(r))$ and $d_{T'}(\phi(\alpha_n(r)), G_0')\geq n$. Hence $\bar\phi(V_n^M(r))\subset V_n^{f_\ftree(M)}(\bar\phi(r))$.

Let $r'\in \comb X'$ be an $M$-Morse combinatorial ray of infinite type and let $n$ be an integer. Let $v =\alpha_n(\bar\phi^{-1}(r'))$ and let $m = d_{T'}(G_0', \phi(v))$. With this definition we have that $\phi(v) = \alpha_m(r')$. Let $w = \alpha_m(r')$. If $s'\in V_m^M(r')$ and $s = \bar\phi^{-1}(s)$, then there exists some vertex $u$ on the geodesic $p(s)$ such that $\phi(u)\in T_{\phi(v)}$. By Condition \eqref{cond:paths_exist} we have that $u\in T_{v}$ and hence $s\in V_n^{f_\ftree(M)}(\bar\phi^{-1}(r'))$. Hence $\bar\phi^{-1}(V_m^M(r'))\subset V_n^{f_\ftree(M)}(\bar\phi^{-1}(r'))$. 

To show that the conditions of Definition \ref{def:bass_serre_continuity} hold for Morse combinatorial rays of finite type, we first need to prove the following claims.

\textbf{Claim 1.} Let $i\in \{1, 2\}$, let $l$ be a positive integer and let $z\in \mb A_i$ be an $M$-Morse direction. There exists an integer $k$ such that 
\begin{enumerate}[label=(\roman*)]
    \item  for all $x\in F_i^M$ with $x^{**}\in \tilde{O}_k^M(v_i^*\cdot z, v_i^*)$ we have $v_i^*\cdot g(x)\in \tilde{O}_l^{f(M)}(v_i^*\cdot z, v_i^*)$.
    \item for all $x\in F_i^M$ with $d_i(v_i^*, x^{**})\geq k$ and $v_i^*\cdot g(x)\in  \tilde{O}_k^{f(M)}(v_i^*\cdot z, v_i^*)$ we have that $x^{**}\in \tilde{O}_l^{f(M)}(v_i^*\cdot z, v_i^*)$.
\end{enumerate}

\textbf{Proof.} Claim 1 follows from Condition \ref{cond2:closeness} and Lemma 2.24 of \cite{graph_of_groups1}. More precisely, Lemma 2.24 of \cite{graph_of_groups1} implies that there exists an integer $k'$ such that if $y\in \tilde{O}_{k'}^{f(M)}(v_i^*\cdot z, v_i^*)$, then $\tilde{O}_{k'}^{f(M)}(y, v_i^*)\subset \tilde{O}_l^{f(M)}(v_i^*\cdot z, v_i^*)$. Taking $k= \max\{k', D_v(M, k)\}$ works; if $x^{**}\in \tilde{O}_k^{f(M)}$, then $\tilde{O}_{k'}^{f(M)}(x^{**}, v_i^*)\subset\tilde{O}_l^{f(M)}(v_i^*\cdot z, v_i^*)$ and Condition \ref{cond2:closeness} implies that $v_i^*\cdot g(x)\in \tilde{O}_{k'}^{f(M)}(x^{**}, v_i^*)$, which yields $(i)$. Analogously, if $v_i^*\cdot g(x)\in \tilde{O}_k^{f(M)}$ and $d_i(v_i^*, x^{**})\geq D_v(M, k)$, then $\tilde{O}_{k'}^{f(M)}(v_i^*\cdot g(x), v_i^*)\subset\tilde{O}_l^{f(M)}(v_i^*\cdot z, v_i^*)$ and Condition \ref{cond2:closeness} implies that $x^{**}\in \tilde{O}_{k'}^{f(M)}(v_i^*\cdot g(x), v_i^*)$, which yields $(ii)$. $\blacksquare$

\textbf{Claim 2.} Let $i\in \{1, 2\}$. For every Morse gauge $M$ and integer $k$ there exist at most finitely many $x\in F_i^M$ with $d_i(v_i^*, x^{**})\leq k$.

\textbf{Proof.} The graph $X$ is locally finite. Hence, there are only finitely many edges $\alpha\in E_v$ with $d_1(v^*, \alpha^*)\leq k$. Since $x\in F_1$ correspond to edges $\alpha \in E_v$ and $x^{**} = \alpha^*$ the statement follows for $i=1$. The case for $i=2$ is slightly trickier, since for any vertex $x\in F_2$, infinitely many vertices $y\in F_2$ might have $x^{**} = y^{**}$. We need to show that this does not happen when considering vertices in $F_2 ^M$ instead of $F_2$. Every vertex $x\in F_2$ has distance at most $C$ from $v_2$. By Condition \ref{cond2:coars_surj} we know that the vertex group of every vertex $y$ on the geodesic path from $v_2$ to $x$ (not including the endpoints) has empty Morse boundary. Hence, every of these vertices $y$ has only finitely many outgoing edges that are $M$-Morse. So, for every outgoing edge $\beta\in E_{v_2}$ there are only finitely many $x\in F_2^M$ whose geodesic path from $v_2$ to $x$ starts with $\beta$. Now the same observations as for $i=1$ conclude the proof for $i=2$. $\blacksquare$

Now we are ready to prove that the conditions of Definition \ref{def:bass_serre_continuity} hold for combinatorial rays of finite type and suitably chosen $f_\ftree$. 

Let $r\in \comb X$ be an $M$-Morse combinatorial ray of finite type of length $l$ and denote its tail $z(r)$ by $z$ and denote $f_1(M)$ by $N$ and let $n' = n + 18\delta_{f^2(M_1)}$. Since $p_v$ is a homeomorphism, there exists an integer $k_1$ such that $p_v(O_{k_1}^{N}(z))\subset O_{n'}^{f(N)}(p_v(z))$. If $m\geq k_1$, then we have for all combinatorial rays $s\in V_m^M(r)$ with $l(s) = l(r)$ that $p_v(z(s))\in O_{n'}^{f(N)}(p_v(z))$ and hence by Remark \ref{remark:property_of_neighbourhoods}, $\bar\phi(s)\in V_n^{f(N)}(\bar\phi(r))$.  By Claim~1, there exists an integer $k_2$ such that all $y\in F_2^{f(N)}$
with $d_2(y^{**}, v_2^*)\geq k_2$ and $g(y)\in O_{k_2}^{f^2(N)}(p_v(z))$ satisfy $y^{**}\in\tilde{O}_{n'}^{f^2(N)}(v_2^*\cdot p_v(z), v_2^*)$. Since $p_v$ is a homeomorphism, there exists an integer $k_3$ such that $p_v(O_{k_3}^{f (N)}(z))\subset O_{k_2}^{f^2(N)}(p_v(z))$. Lastly, Claim 1 implies that there exists an integer $k_4$ such that if $x\in F_1^{N}$ and $x^{**}\in \tilde{O}_{k_4}^{N}(v_1^*\cdot z, v_1^*)$, then $g(x)\in O_{k_3}^{f(N)}(z)$. Claim 2 implies that there exist only finitely many $y\in F_2^{f(N)}$ with $d_2(y^{**}, v_2^*)\leq k_2$, hence we can choose $k_4$ large enough such that all $x\in F_1^{N}$ with $d_1(x^{**}, v_1^*)\geq k_4$ satisfy $d_2(\phi(x)^{**}, v_2^*)\geq k_2$. Let $m = \max\{k_4, k_1\}$ and let $s\in V_m^M(r)$ be a combinatorial ray with $l(s) > l(r)$. There exists a unique vertex $x\in F_1^{N}$ which lies on $p(s)$. We have that $x^{**} = \alpha_{l+1}^*(s)$ and hence $x^{**} \in \tilde{O}_{k_4}^N(v_1^*\cdot z, v_1^*)$. Claim~1 implies that $g(x)\in  O_{k_3}^{f(N)}(z)$ and hence $p_v(g(x)) = g(\phi(x)) \in O_{k_2}^{f^2(N)}(p_v(z))$. The choice of $k_4$ implies that $d_2(\phi(x)^{**}, v_2^*)\geq k_2$ and hence by Claim~1, we have that $\phi(x)^{**}\in \tilde{O}_{n'}^{f^2(N)}(v_2^*\cdot p_v(z), v_2^*)$. Let $l'$ be the length of $\bar\phi(r)$. We have that $\phi(x)^{**} = \alpha_{l'+1}^*(\bar\phi (s))$  and hence by Remark \ref{remark:property_of_neighbourhoods}, $\bar\phi(s)\in V_n^{f^2(N)}(\bar\phi (r))$.

This concludes the proof of Condition \ref{cond:phi} for $f_\ftree = f^2\circ f_1$. Condition \ref{cond:phi_inverse} is proven analogously. 
\end{proof}

\section{The map} \label{section:relativemap}

Recall that Morseless stars are defined in Section \ref{section:definitoin_of_morseless_stars} and correspond to the graphs of groups of Theorem \ref{cor:rel_hyp_graph_of_groups}, that have exactly one distinguished vertex. Thus the following theorem is a special case of Theorem \ref{cor:rel_hyp_graph_of_groups}, where the distinguished set of vertices $W$ contains exactly one vertex. In this section, we prove Theorem \ref{theorem:star_of_groups} by constructing local bijections and in Section \ref{section:main_result} we use Theorem \ref{theorem:star_of_groups} to inductively prove Theorem \ref{cor:rel_hyp_graph_of_groups}. 

\begin{theorem}\label{theorem:star_of_groups}
Let $\mc{G}$ be a relatively hyperbolic Morseless star. Let $\mc G'$ be the graph of groups with underlying graph $\Gamma' = \Gamma$, vertex groups $G_i' = G_i$ and trivial edge groups. Then $\mb \pi_1(\mc G) \cong \mb\pi_1(\mc G ')$.
\end{theorem}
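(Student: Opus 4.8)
The plan is to construct, for every vertex $v\in V(T)$, a geodesic local bijection $q_v$ of some uniform depth $C$ and Morseness $f$, and then invoke Proposition \ref{prop:local_bij_imply_bass_serre_map} together with Proposition \ref{prop:bass_serre_implies_homeo} to conclude that $\mb\pi_1(\mc G)\cong\mb\pi_1(\mc G')$. Since the underlying graph $\Gamma'=\Gamma$ and the vertex groups are unchanged, the homeomorphisms $p_v$ required by Condition \ref{cond2:morseness:tails} will be the identity (after identifying $G_{\check v}=G'_{\check v}$), so the real content is the definition of $q_v$ on $E_v^+$, i.e. sending cosets of edge groups at $v$ in $\mc G$ to cosets of the trivial edge group (= individual group elements) at the corresponding vertex of $T'$.

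First I would fix, for each vertex group $G_i$ that appears as an endpoint of an edge, a Morse $G_i$-geodesic line $\lambda^{(i)}$ through $(1,i)$ in the relevant Cayley graph (this uses that such lines exist whenever $\mb G_i\neq\emptyset$; if $\mb G_i=\emptyset$ the construction degenerates harmlessly). Recall from Section \ref{section:graph_of_group} the bijection $\psi: A\to G_i/H_\beta$ between the edges of $T$ of a given type at $v$ and the cosets of the corresponding edge group, and from Definition \ref{def:xg} and Lemma \ref{lemma:properties_of_xg} the element $(\bar\alpha^*)^G\in G_i$. Using the construction preceding Definition \ref{def:corresponding_rays}, to each outgoing edge $\alpha\in E_{v,\beta}$ I associate the group element $y_\alpha=(\bar\alpha^*)^G\in G_{\check v}$ and then its Morse direction $\gamma_\alpha=[\lambda^{(\check v)}_{y_\alpha}]$. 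In $T'$, the vertex $\phi(v)$ carries the same group $G'_{\check v}=G_{\check v}$, but now every element $g$ spans its own edge; so I define $q_v(\alpha^+)$ to be the vertex of $T'$ across the edge corresponding to $g=y_\alpha$ from $\phi(v)$. Consistency across vertices ($q_{\alpha^+}(\alpha^+)=q_{\alpha^-}(\alpha^+)$) and $q_{G_0}(G_0)=G_0'$ must be arranged by a coherent choice of the lines $\lambda^{(i)}$ and basepoints; this is bookkeeping but needs care.

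The verification of Conditions \ref{cond2:nestedness}--\ref{cond2:commutativity} then proceeds as follows. Nestedness \ref{cond2:nestedness}, non-nestedness \ref{cond2:injectivity} and partial surjectivity \ref{cond2:partial_surj} are essentially combinatorial: in $\mc G'$ all intermediate vertices on $[\phi(v),q_v(\alpha^+)]_{T'}$ (if any) have trivial or non-Morse-boundary vertex groups, so \ref{cond2:partial_surj} is automatic, and since distinct $\alpha$ give distinct $y_\alpha H_\beta$ they give distinct edges of $T'$. Coarse surjectivity \ref{cond2:coars_surj} holds with depth $C=1$ (or a small constant) because every edge of $T'$ at $\phi(v)$ is $q_v(\alpha^+)^{**}$ for a unique $\alpha$ — essentially $\psi$ being a bijection onto $G_i/H_\beta$ composed with $G_i/\{1\}\to G_i/H_\beta$ surjective. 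The Morse condition \ref{cond2:morseness} is exactly Lemma \ref{lemma:properties_of_xg} combined with Lemma \ref{lemma:compare_morseness_of_repr}: $\alpha$ is $M$-Morse iff $[v^*,\bar\alpha^*]$ is $M$-Morse iff $(\bar\alpha^*)^G=y_\alpha$ is $(f_\xgg(M),G_{\check v})$-Morse iff the corresponding single edge of $T'$ is comparably Morse (in $\mc G'$ the geodesic $[\phi(v),q_v(\alpha^+)]_{T'}$ is a single edge, whose Morseness is controlled by $y_\alpha$ through the $\mc G'$-analogue of Lemma \ref{lemma:properties_of_xg}). For the geodesic structure, take $g_1(x)=\gamma_{\alpha}$ where $\alpha$ is the first edge from $v$ towards $x$, and $g_2$ defined symmetrically on the $\mc G'$ side via the same lines $\lambda^{(i)}$; then Condition \ref{cond2:closeness1} is Lemma \ref{lemma:corresponding_geodesic}\eqref{prop:morseness} (relating Morseness of $[v^*,x]$, of $\gamma_x$, and of $\lambda$) together with \ref{lemma:properties_of_morseless_stars}\eqref{lemma:vertex_group_morseness}, and Condition \ref{cond2:closeness} is precisely Lemma \ref{lemma:corresponding_geodesic}\eqref{prop:contained_neighbourhoods}, using that $x^{**}=\alpha^*$ is within bounded distance of $\bar\alpha^*$ (Lemma \ref{lemma:close_projections}, Lemma \ref{lemma:compare_morseness_of_repr}) so that the neighbourhood statements for $\bar\alpha^*$ transfer to $\alpha^*$ after enlarging $D_v$ by an additive constant. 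Commutativity \ref{cond2:commutativity} is automatic because $p_v=\mathrm{id}$ and $g_1,g_2$ are built from the same line.

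The main obstacle I anticipate is twofold. First, ensuring the compatibility conditions $q_{G_0}(G_0)=G_0'$ and $q_{\alpha^+}(\alpha^+)=q_{\alpha^-}(\alpha^+)$ hold \emph{simultaneously} for all vertices: this forces the choices of base lines $\lambda^{(i)}$, of basepoints $v^*$, $\alpha^*$, and of the identifications $G_i\cong G_i'$ to be made globally coherently rather than vertex-by-vertex, and one must check that the construction of $v^*$ and $\bar\alpha^*$ in $T$ matches the analogous construction in $T'$ up to the controlled data. Second, and more substantively, getting Condition \ref{cond2:closeness} with a \emph{single} function $D_v$ that works uniformly over all edges $\alpha$ at $v$ (not just for a fixed $\alpha$): here the relative-hyperbolicity input is essential — it is what makes $\mc G$ Morse-preserving (Lemma \ref{lemma:morse_preserving_rel_hyp}) and it underlies Lemma \ref{lemma:close_projections}, which is the statement that $\bar\alpha^*$ is uniformly close to the closest-point projection $q$ of $v^*$ onto $\pi^{-1}(\alpha)^-$; without this one cannot identify $x^{**}$ with a point on the realisation of $v^*\cdot g(x)$ in a uniformly controlled way, and Condition \ref{cond2:closeness} would fail (this is exactly the $F_2\times\Z$ counterexample discussed in the introduction). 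Once both points are handled, Proposition \ref{prop:local_bij_imply_bass_serre_map} gives a continuous Bass-Serre tree map $\phi$, and Proposition \ref{prop:bass_serre_implies_homeo} gives the desired homeomorphism.
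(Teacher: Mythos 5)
Your overall strategy is the paper's strategy — define geodesic local bijections $q_v$ and feed them to Proposition \ref{prop:local_bij_imply_bass_serre_map} and Proposition \ref{prop:bass_serre_implies_homeo} — and most of the lemma citations are the right ones. The problem is that the map you actually construct is not a bijection, which is the whole point of the exercise.

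Concretely: you define $q_v$ on $E_{v,\beta}$ by $\alpha\mapsto y_\alpha=(\bar\alpha^*)^G$. The set $E_{v,\beta}$ is in bijection with the coset space $G_{\check v}/H_\beta$ via $\psi_\beta$, and $(\bar\alpha^*)^G$ is one particular coset representative — it is a section of the quotient map $G_{\check v}\to G_{\check v}/H_\beta$, picking the representative closest to the basepoint. On the $\mc G'$ side, $E_{w,\beta}$ is in bijection with all of $G_{\check v}$ (or $G_{\check v}-\{1\}$), because the edge group is trivial. Since $H_\beta$ is infinite (it is relatively wide, hence has infinite index in the vertex group, and wide groups are infinite), your map hits exactly one element per $H_\beta$-coset and misses the rest of $G_{\check v}$. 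So $q_v:E_v^+\to E_w^+$ is injective but far from surjective, and coarse surjectivity \ref{cond2:coars_surj} fails for \emph{every} depth $C$: all your images $q_v(\alpha^+)$ are at $T'$-distance one from $q_v(v)$, so $\bigcup_\alpha T'_{q_v(\alpha^+)}$ covers only the subtrees below edges in the image, and the subtrees below the (infinitely many) missed neighbours of $q_v(v)$ are not covered at any distance. Your parenthetical justification — ``$\psi$ being a bijection onto $G_i/H_\beta$ composed with $G_i/\{1\}\to G_i/H_\beta$ surjective'' — is a composition going the wrong way; surjectivity of the quotient map does not give you a surjective section, and in fact a section onto $G_i$ cannot exist when $H_\beta$ is nontrivial. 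The ``degenerates harmlessly'' remark for $\mb G_{\check v}=\emptyset$ is also wrong for the same reason; there you need an arbitrary bijection $\nu_\beta:G_i/H_\beta\to G_i$, not the canonical section.

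The missing idea is precisely how to fill the $H_\beta$-many ``extra'' elements on the $\mc G'$ side, in a way that controls Morse gauges in both directions. The paper does this by constructing $q_\beta^{-1}$ iteratively over an enumeration of $E_{w,\beta}$: for each $\alpha_2$ on the $\mc G'$ side with associated element $y=(\bar\alpha_2^*)^G$, one first tries to assign the ``exact'' preimage $\alpha_1$ with $\bar\alpha_1^*=x$ (where $x^G=y^G$); if that $\alpha_1$ is already used, one instead chooses any unused $\alpha_1$ whose edge group $\pi^{-1}(\alpha_1)$ meets the ray $\lambda_{x,v}$ and lies beyond the threshold $d_X(v^*,x)+D_\lam(M,d_X(v^*,x))+T(M)$, so that Lemma \ref{lemma:corresponding_geodesic}\eqref{prop:nice_connections} applies. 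It is exactly this non-exact branch — with its far-away threshold and the use of the auxiliary line $\lambda$ — that makes the map a genuine bijection while keeping the Morse conditions \ref{cond2:morseness} and \ref{cond2:geodesics} verifiable, and it is where relative hyperbolicity (via Lemma \ref{lemma:close_projections} and Lemma \ref{lemma:morse_preserving_rel_hyp}) enters in an essential way. Your proposal, as written, never confronts this cardinality/matching problem, so the construction does not get off the ground.
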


Observe that the graph of groups $\mc{G}'$ is a Morseless star but not necessarily a relatively hyperbolic Morseless star.\\

We will prove Theorem \ref{theorem:star_of_groups} by constructing a collection of local bijections that satisfy the assumptions of Proposition \ref{prop:local_bij_imply_bass_serre_map}. More precisely, we construct a map $q: V(T)\to V(T')$ and local bijections $\{q_v\}_{v\in V(T)}$ such that $q_v$ and $q$ agree on the intersection of their domains. We start by defining $q(G_0) = G_0'$. Then we repeat the following process until $q$ is defined for all vertices $v\in V(T)$. 

\begin{enumerate}
    \item Choose a vertex $v\in V(T)$ such that $q(v)$ is defined but $q_v$ is not. 
    \item Construct the local bijection $q_v$. \label{step:2}
    \item For every vertex $w\in E_v^+$, define $q(w) = q_v(w)$.
\end{enumerate}

In our process we will make sure that the projections to $\Gamma$ of $q(v)$ and $v$ respectively agree for all vertices $v\in V(T)$. The construction of $q_v$ in the case that $\mb G_{\check{v}}$ is empty differs significantly from the construction in the other case. Thus we first show how to construct $q_v$ if $\mb G_{\check{v}}$ is empty and then show how to consturct $q_v$ if $\mb G_{\check v}$ is non-empty.

\subsection{Constructing the local bijection $q_v$ if $\mb G_{\check{v}}$ is empty} 
Let $i\in V(\Gamma)$ be a vertex such that $\mb G_i$ is empty. For every edge $\beta\in E(\Gamma)$ whose source is $i$ fix a bijection $\nu_\beta : G_i / H_\beta \to G_i$ with $\nu_\beta(H_\beta) = 1$. While any possible choice of $\nu_\beta$ works, different choices will induce different local bijections. To ensure that the local bijections we construct have a uniform Morseness, it is important that we fix the bijections $\nu_\beta$ once and for all. 

Let $v\in V(T)$ be a vertex with $\check v = i$ and let $\beta \in E(\Gamma)$ be an edge whose source is $i$. We denote the set of edges $\alpha\in E(T)$ whose source is $v$ and whose projection $\check \alpha$ to $\Gamma$ is $\beta$ by $A_{v, \beta}$. Recall that we can define a bijection $\psi_{v, \beta} : A_{v, \beta} \to G_i/H_\beta$ via $\psi_{v,\beta}(\alpha) = (\bar\alpha^*)^G H_\beta$. We can use this bijection to define the Morseness of cosets of $G_i/H_\beta$.

Observe the following. The sets $A_{v, \beta}$ and $E_{v, \beta}$ are defined very similarly. The only difference is that in $E_{v, \beta}$ we require that edges are outgoing. Since $T$ is a tree, there is at most one edge $\alpha\in E(T)$ with $\alpha^- = v$ that is not outgoing, namely the one whose inverse $\bar\alpha$ is on the path from $G_0$ to $v$.

\begin{definition}
A coset $hH_\beta\in G_i/H_\beta$ is $M$-Morse if there exists a closest point $p\in \pi^{-1}(\psi_{v,\beta}^{-1}(hH_\beta))^-$ to $v^*$ such that $[v^*, p]$ is $M$-Morse.
\end{definition}

Note that $v^*$ is equal to $(g_1, i)$ for some $g_1\in G$ and any point $p\in \pi^{-1}(\psi_{v, \beta}^{-1}(hH_\beta))$ has the form $p = (g_1hh_1, i)$ for some $h_1\in H_\beta$. As a consequence, this definition does not depend on the vertex $v$. Namely, if $u\in V(T)$ is another vertex with $\check u = i$ and $u^* = (g_2, i)$ for some $g_2\in G$, then $p' = (g_2hh_1, i)$ is a closest point on $\pi^{-1}(\psi_{u, \beta}^{-1}(hH_\beta))^-$ to $u^*$ if and only if $p = (g_1hh_1, i)$ is a closest point on $\pi^{-1}(\psi_{v, \beta}^{-1}(hH_\beta))^-$ to $v^*$. Similarly, the geodesic $[u^*, p']$ is $M$-Morse if and only if $[v^*, p]$ is $M$-Morse. In the following if it is clear which vertex $v$ of $V(T)$ we are working with, we will denote $A_{v, \beta}$ by $A_\beta$ and $\psi_{v, \beta}$ by $\psi_\beta$.

Although $\nu_\beta$ could be any bijection, it turns out that it satisfies the following property. This works only because we assume that $\mb G_i$ is empty.

\begin{lemma}\label{lemma:morseness_of_empty_bijecton}
There exists an increasing function $f_\empbij : \mc M\to \mc M$ such that for every edge $\beta\in E(\Gamma)$ whose source is $i$ the following holds. If a coset $hH_\beta\in G_i/H_\beta$ is $M$-Morse, then $\nu_\beta(hH_\beta)$ is $(f_\empbij(M), X')$-Morse. Conversely, if $g\in G_i$ is $(M, X')$-Morse, then the coset $\nu_\beta^{-1}(g)$ is $f_\empbij(M)$-Morse.
\end{lemma}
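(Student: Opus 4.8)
The plan is to exploit the hypothesis $\mb G_i=\emptyset$ in order to show that for every Morse gauge $M$ there are only \emph{finitely many} $M$-Morse cosets in $G_i/H_\beta$ and only finitely many $(M,X')$-Morse elements of $G_i$. An arbitrary bijection $\nu_\beta$ then carries these finite sets to finite sets, and since every geodesic segment is Morse with a gauge depending only on its length, $f_\empbij(M)$ can be taken to be a finite maximum of gauges. As a preliminary I would record that for each $L\geq 0$ there is a Morse gauge $M_L$, increasing in $L$, such that any geodesic segment of length at most $L$ in any geodesic metric space is $M_L$-Morse: a $(\lambda,\eps)$-quasi-geodesic with endpoints at distance at most $L$ has length at most $\lambda(L+\eps)$, so every point of it stays within $M_L(\lambda,\eps):=\lambda^2(L+\eps)+\eps$ of an endpoint, hence of the segment. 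In particular every point of $X'$ is Morse with a gauge depending only on its distance to the basepoint, and every coset $hH_\beta$ is $M_\ell$-Morse, where $\ell$ is the length of its defining geodesic $[v^*,p]$.

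The heart of the matter is the finiteness statement, which is where $\mb G_i=\emptyset$ is used. Suppose $hH_\beta$ is an $M$-Morse coset (the notion being independent of the choice of $v$ with $\check v=i$) and put $\alpha=\psi_{v,\beta}^{-1}(hH_\beta)$, which we may take to be outgoing, since the at most one non-outgoing edge of $A_{v,\beta}$ corresponds to the trivial coset $H_\beta$, for which $\nu_\beta(H_\beta)=1$ and the conclusion is immediate. By definition some closest point $p\in\pi^{-1}(\alpha)^-$ of $v^*$ has $[v^*,p]$ being $M$-Morse, so Lemma \ref{lemma:compare_morseness_of_repr} shows that $[v^*,\alpha^*]$ is $f_8(M)$-Morse, i.e.\ that $\alpha$ is $f_8(M)$-Morse, and then Lemma \ref{lemma:properties_of_xg} shows that the representative $(\bar\alpha^*)^G\in hH_\beta$ is $(f_\xgg(f_8(M)),G_i)$-Morse. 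Since $\mb G_i=\emptyset$, an Arzel\`a--Ascoli argument combined with the fact that uniform limits of $N$-Morse geodesics are $N$-Morse (Lemma 2.10 of \cite{Cor16}) shows that for every Morse gauge $N$ there is a constant $R(N)$ bounding the length of every $N$-Morse geodesic segment in $\Cay(G_i,S_i)$ -- otherwise one would produce an $N$-Morse ray and contradict emptiness of $\mb G_i$. Hence $hH_\beta$ has a representative of $G_i$-length at most $R(f_\xgg(f_8(M)))$, so there are finitely many $M$-Morse cosets. The analogous statement for elements is similar: if $g\in G_i$ is $(M,X')$-Morse then, using that $\mc G'$ has trivial edge groups (so an $X'$-geodesic between two points of the vertex space $\iota_i(\Cay(G_i,S_i))$ stays inside it) together with Lemma \ref{lemma:properties_of_morseless_stars}(\ref{lemma:vertex_group_morseness}) for $\mc G'$ and Lemma \ref{lemma:monster}(\ref{lemma:close_representatives}),(\ref{prop:adapted2.5}), a $G_i$-geodesic from $1$ to $g$ is $N$-Morse for some $N$ depending only on $M$, so $|g|_{G_i}\leq R(N)$ and there are finitely many such $g$.

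To assemble $f_\empbij$, fix $M$. By the finiteness just proved, $\nu_\beta$ sends the $M$-Morse cosets to a finite subset of $G_i$; for each such $g$, Lemma \ref{lemma:properties_of_morseless_stars}(\ref{lemma:undistorted_vertex_groups}) applied to $\mc G'$ (with uniform constants, Remark \ref{remark:uniform_constants}) bounds $d_{X'}(\iota_i(1),\iota_i(g))$ in terms of $|g|_{G_i}$, so $\iota_i(g)$ is $M_L$-Morse in $X'$ for that bound $L$. Symmetrically $\nu_\beta^{-1}$ sends the $(M,X')$-Morse elements to finitely many cosets, each of which is $M_\ell$-Morse in $X$ for the length $\ell$ of its defining geodesic $[v^*,p]$. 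Letting $f_\empbij(M)$ be the maximum of $M$ and of all these finitely many gauges -- over the cosets and elements above, and over the finitely many edges $\beta$ with source $i$ -- yields the required function; it is increasing since the families of $M$-Morse cosets and of $(M,X')$-Morse elements grow with $M$, and it can be made continuous in the second variable by the smoothing argument from the proof of Lemma \ref{lemma:monster}.

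The main conceptual point, and the only subtlety, is that one never needs $\nu_\beta$ to interact well with Morseness: the finiteness of the sets of Morse cosets and Morse elements -- which is exactly the content of the hypothesis $\mb G_i=\emptyset$ -- makes the image of any bijection an automatically finite, hence uniformly bounded, collection. The remaining bookkeeping with $f_8$, $f_\xgg$, and the passage between $X$, $\Cay(G_i,S_i)$ and $X'$ is routine given the lemmas already established.
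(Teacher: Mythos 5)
Your proposal is correct and follows the same route the paper takes, which in the paper is compressed into three sentences: $\mb G_i=\emptyset$ forces only finitely many $M$-Morse cosets and $(M,X')$-Morse elements, so $f_\empbij(M)$ has finitely many conditions to satisfy. You have merely expanded that sketch into a complete argument: the Arzel\`a--Ascoli plus Cordes~2.10 observation that emptiness of $\mb G_i$ bounds the length of $N$-Morse segments in $\Cay(G_i,S_i)$; the chain through Lemmas~\ref{lemma:compare_morseness_of_repr} and \ref{lemma:properties_of_xg} (for cosets) and Lemma~\ref{lemma:properties_of_morseless_stars}\eqref{lemma:vertex_group_morseness} with the isometric embedding of vertex groups in trivial Morseless stars (for elements of $G_i$) to reduce both notions of Morseness to $(N,G_i)$-Morseness; the elementary gauge $M_L$ for segments of length $\leq L$; and the monotonicity/continuity cleanup. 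All of these are accurate and consistent with the paper's conventions, and your closing observation --- that $\nu_\beta$ need not interact with Morseness at all because any bijection sends finite sets to finite sets --- is precisely the point the paper's terse proof is leaning on.
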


Recall that since $\Gamma = \Gamma'$, we can also think of $\beta$ as an edge of $\Gamma'$.

\begin{proof}
The Morse boundary $\mb G_i$ is empty. Thus for every Morse gauge $M$, there exist at most finitely many elements of $G_i/H_\beta$ and $G_i$ which are $M$-Morse. Hence for any Morse gauge $M$, the Morse gauge $f_\empbij(M)$ only has finitely many conditions it needs to satisfy and thus exists.
\end{proof}

Let $v\in V(T)$ be a vertex with $\check{v} = i$ and assume that $q(v)=w$ for some vertex $w\in V(T')$. We are now ready to construct the local bijection $q_v$. To do so, we will construct bijections $q_\beta: E_{v, \beta}\to E_{w, \beta}$ for all edges $\beta \in E(\Gamma)$ with $\beta^- = i$. Then we can define $q_v : E_v^+\cup \{v\}\to E_w^+\cup \{w\}$ via $q_v(v) = w$ and $q_v(\alpha^+) = q_\beta(\alpha)^+$ for edges $\alpha\in E_{v, \beta}$. Observe that this construction ensures that $\check {q_v(x)} = \check x$ for all vertices $x\in E_v^+$.

Fix an edge $\beta\in E(\Gamma)$ with $\beta^- = i$. Similar to $A_\beta$ we can define $A_\beta'$ as the set of edges $\alpha\in E(T')$ whose projection to $\Gamma$ is $\beta$ and whose source is $v$. Analogous to $\psi_\beta$, we have a bijection $\psi_\beta' : A_\beta'\to G_i/H_\beta'$. Since $H_\beta' =\{1\}$ we can view $\psi_\beta'$ as a bijection $\psi_\beta' : A_\beta'\to G_i$. Define the bijection $\tilde{q}_\beta : A_\beta \to A_\beta'$ as the composition $\psi_\beta'^{-1}\circ \nu_\beta \circ \psi_\beta$.

Recall that the sets $A_\beta$ and $E_{v, \beta}$ differ by at most one edge. Edges in $E_{v, \beta}$ have to be outgoing. Since $T$ is a tree, there is at most one edge $\alpha\in E(T)$ with $\alpha^- = v$ that is not outgoing, namely the one whose inverse $\bar\alpha$ is on the path from $G_0$ to $v$. Since we construct the local bijection $q_v$ via the bijections $q_\beta$ (both here and in the case that $\mb G_i$ is non-empty) we know that if $\alpha\in V(T)$ with $\alpha ^- =v$ is not outgoing and $\alpha'\in V(T')$ with $\alpha' = w$ is not outgoing, then their projections to $\Gamma$ coincide. This observation together with the fact that $\nu_\beta (H_\beta) =1$ implies that $\tilde{q}_\beta$ restricted to $E_{v, \beta}$ induces a bijection $q_\beta : E_{v, \beta} \to E_{w, \beta}$.

\begin{lemma}
The map $q_v$ constructed above is a geodesic local bijection of depth $1$ and Morseness $f:  \mc M\to \mc M$ for any increasing map $f$ such that $f(M)\geq \max\{f_2(f_\empbij(f_\emp(M, 1)),1), f_8(f_\empbij(f_2(M, 1)))\}$.
\end{lemma}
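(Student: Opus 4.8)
The plan is to verify each of the defining conditions of a geodesic local bijection (\ref{cond2:nestedness}--\ref{cond2:commutativity}) for the map $q_v$ constructed above, in the special case where $\mb A_1 = \mb G_i$ is empty. Note that the empty Morse boundary assumption drastically simplifies the ``geodesic'' part: once $\mb A_1$ (equivalently $\mb A_2$, which is also empty since $G_i' = G_i$) is empty, Conditions \ref{cond2:morseness:tails}, \ref{cond2:geodesics} and \ref{cond2:commutativity} are vacuous. So the real work is Conditions \ref{cond2:nestedness}--\ref{cond2:morseness}, and in particular the Morse condition \ref{cond2:morseness}, which is where the stated bound on $f$ comes from.

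\textbf{Structure of the proof.} First I would record that $q_v$ has depth $1$: since each $q_\beta$ sends an edge $\alpha \in E_{v,\beta}$ to an edge $q_\beta(\alpha) \in E_{w,\beta}$ with $q_\beta(\alpha)^- = w$, and since $\nu_\beta(H_\beta) = 1$ matches the non-outgoing edge at $v$ to the non-outgoing edge at $w$, the target vertices $q_v(\alpha^+) = q_\beta(\alpha)^+$ are exactly the neighbours of $w$ in $T'$ lying in $T'_w$, i.e.\ at distance $1$ from $w$. This immediately gives \ref{cond2:nestedness} (nestedness and $q_v(\alpha^+) \neq v$, using $\check{q_v(\alpha^+)} = \check{\alpha^+} \neq 0$ together with the basepoint convention), \ref{cond2:coars_surj} (coarse surjectivity with constant $1$: any $w' \in T'_w$ with $d_{T'}(w', w) \geq 1$ lies in $T'_{q_v(\alpha^+)}$ for the unique $\alpha$ whose image-subtree contains it), and \ref{cond2:injectivity} (non-nestedness: distinct outgoing edges at $w$ give disjoint subtrees). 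Condition \ref{cond2:partial_surj} holds trivially because the only intermediate vertex on a length-$1$ geodesic would have to be an endpoint, so there is nothing to check. Since $\mb G_i$ and $\mb G_i'$ are both empty, \ref{cond2:morseness:tails} holds with $p_v$ the (unique, empty) homeomorphism, and \ref{cond2:geodesics}, \ref{cond2:commutativity} are vacuous.

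\textbf{The Morse condition.} This is the substantive step. I want to show: if $\alpha \in E_{v,\beta}$ is $M$-Morse, then the single edge $q_\beta(\alpha) \in E_{w,\beta}$ on the geodesic from $w$ to $q_v(\alpha^+)$ is $f(M)$-Morse, and conversely. By Definition \ref{definition:morseness_edges}, $\alpha$ being $M$-Morse means $[v^*, \alpha^*]$ is $M$-Morse; since $\bar\alpha^*$ lies on $[1, \alpha^*]$ at distance $1$ from $\alpha^*$, Lemma \ref{lemma:monster}\ref{prop:adapted2.5} gives that $[v^*, \bar\alpha^*]$ is controlled, and by Lemma \ref{lemma:properties_of_xg} (translating to the vertex group $G_i$) and Lemma \ref{lemma:cosets_morseness_vs_closeness} (passing to the closest point $q$ on $\pi^{-1}(\alpha)^-$), the coset $\psi_\beta(\alpha) = (\bar\alpha^*)^G H_\beta$ is $f_\emp(M,1)$-Morse in the sense just defined. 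Then Lemma \ref{lemma:morseness_of_empty_bijecton} says $\nu_\beta(\psi_\beta(\alpha))$ is $(f_\empbij(f_\emp(M,1)), X')$-Morse, which via Lemma \ref{lemma:properties_of_morseless_stars}\eqref{lemma:vertex_group_morseness} (going from $(M',X')$-Morse to $X'$-Morse, picking up $f_2(\cdot, 1)$ since the relevant quasi-isometry constant is $C_2$; here $C_2 = 1$ or we absorb it) says precisely that the edge $q_\beta(\alpha)$ is $f_2(f_\empbij(f_\emp(M,1)),1)$-Morse in $T'$. For the converse direction one runs the same chain backwards: starting from the edge $q_\beta(\alpha)$ being $M$-Morse, Lemma \ref{lemma:properties_of_morseless_stars}\eqref{lemma:vertex_group_morseness} and Lemma \ref{lemma:monster}\ref{prop:adapted2.5} give that $\nu_\beta(\psi_\beta(\alpha))$ is $(f_2(M,1), X')$-Morse, the converse half of Lemma \ref{lemma:morseness_of_empty_bijecton} gives that the coset $\psi_\beta(\alpha)$ is $f_\empbij(f_2(M,1))$-Morse, and then Lemma \ref{lemma:compare_morseness_of_repr} (which upgrades Morseness of $[v^*, x]$ for $x$ on the edge group to Morseness of $[v^*, \bar\alpha^*]$ and $[v^*, \alpha^*]$) gives that $\alpha$ is $f_8(f_\empbij(f_2(M,1)))$-Morse. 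Taking $f(M) \geq \max\{f_2(f_\empbij(f_\emp(M,1)),1),\ f_8(f_\empbij(f_2(M,1)))\}$ then makes $q_v$ a local bijection of Morseness $f$, and since the remaining geodesic conditions are vacuous, $q_v$ is a geodesic local bijection of depth $1$ and Morseness $f$.

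\textbf{Main obstacle.} The only real subtlety is bookkeeping the chain of Morse-gauge estimates correctly, in particular matching up the two notions of Morseness for cosets: the one defined via closest points to $v^*$ on $\pi^{-1}(\psi_\beta^{-1}(hH_\beta))^-$ (used in Lemma \ref{lemma:morseness_of_empty_bijecton}) and the edge-Morseness of Definition \ref{definition:morseness_edges}. The bridge is precisely Lemma \ref{lemma:cosets_morseness_vs_closeness} together with Lemma \ref{lemma:compare_morseness_of_repr} (the latter needing relative hyperbolicity, which we have), and one must check these are invoked with the right arguments so that the composite function never exceeds the claimed bound. There is also a small point about the non-outgoing edge at $v$: one must confirm $\tilde{q}_\beta$ restricts to a genuine bijection $E_{v,\beta} \to E_{w,\beta}$, which is exactly the observation (already made in the text preceding the lemma) that $\nu_\beta(H_\beta) = 1$ and that non-outgoing edges at $v$ and $w$ have matching projections to $\Gamma$. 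Everything else is essentially formal.
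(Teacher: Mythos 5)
Your proposal is correct and follows essentially the same chain of lemmas as the paper's proof: Conditions~\ref{cond2:nestedness}--\ref{cond2:partial_surj} from bijectivity, the forward direction via Lemma~\ref{lemma:cosets_morseness_vs_closeness}, Lemma~\ref{lemma:morseness_of_empty_bijecton}, and Lemma~\ref{lemma:monster}\ref{prop:adapted2.5}, the converse via Lemma~\ref{lemma:monster}\ref{prop:adapted2.5}, Lemma~\ref{lemma:morseness_of_empty_bijecton}, and Lemma~\ref{lemma:compare_morseness_of_repr}, and the ``geodesic'' conditions dispatched because $\mb G_i$ is empty. The only minor quibble is that your forward direction inserts a redundant detour through Lemma~\ref{lemma:properties_of_xg} and an extra application of Lemma~\ref{lemma:monster}\ref{prop:adapted2.5} before invoking Lemma~\ref{lemma:cosets_morseness_vs_closeness}; applied literally that would accumulate an extra $f_2(\cdot,1)$ beyond the stated $f_\emp(M,1)$, but the direct application of Lemma~\ref{lemma:cosets_morseness_vs_closeness} to $[v^*,\alpha^*]$ as in the paper gives the claimed bound, and your final constants are the correct ones.
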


\begin{proof}
We need to prove that Conditions \ref{cond2:nestedness} - \ref{cond2:commutativity} hold. Conditions \ref{cond2:nestedness}  -\ref{cond2:partial_surj} follow directly from the fact that $q_v : E_v^+\to E_w^+$ is a bijection. 

Next we check that Condition \ref{cond2:morseness} holds. Let $\beta\in E(\Gamma)$ with $\beta^- = i$. Let $\alpha\in E_{v, \beta}$ be $M$-Morse. Lemma \ref{lemma:cosets_morseness_vs_closeness} implies that the coset $\psi_\beta(\alpha)$ is $M_1 = f_\emp(M, 1)$-Morse. Lemma \ref{lemma:morseness_of_empty_bijecton} implies that $g = \nu_\beta(\psi_\beta(\alpha))$ is $(M_2, X') = (f_\empbij(M_1), X')$-Morse. Let $\alpha' = {\psi_{\beta}^{-1}}'(g)$. Since $\mc G'$ is a trivial Morseless star, the endpoints of $[w^*, \alpha^*]$ and $w^*\cdot[(1, i), (g, i)]$ are at distance $1$ from each other. Lemma \ref{lemma:monster} \ref{prop:adapted2.5} implies that $\alpha'$ is $f_2(M_2, 1)$-Morse. Conversely, let $\alpha'\in E_{w ,\beta}$ be $N$-Morse and let $g = \psi_\beta'(\alpha')$. By the same argument as above, Lemma \ref{lemma:monster} \ref{prop:adapted2.5} implies that $g$ is $(N_1, X') = (f_2(N, 1), X')$-Morse. By Lemma~\ref{lemma:morseness_of_empty_bijecton}, $\nu_\beta^{-1}(g)$ is $N_2 = f_\empbij(N_1)$-Morse. Lemma~\ref{lemma:compare_morseness_of_repr} then implies that $\alpha$ is $f_8(N_2)$-Morse. Hence, $q_v$ indeed satisfies Condition \ref{cond2:morseness}. 

We know that $v$ and $w$ project to the same vertex $i$ in $\Gamma$. We also assumed that $\mb G_i$ is empty. Hence, $q_v$ is indeed geodesic and we do not need to check Conditions \ref{cond2:morseness:tails} - \ref{cond2:commutativity}.
\end{proof}

\subsection{Constructing the local bijection $q_v$ if $\mb G_{\check{v}}$ is non-empty} Let $i\in V(\Gamma)$ be a vertex such that $\mb G_i$ is not empty. We fix a Morse $G_i$-geodesic line $\lambda$ and a Morse gauge $M_0$ such that $\lambda$ is $(M_0, X)$, $(M_0, X')$ and $(M_0, G_i)$-Morse. 

The group $G_i$ embeds both into $X$ and $X'$. Hence certain paths can be viewed as living either in $X$ or in $X'$. Next, we show what happens to the Morseness of these paths when we switch the viewpoint. To do so we will repeatedly use Lemma \ref{lemma:properties_of_morseless_stars}. Let $g\in G_i$ be an element. If the geodesic $[(1, i), (g, i)]_X$ is $M$-Morse, then $[1, g]_{G_i}$ is $(f_3(M), G_i)$-Morse. Hence, $[(1, i), (g, i)]_{G_i}$ viewed as a quasi-geodesic in $X'$ is a $M_1 = f_3^2(M)$-Morse $C_2$-quasi-geodesic. By Lemma \ref{lemma:monster}\ref{prop:adapted2.5}, the geodesic $[(1, i), (g, i)]_{X'}$ is $f_2(M_1, C_2)$-Morse. Similarly, if $z\in \mb G_i$ is $M$-Morse viewed as an element of $\mb X$, then $z$ is $(f_3(M), G_i)$-Morse. If $\gamma$ is a $(f_3(M), G_i)$-Morse $G_i$-geodesic realisation of $z$, then $\gamma$ is a $M_1 = f_3^2(M)$-Morse $C_2$-quasi-geodesic. By Lemma \ref{lemma:monster}\ref{prop:realization1}, there exists a $f_2(M, C_2)$-Morse geodesic $\gamma'$ in $X'$ with $[\gamma'] = [\gamma]$. This motivates the following definition. 
\begin{definition}
The map $f_\mov: \mc M\to \mc M$ is the map defined by $f_\mov(M) = f_2(f_3^2(M), C_2)$ for all Morse gauges $M\in \mc M$.
\end{definition}

As shown above, the map $f_\mov$ is a control on the Morse gauge when changing the viewpoint from $X$ to $X'$ and vice versa.

Now we are ready to start the construction of the local bijection $q_v$ for a fixed vertex $v\in V(T)$ with $\check v = i$ and where $q(v) = w$ is defined. As in the case where $\mb G_i$ was empty, we construct bijections $q_\beta: E_{v, \beta} \to E_{w, \beta}$ for every edge $\beta\in E(\Gamma)$ whose source is $i$ and define $q_v$ as the map induced by these bijections. We will additionally construct maps $ E_{v, \beta}^+\to \mb G_i$ and $ E_{w, \beta}^+\to \mb G_i$ both denoted by $g$. Let $\beta\in V(\Gamma)$ be an edge with $\beta^-=i$. Enumerate the edges of $E_{w, \beta}$. We construct $q_\beta$ iteratively and the enumeration will determine in which order we construct $q_\beta$. The construction of $q_\beta$ is depicted in Figure \ref{picture:mapping_amalgam}. Repeat the following steps.

\begin{figure}\centering
\includegraphics[width= \linewidth]{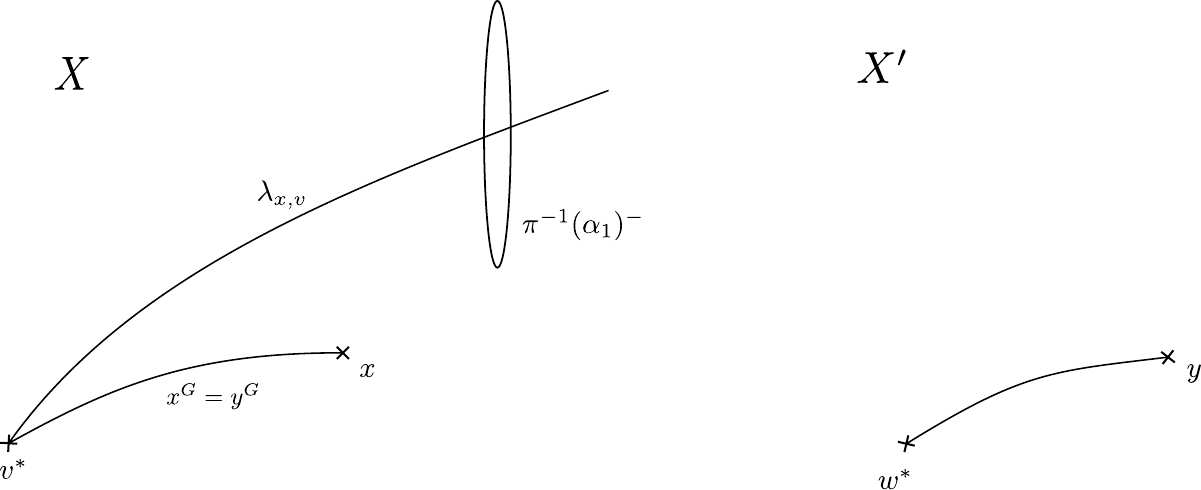}
\caption{Construction of $q_\beta(\alpha_2)^{-1}$}
\label{picture:mapping_amalgam}
\end{figure}

\begin{itemize}
    \item Let $\alpha_2$ be the lowest index element of $E_{w, \beta}$ which is not yet in the image of $q_\beta$ and denote $\bar\alpha_2^*$ by $y$. Let $x\in \pi^{-1}(v)$ be the vertex such that $x^G = y^G$. Recall that $x^G$ is defined in Definition \ref{def:xg}. 
    \item If there exists an edge $\alpha_1\in E_{v, \beta}$ for which $q_\beta$ is not defined and such that $\bar\alpha_1^* = x$, then define $q_\beta(\alpha_1) = \alpha_2$. If this happens, we say that $\alpha_1$ is exact.
    \item Otherwise pick any edge $\alpha_1\in E_{v, \beta}$ for which the following conditions hold and define $q_\beta (\alpha_1) = \alpha_2$.
    \begin{enumerate}
        \item $q_\beta(\alpha_1)$ is not defined. \label{cond3:not_matched}
        \item $\pi^{-1}(\alpha_1)$ and $\lambda_{x, v}$ have a non-empty intersection. Recall that $\lambda_{x, v}$ and $\gamma_x$ are defined in Definition \ref{def:corresponding_rays}\label{cond3:intsersect}
        \item $d_X(v^*, \pi^{-1}(\alpha_1))\geq d_X(v^*, x)  + D_\dlam(M, d_X(v^*, x)) + T(M)$, where $M$ is a Morse gauge such that $[v^*, x]$ is $M$-Morse and $T:\mc M\to \R_{\geq1}$ is a map. Note that this will allow us to apply Lemma \ref{lemma:corresponding_geodesic}\eqref{prop:nice_connections}. The map $T$ should be sufficiently large and in the proof of Lemma \ref{lemma:amalgam_local_bijection} the exact condition $T$ needs to satisfy is stated.  \label{cond3:far_away}
    \end{enumerate}
    In this case, we say that $\alpha_1$ is not exact.
    \item Define $g(\alpha_2^+) = g(q_\beta^{-1}(\alpha_2)^+)  =  \gamma_{x}$ and observe that $\gamma_x = \gamma_y$. 
\end{itemize}

\begin{lemma}
The map $q_\beta : E_{v, \beta}\to E_{w, \beta}$ is a well defined bijection. 
\end{lemma}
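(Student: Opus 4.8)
The plan is to verify directly that $q_\beta$ is well-defined (i.e.\ that the iterative construction never gets stuck) and that it is both injective and surjective onto $E_{w,\beta}$. The subtlety is that the construction branches: for a given target $\alpha_2 \in E_{w,\beta}$ with $\bar\alpha_2^* = y$ and associated vertex $x \in \pi^{-1}(v)$ with $x^G = y^G$, we either match $\alpha_2$ with the ``exact'' edge $\alpha_1 \in E_{v,\beta}$ satisfying $\bar\alpha_1^* = x$ (if it is still available), or else with some non-exact edge $\alpha_1$ satisfying conditions \eqref{cond3:not_matched}--\eqref{cond3:far_away}. So the content of the lemma is that (a) in the non-exact case such an $\alpha_1$ always exists, (b) the process terminates having used each edge of $E_{v,\beta}$ exactly once, and (c) every edge of $E_{w,\beta}$ is eventually hit.

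First I would record the bookkeeping. Surjectivity onto $E_{w,\beta}$ is built in: we enumerate $E_{w,\beta}$ and at each step pick the lowest-index element not yet in the image, so every edge of $E_{w,\beta}$ is eventually chosen as $\alpha_2$ (using that $E_{w,\beta}$ is countable, which holds since $T'$ is locally finite and hence $X'$ has countably many vertices). Injectivity is immediate from condition \eqref{cond3:not_matched}/``$q_\beta(\alpha_1)$ is not defined'': we never assign a source edge twice. So the only real point is well-definedness: that in the non-exact case, the set of $\alpha_1 \in E_{v,\beta}$ satisfying \eqref{cond3:not_matched}, \eqref{cond3:intsersect}, \eqref{cond3:far_away} is non-empty, and moreover that after the process every $\alpha_1 \in E_{v,\beta}$ has been used (equivalently, that $|E_{v,\beta}| = |E_{w,\beta}|$ as cardinals, which follows once we know $q_\beta$ is a well-defined injection that is ``eventually everywhere defined'').

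For the existence of a non-exact $\alpha_1$: we have the geodesic line $\lambda$ in $G_i$ and, for the vertex $x \in \pi^{-1}(v)$, the ray $\lambda_{x,v} = v^* \cdot \lambda_y$ of Definition \ref{def:corresponding_rays}, which is a $C_2$-quasi-geodesic $\pi^{-1}(v)$-geodesic leaving $v^*$. Since $\lambda_{x,v}$ is an infinite ray inside the vertex space $\pi^{-1}(v)$ and lives in $Cay(v^*G_{\check v},S_{\check v})$, it passes through infinitely many vertices of $\pi^{-1}(v)$, and at each such vertex $u$ there is (for edges with $\check\alpha = \beta$) a corresponding edge group coset $\pi^{-1}(\alpha)$ of $T$ with $\alpha \in A_{v,\beta}$, namely the ones meeting $\lambda_{x,v}$; among these, all but finitely many are outgoing (only the one edge at $v$ pointing back toward $G_0$ fails to be outgoing) and all but finitely many satisfy the lower distance bound \eqref{cond3:far_away}, because $\lambda_{x,v}$ is a quasi-geodesic so $d_X(v^*,\pi^{-1}(\alpha)) \to \infty$ along it. Thus there are infinitely many candidates satisfying \eqref{cond3:intsersect} and \eqref{cond3:far_away}; since at any finite stage of the construction only finitely many source edges have been used, at least one candidate also satisfies \eqref{cond3:not_matched}. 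This is the step I expect to be the main obstacle, since it is the only place where one must argue that the geometry of $\lambda_{x,v}$ inside the vertex space genuinely produces the required supply of edges; the rest is bookkeeping. (One should also double-check the edge case $|E_{v,\beta}|$ finite: then one must argue exact matches suffice, or that a finite $E_{v,\beta}$ forces $\mb G_i$ to behave in a way excluded here — but in the generic relevant case $E_{v,\beta}$ is infinite.)

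Finally, to see that every $\alpha_1 \in E_{v,\beta}$ is eventually assigned: the construction runs through all of $E_{w,\beta}$, and at termination $q_\beta$ is an injection from (all of) $E_{v,\beta}$-so-far into $E_{w,\beta}$; since $|E_{v,\beta}| = |E_{w,\beta}|$ (both vertex groups are the same group $G_i$, and $A_{v,\beta}, A_{w,\beta}$ are both in bijection with $G_i/H_\beta$ via $\psi_{v,\beta}, \psi'_{w,\beta}$, with the non-outgoing edge removed on each side) a bijective enumeration process that is defined for a cofinite subset of the source and hits all of the target must in fact be defined everywhere on the source. Hence $q_\beta$ is a well-defined bijection, and therefore so is the induced map $q_v : E_v^+ \cup \{v\} \to E_w^+ \cup \{w\}$.
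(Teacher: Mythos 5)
The main gap is in your totality argument --- the step showing that $q_\beta$ is eventually defined on all of $E_{v,\beta}$. You argue via cardinality: since $|E_{v,\beta}| = |E_{w,\beta}|$ and the limit map is an injection defined on a (claimed) cofinite subset of $E_{v,\beta}$ that hits all of $E_{w,\beta}$, it must be everywhere defined. This fails for two reasons. First, you never establish that the undefined set is finite (or cofinite), and nothing in the construction forces that. Second, and more seriously, even if the undefined set were finite, the conclusion still doesn't follow for countably infinite sets: $n \mapsto n-1$ is a bijection from $\N \setminus \{0\}$ onto $\N$, so ``injective, cofinitely defined, surjective, equal cardinalities'' does not imply ``defined everywhere.'' The cardinality argument is simply the wrong tool here.

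The paper instead argues totality pointwise. Fix $\alpha_1 \in E_{v,\beta}$ and set $x = \bar\alpha_1^*$. Since $\mc G'$ has trivial edge groups, each $\pi^{-1}(\alpha_2)^-$ for $\alpha_2 \in E_{w,\beta}$ is a single vertex, so the assignment $\alpha_2 \mapsto (\bar\alpha_2^*)^G$ is a \emph{bijection} from $E_{w,\beta}$ onto $G_{\check v}$ (minus one element when $w \neq G_0'$). Hence there is a unique $\alpha_2 \in E_{w,\beta}$ with $(\bar\alpha_2^*)^G = x^G$, and when the construction processes that $\alpha_2$, the associated vertex in $\pi^{-1}(v)$ is exactly $x = \bar\alpha_1^*$; so $\alpha_1$ is the exact candidate for $\alpha_2$. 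At that finite step of the enumeration, either $\alpha_1$ already has an image, or it is assigned to $\alpha_2$ as the exact match. Either way $\alpha_1$ lands in the domain, and since this holds for every $\alpha_1$, the map is total. You should replace the cardinality step with this argument; the fact that trivial edge groups make $\alpha_2 \mapsto (\bar\alpha_2^*)^G$ injective is precisely what makes it work.

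A smaller issue: in arguing that a non-exact candidate always exists, you assert that $\lambda_{x,v}$ meets infinitely many distinct cosets $\pi^{-1}(\alpha)$ because ``$d_X(v^*, \pi^{-1}(\alpha)) \to \infty$ along it,'' but this is not immediate --- a priori the ray could keep revisiting the same coset. The paper closes this by invoking Lemma \ref{lemma:properties_of_morseless_stars}\eqref{constant:edge_group_intersection2}: a Morse quasi-geodesic meets each edge-group coset in a set of uniformly bounded diameter, which forces $\lambda_{x,v}$ to meet infinitely many distinct cosets, after which only finitely many of those can violate conditions \eqref{cond3:not_matched} or \eqref{cond3:far_away}. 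Your sketch is morally the same but should cite that lemma to be airtight.
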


\begin{proof}
The sets $\pi^{-1}(\alpha)$ for $\alpha\in A_\beta$ partition $\pi^{-1}(v)$. By Lemma \ref{lemma:properties_of_morseless_stars}\eqref{constant:edge_group_intersection2}, the intersection of $\lambda_{x, v}$ and any edge group $\pi^{-1}(\alpha)$ is finite. Hence, there are infinitely many edges $\alpha\in E_{v, \beta}$ that satisfy \eqref{cond3:intsersect}. Only finitely many edges  $\alpha\in E_{v, \beta}$ violate \eqref{cond3:not_matched} or \eqref{cond3:far_away}. Hence, an edge $\alpha_1\in E_{v, \beta}$ satisfying \eqref{cond3:not_matched} - \eqref{cond3:far_away} exists.

From the construction it is clear that the map $q_\beta$ is injective and surjective. The only thing left to argue is that $q_\beta$ is defined on the whole domain $E_{v, \beta}$. This is indeed the case; the edge groups $\pi^{-1}(\alpha_2)^-$ for edges $\alpha_2\in E_{w, \beta}$ consist of single vertices. Hence, for every edge $\alpha_1\in E_{v, \beta}$ there exists an edge $\alpha_2\in E_{w, \beta}$ such that $(\bar\alpha_2^*)^G = (\bar\alpha_1^*)^G$. Observing that $\bar\alpha_2^*\neq \bar\alpha^*$ for distinct $\alpha, \alpha_2\in E_{v ,\beta}$ concludes the proof.
\end{proof}

\begin{lemma}\label{lemma:amalgam_local_bijection}
For sufficiently large $T$, the map $q_v$ constructed above is a geodesic local bijection of depth $1$ and Morseness $f:  \mc M\to \mc M$ for some increasing function $f$.
\end{lemma}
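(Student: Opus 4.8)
The plan is to verify the ten conditions \ref{cond2:nestedness}--\ref{cond2:commutativity} for the map $q_v$ constructed via the bijections $q_\beta : E_{v,\beta}\to E_{w,\beta}$. First I would dispose of the "coarse" conditions: since each $q_\beta$ is a genuine bijection and $q_v$ has depth $1$ (all edges in $E_v$ go directly to $E_v^+$, and their images go directly to $E_w^+$), Conditions \ref{cond2:nestedness}, \ref{cond2:coars_surj}, \ref{cond2:injectivity} and \ref{cond2:partial_surj} follow formally, essentially as in the case $\mb G_i$ empty. The substantive content is Conditions \ref{cond2:morseness} (Morseness of edges), \ref{cond2:morseness:tails} (the homeomorphism $p_v$), \ref{cond2:geodesics} (existence of $D_v$ and the maps $g_j$ with the neighbourhood-tracking property \ref{cond2:closeness}), and \ref{cond2:commutativity} (commutativity $p_v(g(x)) = g(q_v(x))$).

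For Condition \ref{cond2:morseness:tails}, since $v$ and $w=q_v(v)$ both project to $i$, we take $p_v = \mathrm{id}_{\mb G_i}$ up to the viewpoint change; more precisely $p_v$ is the identity on the underlying set $\mb G_i$, and $f_\mov$ controls the Morse gauge when we pass between the $X$-, $G_i$- and $X'$-pictures, so $p_v(z)$ is $f(M)$-Morse whenever $z$ is $M$-Morse (and conversely) for $f$ large compared to $f_\mov$. Condition \ref{cond2:commutativity} is then immediate from the construction: we defined $g(\alpha_2^+) = \gamma_x = \gamma_y = g(\alpha_1^+)$, and $\gamma_x\in\mb G_i$ is the same element in both vertex groups, so $p_v(g(\alpha_1^+)) = g(q_\beta(\alpha_1)^+)$ holds by fiat. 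For Condition \ref{cond2:morseness} I would argue both directions using the machinery of Section \ref{section:morseless_stars}: if $\alpha_1\in E_{v,\beta}$ is $M$-Morse, then by Lemma \ref{lemma:properties_of_xg} the element $(\bar\alpha_1^*)^G = y^G = x^G$ is $(f_\xgg(M), G_{\check v})$-Morse, hence by Lemma \ref{lemma:corresponding_geodesic}\eqref{prop:morseness} the geodesic $[v^*,x]$ (and $\gamma_x$) is suitably Morse, and since $x^G = y^G$ the same element controls $[w^*, \bar\alpha_2^*]$ in $X'$ after applying $f_\mov$; then $\alpha_2 = q_\beta(\alpha_1)$ is $f(M)$-Morse by Lemma \ref{lemma:compare_morseness_of_repr} and Lemma \ref{lemma:monster}\ref{prop:adapted2.5}. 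The converse runs the same chain backwards. The maps $g$ were defined along the way and Condition \ref{cond2:closeness1} follows from Lemma \ref{lemma:corresponding_geodesic}\eqref{prop:morseness} composed with the Morseness equivalences just used.

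The main obstacle will be Condition \ref{cond2:closeness}, the statement that for $x\in F_i^M$ with $d_i(x^{**}, v_i^*)$ large, $x^{**}$ and $v_i^*\cdot g(x)$ lie in each other's $k$-neighbourhoods. For exact edges $\alpha_1$ (where $\bar\alpha_1^* = x$) this is essentially Lemma \ref{lemma:corresponding_geodesic}\eqref{prop:contained_neighbourhoods} applied directly, since $x^{**} = \alpha_1^*$ is within distance $1$ of $x = \bar\alpha_1^*$. The delicate case is the non-exact edges, and this is precisely why Condition \eqref{cond3:far_away} and the auxiliary function $T$ were built into the construction: there $x^{**} = \alpha_1^*$ sits on $\lambda_{x,v}$ (by \eqref{cond3:intsersect}) far past the point nearest to $x$, so Lemma \ref{lemma:corresponding_geodesic}\eqref{prop:nice_connections} applies and shows $[x, \alpha_1^*]$ is uniformly Morse, forcing $\alpha_1^*$ to track the ray $\lambda_{x,v}$ which represents $v^*\cdot g(x) = v^*\cdot\gamma_x$; one then uses Lemma \ref{lemma:get_close_again} (or directly Lemma \ref{lemma:corresponding_geodesic}\eqref{prop:contained_neighbourhoods}) to convert the Hausdorff closeness into membership in $\tilde O_k$. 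I would spell out here the exact inequality $T$ must satisfy — roughly, $T(M)$ must exceed the constant $D_\dlam$ appearing in Lemma \ref{lemma:corresponding_geodesic}\eqref{prop:nice_connections} plus the neighbourhood constant $\delta_{f(M)}$ — so that for $d_1(v^*, \alpha_1^*)\geq d_1(v^*,x) + D_\dlam(M, d_1(v^*,x)) + T(M)$ the point $\alpha_1^*$ is $\delta$-close to $\lambda_{x,v}$ along the whole initial segment, after which we invoke Lemma \ref{lemma:get_close_again} to deduce the required tracking on $[0,k]$. On the $F_2$ side the analogous statement holds because $w^* \cdot \gamma_y$ is literally $w^* \cdot \gamma_x$ and $\bar\alpha_2^* = y$ is distance $1$ from the relevant point, so only the easy (exact) case occurs. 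Finally, I would note that all constants and functions produced depend only on $\mc G$ (and the fixed choices $\lambda$, $M_0$, $\nu_\beta$), so $f$ can be taken uniform, completing the verification that $q_v$ is a geodesic local bijection of depth $1$ and Morseness $f$.
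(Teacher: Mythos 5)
There is a genuine gap in your handling of Condition \ref{cond2:morseness}. Your argument hinges on the identity $(\bar\alpha_1^*)^G = y^G = x^G$, which holds \emph{only} when $\alpha_1$ is exact. For non-exact edges the point $\bar\alpha_1^*$ is, by design of Step \eqref{constr:step3} of the construction, a deliberately chosen far-out vertex with $\pi^{-1}(\alpha_1)\cap\lambda_{x,v}\neq\emptyset$, and in general $(\bar\alpha_1^*)^G\neq x^G$. So your chain (Lemma \ref{lemma:properties_of_xg} $\Rightarrow$ Lemma \ref{lemma:corresponding_geodesic}\eqref{prop:morseness} $\Rightarrow$ viewpoint change $\Rightarrow$ Lemma \ref{lemma:compare_morseness_of_repr}) transfers Morseness of $\alpha_1$ to Morseness of the edge of $E_w$ corresponding to $(\bar\alpha_1^*)^G$, which is not $q_\beta(\alpha_1)$ in the non-exact case. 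Worse, the construction's condition \eqref{cond3:far_away} only asserts $[v^*,x]$ is $M$-Morse for \emph{some} unknown $M$, which a priori has nothing to do with the gauge $N$ you are given on $\alpha_1$, so one must extract a bound on $[v^*,x]$ that depends only on $N$ — this is precisely the point the paper flags (``It is not clear how (and if) $M$ and $N$ relate to each other'') and which motivates the whole auxiliary construction of the point $b$ on $[v^*,a]$, the application of Lemma \ref{lemma:get_close_again} to see $b$ is uniformly close to $[v^*,\bar\alpha_1^*]$, and then Lemma \ref{lemma:corresponding_geodesic}\eqref{prop:nice_connections} to obtain that $[b,x]$ is $f_\lam(M_0)$-Morse independently of $M$.

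Relatedly, you have misattributed the role of $T$: the lower bound $T(M)\geq 13K_3$ is imposed precisely to make the above argument for Condition \ref{cond2:morseness} (non-exact, converse direction) go through, not for Condition \ref{cond2:closeness}. The paper's treatment of \ref{cond2:closeness} in the non-exact case does not invoke $T$ or Lemma \ref{lemma:corresponding_geodesic}\eqref{prop:nice_connections} at all; instead it uses the $f(N)$-Morseness of $[v^*,a]$ (already established via \ref{cond2:morseness}), Lemma \ref{lemma:monster}\ref{lemma:close_representatives}, Lemma 2.7 of \cite{Cor16}, two applications of Lemma \ref{lemma:cosets_morseness_vs_closeness} to compare $a$ with $\bar\alpha_1^*$, and then Lemma \ref{lemma:get_close_again}. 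Your sketch for \ref{cond2:closeness} may be salvageable as an alternative route, but as written it conflates $\alpha_1^*$ with the intersection point $a\in\lambda_{x,v}\cap\pi^{-1}(\alpha_1)^-$ (which are distinct vertices) and, more importantly, skips the non-exact case of Condition \ref{cond2:morseness} entirely, which is the heart of the lemma and the reason the map $T$ appears in the construction at all.
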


We will see in the proof that if $f$ satisfies a finite set of conditions, which do not depend on the vertex $v$, then $q_v$ is a geodesic local bijection of depth $1$ and Morseness $f$.

\begin{proof}

We need to prove that Conditions \ref{cond2:nestedness} - \ref{cond2:commutativity} hold. Again, Conditions \ref{cond2:nestedness}  -\ref{cond2:partial_surj} follow directly from the fact that $q_v : E_v^+\to E_w^+$ is a bijection. We can define $p_v$ as the identity map from $\mb G_i$ to itself. Condition \ref{cond2:morseness:tails} holds if $f(M)\geq f_\mov(M)$. Condition \ref{cond2:commutativity} follows directly from the definition of $p_v$ and $g$.\\

Next we prove Condition \ref{cond2:morseness} and Condition \ref{cond2:geodesics}. It is enough to show that Condition \ref{cond2:closeness1} holds for all elements of $F_2$, since Condition \ref{cond2:morseness} then implies that for large enough $f$, Condition \ref{cond2:closeness1} also holds for elements of $F_1$. Also observe that $F_1 = E_{v}^+$ and $F_2 = E_{w}^+$. In the following, we will use the notation from the construction of $q_\beta$. 

Let $\alpha_2\in E_{w}$ be $N$-Morse. Lemma \ref{lemma:monster}\ref{prop:adapted2.5} implies that $[w^*, y]$ is $N_1 = f_2(N, 1)$-Morse. Lemma \ref{lemma:corresponding_geodesic}\eqref{prop:morseness} implies that $g(\alpha_2^+)$ is $N_2 = f_\lam(\max\{M_0, N_1\})$-Morse. Conversely, if $g(\alpha_2^+)$ is $N'$-Morse, then Lemma \ref{lemma:corresponding_geodesic}\eqref{prop:morseness} implies that $[w^*, y]$ is $N_1' = f_\lam(\max\{N', M_0\})$-Morse and by Lemma \ref{lemma:monster}\ref{prop:adapted2.5}, the edge $\alpha_2$ is $N_2' = f_2(N_1', 1)$-Morse. Hence Condition \ref{cond2:closeness1} holds for all elements of $F_2$ for large enough $f$. Furthermore, since $[w^*, y]$ is $N_1$-Morse, the geodesic $[v^*, x]$ is $N_3 = f_\mov(N_1)$-Morse. Thus Condition \ref{cond2:morseness} holds for large enough $f$ if $\alpha_1 = q_\beta^{-1}(\alpha_2)$ is exact. If $\alpha_1$ is not exact we know that $\pi^{-1}(\alpha_1)^-$ intersects $\lambda_{x, v}$. Let $a$ be a vertex in that intersection. Lemma \ref{lemma:corresponding_geodesic}\eqref{prop:morseness} implies that $\lambda_{x,v}$ is $N_4 = f_\lam(\max\{N_3, M_0\})$-Morse. Hence, by Lemma \ref{lemma:monster}\ref{prop:adapted2.5} we have that $[v^*, a]$ is $N_5 = f_2(N_4, C_2)$-Morse. Lemma \ref{lemma:compare_morseness_of_repr} implies that $\alpha_1$ is $N_6 = f_8(N_5, 1)$-Morse and thus in this case, Condition \ref{cond2:morseness} holds if $f(N)\geq N_6$.

Conversely, let $\alpha_1\in E_{v}$ be $N$-Morse. Let $\alpha_2 = q_\beta(\alpha_1)$. Lemma \ref{lemma:monster}\ref{prop:adapted2.5} implies that $[v^*, \bar\alpha_1^*]$ is $N_1 = f_2(N, 1)$-Morse. If $\alpha_1$ is exact, then $x = \bar\alpha_1^*$. Hence $[w^*, y]$ is $N_2 = f_\mov(N_1)$-Morse. By Lemma \ref{lemma:monster}\ref{prop:adapted2.5}, the edge $\alpha_2$ is then $N_3 = f_2(N_2, 1)$-Morse. So if $f(N)\geq N_3$ Condition \ref{cond2:morseness} holds if $\alpha_1$ is exact. In the following we assume that $\alpha_1$ is not exact. In this case the proof of \ref{cond2:morseness} is more complicated and depicted by Figure \ref{picture:mapping_amalgam_proof}.

\begin{figure}\centering
\includegraphics[width= \linewidth]{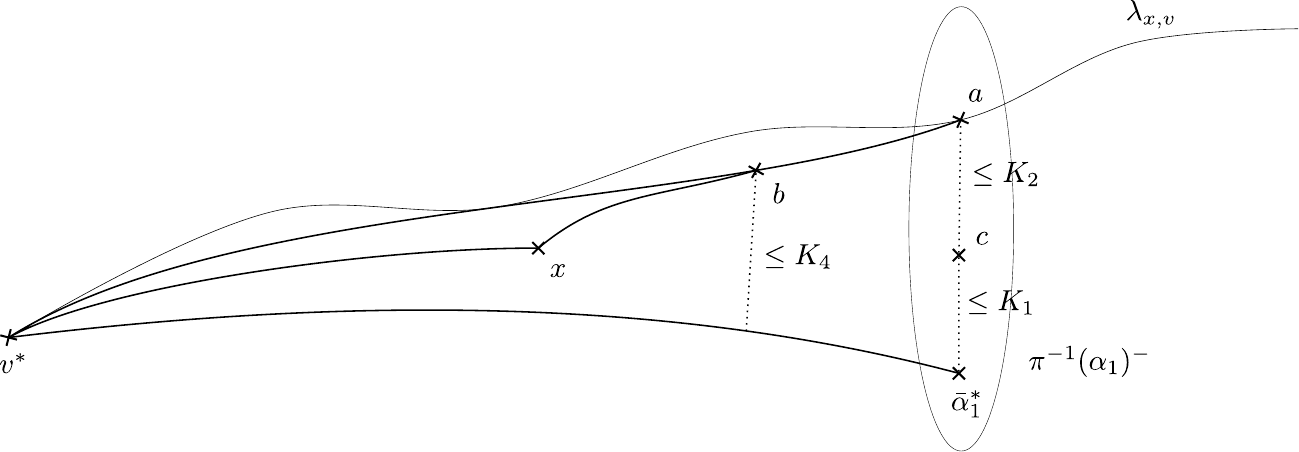}
\caption{Proof of Condition \ref{cond2:morseness} for elements of $F_1$}
\label{picture:mapping_amalgam_proof}
\end{figure}

If $\alpha_1$ is not exact we have that  $\pi^{-1}(\alpha_1)^-\cap \lambda_{x, v}$ is not empty and hence contains a vertex, say $a$. In the construction of $\alpha_1$, we assumed that $[v^*, x]$ was $M$-Morse. It is not clear how (and if) $M$ and $N$ relate to each other. Lemma \ref{lemma:corresponding_geodesic} \eqref{prop:morseness} implies that $\lambda_{x, v}$ is $M_1 = f_\lam(\max\{M, M_0\})$-Morse. Since $[v^*, a]$ has its endpoints on $\lambda_{x, v}$ it is $M_2 = f_2(M_1, C_2)$-Morse by Lemma \ref{lemma:monster}\ref{prop:adapted2.5}. Let $c$ be a vertex in $\pi^{-1}(\alpha_1)$ closest to $v^*$. Lemma \ref{lemma:cosets_morseness_vs_closeness} implies that $d_X(\bar\alpha_1^*, c)\leq K_1 = D_5(N_1, 1)$ and that $d_X(c, a)\leq K_2 = D_5(M_2, 1)$. If $K_2\leq K_1$, define $b = a$. Otherwise, the concatenation $[v^*, \bar\alpha_1^*][\bar\alpha_1^*, a]$ is a $2K_2$-quasi-geodesic with endpoints on $[v^*, a]$. Hence, $d_X([v^*, \bar\alpha_1^*](t), [v^*, a])\leq M_2(2K_2) = K_3$ for all $t\in [0, d_X( v^*, \bar\alpha_1^*)]$. Since $[v^*, a]$ and $[v^*, \bar\alpha_1^*]$ are geodesics with the same starting point, $d_X([v^*, \bar\alpha_1^*](t), [v^*, a](t))\leq 2K_3$ for all $t\in [0, d_X( v^*, a)- K_3]$. If $d_X(v^*, a)\geq 13K_3$ we can apply Lemma \ref{lemma:get_close_again} to get that $d_X([v^*, a](t), [v^*, \bar\alpha_1^*](t))\leq \delta_{N_1}$ for all $t\in [0, d_X(v^*, a) - 5K_3]$ and we choose $b$ on $[v^*, a]$ such that $d_X(v^*, b) = d_X(v^*, a) - 5K_3$. 

So, if $T(M)\geq 13K_3$, then no matter whether $K_2\leq K_1$ or not, $b$ is in the $K_4 = 2K_1 + \delta_{N_1}+1$ neighbourhood of $[v^*, \bar\alpha_1^*]$. Note that $K_3$ depends only on $M$ and this is the only condition we impose on $T$. By \eqref{cond3:far_away} we know that $d_X(v^*, b)\geq d_X(v^* ,x) + D_\dlam(M, d_X(v^*, x))$. Lemma \ref{lemma:corresponding_geodesic}\eqref{prop:nice_connections} implies that $[b, x]$ is $f_\lam(M_0)$-Morse. Since $[v^*, b]$ has endpoints in a $K_4$-neighbourhood of $[v^*, \bar\alpha_1^*]$ it is $N_4 = f_2(N_1, K_4)$-Morse by Lemma \ref{lemma:monster}\ref{prop:adapted2.5}. Lemma \ref{lemma:monster}\ref{lemma:triangles} applied to the triangle $([v^*, x], [x,b], [b, v^*])$ implies that $[v^*, x]$ is $N_5 = f_1(\max\{N_4, f_\lam(M_0)\})$-Morse. Thus, $[w^*, y]$ is $N_6 = f_\mov(N_5)$-Morse and Lemma \ref{lemma:monster}\ref{prop:adapted2.5} implies that $\alpha_2$ is $N_7 = f_2(N_6, 1)$-Morse. So if $f(N)\geq N_7$, Condition \ref{cond2:morseness} holds in this case. Also note that even though we use the Morse gauge $M$ in the proof, $N_7$ does not depend on $M$. Hence for large enough $f$, Condition \ref{cond2:morseness} holds as a whole.

The only thing left to prove is Condition \ref{cond2:closeness}. To do so we assume that the map $f$ is large enough, that is if $\tilde f$ is a map satisfying all conditions posed on $f$ so far, we assume that $f\geq \tilde f ^2$. In particular, $f$ satisfies Condition \ref{cond2:closeness1}.

Let $\alpha_2\in E_w$ be an $N$-Morse edge. We know that $\gamma_y$ and $[w^*, y]$ are both $f(N)$-Morse. If $D_v(N, k)\geq D_\dlam(f(N), k)+1$, then Lemma \ref{lemma:corresponding_geodesic}\eqref{prop:contained_neighbourhoods} implies that Condition \ref{cond2:closeness} holds in this case.

Let $\alpha_1\in E_v$ be an $N$-Morse edge which is exact. We know that $[v^*, x]$ and $\gamma_x$ are $f(N)$-Morse. If $D_v(N, k)\geq D_\dlam(f(N), k)+1$, then Lemma \ref{lemma:corresponding_geodesic}\eqref{prop:contained_neighbourhoods} implies that Condition \ref{cond2:closeness} holds in this case.

Let $\alpha_1\in E_v$ be an $N$-Morse edge that is not exact. We have shown that $[v^*, \bar\alpha_1^*]$ and $q_\beta(\alpha_1)$ are $\tilde f(N)$-Morse and $\gamma_x$ is $f(N)$-Morse. Thus, $ [v^*, a]$ is $f(N)$-Morse, where $a$ is a vertex in the intersection $\pi^{-1}(\alpha_1)^-\cap \lambda_{x, v}$. Let $\gamma$ be an $f(N)$-Morse realisation of $[\lambda_{x, v}] = v^*\cdot \gamma_x$ starting at $v^*$. Lemma \ref{lemma:monster}\ref{lemma:close_representatives} implies that $d_X(a, \gamma)\leq K_1 = D_1(f(N), C_2)$. Lemma 2.7 of \cite{Cor16} implies that $\gamma\in O_{k_1}^{f(N)}([v^*, a], v^*)$ for $k_1 \leq d_X(a, v^*) - \delta_{f(N)} - K_1 $. Let $c\in \pi^{-1}(\alpha_1)^-$ be a closest point to $v^*$. Applying Lemma \ref{lemma:cosets_morseness_vs_closeness} twice we get that $d_X(a, \bar\alpha_1^*)\leq 2D_5(f(N), 1) = K_2$. Lemma 2.7 of \cite{Cor16} implies that $[v^*, a]\in O_{k_2}^{f(N)}([v^*, \alpha_1^*], v^*)$ for $k_2 \leq d_X(v^*,a) - \delta_{f(N)} - K_2 $. Now we can apply Lemma \ref{lemma:get_close_again} to $[v^*, \alpha_1^*]$ and $\gamma$. We get that if $k_1$ and $k_2$ are at least $12\delta_{f(N)}$, then $\gamma\in O_{k_3}^{f(N)}(\alpha_1^*, v^*)$ and conversely $\alpha_1^*\in O_{k_3}^{f(N)}(\gamma, v^*)$ for $k_3\leq \min\{k_1, k_2\} - 4\delta_{f(N)}$. Hence, if $D_v(N, k)\geq k + 20 \delta_{f(N)} + K_1 + 2K_2 +1$, then Condition \ref{cond2:closeness} is satisfied in this case.

Choosing $D_v$ large enough that the conditions in all three cases are satisfied concludes the proof. 

\end{proof}

\begin{proof}[Proof of Theorem \ref{theorem:star_of_groups}] 
The constructed maps $q_v$ are geodesic local bijections of depth $1$ and Morseness $f$ for some increasing map $f: \mc M\to \mc M$. Moreover, the collection $\{q_v\}_{v\in V(T)}$ satisfies all assumptions of Proposition \ref{prop:local_bij_imply_bass_serre_map} and hence induces a continuous Bass-Serre tree map. Proposition \ref{prop:bass_serre_implies_homeo} concludes the proof. 
\end{proof}

\section{Free product with empty Morse boundary} \label{section:doublemap}

In this section we prove Theorem \ref{theorem:empty_morse_boundary}. That is, we prove that if $G$ and $H$ are finitely generated groups such that $G$ is not hyperbolic and $H$ is infinite and has empty Morse boundary, then $\mb (G\ast G) \cong \mb (G\ast H)$. To do so, we need to construct a bijection from $G$ to $G\times H - \{1\}$ that can be viewed as a geodesic local bijection. 

In the first part of this section, we construct a sequence of geodesic lines $\lambda^1, \lambda^2, \lambda^3,$ and so on whose Morseness is increasingly bad. In the second part of this section we then use these geodesic lines to construct a bijection $q$ from $G$ to $G\times H -\{1\}$. Namely, we require that for (most) elements $(g, h)$, the preimage $q^{-1}(g, h)$ lies on the to $g$ corresponding ray $\lambda^i_g$, where $i$ depends only on $h$. We then show that $q$ indeed can be viewed as a geodesic local bijection. Lastly, we apply the results from Section \ref{section:homeo_tools} to show that the constructed bijection induces a homeomorphism of the Morse boundaries of $G*G$ and $G*H$.

Theorem 1.2 of \cite{graph_of_groups1} states that the Morse boundary of a free product only depends on the Morse boundary of its factors. Hence, if the Morse boundary of $G$ is empty, the statement follows directly. Thus for the rest of this section we assume that $\mb G$ is not empty. Let $\mc G$ and $\mc G'$ be the graph of groups corresponding to the free products $G*G$ and $G*H$ respectively. In particular, their underlying graphs $\Gamma$ and $\Gamma'$ consist of two vertices $0$ and $1$ joined by an edge. The graph of groups $\mc G$ and $\mc G'$ are trivial Morseless stars and hence are Morse preserving. This allows us to use the tools developed in previous sections. Also observe that the vertex groups $G_0$ and $G_1$ are both isomorphic to $G$ and we assume that $G_0'$ is isomorphic to $G$ while $G_1'$ is isomorphic to $H$.

Also, since both $\mc G$ and $\mc G'$ are trivial Morseless stars, their vertex groups are not only quasi-isometrically embedded in their the Bass-Serre space $X$ and $X'$ but even isometrically embedded. As a consequence we do not need to distinguish whether paths are (quasi)-geodesics with respect to the metrics $d_G$ or $d_X$. In this section, unlike the rest of the paper, we say that a quasi-geodesic is $M$-Morse if it is $(M, G)$-Morse. 

As in Section \ref{section:relativemap} we will construct a collection of local bijections $\{q_v\}$ and then use the results of Section~\ref{section:homeo_tools} to conclude that the Morse boundaries $\mb (G*G)$ and $\mb (G*H)$ are homeomorphic. But first, we need to set up the construction.

\begin{lemma}\label{lemma:existance_of_bad_geodesic_segments} There exists a constant $C\in \R$ and a sequence of geodesic segments $(s_i)_{i\in \N}$ in $G$ such that the following holds. For every $i\in \N$ there exists a $ C$-quasi-geodesic $\gamma_i$ with endpoints on $s_i$ but not contained in the $i$-neighbourhood of $s_i$.
\end{lemma}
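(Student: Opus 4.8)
The plan is to extract such segments from the fact that $\mb G$ is non-empty. Since we are in the setting of this section, $\mb G \neq \emptyset$, so there exists a Morse geodesic ray $\xi$ in $G$, say $\xi$ is $M_\xi$-Morse for some Morse gauge $M_\xi$. The key point is that a Morse ray is \emph{not} a quasi-geodesic in the hyperbolic sense uniformly better than its own Morse gauge allows; more usefully, if \emph{every} long geodesic segment in $G$ had the property that all $C$-quasi-geodesics with endpoints on it stay in a fixed bounded neighbourhood (say the $N$-neighbourhood for a single $N$ depending only on $C$), then $G$ would be hyperbolic — contradicting the standing hypothesis of Theorem~\ref{theorem:empty_morse_boundary} that $G$ is not hyperbolic. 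So the proof is essentially a quantitative restatement of ``$G$ is not hyperbolic''.

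More concretely, first I would fix $C = 3$ (any fixed constant $\geq 1$ works; the precise value is irrelevant, it just needs to be uniform). Suppose for contradiction that the conclusion fails: then there exists $N \in \N$ such that for \emph{every} geodesic segment $s$ in $G$ and every $C$-quasi-geodesic $\gamma$ with endpoints on $s$, we have $\gamma \subset \mc N_N(s)$. I would then argue that this forces $G$ to be hyperbolic. One clean way: a geodesic space in which geodesic triangles are uniformly thin is hyperbolic (Rips), and the hypothesis lets one control triangles — given a geodesic triangle with sides $a, b, c$, the concatenation $a \cdot b$ is a $3$-quasi-geodesic (Lemma~\ref{quasi-geodesic}, after reparametrising, or directly since a path made of two geodesics meeting at a point is a $(3,0)$-quasi-geodesic) with endpoints on $c$, so it lies in $\mc N_N(c)$, i.e.\ every point of $a\cup b$ is within $N$ of $c$; symmetrising over the three sides gives uniformly $N$-slim triangles, hence $G$ is $\delta$-hyperbolic for $\delta$ depending only on $N$. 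This contradicts the hypothesis that $G$ is not hyperbolic. Therefore for this fixed $C$ and for every $i$ there is no single $N$ that works, so in particular for each $i$ there exists a geodesic segment $s_i$ and a $C$-quasi-geodesic $\gamma_i$ with endpoints on $s_i$ that is not contained in $\mc N_i(s_i)$. That is exactly the claimed sequence, with the uniform constant $C$.

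The main obstacle is being careful that the negation is applied correctly: I want a \emph{single} constant $C$ that works for the whole sequence, and then for each \emph{individual} $i$ a counterexample segment — not a uniform bound. This is fine because the hyperbolicity obstruction above already fails at a single fixed $C$ (failure of hyperbolicity is detected by arbitrarily fat triangles, which can be realised with any fixed quasi-geodesic constant $\geq 1$), so once $C$ is fixed, ``for all $N$ there is a bad pair'' holds, and I just pick, for each $i$, a bad pair witnessing $N = i$. A secondary point to check is the precise quasi-geodesic constant produced by concatenating two geodesics and how Lemma~\ref{quasi-geodesic} or a direct estimate gives the value $3$; this is routine and I would not belabour it. No deeper input than ``not hyperbolic $\Leftrightarrow$ triangles are not uniformly slim'' is needed.
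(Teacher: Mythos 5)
Your overall plan is the same as the paper's: assume the conclusion fails, deduce a uniform quasi-geodesic-stability property, and conclude $G$ is hyperbolic, contradicting the standing hypothesis. The paper, however, lets $C$ range over all constants in the contradiction hypothesis: assuming that for every $C$ there is a $K(C)$ so that $C$-quasi-geodesics with endpoints on a geodesic stay in the $K(C)$-neighbourhood, one obtains a single Morse gauge $M$ (e.g.\ any $M$ with $M(\lambda,\eps)\geq K(\max\{\lambda,\eps\})$) for which \emph{every} geodesic in $G$ is $M$-Morse, and it is a known fact that a geodesic space in which all geodesics are $M$-Morse for one fixed $M$ is hyperbolic. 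You instead fix $C=3$ and try to go directly to slim triangles.

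The step that fails is your claim that for a geodesic triangle with sides $a,b,c$, the concatenation $a\cdot b$ is a $(3,0)$-quasi-geodesic. This is simply false, even in $\R^2$: take vertices $(0,0)$, $(R,0)$, $(0,1)$ with $a=[(0,0),(R,0)]$ and $b=[(R,0),(0,1)]$. The points $(1,0)\in a$ and the point of $b$ at arc-length $R-1$ from $(R,0)$ (approximately $(1,1)$) have path distance along $a\cdot b$ about $2R-2$ but Euclidean distance about $1$, so the quasi-geodesic constants blow up as $R\to\infty$. Your citation of Lemma~\ref{quasi-geodesic} also does not help: that lemma concatenates a geodesic $\alpha$ from $p$ with a subsegment of a quasi-geodesic $\gamma$ beginning at a closest point to $p$, which is a different path from $a\cdot b$. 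So as written, the deduction ``uniform stability of $3$-quasi-geodesics implies $\delta$-slim triangles'' is not justified, and this is the crux of the argument.

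The fixed-$C$ idea can in principle be salvaged, but you need a different bridge to hyperbolicity. One option: work in the Cayley graph of $G$ and invoke Papasoglu's bigon criterion --- each side of a geodesic bigon is a $(1,0)$- (hence $(3,3)$-) quasi-geodesic with endpoints on the other side, so your hypothesis forces bigons to be $N$-thin, which for graphs implies hyperbolicity. Alternatively, simply follow the paper: let $C$ vary in the contradiction hypothesis, package the resulting bounds $K(C)$ into a Morse gauge for which all geodesics are Morse, and cite that this implies hyperbolicity. Either way, the triangle argument in your write-up needs to be replaced.
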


\begin{proof}
For every constant $C\in \R$, either there exists such a sequence $s_i$ or there exists a constant $K = K(C)$ such that all $C$-quasi-geodesics with endpoints on a geodesic segment are contained in the $K$-neighbourhood of that geodesic. So assume by contradiction that for all constants $C$, the latter is true. Then, for any Morse gauge $M$ with $M(\lambda, \eps) \geq  K(\max\{\lambda, \eps\})$ for all quasi-geodesic constants $(\lambda, \eps)$, every geodesic in $G$ is $M$-Morse. Hence, $G$ is hyperbolic, which we have assumed it is not. 
\end{proof}

\begin{definition}
Let $\gamma$ be a Morse quasi-geodesic and $C\in \R$ a constant. We say that the critical value of $\gamma$ at $C$ is the minimal value $t\in \R$ such that every $C$-quasi-geodesic $\lambda$ with endpoints on $\gamma$ is contained in a $t$-neighbourhood of $\gamma$. 
\end{definition}

Note that the minimum exists since we used closed neighbourhoods in the definition of Morse geodesics.

For the rest of this section, we fix a Morse geodesic line $\lambda$ with $\lambda(0) = 1$ in $G$ and an element $h_0\in H - \{1\}$. Furthermore we fix a Morse gauge $M_0$ such that $\lambda$, $\lambda[0, \infty)$ and $\lambda[0, -\infty)$ are $M_0$-Morse and $h_0$ is $(M_0, X')$-Morse. 

\begin{lemma}\label{lemma:existance_dual_segments}
Let $N$ be a Morse gauge. There exists a constant $C\in \R$ and sequences of geodesic segments $(a_n)_{n\in \N}$ and $(b_n)_{n\in N}$ with $a_n^+ = b_n^-$ and such that for every $n\in \N$ the concatenation $a_nb_n$ is a geodesic and the following holds. The critical values of $a_n$ and $b_n$ at $C$ are at least $n$, the concatenation $a_nb_n$ is not $f_1(M_0)$-Morse and the segments $a_n$ and $b_n$ are not $N$-Morse. 
\end{lemma}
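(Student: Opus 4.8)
The plan is to build the sequences $(a_n)$ and $(b_n)$ directly out of the segments $(s_i)$ produced by Lemma~\ref{lemma:existance_of_bad_geodesic_segments}, using the fact that a geodesic segment fails to be $N$-Morse precisely when it admits a quasi-geodesic detour of bad critical value. First I would fix $N$ and invoke Lemma~\ref{lemma:existance_of_bad_geodesic_segments} to get the constant $C_0$ and the segments $s_i$, each carrying a $C_0$-quasi-geodesic $\gamma_i$ with endpoints on $s_i$ that escapes the $i$-neighbourhood of $s_i$; equivalently, the critical value of $s_i$ at $C_0$ is at least $i$. I would then take $C = \max\{C_0, N(1,0), f_1(M_0)(1,0)\}$ (any constant dominating the relevant gauges at the scales that will appear), so that ``critical value at $C$ is at least $n$'' is a strictly stronger demand that still holds for $s_i$ with $i$ large.

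The main construction step is to arrange that \emph{both} halves of the concatenation are bad simultaneously, and that the concatenation as a whole is not even $f_1(M_0)$-Morse. For the first requirement I would, for each $n$, pick indices $i(n)\ne j(n)$ with $i(n), j(n)\ge n$ and form $a_n$ as a translate of $s_{i(n)}$ and $b_n$ as a translate of $s_{j(n)}$ chosen so that $a_n^+ = b_n^-$ and so that the concatenation $a_n b_n$ is again a geodesic. Making $a_nb_n$ geodesic is a routine matter in $G$: one can for instance translate $s_{i(n)}$ and $s_{j(n)}$ to lie along a single geodesic ray or biinfinite geodesic of $G$ (such a geodesic exists since $G$ is infinite and one-ended-ness is not needed — a geodesic line or a long geodesic segment suffices), so $a_n b_n$ is a subsegment of that geodesic. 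Because the critical value is translation-invariant, $a_n$ still has critical value $\ge n$ at $C$, and likewise $b_n$; and since $N\le C$ in the relevant sense, neither $a_n$ nor $b_n$ is $N$-Morse (a quasi-geodesic escaping the $n$-neighbourhood, with $n$ large, certainly escapes the $N(1,0)$-neighbourhood).

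For the remaining requirement — that $a_n b_n$ is not $f_1(M_0)$-Morse — I would argue by contradiction using Lemma~\ref{lemma:monster}\ref{prop:subsegments}: if $a_n b_n$ were $f_1(M_0)$-Morse, then its subsegment $a_n$ would be $f_1(f_1(M_0))$-Morse, a \emph{fixed} Morse gauge independent of $n$. But $a_n$ is a translate of $s_{i(n)}$ with $i(n)\ge n$, so for $n$ large enough that $f_1(f_1(M_0))(C_0,0) < i(n)$ this contradicts the defining property of $s_{i(n)}$ (its $C_0$-quasi-geodesic detour escapes the $i(n)$-neighbourhood). Since $C$ was chosen $\ge C_0$, the same detour works as a $C$-quasi-geodesic, so this is consistent with the critical-value claim. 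Passing to a subsequence of the $s_i$ if necessary to make all the inequalities hold, I obtain the desired sequences; one takes a single constant $C$ valid for all $n$ by the remark that these properties are monotone in $C$.

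\textbf{Main obstacle.} The one point requiring genuine care is the compatibility of the three demands placed on the \emph{same} concatenation $a_n b_n$: it must be geodesic, each half must have large critical value, and the whole thing must be non-$M_0$-Morse (up to $f_1$). The geodesic condition is the binding constraint — freely translating $s_{i(n)}$ and $s_{j(n)}$ would destroy it — so the real work is checking that one can place two ``bad'' segments end to end along a genuine geodesic of $G$ while each retains its bad detour. I expect this to follow from the translation-invariance of critical values together with the existence of long (or biinfinite) geodesics in $G$, but verifying that the detour quasi-geodesics of $a_n$ and $b_n$ do not interfere with geodesicity of the concatenation — and recording that $C$ can be taken uniform — is where the argument needs to be written carefully rather than waved through.
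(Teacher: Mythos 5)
The construction you propose does not work, and the difficulty you flag as ``the main obstacle'' is in fact an unfixable hole rather than a detail to tidy up. The paper's proof never concatenates two independently sourced segments. Instead, for each $n$ it takes the \emph{single} bad geodesic $s_n$ from Lemma~\ref{lemma:existance_of_bad_geodesic_segments} and searches for an interior split point $k_0$ such that both complementary subsegments $a_n = s_n[0,k_0]$ and $b_n = s_n[k_0,l_n]$ have large critical value at $3C'+2$. The existence of such a $k_0$ is established by cutting the witnessing quasi-geodesic $\gamma_n$ at its closest point to $s_n(k)$ (via Lemma~\ref{quasi-geodesic}) and observing that at least one of the two resulting $3C'$-quasi-geodesics still escapes the $n$-neighbourhood, together with a discrete intermediate-value argument on the split index $k$. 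Since $a_n$ and $b_n$ are complementary subsegments of one geodesic $s_n$, the concatenation $a_n b_n = s_n$ is automatically a geodesic, and it is not $f_1(M_0)$-Morse once $n$ exceeds $f_1(M_0)(C',C')$ — no separate argument is needed.

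By contrast, you take $a_n$ and $b_n$ to be translates of two \emph{different} segments $s_{i(n)}$, $s_{j(n)}$, and then claim one can ``translate $s_{i(n)}$ and $s_{j(n)}$ to lie along a single geodesic ray or biinfinite geodesic of $G$'' so that the concatenation is geodesic. This step is unjustified and generally false: translating a geodesic segment produces another geodesic segment with the same shape, and there is no reason a prescribed geodesic ray of $G$ should contain a translate of $s_{i(n)}$ as a subsegment, let alone two consecutive ones realizing your two chosen bad segments. Concretely, if $a_n$ goes from $p$ to $q$ and $b_n$ from $q$ to $r$, you need $d(p,r) = d(p,q) + d(q,r)$, and left-translation gives you no control over this relation between translates of two independently chosen segments. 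Translation-invariance of critical values, which you invoke, is correct but does not touch the geodesicity constraint. Your contradiction argument via Lemma~\ref{lemma:monster}~\ref{prop:subsegments} for the non-$f_1(M_0)$-Morseness of $a_n b_n$ is fine in spirit, but it is moot without a construction that actually produces a geodesic concatenation with both halves bad, which is precisely what the paper's intermediate-value split supplies and your proposal does not.
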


\begin{proof}
By Lemma \ref{lemma:existance_of_bad_geodesic_segments} there exists a constant $C'$, a sequence of geodesics $s_n: [0, l_n]\to G$ and a sequence of $C'$-quasi-geodesics $\gamma_n : [0, t_n]\to G$ such that $\gamma$ has endpoints on $s_n$ but is not in the $n$-neighbourhood of $s_n$. 

\begin{figure}\centering
\includegraphics[width= \linewidth]{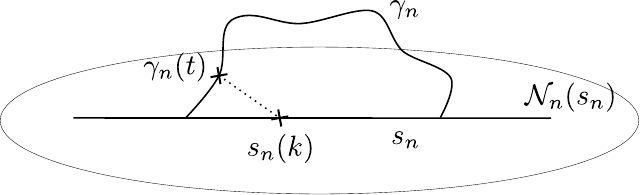}
\caption{Proof of Lemma \ref{lemma:existance_of_bad_geodesic_segments}}
\label{picture:lemma6.4}
\end{figure}

Fix an integer $n$. Let $k\in [0, l_n]$ be a positive integer (or let $k = l_n$). We can consider the subsegments $s_n[0, k]$ and $s_n[k, l_n]$. If both endpoints of $\gamma_n$ are on the same subsegment, that subsegment has critical value at $C'$ (and hence at $3C'$) of at least $n$. Assume that $\gamma_n$ has an endpoint on each of the subsegments $s_n[0, k]$ and $s_n[k, l_n]$. As depicted in Figure \ref{picture:lemma6.4}, let $\gamma_n(t)$ be the closets point on $\gamma_n$ to $s_n(k)$. Lemma \ref{quasi-geodesic} implies that the concatenations $[s_n(k), \gamma_n(t)]\gamma_n[t, l_n]$ and $\gamma_n[0, t][\gamma_n(t), s_n(k)]$ are $3C'$-quasi-geodesics. At least one of those quasi-geodesics is not contained in the $n$-neighbourhood of $s_n$. Hence, one of the subsegments $s_n[0, k]$ and $s_n[k, l_n]$ has critical value at $3C'$ of at least $n$. 

Thus, no matter where the endpoints of $\gamma$ are, at least one of the subsegments $s_n[0, k]$ and $s_n[k, l_n]$ has critical value at $3C'$ of at least $n$. Assume there exists an integer $k_0\in [0 ,l_n]$ such that both $s_n[k_0, l_n]$ and $s_n[0, k_0+1]$ (respectively $s_n[0, l_n]$, if $k_0+1\geq l_n$) have critical value at $3C'$ of at least $n$. Any $3C'$-quasi-geodesic with endpoints on $s_n[0, k_0+1]$ can be extended to a $(3C'+2)$-quasi-geodesic with endpoints on $s_n[0, k_0]$. Thus, the critical value of $a_n = s_n[0, k_0]$ and $b_n = s_n[k_0, l_n]$ at $3C'+2$ is at least $n$. If this is true for every integer $n$, the critical values of $a_n$ and $b_n$ at $3C'+2$ are arbitrarily large for large enough $n$. In particular, for any Morse gauge $M$, they are larger than $M(3C'+2)$. Therefore, by passing to a subsequence of $s_n$ we can make sure that all the required conditions are satisfied.

Next we prove that either $s_n[0, 0]$ has critical value at $3C'$ of at least $n$ (in which case we can choose $k=0$ and also have that $s_n[0, k]$ and $s_n[k, l_n]$ have critical value at $3C'+2$ of at least $n$) or that such an integer $k_0$ indeed exists. Assume that $s_n[0, 0]$ does not have critical value at $3C'$ of at least $n$ and let $k_1\geq 0$ be the smallest integer such that $s_n[0, k_1+1]$ (or $s_n[0, l_n]$ if $k_1+1\geq l_n$) has critical value at $3C'$ of at least $n$. Since $s_n[0, k_1]$ does not have critical value at $3C'$ of at least $n$, $s_n[k_1, l_n]$ does. Hence $k_1$ is a possible choice for $k_0$.
\end{proof}

Let $N = f_2(f_2(f_2(M_0, M_0(3, 0)), 3), 3)$. For the rest of this section we fix a sequence $a_n$ and $b_n$ that satisfy Lemma \ref{lemma:existance_dual_segments} for $N$. We denote the concatenation $a_nb_n$ by $s_n$. We will use this sequence to construct a sequence of Morse geodesic lines $\lambda^n$. The construction is depicted in Figure \ref{picture:construction_of_lambda_n}.

\begin{figure}\centering
\includegraphics[width= \linewidth]{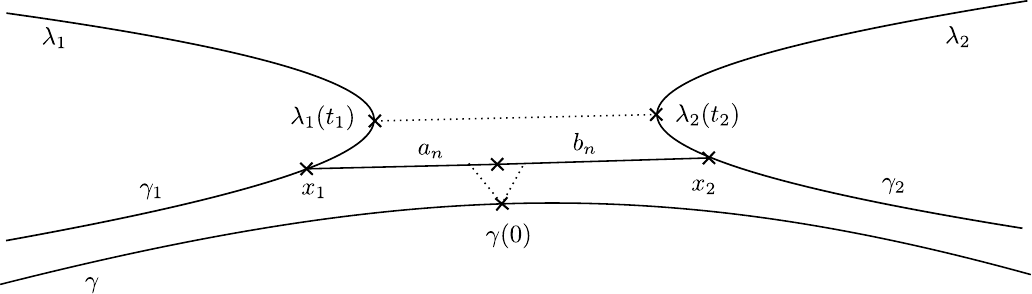}
\caption{Construction of $\lambda^n$}
\label{picture:construction_of_lambda_n}
\end{figure}

\textbf{Construction:} Recall that we have fixed an $M_0$-Morse geodesic line $\lambda$. Let $n$ be a positive integer. Let $x_1$ and $x_2$ be the endpoints of $s_n$. Consider the geodesic lines $\lambda_1 = x_1\cdot \lambda$ and $\lambda_2 = x_2\cdot \lambda$. Let $\lambda_1(t_1)$ and $\lambda_2(t_2)$ be closest points on $\lambda_1$ and $\lambda_2$. For $i\in \{1, 2\}$, let $\gamma_i$ be one of the subrays $\lambda_i[0, \infty)$ and $\lambda_i[0, -\infty)$ whose interior does not contain $\lambda_i(t_i)$. If $[\gamma_1] = [\gamma_2]$, then Lemma \ref{lemma:monster} \ref{lemma:triangles} about triangles applied to the triangle with sides $\gamma_1, \gamma_2$ and $s_n$ would imply that $s_n$ is $f_1(M_0)$-Morse, which it is not. Hence we know that $[\gamma_1]\neq [\gamma_2]$ and there exists a Morse geodesic line $\gamma$ such that $[\gamma[0, -\infty) ]= [\gamma_1]$ and $[\gamma[0, \infty)] = [\gamma_2]$. We reparametrize $\gamma$ such that the distance of $\gamma(0)$ to $a_n$ and $b_n$ is equal. A priori, it is not clear that such a point always exist but we will prove the existence of such a point together with the next Lemma. We denote $\gamma(0)^{-1}\cdot\gamma$ by $\lambda^n$.

\begin{lemma}\label{lemma:constructing_geodesics}
There exists a function $f_\blam: \mc M\to \mc M$ such that for every positive integer $n$ the following properties hold.
\begin{enumerate}[label=\roman*)]
    \item If $s_n$ is $M$-Morse, then $\lambda^n$ is $f_\blam(M)$-Morse.\label{cond3:segment_to_ray}
    \item If $\lambda^n[0, \infty)$ is $M$-Morse, then $b_n$ is $f_\blam(M)$-Morse.\label{cond3:ray_to_semment_right}
    \item If $\lambda^n[0, -\infty)$ is $M$-Morse, then $a_n$ is $f_\blam(M)$-Morse.\label{cond3:ray_to_segment_left}
    \item There exists an integer $T_n$ such that for all $t\geq T_n$ $\lambda^n[t, \infty)$ and $\lambda^n[-t,-\infty)$ are $f_\blam(M_0)$-Morse.\label{cond3:nice_at_some_point}
\end{enumerate}
\end{lemma}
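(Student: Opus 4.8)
\textbf{Proof proposal for Lemma~\ref{lemma:constructing_geodesics}.}
The plan is to extract everything from the triangle structure used in the construction of $\lambda^n$ and from Lemma~\ref{lemma:monster}. Write $x_1,x_2$ for the endpoints of $s_n=a_nb_n$, $\lambda_1=x_1\cdot\lambda$, $\lambda_2=x_2\cdot\lambda$, and $\gamma_1,\gamma_2$ for the two subrays chosen in the construction. By Lemma~\ref{lemma:monster}\ref{prop:subsegments} the rays $\gamma_1,\gamma_2$ are $f_1(M_0)$-Morse (they are subrays of translates of $\lambda$). The geodesic line $\gamma$ produced in the construction has $[\gamma[0,-\infty)]=[\gamma_1]$ and $[\gamma[0,\infty)]=[\gamma_2]$, so by Lemma~\ref{lemma:monster}\ref{prop:realization1} each of $\gamma[0,\infty)$ and $\gamma[0,-\infty)$ is $f_1^2(M_0)$-Morse, and then by Lemma~\ref{lemma:monster}\ref{cond:concatenation2} (concatenation of two) the whole line $\gamma$, hence $\lambda^n$, is $f_1^3(M_0)$-Morse; in particular $\lambda^n$ is always Morse with a gauge independent of $n$, which is what will make part \ref{cond3:nice_at_some_point} work. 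For part \ref{cond3:segment_to_ray}: if $s_n$ is $M$-Morse, apply Lemma~\ref{lemma:monster}\ref{lemma:triangles} to the triangle $(\gamma_1,\gamma_2,s_n)$ — wait, this is backwards; instead observe that $\gamma[0,\infty)$, $\gamma[0,-\infty)$ together with a geodesic $[\gamma(0),\gamma(0)]$ do not see $s_n$. The correct route is: $\lambda^n[0,\infty)$ and $\gamma_2$ (a translate) are asymptotic $C$-quasigeodesic rays with starting points within bounded distance — but that bound depends on $n$. So this direction should instead go through the segment directly.

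Let me restructure. For \ref{cond3:ray_to_semment_right} and \ref{cond3:ray_to_segment_left}: suppose $\lambda^n[0,\infty)$ is $M$-Morse. Consider the geodesic triangle with one side $b_n$, one side a subsegment of $\lambda^n[0,\infty)$, and the third side a geodesic joining $b_n^+=x_2$ to a far-out point of $\lambda^n[0,\infty)$; the subray $\lambda^n[0,\infty)$ and the ray $\gamma_2=x_2\cdot\lambda[0,\infty)$ (suitably oriented) are asymptotic, and since $x_2\cdot\lambda$ is $M_0$-Morse, Lemma~\ref{lemma:monster}\ref{lemma:triangles} (together with \ref{prop:realization1} to control the extra bounded-distance shift, whose bound is absorbed since $f_2$ takes a quasi-geodesic constant argument) gives that $b_n$ is $f'(M)$-Morse for a gauge $f'$ independent of $n$. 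Symmetrically for $a_n$ using $\lambda^n[0,-\infty)$ and $\gamma_1$. For \ref{cond3:segment_to_ray}: if $s_n$ is $M$-Morse, then the triangle $(\gamma_1,\gamma_2,s_n)$ has two Morse sides — $\gamma_1,\gamma_2$ are $f_1(M_0)$-Morse — but we want the side $s_n$ to force the third vertex configuration; actually the cleanest statement is that $\lambda^n$ is $f_1^3(M_0)$-Morse unconditionally (shown above), so taking $f_\blam(M)\ge f_1^3(M_0)$ covers \ref{cond3:segment_to_ray} trivially. This also handles \ref{cond3:nice_at_some_point}: by Lemma~\ref{lemma:monster}\ref{prop:subsegments} any subray of the $f_1^3(M_0)$-Morse line $\lambda^n$ is $f_1^4(M_0)$-Morse, so actually any $T_n$ works — but the statement is phrased with $T_n$ presumably because one wants the \emph{stronger} conclusion that the far-out subrays become $f_\blam(M_0)$-Morse uniformly; this is immediate from the uniform Morseness of $\lambda^n$.

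So the real content, and the step I expect to be the main obstacle, is making \ref{cond3:ray_to_semment_right} and \ref{cond3:ray_to_segment_left} quantitatively uniform in $n$. The subtlety is that the reparametrisation point $\gamma(0)$ was chosen only by the balance condition ``$d(\gamma(0),a_n)=d(\gamma(0),b_n)$'', and I need to first establish that such a point exists (promised in the text to be proved ``together with the next Lemma''). For existence: the function $t\mapsto d_G(\gamma(t),a_n)-d_G(\gamma(t),b_n)$ is continuous (1-Lipschitz), tends to $+\infty$ on the $\gamma_2$ end and to $-\infty$ on the $\gamma_1$ end — this uses that $\gamma[0,\infty)$ fellow-travels $\gamma_2=x_2\cdot\lambda$ which escapes every bounded neighbourhood of the compact set $a_n$ (and symmetrically), together with the fact that $[\gamma_1]\ne[\gamma_2]$ so the two ends genuinely separate — hence by the intermediate value theorem it has a zero. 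Then, to get the uniform Morse bounds on $a_n,b_n$: the key point is that $x_2\cdot\lambda$ is $M_0$-Morse (a uniform gauge), $\lambda^n$ is $f_1^3(M_0)$-Morse, and the geodesic $b_n=[x_1',x_2]$ where $x_1'=a_n^+=b_n^-$ lies on $s_n$; I form the quasi-geodesic triangle with vertices $x_1'$, $x_2$, and a deep point $p$ of the shared asymptotic end, whose sides $[x_1',p]$ (following $\lambda^n$, hence Morse) and $[x_2,p]$ (following $x_2\cdot\lambda$, hence Morse) have uniformly controlled gauges and bounded-constant quasi-geodesic type, so Lemma~\ref{lemma:monster}\ref{lemma:triangles} makes $b_n=[x_1',x_2]$ have a gauge $f_\blam(M)$ depending only on $M$ and $M_0$, not on $n$. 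I would define $f_\blam$ as the pointwise maximum of the finitely many gauges arising (e.g. $f_\blam(M)=\max\{f_1^4(M_0),\ f_2^2(f_1(M),C),\ \dots\}$ for the relevant composition constants $C$ coming from the quasi-geodesic concatenations), and note as usual that enlarging $f_\blam$ preserves all four conclusions.
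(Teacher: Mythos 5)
Your proposal contains a central error that invalidates the treatment of parts \ref{cond3:segment_to_ray} and \ref{cond3:nice_at_some_point}: the claim that $\lambda^n$ is unconditionally $f_1^3(M_0)$-Morse. To apply Lemma~\ref{lemma:monster}\ref{prop:realization1} to $\gamma[0,\infty)$ and $\gamma_2$ you need their starting points to be within a \emph{uniformly bounded} distance, and $d_G(\gamma(0),\gamma_2(0))$ grows without bound in $n$ (you even flag this yourself mid-paragraph, but then proceed as if the issue had been resolved). In fact $\lambda^n$ cannot be uniformly Morse: if it were $M'$-Morse for fixed $M'$, then by Lemma~\ref{lemma:monster}\ref{prop:subsegments} and (an already-assumed) part \ref{cond3:ray_to_semment_right} of this very lemma, $b_n$ would be $f_\blam(f_1(M'))$-Morse uniformly in $n$, contradicting the choice of $(b_n)$ in Lemma~\ref{lemma:existance_dual_segments}, whose critical values at $C$ are required to blow up. The whole purpose of the construction is that the Morse gauge of $\lambda^n$ degrades with $n$; part \ref{cond3:segment_to_ray} genuinely depends on the hypothesis that $s_n$ is $M$-Morse (the paper feeds the $4$-gon $(\gamma_1^{-1},s_n,\gamma_2,\gamma^{-1})$ into Lemma~\ref{lemma:monster}\ref{cond:n_gon}), and part \ref{cond3:nice_at_some_point} is nontrivial precisely because only the far-out tails of $\lambda^n$ fellow-travel the uniformly Morse rays $\gamma_1,\gamma_2$.

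Your intermediate-value argument for the existence of $\gamma(0)$ is also incorrect. The function $h(t)=d_G(\gamma(t),a_n)-d_G(\gamma(t),b_n)$ is bounded: writing $m=a_n^+=b_n^-$, one has $d_G(\gamma(t),a_n)\leq d_G(\gamma(t),m)$ and $d_G(\gamma(t),b_n)\geq d_G(\gamma(t),m)-l(b_n)$, so $h(t)\leq l(b_n)$, and symmetrically $h(t)\geq -l(a_n)$. Thus $h$ does not tend to $\pm\infty$, and the IVT argument as stated does not apply; it is not at all obvious that $h$ changes sign. The paper instead derives a contradiction from Morseness: if $h(t)<0$ at some point far along the $\gamma_2$ end, the nearest-point concatenation $[\gamma(t),p][p,b_n^+]_{s_n}$ is a $(3,0)$-quasi-geodesic containing $b_n$, and the triangle $(\gamma[t,\infty),\gamma_2^{-1},\eta^{-1})$ (with the uniform Morseness of the deep subray from part \ref{cond3:nice_at_some_point}, proved first) would force $b_n$ to be $N$-Morse, contrary to its choice. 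Your sketch of parts \ref{cond3:ray_to_semment_right} and \ref{cond3:ray_to_segment_left} is in the right spirit but leaves unaddressed how to join $x_1'=b_n^-$ to a point on $\lambda^n$ with a uniformly controlled quasi-geodesic; the paper does this by choosing the balance point $\gamma(0)$ so that its nearest point on $s_n$ lies on $a_n$, which makes the concatenation through $\gamma(0)$ a $(3,0)$-quasi-geodesic whose $s_n$-part contains all of $b_n$.
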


Note that the map $f_\blam$ is allowed to depend on $M_0$. 

\begin{proof}
We first prove Property \ref{cond3:nice_at_some_point} (without using any properties of $\gamma(0)$) and then use this to prove that we can indeed reparametrize $\gamma$ such that the distance from $\gamma(0)$ to $a_n$ and $b_n$ is the same.

Let $K$ be an integer, let $\gamma(t)$ be the point on $\gamma$ closest to $\gamma_2(0)$ and let $\gamma(s)$ be a point on $\gamma$ closest to $\gamma_2(K)$. The rays $\gamma[0, \infty)$ and $\gamma_2$ have finite Hausdorff distance. Thus there exists $K$ large enough, so that we can apply Lemma \ref{quasi-geodesic}, implying that the concatenation $[\gamma_2(0),  \gamma(t)]\gamma[t, s][\gamma(s), \gamma_2(K)]$ is a $(3, 0)$-quasi-geodesic. Hence, $\gamma[t, s]$ is in the $K_1 = M_0(3, 0)$-neighbourhood of $\gamma_2$. Since we can use this argument for any $K$ which is large enough we get that the whole ray $\gamma[t,  \infty)$ is in the $K_1$-neighbourhood of $\gamma_2$ and hence is $N_1 = f_2(M_0, K_1)$-Morse by Lemma \ref{lemma:monster}\ref{prop:adapted2.5}. We can repeat the same argument for $\gamma[0, \infty)$ and $\gamma_1$. Thus Property \ref{cond3:nice_at_some_point} holds for $\gamma$ and its translate $\lambda^n$. 

Now we prove that $\gamma(0)$ is well-defined. Consider the function $h(t) = d_G(\gamma(t), a_n) - d_G(\gamma(t), b_n)$. This function is continuous and $\gamma(0)$ being well-defined is equivalent to $0$ being in the image of $h$. Since $h$ is continuous it is enough to prove that $h$ assumes both non-negative and non-positive values. Let $t\geq T_n$ be an integer and let $p$ be a closest point to $\gamma(t)$ on $s_n$. By Lemma \ref{quasi-geodesic} the concatenation $\eta = [\gamma(t), p][p, b_n^+]_{s_n}$ is a $(3, 0)$-quasi-geodesic. Here $[p, b_n^+]_{s_n}$ denotes the subsegment of $s_n$ from $p$ to $b_n^+$. Lemma \ref{lemma:monster}\ref{lemma:triangles} applied to the quasi-geodesic triangle $(\gamma[t, \infty), \gamma_2^{-1}, \eta^{-1})$ implies that $\eta$ is $f_2(N_1, 3)$-Morse. If $h(t)$ was negative, then $b_n$ would have its endpoints on $\eta$ and hence be $f_2(f_2(N_1, 3), 3)$-Morse by Lemma \ref{lemma:monster}\ref{prop:adapted2.5}. This is not the case, so $h(t)$ indeed assumes non-negative values. Similarly, one can prove that $h(t)$ assumes non-positive values. Thus, $\gamma(0)$ is indeed well-defined.

Let $M$ be a Morse gauge and let $M_1 = \max\{M, M_0\}$. The geodesics $\gamma_1^{-1}$, $s_n$, $\gamma_2$ and $\gamma^{-1}$ form a 4-gon. If $s_n$ is $M$-Morse, then by Lemma \ref{lemma:monster}\ref{cond:n_gon} applied to the $4$-gon above $\gamma$ is $f_1^2(M_1)$-Morse, which shows that Property \ref{cond3:segment_to_ray} holds.

Assume that $\lambda^n[0,\infty)$ and hence $\gamma[0, \infty)$ is $M$-Morse. Let $p$ be the closest point to $\gamma(0)$ on $s_n$. We constructed $\gamma(0)$ in a way we can assume that $p$ lies on $a_n$. The concatenation $\eta = [\gamma(0), p][p, b_n^+]_{s_n}$ is a $(3, 0)$-quasi-geodesic by Lemma \ref{quasi-geodesic}. Here $[p, b_n^+]_{s_n}$ denotes the subsegment of $s_n$ from $p$ to $b_n^+$. Lemma \ref{lemma:monster}\ref{lemma:triangles} applied to the quasi-geodesic triangle $(\gamma[0, \infty), \gamma_2^{-1}, \eta^{-1})$ shows that $\eta$ is $M_2 = f_2(M_1, 3)$-Morse. Since $b_n$ has its endpoints on $\eta$ it is $M_3 = f_2(M_2, 3)$-Morse by Lemma \ref{lemma:monster}\ref{prop:adapted2.5}. This concludes the proof of Property \ref{cond3:ray_to_semment_right}. Property \ref{cond3:ray_to_segment_left} can be proven analogously.
\end{proof}

Recall that we construct  the ray $\lambda_g^n$ ray corresponding to an element $g\in G$ using the base line $\lambda^n$ as follows. We translate $\lambda^n$ by $g$ and then choose a closest point, say $x_0 = g\cdot \lambda^n(t)$, of $1$ on $g\cdot\lambda^n$. If $t> 0$ we say that $\lambda_g^n$ is a realisation of $g\cdot \lambda^n[0, -\infty)$ and otherwise we define $\lambda_g^n$ as a realisation of $g\cdot \lambda^n[0, \infty)$.

\begin{lemma}\label{lemma:corresponding2}
There exists an increasing function $f_\clam: \mc M\to \mc M$ and an increasing function $D_\dclam: \mc M\to \R_{\geq1}$ such that for every element $g\in G$ and positive integer $n\in \N$ the following conditions hold.
\begin{enumerate}[label=\roman*)]
    \item If $\lambda_g^n$ is $M$-Morse, then $g$ is $f_\clam(M)$-Morse and either $\lambda^n[0, \infty)$ or $\lambda^n[0, -\infty)$ is $f_\clam(M)$-Morse.\label{lemma:retrieve_Morseness}
    \item If $g$ and $\lambda^n$ are $M$-Morse, then $\lambda_g^n$ is $f_\clam(M)$-Morse.\label{lemma:to_corresponding}
    \item There exists an integer $T_g^n$ such that if $x$ is on $\lambda_g^n$, and $d_G(1, x)\geq T_g^n$ and $[1, x]$ is $M$-Morse, then $\lambda_g^n$ is $f_\clam(M)$-Morse.\label{lemma:far_away_enough2}
    \item If $g$ and $\lambda^n$ are $M$-Morse, and $d_G(1, g)\geq D_\dclam(M, k)$, then 
    \begin{align}
    g\in \tilde{O}^M_k(\lambda^n_g)\qquad\text{and}\qquad \lambda_g^n\in O_k^M(g).
    \end{align}\label{prop:neighbourhoods2}
\end{enumerate}
\end{lemma}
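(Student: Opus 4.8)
The plan is to prove all four statements of Lemma~\ref{lemma:corresponding2} essentially by translating the corresponding facts about $\lambda^n$ and its rays into facts about $\lambda^n_g$, and by reusing (with the trivial-Morseless-star setup, where vertex groups are isometrically embedded) the same geometric arguments that proved Lemma~\ref{lemma:corresponding_geodesic}. The key observation throughout is that $g\cdot\lambda^n[0,\pm\infty)$, the geodesic segment $[1,g]$, and $\lambda^n_g$ form a $(3,0)$-quasi-geodesic triangle in $G$: indeed, if $x_0 = g\cdot\lambda^n(t)$ is the chosen closest point of $1$ to $g\cdot\lambda^n$, then by Lemma~\ref{quasi-geodesic} the concatenation $[1,x_0]\cdot g\cdot\lambda^n[t,\pm\infty)$ is a $(3,0)$-quasi-geodesic and $\lambda^n_g$ is a realisation of this ray (possibly after discarding the closest-point segment, but in any case within bounded distance). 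This is the workhorse for all four parts.

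First, for \ref{lemma:retrieve_Morseness}: if $\lambda^n_g$ is $M$-Morse, then since $[1,g]$ is a side of a $(3,0)$-quasi-geodesic triangle whose other two sides are $\lambda^n_g$ and $g\cdot\lambda^n[0,\pm\infty)$, I would first note $[1,g]$ is a geodesic segment with an $M$-Morse ``neighbour'' and apply Lemma~\ref{lemma:monster}\ref{lemma:triangles} (quasi-geodesic-triangle version) to get $[1,g]$ is $f_2(M,3)$-Morse, hence $g$ is $f_\clam(M)$-Morse. For the statement about $\lambda^n[0,\pm\infty)$: the appropriate subray of $g\cdot\lambda^n$ is a side of the same triangle and is at bounded Hausdorff distance from $\lambda^n_g$ (they are two realisations of the same boundary point starting within bounded distance), so by Lemma~\ref{lemma:monster}\ref{prop:realization1} it is $f_2(M,3)$-Morse, and translating by $g^{-1}$ (an isometry) gives $\lambda^n[0,\pm\infty)$ is $f_2(M,3)$-Morse. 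Part \ref{lemma:to_corresponding} is the converse direction of the same triangle argument: $g$ $M$-Morse gives $[1,g]$ is $M$-Morse by definition (up to $f_1$), $\lambda^n$ $M$-Morse gives the relevant subray of $g\cdot\lambda^n$ is $f_2(M,C_2)$-Morse (here $C_2=1$ in the trivial setting, so really $f_1(M)$-Morse, by Lemma~\ref{lemma:monster}\ref{prop:subsegments} applied to the line), and then Lemma~\ref{lemma:monster}\ref{lemma:triangles} applied to the quasi-geodesic triangle gives $\lambda^n_g$ is $f_\clam(M)$-Morse for a suitably large $f_\clam$.

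Part \ref{lemma:far_away_enough2} mirrors Lemma~\ref{lemma:corresponding_geodesic}\eqref{prop:nice_connections}: I set $T_g^n$ large enough (depending on $g$ and $n$) that any point $x$ on $\lambda^n_g$ with $d_G(1,x)\geq T_g^n$ lies ``past'' the closest-point segment, so that the subsegment of $\lambda^n_g$ from $1$ to $x$ is, after attaching a bounded-length connector, a short quasi-geodesic from $1$ with both endpoints in a bounded neighbourhood of the $M$-Morse geodesic $[1,x]$; applying Lemma~\ref{lemma:monster}\ref{prop:adapted2.5} gives that this subsegment is controlled, and since $x$ ranges arbitrarily far along $\lambda^n_g$ (so the controlled subsegments exhaust $\lambda^n_g$), $\lambda^n_g$ as a whole is $f_\clam(M)$-Morse. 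For Part \ref{prop:neighbourhoods2}: if $g$ and $\lambda^n$ are $M$-Morse then $\lambda^n_g$ is $f_\clam(M)$-Morse by \ref{lemma:to_corresponding}, and both $[1,g]$ and $\lambda^n_g$ are $f_\clam(M)$-Morse realisations starting at $1$ that fellow-travel up to distance roughly $d_G(1,g)$ (via the bounded-Hausdorff-distance estimate from Lemma~\ref{lemma:monster}\ref{lemma:close_representatives} between $\lambda^n_g$ and $g\cdot\lambda^n[0,\pm\infty)$, plus the triangle-inequality comparison of two geodesics with the same endpoints $1$ and $g$); choosing $D_\dclam(M,k)$ large enough that $d_G(1,g)\geq D_\dclam(M,k)$ forces this fellow-travelling to persist to distance $\geq k + (\text{constants})$ and then invoking Lemma~\ref{lemma:get_close_again} puts $g$ into $\tilde O^M_k(\lambda^n_g)$ and $\lambda^n_g$ into $O^M_k(g)$, exactly as in the proof of Lemma~\ref{lemma:corresponding_geodesic}\eqref{prop:contained_neighbourhoods}.

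The main obstacle I expect is bookkeeping around the two cases (the chosen closest point $x_0 = g\cdot\lambda^n(t)$ with $t>0$ versus $t\leq 0$, which determines whether $\lambda^n_g$ realises the forward or backward subray of $g\cdot\lambda^n$), and making sure the functions $f_\clam$ and $D_\dclam$ are chosen uniformly — i.e., as the pointwise maximum over the finitely many conditions coming from the four parts and both cases — so that a single pair $(f_\clam, D_\dclam)$ works for all $g$ and all $n$. Since $M_0$ is fixed and $\lambda^n$ is $f_\blam(M_0)$-Morse on its tails (Lemma~\ref{lemma:constructing_geodesics}\ref{cond3:nice_at_some_point}), the dependence on $n$ in $T_g^n$ is harmless; the only genuinely $g$-dependent quantity is $T_g^n$, which is allowed. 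Everything else is a routine but careful application of Lemma~\ref{lemma:monster} together with the fact that in the trivial-edge-group setting $C_2$ can be taken to be $1$, so that all the quasi-geodesic constants appearing are absolute.
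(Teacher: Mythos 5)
Your plan for parts \ref{lemma:retrieve_Morseness}, \ref{lemma:to_corresponding} and \ref{prop:neighbourhoods2} is essentially the same as the paper's: work with the $(3,0)$-quasi-geodesic $\gamma=[1,x_0]\,g\cdot\lambda^n[t,\pm\infty)$ and apply the appropriate parts of Lemma \ref{lemma:monster} (the paper uses \ref{lemma:close_representatives} plus \ref{prop:adapted2.5} rather than the quasi-geodesic triangle clause of \ref{lemma:triangles}, but these are interchangeable here, and for \ref{lemma:to_corresponding} and \ref{prop:neighbourhoods2} the paper simply cites Lemma 2.27 of \cite{graph_of_groups1}, which does the same computation). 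One slip to correct in \ref{lemma:retrieve_Morseness}: you assert that $g\cdot\lambda^n[0,\pm\infty)$ and $\lambda^n_g$ ``start within bounded distance,'' but their starting points are $g$ and $1$, at distance $d_G(1,g)$, which is unbounded; so invoking Lemma \ref{lemma:monster}\ref{prop:realization1} directly would give a gauge depending on $g$. The fix is to use that $g\cdot\lambda^n[0,\pm\infty)$ is a subray of $\gamma$, which does start at $1$, so $\gamma$ is $f_2(M,3)$-Morse by \ref{prop:realization1} and the subray inherits this by \ref{prop:subsegments}; then translate by $g^{-1}$.

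For part \ref{lemma:far_away_enough2} there is a genuine gap. The hypothesis concerns a \emph{single} point $x$ on $\lambda^n_g$ past $T_g^n$ with $[1,x]$ $M$-Morse, yet your argument treats $x$ as ranging over all far-out points so that ``the controlled subsegments exhaust $\lambda^n_g$.'' That quantifier is not available: knowing $[1,x]$ is $M$-Morse for one $x$ tells you nothing about $\lambda^n_g$ beyond $x$. The missing ingredient is Lemma \ref{lemma:constructing_geodesics}\ref{cond3:nice_at_some_point}: past $T_n$ the tails of $\lambda^n$ are $f_\blam(M_0)$-Morse \emph{uniformly in $n$}, and so after choosing $T_g^n$ accordingly the tail $\lambda^n_g[T_g^n,\infty)$ is $M_2$-Morse with $M_2$ depending only on $M_0$, not on $g$, $n$, or $M$. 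With this in hand, the paper's argument is: $\lambda^n_g[0,t]$ (where $x=\lambda^n_g(t)$) is $f_1(M)$-Morse by Lemma \ref{lemma:monster}\ref{prop:geodesic_segments} since it shares endpoints with $[1,x]$, the tail $\lambda^n_g[t,\infty)$ is $M_2$-Morse by the above, and the concatenation is a geodesic, hence $\max\{f_1(M),M_2\}$-controlled by Lemma \ref{lemma:monster}\ref{prop:multiple_concatenation}. Without the uniform tail Morseness there is no way to make the final gauge independent of $g$ and $n$.
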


\begin{proof} 

For the rest of the proof we assume that $t\leq0$. The proof if $t>0$ goes analogously by replacing $\lambda^n[t, \infty)$ with $\lambda^n[-t, -\infty)$ and so on. 

The concatenation $\gamma = [1, x_0]g\cdot\lambda^n[t, \infty)$ is a $(3, 0)$-quasi-geodesic by Lemma \ref{quasi-geodesic} and it satisfies $[\gamma] = [\lambda_g^n]$ by construction. Thus, if $\lambda_g^n$ is $M$-Morse, $\gamma$ is contained in a $D_1(M, 3)$-neighbourhood of $\lambda_g^n$ by Lemma \ref{lemma:monster}\ref{lemma:close_representatives}. Lemma \ref{lemma:monster}\ref{prop:adapted2.5} about quasi-geodesics in neighbourhoods of Morse quasi-geodesics concludes the proof of Property \ref{lemma:retrieve_Morseness}. Property \ref{lemma:to_corresponding} and \ref{prop:neighbourhoods2} are proven in Lemma 2.27 of \cite{graph_of_groups1}.

Lemma \ref{lemma:constructing_geodesics} \ref{cond3:nice_at_some_point} states that there exists a constant $T_n$ such that $\lambda^n[T_n, \infty)$ is $f_\blam(M_0) = M_1$-Morse. Let $y_1 = \lambda_g^n(s)$ be a point on $\lambda_g^n$ closest to $g\cdot\lambda^n(T_n)$. Lemma \ref{quasi-geodesic} implies that $\gamma_1 = [g\cdot \lambda^n(T_n), y_1]\lambda_g^n[s, \infty)$ is a $(3, 0)$-quasi-geodesic with $[\gamma_1] = [g\cdot\lambda^n[T_n, \infty)]$. Hence, $\lambda_g^n[s', \infty)$ is $M_2 = f_2(M_1, 3)$-Morse by Lemma \ref{lemma:monster}\ref{prop:realization1} for any $s'\geq s$. Define $T_g^n =s$ and let $x = \lambda_g^n(t)$ be an $M$-Morse point on $\lambda_g^n$ with $d_G(1, x)\geq T_g^n$ or in other words such that $t\geq T_g^n$. Lemma \ref{lemma:monster}\ref{prop:geodesic_segments} about geodesics with the same endpoints shows that $\lambda_g^n[0, t]$ is $f_1(M)$-Morse. Lemma \ref{lemma:monster}\ref{prop:multiple_concatenation} about concatenating geodesics concludes the proof of \ref{lemma:far_away_enough2}. 
\end{proof}

\begin{lemma}\label{lemma:morseness_comp1}
Let $g\in G$ and $h\in H$. There exists an increasing function $f_\comp : \mc M\to \mc M$ such that the following conditions hold. 
\begin{enumerate}[label=\roman*)]
    \item If $g$ is $M$-Morse and $h$ is $(M, H)$--Morse, then $\lambda_g^{\abs{h}}$ is $f_\comp(M)$-Morse. \label{prop:from_gourp_to_ray}
    \item If $\lambda_g^{\abs{h}}$ is $M$-Morse, then $g$ and $h$ are $f_\comp(M)$ and $(f_\comp(M), H)$-Morse respectively. \label{prop:from_ray_to_group}
\end{enumerate}
\end{lemma}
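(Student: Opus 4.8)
The statement connects the Morseness of the "combined ray" $\lambda_g^{|h|}$ to the joint Morseness of $g \in G$ and $h \in H$. The length $|h|$ is the index selecting which base line $\lambda^{|h|}$ to use, and by construction the Morseness of $\lambda^{|h|}$ (and of its two subrays $a_{|h|}$, $b_{|h|}$) is governed by the sequence $(s_n) = (a_n b_n)$ chosen via Lemma \ref{lemma:existance_dual_segments}. The plan is to translate both implications into statements about the base line $\lambda^{|h|}$ and then invoke the previously established control: Lemma \ref{lemma:corresponding2} to pass between $g, \lambda^{|h|}$ and $\lambda_g^{|h|}$, Lemma \ref{lemma:constructing_geodesics} to pass between $\lambda^{|h|}$ and the segments $a_{|h|}, b_{|h|}$, and a comparison between the Morseness of a geodesic segment $s_n$ in $G$ and the word-metric size/Morseness of the group element $h$ with $|h| = n$.

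For part \ref{prop:from_gourp_to_ray}: assume $g$ is $M$-Morse and $h$ is $(M,H)$-Morse. The key point is that if $h$ is $(M,H)$-Morse then $|h|$ cannot be too large relative to $M$ — more precisely, I want to argue that $s_{|h|}$ is $M'$-Morse for some $M'$ depending only on $M$ (and $M_0$). This should follow because $(s_n)$ was chosen so that the critical values of $a_n, b_n$ blow up; if $h$ is $(M,H)$-Morse then by the defining property of the sequence the segment $s_{|h|}$ must still be reasonably Morse, i.e. there is $M' = M'(M)$ with $s_{|h|}$ being $M'$-Morse whenever $|h|$ is small enough to be the length of an $(M,H)$-Morse element. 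Once $s_{|h|}$ is $M'$-Morse, Lemma \ref{lemma:constructing_geodesics}\ref{cond3:segment_to_ray} gives $\lambda^{|h|}$ is $f_\blam(M')$-Morse; then Lemma \ref{lemma:corresponding2}\ref{lemma:to_corresponding} applied with Morse gauge $\max\{M, f_\blam(M')\}$ gives that $\lambda_g^{|h|}$ is $f_\clam(\max\{M, f_\blam(M')\})$-Morse, and we set $f_\comp(M)$ to dominate this.

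For part \ref{prop:from_ray_to_group}: assume $\lambda_g^{|h|}$ is $M$-Morse. Lemma \ref{lemma:corresponding2}\ref{lemma:retrieve_Morseness} immediately yields that $g$ is $f_\clam(M)$-Morse and that one of $\lambda^{|h|}[0,\infty)$, $\lambda^{|h|}[0,-\infty)$ is $f_\clam(M)$-Morse. In either case Lemma \ref{lemma:constructing_geodesics}\ref{cond3:ray_to_semment_right} or \ref{cond3:ray_to_segment_left} shows that $b_{|h|}$ (resp. $a_{|h|}$) is $f_\blam(f_\clam(M))$-Morse. Now I use the defining property of the sequence from Lemma \ref{lemma:existance_dual_segments} in the contrapositive direction: if $a_{|h|}$ or $b_{|h|}$ is $N'$-Morse then $|h|$ is bounded by some constant $B(N')$ (because the critical values of $a_n, b_n$ at the fixed constant $C$ tend to infinity, so only finitely many indices $n$ give segments that are $N'$-Morse). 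A bound $|h| \le B(N')$ means $h$ lies in a finite ball of $H$, hence is $(M'', H)$-Morse for some fixed Morse gauge $M''$ depending only on $B$ (every element of a finite set is Morse for a sufficiently large common gauge). Taking $f_\comp(M)$ large enough to absorb $f_\clam(M)$, $f_\blam(f_\clam(M))$, the function $B$, and the resulting $M''$ finishes the proof.

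The main obstacle I anticipate is making the correspondence "$h$ is $(M,H)$-Morse $\iff$ $|h|$ controlled $\iff$ $s_{|h|}$ suitably Morse" precise and in the right direction each time. In particular, in part (i) one needs that an $(M,H)$-Morse element of $H$ has bounded length in terms of $M$ — this is exactly because $\mb H$ is empty: for each Morse gauge $M$ there are only finitely many $(M,H)$-Morse elements of $H$ (this is the same finiteness used in Lemma \ref{lemma:morseness_of_empty_bijecton} and Lemma \ref{lemma:existance_dual_segments}), so there is a well-defined maximal length $L(M)$ among them, and we just need $s_n$ to be uniformly Morse for $n \le L(M)$, which holds since $\{s_1, \dots, s_{L(M)}\}$ is a finite family of geodesic segments, each Morse, hence all $M'(M)$-Morse for a common gauge. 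Conversely in part (ii) the finiteness is used to turn "$a_{|h|}$ or $b_{|h|}$ is $N'$-Morse" into a bound on $|h|$, using that the critical values grow without bound so only finitely many indices survive. Once these two finiteness arguments are set up cleanly, the rest is routine bookkeeping with the functions $f_\blam$, $f_\clam$, and $f_\comp$.
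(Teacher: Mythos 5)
Your proof is correct and follows essentially the same strategy as the paper's. The only minor variation is in part (i): the paper goes directly from the finiteness of $(M,H)$-Morse elements of $H$ to a uniform Morse gauge for the finitely many geodesic lines $\lambda^{|h'|}$ (which are Morse by construction), and then applies Lemma~\ref{lemma:corresponding2}\ref{lemma:to_corresponding}, whereas you take a small detour through the segments $s_{|h|}$ and Lemma~\ref{lemma:constructing_geodesics}\ref{cond3:segment_to_ray} before arriving at the same place; this is valid but slightly longer. Your part (ii) matches the paper's argument step for step: Lemma~\ref{lemma:corresponding2}\ref{lemma:retrieve_Morseness} to recover Morseness of $g$ and one half-line of $\lambda^n$, Lemma~\ref{lemma:constructing_geodesics}\ref{cond3:ray_to_semment_right}/\ref{cond3:ray_to_segment_left} to pass to $a_n$ or $b_n$, the growing critical values from Lemma~\ref{lemma:existance_dual_segments} to bound $n=|h|$, and finiteness to absorb the bound into a common gauge.
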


\begin{proof}
We first prove Property \ref{prop:from_gourp_to_ray}. For every Morse gauge $M$ there exist only finitely many $h'\in H$ which are $(M, H)$-Morse. Thus there exists a Morse gauge $M_1$ such that $\lambda^{\abs{h'}}$ is $M_1$-Morse for all of these. Let $M_2 = \max\{M_1, M\}$. Lemma \ref{lemma:corresponding2}\ref{lemma:to_corresponding} shows that $\lambda_g^{\abs{h}}$ is $f_\clam(M_2)$-Morse.

Next we prove Property \ref{prop:from_ray_to_group}. Let $n = \abs{h}$. If $\lambda_g^n$ is $M$-Morse, then Property \ref{lemma:retrieve_Morseness} of Lemma \ref{lemma:corresponding2} implies that $g$ and at least one of $\lambda^n[0, \infty)$ and $\lambda^n[0, -\infty)$ is $M_1 = f_\clam(M)$-Morse. Hence, at least one of $a_n$ and $b_n$ is $M_2 = f_\blam(M_1)$-Morse by Lemma \ref{lemma:constructing_geodesics}. Let $C$ be the constant from Lemma \ref{lemma:existance_dual_segments} where we have some control over the critical value of $a_n$ and $b_n$ at $C$. In particular, for all $m >  M_2(C)$ we know that $a_m$ and $b_m$ are not $M_2$-Morse. There are only finitely many elements of $H$ which have length at most $M_2(C)$ and hence there exists some Morse gauge $M_3$ such that all of them are $(M_3, H)$-Morse. Thus for $f_\comp(M) = M_3$ Property \ref{prop:from_ray_to_group} holds.
\end{proof}

\begin{definition}
Let $w$ be a vertex of $V(T')$. We define 
\begin{align}
    H_w = \cup_{\alpha\in E_w} E_{\alpha^+}^+.
\end{align}
That is, $H_w$ is the set of all vertices in $V(T')$ that can be reached from $w$ by going along two outgoing edges.
\end{definition}

Now we are ready to start with the construction of the collection of geodesic local bijections $\{q_v\}_{v\in V(T)}$. As in Section \ref{section:relativemap} we also construct an injective map $q: V(T)\to V(T')$ which agrees with the local bijections $q_v$ on their domains. As before, we start with defining $q(G_0) = G_0'$. Next we repeatedly choose a vertex $v\in V(T)$ such that $q(v)$ is defined but $q_v$ is not, construct the local bijection $q_v$ and finally define $q(w) = q_v(w)$ for every $w\in E_v^+$. We will make sure that $q(v)$ projects to $0$ in $\Gamma'$ for every vertex $v\in V(T)$.

\subsection{Constructing the local bijection $q_v$.}

For every vertex $v\in V(T)$, there is a natural bijection $\psi$ between $G$ and edges of $T$ whose source is $v$ by sending and edge $\alpha$ to $(\bar\alpha^*)^G$. Note that $(\bar\alpha^*)^G$ was defined in \ref{def:xg} and can be viewed as the path needed to travel from $v^*$ to $\bar\alpha^*$. This bijection $\psi$ induces a bijection from $E_v$ to $G$, if $v = G_0$ and from $E_v$ to $G - \{1\}$ otherwise. Similarly, for a vertex $w\in V(T')$ whose projection to $\Gamma'$ is equal to $0$ we have a bijection from $E_w$ to either $G$ (if $w = G_0'$) or $G - \{1\}$ (otherwise). For a vertex $u\in V(T')$ whose projection to $\Gamma$ is $1$, we have a bijection from $E_u$ to $H - \{1\}$.

Define  
\begin{align}
Y_1 = G \qquad &\text{and}\qquad Z_1  = G\times(H - \{1\}).\\
Y_2 = G- \{1\}\qquad &\text{and}\qquad Z_2 = (G-\{1\})\times (H - \{1\}).
\end{align}

As we have seen above, for a vertex $v\in V(T)$ we can identify elements of $E_v$ (and hence $E_v^+$) with elements of $Y_i$, where $i=1$ if and only if $v = G_0$. Moreover, for a vertex $w\in V(T')$ we can identify elements of $H_w$ with $Z_i$, where $i=1$ if and only if $w = G_0'$. The identification works as follows, if $u\in H_w$ and $(\alpha_1, \alpha_2)$ is a path from $w$ to $u$, then we identify $u$ with $((\bar\alpha_1^*)^G,(\bar\alpha_2^*)^G)$. 

Thus, if we have bijections $q_i : Y_i\to Z_i$ for $i\in \{1, 2\}$, and we know that $q(v) = w$ for some vertex $v\in V(T)$ then the bijections $q_i$ induce a bijection $q_v : E_v^+ \to H_w$. Thus to define the maps $q_v$ for all vertices $v$ it is enough to construct bijections $q_1$ and $q_2$. Note that with this construction, we indeed ensure that $\check{q(u)} = 0$ for all $u\in E_v^+$.

Recall that we fixed an $(M_0, X')$-Morse element $h_0\in 
H-\{1\}$. Let $i\in \{1, 2\}$. We construct a bijection $q_i : Y_i\to Z_i$ and maps $ Y_i \to \mb G$ and $ Z_i\to \mb G$, both denoted by $g$, as follows. Enumerate the elements of $Z_i$; this enumeration determines the order in which we define $q_i$. Repeat the following steps: 

\begin{enumerate}
    \item Let $y = (x, h)$ be the lowest index element of $Z_i$ which is not yet in the image of $q$. Let $n$ be the length of $h$.
    \item If $q_i(x)$ is not yet defined and $h = h_0$, set $q_i(x) = y$ and say that $x$ is exact.\label{constr:step2}
    \item Otherwise, choose an element $x_1$ on $\lambda^n_{x}$ such that the following hold and define $q_i(x_1) = y$.
    \begin{itemize}
        \item $q_i(x_1)$ is not yet defined. 
        \item $d_G(1, x_1)\geq T_x^n$, that is we can apply Lemma \ref{lemma:corresponding2}\ref{lemma:far_away_enough2}.
    \end{itemize}
    We say that $x_1$ is not exact.
    \label{constr:step3}
    \item Define $g(y) = [\lambda_x^n]$ and define $g(q_i^{-1}(y)) = g(y)$.
\end{enumerate}

\begin{lemma}
For $i\in \{1, 2\}$ the map $q_i$ as defined above is a well-defined bijection
\end{lemma}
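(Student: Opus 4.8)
The plan is to verify the three defining properties of a bijection---well-definedness (the procedure never gets stuck), injectivity, and surjectivity---directly from the iterative construction, mirroring the argument used for the map $q_\beta$ in Section~\ref{section:relativemap}. Fix $i\in\{1,2\}$. First I would observe that surjectivity onto $Z_i$ is automatic: the enumeration of $Z_i$ guarantees that each element $y=(x,h)$ of $Z_i$ is treated exactly once, and at that stage $q_i$ is defined on some preimage of $y$ (either $x$ itself in step~\eqref{constr:step2} or the chosen $x_1$ in step~\eqref{constr:step3}), so every element of $Z_i$ ends up in the image. Injectivity is also built in: each step defines $q_i$ on an element for which $q_i$ was not yet defined (this is explicitly required in steps~\eqref{constr:step2} and~\eqref{constr:step3}), so no element of $Y_i$ is ever assigned two values, and distinct stages assign the images of distinct $y$'s, which are distinct.

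The substantive point is that step~\eqref{constr:step3} can always be carried out, i.e. that a valid choice of $x_1$ on $\lambda_x^n$ exists. Here I would argue exactly as in the proof that $q_\beta$ is a well-defined bijection: the ray $\lambda_x^n$ is a quasi-geodesic (by Lemma~\ref{lemma:corresponding2}, since $\lambda^n$ is Morse by Lemma~\ref{lemma:constructing_geodesics}), hence it passes through infinitely many distinct group elements, so infinitely many candidates $x_1$ lie on $\lambda_x^n$. Among these, only finitely many fail the condition $d_G(1,x_1)\geq T_x^n$, and at any given stage only finitely many group elements have already been used as a preimage (each stage uses exactly one, and only finitely many stages have occurred). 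Therefore infinitely many admissible $x_1$ remain, and in particular one exists; this shows the procedure never gets stuck.

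Finally I would check that $q_i$ is defined on \emph{all} of $Y_i$, not merely that it is injective and surjective onto $Z_i$. This is where I expect the only real care is needed. The key observation, as in the $q_\beta$ case, is that $Y_i$ and $Z_i$ are \emph{countable}, and one must confirm that the enumeration of $Z_i$ exhausts $Y_i$ under $q_i^{-1}$. Since $q_i$ is injective and its image (after all stages) is all of $Z_i$, and since $\lvert Y_i\rvert=\lvert Z_i\rvert$ (both are countably infinite: $Y_1=G$, $Z_1=G\times(H-\{1\})$, etc.), an injection from $Y_i$ that is actually a bijection onto $Z_i$ forces $q_i$ to be total on $Y_i$ --- but one has to rule out that some element of $Y_i$ is simply never selected. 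I would argue this by a back-and-forth-style bookkeeping: whenever an element $x\in Y_i$ has not yet been used after processing sufficiently many elements of $Z_i$, the pair $(x,h_0)\in Z_i$ will eventually be processed, and at that stage step~\eqref{constr:step2} applies (unless $x$ was used earlier), forcing $x$ into the domain. Thus every element of $Y_i$ is eventually assigned a value, so $q_i$ is total, and combined with the injectivity and surjectivity established above, $q_i:Y_i\to Z_i$ is a bijection. The main obstacle is precisely this last totality argument, since the construction only directly controls the image side; interleaving the ``exact'' clause~\eqref{constr:step2} with the enumeration is what makes it work.
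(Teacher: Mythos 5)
Your argument is correct and matches the paper's proof, which likewise observes that step~\eqref{constr:step3} always admits a valid choice of $x_1$ and that step~\eqref{constr:step2} guarantees $q_i$ is total on $Y_i$, with injectivity and surjectivity being immediate from the construction. The paper states these points tersely; you elaborate the same ideas (in particular your "back-and-forth-style bookkeeping" is exactly the observation that if $x$ is never chosen in step~\eqref{constr:step3}, then step~\eqref{constr:step2} catches it when $(x,h_0)$ is processed), so there is no substantive difference in approach.
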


\begin{proof}
Step \eqref{constr:step2} makes sure that $q_i$ is defined on the whole domain. It is clear that a vertex $x_1$ in Step \eqref{constr:step3} satisfying the desired conditions always exists. Thus, $q_i$ is well-defined. Injectivity and surjectivity follow directly from the construction.
\end{proof}

\begin{lemma}
The map $q_v$ induced by $q_i$ for either $i=1$ or $i=2$ is a geodesic local bijection of depth $2$ and Morseness $f$ for sufficiently large functions $f: \mc M\to \mc M$.
\end{lemma}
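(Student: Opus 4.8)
The plan is to verify the five conditions \ref{cond2:nestedness}--\ref{cond2:morseness} that define a local bijection of depth $2$, together with the three conditions \ref{cond2:morseness:tails}--\ref{cond2:commutativity} that make it geodesic. The first four conditions \ref{cond2:nestedness}--\ref{cond2:partial_surj} follow, as in the analogous proofs in Section \ref{section:relativemap}, directly from the fact that $q_v : E_v^+\to H_w$ is a bijection and from the combinatorial structure of $H_w$: nestedness holds because any vertex of $H_w$ sits inside $T'_{q_v(v)}$ and is not equal to $v$; coarse surjectivity holds with depth constant $C=2$ because every vertex of $T'_{q_v(v)}$ at distance $\geq 2$ lies in some $T'_{u}$ for $u\in H_w$ (here we also use that $\mb G_1' = \mb H = \emptyset$ (in the factor $H$), so the ``intermediate'' vertices over the $H$-factor carry no Morse boundary — this is exactly where Condition \ref{cond2:partial_surj} gets checked). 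Finally $p_v$ is the identity map $\mb G\to\mb G$ (since $\check v$ and $\check{q(v)}$ both project to $0$ and $G_0=G_0'=G$), so Conditions \ref{cond2:morseness:tails} and \ref{cond2:commutativity} hold for $f(M)\geq f_\mov(M)$ and by the definition of $g$ respectively.

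The substantive work is Condition \ref{cond2:morseness} (Morseness of the two-step edge data) and Condition \ref{cond2:geodesics}. For \ref{cond2:morseness}: a vertex $u\in H_w$ is reached by a path $(\alpha_1,\alpha_2)$, where $\alpha_1$ goes over the $H$-factor and $\alpha_2$ over the $G$-factor, and $u$ is identified with $(x,h)$; the two edges on $[q_v(v),u]_{T'}$ are $M$-Morse iff $[w^*,\bar\alpha_1^*]$ and the translate of $[1,x^G]$ are suitably Morse, which by Lemma \ref{lemma:properties_of_xg}, Lemma \ref{lemma:morseness_comp1} and Lemma \ref{lemma:monster}\ref{prop:adapted2.5} is equivalent (up to applying $f$) to $\lambda_x^{|h|}$ being $M$-Morse, hence to the preimage edge being $M$-Morse. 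One must split into the cases $x_1$ exact versus not exact, exactly paralleling the proof of Lemma \ref{lemma:amalgam_local_bijection}: in the exact case $x_1=x$ and everything is direct; in the non-exact case one uses that $x_1$ lies on $\lambda_x^{|h|}$ far enough out (Step \eqref{constr:step3}, the condition $d_G(1,x_1)\geq T_x^n$) together with Lemma \ref{lemma:corresponding2}\ref{lemma:far_away_enough2} to transfer Morseness of $[1,x_1]$ to Morseness of $\lambda_x^{|h|}$ and back. Crucially, as in Lemma \ref{lemma:amalgam_local_bijection}, the final Morse gauge must be shown not to depend on the auxiliary gauge $M$ witnessing Morseness of $[v^*,x]$, only on $N$ and on the fixed data $M_0$; this is what allows a single $f$ to work for all vertices $v$ simultaneously.

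For Condition \ref{cond2:geodesics} we take $g$ as constructed (sending $u\leftrightarrow(x,h)$ and its preimage to $[\lambda_x^{|h|}]\in\mb G$) and must produce $D_v$. Condition \ref{cond2:closeness1} on $F_2=H_w$ says: $u\in F_2^M$ iff $g(u)=[\lambda_x^{|h|}]$ is suitably Morse — this is Lemma \ref{lemma:morseness_comp1} combined with Lemma \ref{lemma:monster}\ref{prop:adapted2.5} to pass between the two-step edge path and the element $(x,h)$; on $F_1=E_v^+$ it then follows from \ref{cond2:morseness} once $f$ is enlarged (replace $f$ by $f\geq\tilde f^2$ for a map $\tilde f$ witnessing the conditions so far, exactly as at the end of Lemma \ref{lemma:amalgam_local_bijection}). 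Condition \ref{cond2:closeness} — that $x^{**}$ and $v_i^*\cdot g(x)$ lie in each other's $\tilde O_k$-neighbourhoods once $d_i(v_i^*,x^{**})\geq D_v(M,k)$ — is obtained, case by case (exact/non-exact, and the $F_2$ side), from Lemma \ref{lemma:corresponding2}\ref{prop:neighbourhoods2} (the $\tilde O_k$ statement for $\lambda_x^n$) together with Corollary 2.7 of \cite{Cor16} and Lemma \ref{lemma:get_close_again} to stitch the relevant basepoints ($\alpha_1^*$, the intersection point $a$ of $\pi^{-1}(\alpha_1)$ with $\lambda_{x,v}$, and a realisation of $v^*\cdot\gamma_x$) together; one also invokes Lemma \ref{lemma:cosets_morseness_vs_closeness} to bound $d_X(a,\bar\alpha_1^*)$ and Lemma \ref{lemma:monster}\ref{lemma:close_representatives} for the Hausdorff bound. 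The main obstacle is exactly this last point: controlling all constants uniformly in $v$ while juggling the exact/non-exact dichotomy and the shift between the $X$- and $X'$-viewpoints — but since the geodesic lines $\lambda^n$ and the element $h_0$ are fixed once and for all, and the bijections $q_1,q_2$ were constructed once and for all, all the constants that appear depend only on $M_0$ and on the Morse gauge being tracked, so choosing $D_v$ (and enlarging $f$) to dominate the finitely many conditions from the three cases finishes the proof.
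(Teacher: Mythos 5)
Your proposal is correct and follows the paper's route: Conditions \ref{cond2:nestedness}--\ref{cond2:partial_surj}, \ref{cond2:morseness:tails}, \ref{cond2:commutativity} from bijectivity of $q_i$ and the identity $p_v$, then \ref{cond2:morseness} and \ref{cond2:geodesics} via Lemmas \ref{lemma:properties_of_xg}, \ref{lemma:morseness_comp1}, \ref{lemma:corresponding2} with an exact/non-exact case split, and uniform constants coming from the fixed data $\lambda$, $\lambda^n$, $h_0$, $M_0$. The one place you overcomplicate is Condition \ref{cond2:closeness} on $F_1$ in the non-exact case, where the closest-point/projection machinery you borrow from Lemma \ref{lemma:amalgam_local_bijection} (the intersection point $a$, Lemma \ref{lemma:cosets_morseness_vs_closeness}, etc.) is unnecessary: with trivial edge groups $\bar\alpha_3^*$ lies literally on the translate of $\lambda_x^{\abs{h}}$, so the paper simply takes $D_v(M,k)\geq k+1$.
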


\begin{proof}
Conditions \ref{cond2:nestedness} - \ref{cond2:injectivity} hold since $q_i : E_v^+\to H_w$ is a bijection. Condition \ref{cond2:partial_surj} holds since we always make sure that $\check{w} = 0$. Namely, if $u$ is a vertex in $T_w'$ whose vertex groups has trivial Morse boundary, then either $u = w$ or $u$ is at distance at least $2$ from $w$. In the latter case, $q_i$ being a bijection from $E_v^+$ to $H_w$ implies that there exists an edge $\alpha\in E_v$ such that $u\in T_{q_v(\alpha^+)}'$. 

The vertex groups of $v$ and $w$ are both equal to $G$. We define $p_v: \mb G\to \mb G$ as the identity map. Recall that Lemma \ref{lemma:properties_of_morseless_stars}\eqref{lemma:vertex_group_morseness} relates the Morseness of geodesics when viewed as a subset of $G$ to the Morseness of geodesics when viewed as a subset of $X$ (resp $X'$). If $z\in \mb G$ is $(M, X)$-Morse, then it is $(f_3(M), G)$-Morse and thus $(f_3^2(M), X')$-Morse. Similarly, if $z\in \mb G$ is $(M, X')$-Morse, it is $(f_3^2(M), X)$-Morse. Thus, Condition \ref{cond2:morseness:tails} holds if $f\geq f_3^2$. Condition \ref{cond2:commutativity} follows directly from the definition of $g$.

Thus, we only have Conditions \ref{cond2:morseness} and \ref{cond2:geodesics} left to prove. Let $u_1\in F_1 = E_v^+$ and let $u_2 \in F_2 = H_w$ satisfy $q_v(u_1) = u_2$. The geodesic path from $w$ to $u_2$ consists of two edges, say $\alpha_1$ and $\alpha_2$ (in this order). Let $x = (\bar{\alpha_1}^*)^G\in G$ and $h  = (\bar{\alpha_2}^*)^G\in H$. Let $\alpha_3$ be the edge from $v$ to $u_1$. Let $x_1 = (\bar{\alpha_3}^*)^G\in G$. With these definitions we have that $q_i(x_1) = (x, h)$.

Recall that Lemma \ref{lemma:properties_of_xg} shows that there is a relation between the Morseness of a vertex $\alpha$ and the element $(\bar{\alpha}^*)^G$. Assume that $u_2\in F_2^M$, that is both $\alpha_1$ and $\alpha_2$ are $(M, X')$-Morse. Lemma \ref{lemma:properties_of_xg} shows that $x$ is $M_1 = f_\xgg(M)$-Morse and $h$ is $(M_1, H)$-Morse. Lemma \ref{lemma:morseness_comp1}\ref{prop:from_gourp_to_ray}, implies that $\lambda_x^{\abs{h}}$ and hence $g(u_2)$ is $M_2 = f_\comp(M_1)$-Morse. Thus $g(u_2)$ is $(f_3(M_2), X')$-Morse and the first implication of Condition \ref{cond2:closeness1} holds for $u_2\in F_2$ for large enough $f$. 

Note that $\alpha_1^* = u_2^{**}$. If $d_{X'}(w^*, \alpha_1^*)\geq D_\dclam(M_3, k)+1$, then $d_G(1, x)\geq D_\dclam(M_3, k)$ and Lemma \ref{lemma:corresponding2}\ref{prop:neighbourhoods2} implies that $x\in \tilde {O}_k^{M_3}(g(u_2))$ and $g(u_2)\in O_k^{M_3}(x)$. Viewing the geodesics as subsets of $X'$ and translating by $w^*$ gives that $\bar\alpha_1^*$ (and hence $\alpha_1^*$) is in $\tilde {O}_k^{f_3(M_3)}(g(u_2), w^*)$ and conversely $w^*\cdot g(u_2)\in O_k^{f_3(M_3)}(\alpha_1^*, w^*)$. Hence Condition \ref{cond2:closeness} holds for $u_2\in F_2^M$ for large enough $f$ and if $D_v(M, k)\geq D_\dclam(M_3, k)+1$. Moreover, if $x_1$ is exact, then $x_1 = x$ is $M_1$-Morse and hence $\alpha_3$ is $(f_\xgg(M_1), X)$-Morse by Lemma \ref{lemma:properties_of_xg}. If $x_1$ is not exact, then $x_1$ lies on $\lambda_x^{\abs{h}}$ and is thus $M_3 = f_1(M_2)$-Morse by Lemma \ref{lemma:monster}\ref{prop:subsegments}. Using Lemma \ref{lemma:properties_of_xg} again we get that $\alpha_3$ is $(f_\xgg(M_3), X)$-Morse. In both cases, Condition \ref{cond2:morseness} holds for $u_2\in F_2$ for large enough $f$.

What is left to prove is the second implication of Condition \ref{cond2:closeness1} for $u_2\in F_2$ and Conditions \ref{cond2:geodesics} and \ref{cond2:morseness} for $u_1\in F_1$. 

Assume that $g(u_2)$ is $(M, X')$-Morse and hence $M_4 = f_3(M)$-Morse. Lemma \ref{lemma:morseness_comp1} implies that $x$ is $M_5 = f_\comp(M_4)$-Morse and $h$ is $(M_5, H)$-Morse. Hence Lemma \ref{lemma:properties_of_xg} implies that $\alpha_1$ and $\alpha_2$ are $(f_\xgg (M_5), X')$-Morse and thus the second implication of Condition \ref{cond2:closeness1} holds for $u_2\in F_2$.

Assume that $\alpha_3$ is $(M, X)$-Morse. Lemma \ref{lemma:properties_of_xg} implies that $x_1$ is $M_6 = f_\xgg(M)$-Morse. If $x_1$ is exact, then $h=h_0$ is $(M_0, H)$-Morse and $x=x_1$ is $M_6$-Morse. By Lemma \ref{lemma:properties_of_xg}, $\alpha_1$ and $\alpha_2$ are $(f_\xgg(M_6), X')$-Morse. If $x_1$ is not exact, then $x_1$ lies on $\lambda_x^{\abs{h}}$ and most importantly, we can apply Lemma \ref{lemma:corresponding2}\ref{lemma:far_away_enough2} to get that $\lambda_x^{\abs{h}}$ is $M_7 = f_\clam(M_6)$-Morse. Combining this with Condition \ref{cond2:closeness1} for $u_2\in F_2$, we get that for large enough $f$, Condition \ref{cond2:morseness} is satisfied for $u_1\in F_1$ both if $x_1$ is exact and also if it is not. Condition \ref{cond2:morseness} for all vertices and Condition \ref{cond2:closeness1} for $u_2\in F_2$ implies that Condition \ref{cond2:closeness1} holds for $u_1\in F_1$ as well if $f$ is large enough. 

Lastly we prove Condition \ref{cond2:closeness} for $u_1\in F_1$. If $x_1$ is exact, the result follows from Condition \ref{cond2:closeness} for $u_2$. If $x_1$ is not exact, then $\bar\alpha_3^{*}$ lies on $\lambda_x^{\abs{h}}$. Hence, Condition \ref{cond2:closeness} is satisfied if $D_v(M, k)\geq k+1$. 

We have shown that we can satisfy all conditions for large enough functions $D_v$ and $f$, which concludes the proof.
\end{proof}

\begin{proof}[Proof of Theorem \ref{theorem:empty_morse_boundary}]
The conditions we had on the function $f$ in the previous proof do not depend on the vertex $v$. Thus, the collection of geodesic local maps $\{q_v\}$ satisfies the Conditions of Proposition \ref{prop:local_bij_imply_bass_serre_map}, which implies that they induce a continuous Bass-Serre tree map. Proposition \ref{prop:bass_serre_implies_homeo} states that a continuous Bass-Serre tree map induces a homeomorphism of the Morse boundaries $\mb X$ and $\mb X'$ and thus concludes the proof. 
\end{proof}

\section{Proof of the main results}\label{section:main_result}
First we prove Theorem \ref{cor:rel_hyp_graph_of_groups} and then use this to prove Theorem \ref{theorem:3-manifold}, which is a more detailed version of Theorem \ref{theorem:manifold_light}. 

The main idea of the proof of Theorem \ref{cor:rel_hyp_graph_of_groups} is to collapse certain parts of the graph of groups to single vertices and then use Theorem \ref{theorem:star_of_groups} to show that the Morse boundary of the groups does not change when replacing the edge groups of the non-collapsed edges with trivial groups.

\begin{proof}[Proof of Theorem \ref{cor:rel_hyp_graph_of_groups}] 
Let $W = \{w_1, \ldots , w_n\}$. Let $\mc G^0 = \mc G$. For $i\in\{1, \ldots, n\}$ we iteratively define $\mc G^i$ as a copy of $\mc G^{i-1}$ with trivial edge groups for edges adjacent to $w_i$. Since every edge is adjacent to at least one vertex of $W$, we have $\mc G^n = \mc G'$. To show that $\mb \pi_1 (\mc G) \cong \mb \pi_1 (\mc G')$ it suffices to show that $\mb \pi_1 (\mc G^{i-1}) \cong \mb \pi_1 (\mc G^i)$ for every index $i\in\{ 1, \ldots n\}$. Fix some index $i$ and let $X_1, \ldots X_m$ be the connected components of $\Gamma - \{w_i\}$. Define $\mc H = \mc G_{X_1, \ldots, X_m}^{i-1}$ and observe that $\mc H$ is a relatively hyperbolic Morseless star. Therefore, by Theorem \ref{theorem:star_of_groups}, $\mb \pi_1(\mc H)\cong \mb \pi_1(\mc H')$, where $\mc H'$ is actually the same graph of groups as $\mc G^i _{X_1, \ldots X_m}$. Lastly, Lemma \ref{lemma:graph_of_groups_trivia} implies $\pi_1(\mc G^{i-1}) = \pi_1( \mc H) $ and $\pi_1(\mc G^i) = \pi_1(\mc H')$, yielding $\pi_1(\mc G^{i-1})\cong \pi_1(\mc G^i)$.
\end{proof}

\begin{definition}
Let $S$ be a topological space and let $G$ be a finitely generated infinite group such that $\mb G \cong S$. We define $S\st S$ as $\mb (G*G)$.
\end{definition}

In light of Theorem 1.2 of \cite{graph_of_groups1}, the homeomorphism type of $S\st S$ does not depend on $G$.

\begin{theorem}\label{theorem:3-manifold}
Let $M$ be a closed and connected 3-manifold and $G$ its fundamental group. Then the Morse boundary $\mb \pi_1(M)$ is homeomrphic to 
\begin{itemize}
    \item \textbf{Totally disconnected Morse boundary:}
    \begin{enumerate}
        \item $\emptyset$: if $M$ is prime and has one of the geometries $S^3, \R^3, \mathbf{H}^2\times \R, Nil, Sol, \widetilde{PSL_2 (\R)}$.
        \item $\{\cdot, \cdot\}$: if $M$ is prime and has geometry $S^2\times \R$ or if $M=\R P^3 \#\R P^3$.\label{2}
        \item Cantor set: if $M$ is not (\ref{2}) and all prime factors have geometry $S^2\times \R$ or $S^3$.
        \item $\omega$-Cantor set: if $M$ is none of the above and neither contains a finite volume hyperbolic component nor a prime factor with geometry $\mathbf{H}^3$.
    \end{enumerate}
    \item \textbf{Connected components are spheres:} For all of these, we require that $M$ has a prime factor with geometry $\mathbf{H}^3$ but no finite volume hyperbolic component.
    \begin{enumerate}[resume]
        \item $S^2$: if $M$ is prime (and has geometry $\mathbf{H}^3$).
        \item $S^2\st S^2$: if $M$ is not prime and all prime factors have geometry $\mathbf{H}^3, S^3$ or $S^2\times \R$.
        \item $S^2 \st \emptyset$: if $M$ is not prime and at least on of its prime factors is non-geometric or has a geometry which is not $\mathbf{H}^3, S^3$ or $S^2\times \R$.
       
    \end{enumerate}
    \item \textbf{Contains $\omega$-Sierpi\'nski curves:} For all of these, we require that $M$ has a finite volume hyperbolic component.
    \begin{enumerate}[resume]
        \item $\omega$-Sierpi\'nski $\st\omega$-Sierpi\'nski: if $M$ does not contain a prime factor with geometry $\mathbf{H}^3$.
         \item $S^2 \st \omega$-Sierpi\'nski: if at least one of the prime factors of $M$ has geometry $\mathbf{H}^3$. 
    \end{enumerate}    
\end{itemize}
Furthermore, the 9 homeomorphism types listed above are pairwise distinct. 
\end{theorem}

The proof strategy of Theorem \ref{theorem:3-manifold} is as follows. We use the geometrization theorem to show that $G$ is isomorphic to the fundamental group of a graph of groups which satisfies Theorem \ref{cor:rel_hyp_graph_of_groups}. Thus, the Morse boundary of $G$ is homeomrphic to a free product. To reduce the classification of the Morse boundary to finitely many cases, we use Theorem 1.1 of \cite{graph_of_groups1}, which essentially tells us that we only care about which spaces occur as Morse boundaries of factors, but not how many times they occur. Lastly we use Theorem \ref{theorem:empty_morse_boundary}, to get rid of factors which have empty Morse boundary in case there are other factors which are not hyperbolic.  

\begin{proof}
Observe that by potentially passing to a finite index subgroup of $\pi_1(M)$ we can assume that $M$ is oriented. 
We start with recalling the geometrization theorem, see \cite{perelman2003ricci}, \cite{perelman2002entropy}, \cite{kleiner2008notes}, \cite{morgan2007ricci} and \cite{cao2006complete}. Let $M$ be as in the statement of Theorem \ref{theorem:3-manifold}. Then $M$ is the connected sum $M_1 \# M_2\# \ldots \# M_n$ of some prime manifolds $M_1, \ldots M_n$ with fundamental groups $G_i = \pi_1(M_i)$. By the van Kampen Theorem, $G= G_1 * G_2 *\ldots *G_n$. 

For each prime manifold $M_i$ we have either 
\begin{itemize}
    \item \textbf{$M_i$ is geometric:} In this case, $M_i$ has one of the following geometries: $S^3, \R^3, \mathbf{H}^3, S^2\times \R, \mathbf{H}^2\times \R, Nil, Sol , \widetilde{PSL_2 \R}$. See \cite{scott1983geometries} for details about those geometries. In particular, $\widetilde{PSL_2 \R}$ is quasi-isometric to $\mathbf{H}^2\times \R$ and $Nil$ and $Sol$ are non-virtually cyclic and solvable (and hence satisfy a law). Any non-virtually cyclic group satisfying a law has empty Morse boundary by \cite{DS05}. The Morse boundary $\mb G_i$ is (in the same order): $\emptyset,\emptyset,  S^2, \{\cdot, \cdot\}, \emptyset, \emptyset, \emptyset, \emptyset$. Moreover, $G_i$ is infinite in all cases but the first. 
    \item \textbf{$M_i$ is non-geometric:} In this case, there exists some disjoint union of embedded tori $\{T_j\}\subset M_i$ and a disjoint union $\{H_k\}$ of finite volume hyperbolic components and $\{S_l\}$ of Seifert-fibered components such that 
    \begin{align}
        M_i - \bigsqcup T_j = \bigsqcup H_k \bigsqcup S_l.
    \end{align}
    Moreover, we can define a graph of groups $\mc G$ with underlying graph $\Gamma$ as follows.
    \begin{itemize}
        \item The components $S_l$ and $H_k$ are the vertices of $\Gamma$. The vertex group corresponding to a component $C$ is $\pi_1(C)$
        \item The edges of $\Gamma$ correspond to the embedded tori $T_j$ with edge groups $\pi_1(T_j)\cong\Z^2$ and maps induced by inclusions $T_i\into C$ for the components $C$ it is adjacent to.
    \end{itemize}
    The graph of groups $\mc G$ has the following properties:
    \begin{itemize}
        \item The image of every edge group is undistorted in $\pi_1(\mc G)$. This follows from \cite{kapovich19963} and \cite{leeb19943}, where they show that $\pi_1(M_i)$ is quasi-isometric to a CAT(0) space and the edge groups are sent to flats under the quasi-isometry they construct. 
        \item The edge groups have infinite index in the vertex groups and are relatively wide in the vertex groups. The former holds because vertex groups are virtually the product of $\Z$ and the fundamental group of a surface $S$ and the edge groups correspond to the product of boundary components of $S$ with $\Z$. The latter holds because edge groups are isomorphic to $\Z^2$, which is wide.
        \item For every finite volume hyperbolic component $H$, $\pi_1(H)$ is hyperbolic relative to $\mc A$, where $\mc A$ contains images of all edge groups of adjacent edges (see \cite{dahmani2003combination}).  
    \end{itemize}
    For finite volume hyperbolic components $H$, $\mb \pi_1(H)$ is an $\omega$-Sierpi\'nski curve and for Seifert-fibered components $S$, $\mb\pi_1(S)=\emptyset$ (see \cite{charney2020complete}). 
    
    Let $\mc H = \{H_k\}_k$ be the set of all finite volume hyperbolic components. Let $\mc X$ be the set of connected components of $\Gamma - \mc H$ and let $X_c\in \mc X$ be a connected component. By \cite{charney2020complete} the Morse boundary of $\pi_1(\mc G \mid _{X_c})$ is either empty (if $X_c$ consists of a single vertex and no edge) or an $\omega$-Cantor space. Furthermore, the graph of groups $\mc G_i = \mc G _ {\mc X}$ satisfies all conditions of Theorem \ref{cor:rel_hyp_graph_of_groups} and thus $\mb G_i \cong \mb \pi_1 (\mc G_i')$, where $\mc G_i'$ is a copy of $\mc G_i$ where all edge groups are trivial. Since all the vertex groups are infinite, either $\mc G_i'$ consist of a single vertex or $\pi_1 (\mc G_i')$ is infinitely ended. Hence by Theorem 1.1 of \cite{graph_of_groups1}, $\mb \pi_1( \mc G_i')$ is homeomorphic to $\mb \mc F_i$ where $\mc F_i$ is the free product of all vertex groups of $\mc G_i'$. The Morse boundary of every factor of $\mc F_i$ is empty, an $\omega$-Cantor space or an $\omega$-Sierpi\'nski curve. Furthermore, there exist a factor whose Morse boundary is an $\omega$-Sierpi\'nski curve if and only if $M_i$ contains a finite volume hyperbolic component.
\end{itemize}

Now we have enough information about the structure of the groups $G_i$ to start with the classification. We have seen that $G = G_1*\ldots *G_n$. Using Theorem 1.2 of \cite{graph_of_groups1} and the fact that for non-geometric primes $\mb \mc F_i\cong \mb G_i$ we get that $\mb G$ is homeomorphic to $\mb (E_1*\ldots * E_n)$, where $E_i = G_i$ if $M_i$ is geometric and $E_i = \mc F_i$ otherwise. Next we want to apply Theorem 1.1 of \cite{graph_of_groups1}, which essentially allows us to ignore factors of the free product if another factor already has the same Morse boundary. However, we can only apply it if $E = E_1*\ldots *E_n$ is infinitely ended. Thus we first deal with the cases where $E$ is not infinitely ended. This can only happen if either $n=1$ and $M_1$ is geometric or if $n=2$ and both $E_1$ and $E_2$ are equal to $\Z/2\Z$. If $M = M_1$ is a geometric prime, we know what the Morse boundary of $G$ is, and since $\R P^3$ is the only prime which has fundamental group $\Z/2\Z$, the latter only happens if $M =  \R P^3 \# \R P^3$ and then its Morse boundary is equal to $\{\cdot, \cdot\}$.

From now on we assume that $E$ is infinitely ended. Theorem 1.1 of \cite{graph_of_groups1}  implies that the homeomorphism type of $\mb E$ (and hence $\mb G$) only depends on set of the homeomorphism types $\mc T$ of non-virtually cyclic vertex groups. If $\mc T$ has cardinality one, then one might think that $\mb E\cong X$, but this is only the case if $X = \mb A$ for some infinitely ended group $A$. Otherwise $\mb E \cong X\st X$. If $\mc T$ is empty, or in other words if all factors are virtually cyclic, then $E$ is virtually free and its Morse boundary is a Cantor set. The possible homeomorphism types in $\mc T$ are: 
\begin{itemize}
    \item $S^2$: this only occurs if one of the prime factors has geometry $\mathbf{H}^3$
    \item $\omega$-Sierpi\'nski curve: this only occurs if one of the prime factors is non-geometric and has a finite volume hyperbolic component.
    \item $\omega$-Cantor space: This can only happen if one of the primes is non-geometric. Furthermore, \cite{charney2020complete} show that $\emptyset\st \emptyset \cong \omega$-Cantor. Thus by replacing every factor whose Morse boundary is an $\omega$-Cantor space by a free product of two factors with empty Morse boundaries, we can assure that the Morse boundary of no factor is an $\omega$-Cantor space. Observe that this does not change the homeomorphism type of $E$.
    \item $\emptyset$: this occurs if one of the prime factors has geometry other than $\mathbf{H}^3, S^3$ or $S^2\times \R$. The case of a factor having Morse boundary homeomorphic to an $\omega$-Cantor space got merged into this case. Thus, if one of the prime factors is non-geometric and has a Seifert-fibered component, then $\emptyset$ is also one of the homeomorphism types. By Theorem \ref{theorem:empty_morse_boundary} we can ignore the existence of factors with empty Morse boundary as long as at least one of the other factors is non-hyperbolic.
\end{itemize}

The classification follows directly from these observations and the only thing left to prove is that the spaces (1)-(9) are pairwise distinct. This is indeed the case: (1)-(4) are the only totally disconnected spaces in the list and clearly pairwise distinct. Spaces (5)-(7) are the only ones that are not totally disconnected but every non-singleton connected component is a sphere. Furthermore, (5) and (6) are compact, while (7) is not and (5) is connected while (6) is not. In space (8) every non-singleton connected component is a an $\omega$-Sierpi\'nski curve while (9) has connected components which are spheres. 
\end{proof}

\bibliographystyle{alpha}
\bibliography{sources}
\end{document}